\newcommand{\fa}{\mathfrak{a}}
\newcommand{\fc}{\mathfrak{c}}
\newcommand{\fg}{\mathfrak{g}}
\newcommand{\fm}{\mathfrak{m}}
\newcommand{\fn}{\mathfrak{n}}
\newcommand{\fo}{\mathfrak{o}}
\newcommand{\fp}{\mathfrak{p}}
\newcommand{\fs}{\mathfrak{s}}
\newcommand{\fu}{\mathfrak{u}}
\newcommand{\fx}{\mathfrak{x}}
\newcommand{\fy}{\mathfrak{y}}
\newcommand{\bA}{\mathbb{A}}
\newcommand{\bH}{\mathbb{H}}
\newcommand{\C}{\mathbb{C}}
\newcommand{\R}{\mathbb{R}}
\newcommand{\Z}{\mathbb{Z}}
\newcommand{\cA}{\mathcal{A}}
\newcommand{\cB}{\mathcal{B}}
\newcommand{\cC}{\mathcal{C}}
\newcommand{\cF}{\mathcal{F}}
\newcommand{\cH}{\mathcal{H}}
\newcommand{\cK}{\mathcal{K}}
\newcommand{\cL}{\mathcal{L}}
\newcommand{\cO}{\mathcal{O}}
\newcommand{\cS}{\mathcal{S}}
\newcommand{\cU}{\mathcal{U}}
\newcommand{\cV}{\mathcal{V}}
\newcommand{\cW}{\mathcal{W}}
\newcommand{\cX}{\mathcal{X}}
\newcommand{\rd}{\mathrm{d}}
\newcommand{\tS}{\mathtt{S}}
\newcommand{\bs}{\backslash}
\newcommand{\re}{\mathrm{Re}\,}
\newcommand{\tp}[1]{\prescript{t}{}{#1}}
\DeclareMathOperator{\diag}{diag} 
 \DeclareMathOperator{\vol}{vol}
\DeclareMathOperator{\Gal}{Gal}
\providecommand{\abs}[1]{\lvert#1\rvert}
\providecommand{\aabs}[1]{\lVert#1\rVert}
\newcommand{\valP}[1]{\left|#1\right|}
\DeclareMathOperator{\Tr}{Tr}
\DeclareMathOperator{\Trace}{Trace}
\DeclareMathOperator{\GL}{GL}
\DeclareMathOperator{\U}{U}
\DeclareMathOperator{\gl}{\mathfrak{gl}}
\DeclareMathOperator{\Res}{Res}
\DeclareMathOperator{\Ind}{Ind}
\DeclareMathOperator{\Hom}{Hom}
\DeclareMathOperator{\id}{\mathbf{1}}
\DeclareMathOperator{\disc}{disc}
\DeclareMathOperator{\BC}{BC}
\DeclareMathOperator{\Ad}{Ad}
\DeclareMathOperator{\Temp}{Temp} \DeclareMathOperator{\temp}{temp}
\DeclareMathOperator{\rs}{rs}
\DeclareMathOperator{\otimeshat}{\widehat{\otimes}}
\DeclareFontFamily{U}{mathx}{\hyphenchar\font45}
\DeclareFontShape{U}{mathx}{m}{n}{
      <5> <6> <7> <8> <9> <10>
      <10.95> <12> <14.4> <17.28> <20.74> <24.88>
      mathx10
      }{}
\DeclareSymbolFont{mathx}{U}{mathx}{m}{n}
\DeclareMathAccent{\widecheck}{\mathalpha}{mathx}{"71}
\def\Ddots{\mathinner{\mkern1mu\raise\p@
\vbox{\kern7\p@\hbox{.}}\mkern2mu
\raise4\p@\hbox{.}\mkern2mu\raise7\p@\hbox{.}\mkern1mu}}
\theoremstyle{definition}
\newtheorem{definition}{Definition}[section]
\theoremstyle{plain}
\newtheorem{theorem}[definition]{Theorem}
\newtheorem{prop}[definition]{Proposition}
\newtheorem{lemma}[definition]{Lemma}
\newtheorem{coro}[definition]{Corollary}
\theoremstyle{remark}
\newtheorem{remark}[definition]{Remark}
\numberwithin{equation}{section}
\begin{document}

\title{The global Gan--Gross--Prasad conjecture for Fourier--Jacobi periods on unitary groups II: Comparison of the relative trace formulae}

\author{Paul Boisseau}

\author{Weixiao Lu}

\author{Hang Xue}

\address{Paul Boisseau, Max Planck Institute for Mathematics, Vivatsgasse 7, 53111 Bonn, Germany}
\email{boisseau@mpim-bonn.mpg.de}

\address{Weixiao Lu, Aix Marseille Univ, CNRS, I2M, Marseille, 13009, France}
\email{weixiao.lu@univ-amu.fr}

\address{Hang Xue, Department of Mathematics, The University of Arizona, Tucson, AZ, 85721, USA}

\email{xuehang@arizona.edu}

\date{\today}

\begin{abstract}
This is the second of a series of three papers where we prove the Gan--Gross--Prasad conjecture for Fourier--Jacobi periods on unitary groups and an Ichino--Ikeda type refinement. Our strategy is based on the comparison of relative trace formulae formulated by Liu. The goal of this second paper is to compare the two relative trace formulae.
\end{abstract}

\maketitle

\tableofcontents

\section{Introduction}

This is the second in a series of three papers aiming at establishing the global Gan--Gross--Prasad (GGP) conjecture for Fourier--Jacobi periods on unitary groups. 
We refer the readers to the introduction to~\cite{BLX1} for some discussions on where the current paper stands in our approach to the GGP conjectures.

\subsection{What did we achieve in the first of this series?}

Let $E/F$ be a quadratic extension of number fields, and $\bA, \bA_E$ the adeles of $F$ and $E$ respectively. Put $G_n' = \GL_{n, F}$ and $G_n = \Res_{E/F} \GL_{n, E}$. Put $G_+ = G \times E_n$. For each cuspidal datum $\chi$ of $G$ and we defined a distribution $I_{\chi}$ on $G_+(\bA)$. These are called spectral terms in the relative trace formula (RTF), and we do not need them in this paper. Let $\cA$ be the $2n$-dimensional affine space over $E$ viewed as an algebraic variety over $F$, and for each $\alpha \in \cA(F)$ we define another distribution $I_\alpha$ on $G_+(\bA)$ (some of them are by definition zero). These are called geometric terms in the RTF. We will recall these terms in Section~2. The coarse form of the RTF on $G_+$ is the following identity
    \begin{equation}    \label{eq:coarse_RTF_GL}
    \sum_{\chi} I_{\chi}(f_+) =
    \sum_{\alpha} I_{\alpha}(f_+).
    \end{equation}
for all $f_+ \in \cS(G_+(\bA))$. Here the sum $\chi$ runs over all cuspidal data of $G$, and the sum $\alpha$ runs over all $\alpha \in \cA(F)$.

Similarly let $V$ be a skew-hermitian space of dimension $n$ and $\U(V)$ the unitary group. Let $\U_V = \U(V) \times \U(V)$, and $\U_{V, +} = \U_V \times V$.  For each cuspidal datum $\chi$ of $\U_V$ and we defined a distribution $J_{\chi}^V$ on $\U_{V, +}(\bA)$. These are called spectral terms in the RTF on unitary groups, and again we do not need them in this paper. For each $\alpha \in \cA(F)$ we define another distribution $J_\alpha^V$ on $\U_{V, +}(\bA)$ (some of them are by definition zero). These are called geometric terms in the RTF on unitary groups. We will recall these terms in Section~2. The coarse form of the RTF on $\U_{V, +}$ is the following identity
    \begin{equation}    \label{eq:coarse_RTF_U}
    \sum_{\chi} J_{\chi}^V(f_+^V) =
    \sum_{\alpha} J_{\alpha}^V(f_+^V).
    \end{equation}
for all $f_+^V \in \cS(\U_{V, +}(\bA))$. Here the sum $\chi$ runs over all cuspidal data of $\U_{V, +}$, and the sum $\alpha$ runs over all $\alpha \in \cA(F)$.

\subsection{What do we achieve in this paper?}

The first goal of this paper is to define a notion of matching of test functions. For regular semisimple $\alpha \in \cA(F)$, a notion which we will define in Section~4, the geometric terms $I_{\alpha}$ and $J_{\alpha}^V$ reduce to the more familiar orbital integrals. We define that a function $f_+ \in \cS(G_+(\bA))$ and a collection of functions $f_+^V \in \cS(\U_{V, +}(\bA))$ where $V$ runs over all skew-hermitian spaces of dimension $n$ match if 
    \begin{equation}    \label{eq:matching_intro}
    I_{\alpha}(f_+) = \sum_V J_{\alpha}^V(f_+^V),
    \end{equation}
for all regular semisimple $\alpha \in \cA(F)$. For such $\alpha$, there is at most one nonzero term on the right hand side. We will prove that for matching test functions, the identity~\eqref{eq:matching_intro} holds for all $\alpha \in \cA(F)$, not just the regular semisimple ones. This is what is referred to as the singular transfer of test functions. It is achieved via descending of the geometric terms $I_{\alpha}$ and $J_{\alpha}^V$ to the Lie algebra, and the identity~\eqref{eq:matching_intro} eventually reduces to the results of~\cite{CZ}. The fact that we are able to do this reduction ultimately relies on the particular way we setup and truncate the RTFs, as was done in~\cite{BLX1}. See also the introduction of~\cite{BLX1} for more discussions.

The matching of test functions is a local property. Let $v$ be a place of $F$. For a function $f_{+, v} \in \cS(G_+(F_v))$ and a collection of functions $f_{+, v}^V \in \cS(\U_{V, +}(F_v))$ where $V$ runs over all skew-hermitian spaces of dimension $n$ over $E_v$, we can define the notion of matching analogously. Indeed global (factorizable) test functions match if they match at all places. We prove two results for matching test functions $f_{+, v}$ and $f_{+, v}^V$. 
\begin{itemize}
\item Transfer of test functions. Given $f_{+,v}$ we can find a collection $f_{+, v}^V$ that matches it, and vice versa. If $v$ is archimedean this is proved only for a dense subset of test functions. 

\item The fundamental lemma. This means if all data at the place $v$ are unramified, then the unramified test functions should match. See Theorem~\ref{thm:FL} for the precise statement.
\end{itemize}
These two results form the pillars of the comparison of the RTFs at hand.
    
These results form the first part of this paper. The rest of this paper is devoted to proving a local character identity for matching test functions. Again let $v$ be a place of $F$. Let $\pi_v$ be an irreducible tempered representation of $\U_V(F_v)$ with a nonzero Fourier--Jacobi model, cf.~Subsection~6.1 for an explanation. We define a distribution $J_{\pi}^V$ on $\U_V^+(F_v)$. For its base change $\Pi_v$ to $G(F_v)$, we define another distribution $I_{\Pi_v}$ on $G_+(F_v)$. We prove that there is an explicit constant $\kappa(\pi_v)$ such that for all matching test functions $f_{+, v}$ and $\{f_{+, v}^V\}_V$, we have
    \[
    J_{\pi_v}^V(f^V_+) = \kappa(\pi_v) I_{\Pi_v}(f_+).
    \]
We also prove that this identity characterizes matching of test functions. This result will play a definitive role in the third of this series of papers. It is used in the proof of a refined version of the global GGP conjecture, and is also used in the process of isolation of a particular cuspidal datum on the spectral side of the RTFs.

\subsection{General notation and conventions}

We keep the same general notation and convention from~\cite{BLX1}. We recall the following.
\begin{itemize}
\item Let $E/F$ be a quadratic extension of either number fields or local fields of characteristic zero. In either case we denote by
$\textsf{c}$ the nontrivial Galois conjugation in $\Gal(E/F)$. If $A$ is a group on which $\Gal(E/F)$ acts, we denote by $A^-$ the purely imaginery elements in $A$, i.e. $x \in A$ such that $x^{\textsf{c}} = -x$. We fix a nonzero $\tau \in
E^-$. 

\item If $E/F$ are number fields, we denote by $\bA$ and $\bA_E$ the ring of adeles of $F$ and $E$ respectively. We write $\bA_{f}$
the ring of finite adeles of $\bA$. If $\tS$ is a finite set of places of $F$, we set $F_\tS := \prod_{v \in S} F_v$. When $\tS = V_{F,\infty}$ is the set of Archimedean places of $F$, we also write $F_{\infty} := F_{V_{F,\infty}}$. 

\item We fix a nontrivial additive character of $\psi$ of $F\bs \bA$, and let
$\psi^E$ and $\psi_E$ be the nontrivial additive characters of $\bA_E$ given
respectively by
    \[
    \psi^E(x) = \psi(\Tr_{E/F}(\tau x)), \quad \psi_E(x) = \psi(\Tr_{E/F} x).
    \]
Let $\eta$ be the quadratic character of $F^\times \bs \bA^\times$
associated to the extension $E/F$ by global class field theory. We let $\mu$
be a character of $E^\times \bs \bA_E^\times$ whose restriction to $\bA^\times$ is $\eta$. The same notation applies to the case when $E/F$ are local fields of characteristic zero.

\item For any reductive group $G$ over $F$, we let $X(G)$ be the space of $F$-rational characters on $G$. Put $\fa_G = \Hom(X(G), \R)$ and $\fa_G^* = X(G) \otimes_{\Z} \R$. Denote by $H_G: G(F) \to \fa_G$ the usual Harish-Chandra map, cf.~\cite{BLX1}*{Section~3.5}.

\item For $G$ an algebraic group over $F$, we defined the space of Schwartz functions $\cS(G(\bA))$ (if $F$ is a number field) and $\cS(G(F))$ (if $F$ is a local field of characteristic zero), cf.~\cite{BLX1}*{Section~3.3}.

\item Let $G$ be a reductive group over $F$, and $P$ a parabolic subgroup. We often write $M_P$ for its Levi subgroup and $N_P$ for its unipotent radical. When we write $P = MN$, it always means $M$ is the Levis subgroup and $N$ is the unipotent radical.  When $F$ is a number field, we put $[G]_P =  N(\bA)M(F) \bs G(\bA)$, and  $[G] = [G]_G$.

\item Let $G'_n = \GL_{n, F}$, $G_n = \Res_{E/F} \GL_{n, E}$. Put $G = G_n \times G_n$, $H$ the diagonal subgroup of $G$ which is isomorphic to $G_n$, and $G' = G_n' \times G_n'$ which embeds in $G$ componentwise. If $g \in G$ or $G'$ we always write $g = (g_1, g_2)$ for the two components. Define a quadratic character $\eta_{G'}$ of $G'(\bA)$ (or $G'(F)$ when $F$ is local) by $\eta_{G'}(g') = \eta(\det g_1'g_2')^{n+1}$.

\item Let $V$ be an $n$-dimensional skew-hermitian space over $E$, and $\U(V)$ the unitary group. Let $\U_V = \U(V) \times \U(V)$ and $\U_V'$ the diagonal subgroup, which is isomorphic to $\U(V)$. 

\item Comments on the measures. In Part~1 of this paper, we do need the precise choice of the measures. So to avoid any further complication of notation, we just assume that we have prefixed a measure on each space we integrate. In Part~2 of this paper, we will establish a precise formula relating spherical characters, and we will specify what measures we use in Section~5.
\end{itemize}

\subsection{Acknowledgments}
We thank Rapha\"el Beuzart-Plessis and Wei Zhang for many helpful discussions.
PB was partly funded by the European Union ERC Consolidator Grant, RELANTRA, project number 101044930. Views and opinions expressed are however those of the author only and do not necessarily reflect those of the European Union or the European Research Council. Neither the European Union nor the granting authority can be held responsible for them. WL was partially supported by the National Science Foundation under Grant No. 1440140, while he was in residence at the Mathematical Sciences Research Institute in Berkeley, California, during the semester of Spring 2023. HX is partially supported by the NSF grant DMS~\#2154352.

\part{Matching test functions}

\section{Recollections}
\subsection{Geometric distributions on the general linear groups}

We developed in~\cite{BLX1} the coarse form of the relative trace formulae on the general linear groups. We recall the geometric side in this subsection. 

One major innovation in~\cite{BLX1} is the use of the Jacobi groups. Let $L = E^n$, $L^\vee = E_n$, and $S = L^\vee + L + E$ the Heisenberg group. Let $J_n = S \rtimes G_n$ be the Jacobi group. The group $J_n$ can be realized as a subgroup of $G_{n+2}$ consisting of matrices of the form
    \[
    \begin{pmatrix} 1 &\bullet &\bullet &\bullet &\star \\
    &*&*&*&\circ\\ &*&*&*&\circ \\
    &*&*&*&\circ \\ &&&& 1\end{pmatrix}.
    \]
Here the $*$'s give the group $G_n$, $\bullet$'s give $E_n$, $\circ$'s give $E^n$ and $\star$ give the center $E$. The later three combined give the Heisenberg group $S$. 

A standard parabolic subgroup of $G_n$ are those block upper triangular ones. We introduced the notion of standard D-parabolic subgroups of $J_n$ in~\cite{BLX1}. Let $\cF$ be the set of standard D-parabolic subgroups for $J_n$. D-parabolic subgroups in $\cF$ can be described in concrete terms as follows. For $P \in \cF$, we put $P_n = P
\cap G_n$. Then $P_n$ is a standard parabolic subgroup of $G_n$. Assume that $P_n$ is the stabilizer of the flag
    \begin{equation}    \label{eq:increasing_sequence}
     0 = L_0 \subset L_1 \subset \cdots \subset L_r=L.
    \end{equation}
Then  $P$ is either of the form
    \begin{equation}    \label{eq:Type 1 D parabolic}
    P = (L_k^\perp \times L_k \times E) \rtimes P_n
    \end{equation}
for some $0 \leq k \leq r$, in which case we call $P$ of type I,  or of the form
    \begin{equation}    \label{eq:Type 2 D parabolic}
    P  = (L_k^\perp \times L_{k+1} \times E) \rtimes P_n
    \end{equation}
for $0 \le k \le r-1$, in which case we call $P$ of type II. If $P$ is of type I,
the D-Levi decomposition for $P$ is
    \[
     M_{P}  = (0 \times 0 \times E) \rtimes M_{P_n}, \quad
    N_{P}  = (L_k^\perp \times L_k \times 0) \rtimes N_{P_n}.
    \]
If $P$ is of type II, the D-Levi decomposition for $P$ is
    \[
        M_{P}
        = (L_k^\perp/L_{k+1}^{\perp} \times L_{k+1}/L_k \times E)
        \rtimes M_{P_n}, \quad
        N_{P}  = (L_{k+1}^\perp \times L_k \times 0) \rtimes N_{P_n}.
    \]
We showed in~\cite{BLX1}*{Lemma~4.2} that $\cF$ is in bijection with the set of parabolic subgroups $P_{n+1} \times P_n$ of $G_{n+1} \times G_n$ such that $P_n = P_{n+1} \cap G_n$ and is standard.
    
In terms of matrices, a type I D-parabolic subgroup looks like
    \[
    \begin{pmatrix} 1 & & &\bullet &\star \\
    &*&*&*&\circ\\ &*&*&*&\circ \\
    &&&*& \\ &&&& 1\end{pmatrix}, \qquad
    \begin{pmatrix} 1 & & & &\star \\
    &*&*&&\\ &*&*&& \\
    &&&*& \\ &&&& 1\end{pmatrix},
    \]
and its D-Levi (the right matrix) is a product of smaller $\GL$s (and the upper right corner center). A Type II D-parabolic subgroup looks like
    \[
    \begin{pmatrix} 1 & &\bullet &\bullet &\star \\
    &*&*&*&\circ\\ &&*&*&\circ \\
    &&&*& \\ &&&& 1\end{pmatrix}, \qquad
    \begin{pmatrix} 1 & &\bullet & &\star \\
    &*&&&\\ &&*&& \circ \\
    &&&*& \\ &&&& 1\end{pmatrix}
    \]
and its D-Levi subgroup (the right matrix) is a product of $\GL$s and a smaller Jacobi group.

For $P = MN \in \cF$, where $M$ is the D-Levi and $N$ is the D-unipotent radical, we write $P_n = P \cap G_n = M_n N_n$, which is a parabolic subgroup of $G_n$, and $M_G = M_n \times M_n$, $N_G = N_n \times N_n$. We also denote by $M_{L^\vee} = M \cap L^\vee$ and $M_{L} = M \cap L$ which are subgroups of $S$.

To state the geometric side of the relative trace formula, we recall the following notation from~\cite{BLX1}*{Section~6}.
\begin{itemize}
\item Let $A_{n+1}$ be the diagonal maximal torus of $G_{n+1}'$ and $\fa_{n+1}' = \fa_{A_{n+1}}'$.

\item Put $G^+ = G \times L^{\vee, -} \times L^-$ and give it the product group structure. Recall that for any subset $A$ in $L^\vee$ or $L$, the subset of purely imaginery elements in $A$ is denoted by $A^-$. The group $H \times G'$ acts from the right on $G^+$ by
    \begin{equation}    \label{eq:action_G^+}
    (g,w,v) \cdot (h,g') = (h^{-1}gg',w g_1',g_1'^{-1} v).
    \end{equation}

\item Let $P = MN \in \cF$. Put $M^+ = M_G \times
    M_{L^{\vee}}^{-} \times M_L^{-}$, $N^+ = N_G \times N_{L^{\vee}}^-
    \times N^{-}_L$, and $P^+ = M^+ N^+$. We often write an element in $P^+$ as  $m^+n^+$ accordingly.

\item Let $q: G^+ \to \cA= G^+//(H \times G')$ be     the GIT quotient. We define a morphism $G^+ \to \Res_{E/F} \mathbf{A}_{2n, E}$ by
        \begin{equation} \label{eq:gl_GIT_map}
        ((g_1, g_2), w, v) \mapsto (a_1, \hdots, a_n; b_1, \hdots, b_n),
        \end{equation}
        where $\mathbf{A}_{2n, E}$ denotes the $2n$-dimensional affine space over $E$, and we put
        \[
        s = (g_1^{-1} g_2)(g_1^{-1}g_2)^{\mathsf{c}, -1}, \quad
        a_i = \Trace \wedge^i s,
        \quad b_i = w  s^i v, \quad i = 1, 2, \hdots, n.
        \]
    This map descends to a locally closed embedding $\cA \hookrightarrow \mathrm{Res}_{E/F} \mathbf{A}_{2n, E}$. We always consider $\cA$ as a locally closed subscheme of $\Res_{E/F}\mathbf{A}_{2n, E}$.

\item If $\alpha \in \cA(F)$, we define $G^+_{\alpha}$ to be the inverse
    image of $\alpha$ (as a closed subscheme of $G^+$). For $P = MN \in \cF$, we put
    $M^+_{\alpha} = G^+_\alpha \cap M^+$.
\end{itemize}

For
$\alpha \in \cA(F)$, $f^+ \in \cS(G^+(\bA))$, and $P \in \cF$,
we defined a kernel function on $[H]_{P_H} \times [G']_{P_{G'}}$ by
    \[
    k_{f^+, P, \alpha}(h,g')=
    \sum_{m^+ \in M^+_{\alpha}(F)}
    \int_{N^+(\bA)}
    f^+(m^+ n^+ \cdot (h,g'))
    \rd n^+.
    \]
We defined a modified kernel
    \[
    k_{f^+, \alpha}^T(h, g') = \sum_{P \in \cF} \epsilon_P
    \sum_{\substack{\gamma \in P_H(F) \bs H(F)\\
    \delta \in P_{G'}(F) \bs G'(F)}}
    \widehat{\tau}_{P_{n+1}}(H_{P_{n+1}}(\delta_1 g'_1) - T_{P_{n+1}})
    k_{f^+, P, \alpha}(\gamma h, \delta g').
    \]
Here $\epsilon_P$ is a certain sign attached to $P$, $T \in \fa_{n+1}'$ is a truncation parameter and $\widehat{\tau}(...)$ is a the characteristic function of a certain cone in $\fa_{n+1}'$. The definitions of the unexplained terms are in~\cite{BLX1}*{Section~3.5}, but for the purpose of this paper, we do not need the precise definitions of them.

With these kernel functions, we defined in~\cite{BLX1}*{Section~6} the distribution
    \[
    i_\alpha^T(f^+) = \int_{[G']} \int_{[H]} k^T_{f^+, \alpha}(h, g')
    \eta_{G'}(g') \rd h \rd g'.
    \]
When $T$ is sufficiently positive (meaning that $T$ lies in a certain cone, cf.~\cite{BLX1}*{Subsection~3.5.3} for the definition), this expression is absolutely convergent. The function $T \mapsto i^T_\alpha(f)$ is the restriction of a
polynomial exponential function whose purely polynomial part is a constant. This constant is denoted by $i_{\alpha}(f^+)$. 

We have defined an integral transform
    \[
    -^\dag: \cS(G_+(\bA)) \to \cS(G^+(\bA)), \quad f_+ \mapsto f^+ = (f_+)^{\dag}.
    \]
This is essentially a partial Fourier transform. We will encounter a local analogue of it in Part II of the paper. With this we defined the distributions
$I_{\alpha}^T$ and $I_{\alpha}$ on $\cS(G_+(\bA))$ by
    \[
    I_{\alpha}^T(f_+) = i_{\alpha}^T(f^+), \quad
    I_{\alpha}(f_+) = i_{\alpha}(f^+).
    \]
They are the geometric terms appearing in the relative trace formulae.

\subsection{Geometric distributions on the unitary groups}  \label{subsec:recall_U}

Like the case of general linear groups, we introduced Jacobi groups in~\cite{BLX1}.

Let $\Res V$ be the symplectic space over $F$ whose underline vector space is
$V$, and the symplectic form $\mathrm{Tr}_{E/F} \circ q_V$.
Let $S(V) = \Res V \times F$ be the Heisenberg group $J(V) = S(V) \rtimes \U(V)$ the Jacobi group. The group $J(V)$ can be realized as a subgroup of $\U(V + \bH)$ where $\bH$ is the two dimensional split skew-hermitian space. If we realize $\U(V+ \bH)$ as matrices, then $J(V)$ consists of the matrices of the form
    \[
    \begin{pmatrix} 1 &\bullet &\bullet &\bullet &\bullet \\
    &*&*&*&\bullet\\ &*&*&*&\bullet \\
    &*&*&*&\bullet \\ &&&& 1\end{pmatrix}.
    \]
where $\bullet$'s give the Heisenberg group $S(V)$, and $*$'s give $\U(V)$.

Let $\cF_V$ be the set of standard D-parabolic subgroups for $J(V)$. Standard D-parabolic subgroup of $J(V)$ can be described in concrete terms as follows. We fix $P_0 = M_0 N_0$ a minimal parabolic subgroup of $\U(V)$. Let $P$ and $P' = P \cap \U(V)$. Then $P$ is called standard if $P'$ contains $P_0$. If $P'$  If $P'$ stabilizes an isotropic flag of the form 
\begin{equation} \label{eq:isotropic flag}
0=X_0 \subset X_1 \subset \cdots \subset X_r
    \end{equation}
in $V$. Then the corresponding $P \in \cF_V$ is
given by $P = (X_r^\perp \times F) \rtimes P'$. We showed in~\cite{BLX1}*{Lemma~4.7} that $P \mapsto P'$ is a bijection between $\cF_V$ and the set of standard parabolic subgroups of $\U(V)$. In terms of matrices, a D-parabolic subgroups of $J(V)$ looks like
    \[
    \begin{pmatrix} 1 & &\bullet &\bullet &\bullet \\
    &*&*&*&\bullet\\ &&*&*&\bullet \\
    &&&* \\ &&&& 1\end{pmatrix}, \qquad
    \begin{pmatrix} 1 & &\bullet & &\bullet \\
    &*&&&\\ &&*&&\bullet \\
    &&&* \\ &&&& 1\end{pmatrix},
    \]
and its D-Levi subgroup (the right matrix) is the product of $\GL$s and a Jacobi group attached to a smaller skew-hermitian space $V_0$. Note that $X_r^{\perp} = X_r + V_0$.

For $P = MN \in \cF_V$, we let $P_{\mathrm{\U}} = P' \times P'$ be a parabolic subgroup of $\U_V$. 

To state the geometric side of the relative trace formula, we recall the following notation from~\cite{BLX1}*{Section~8}.

\begin{itemize}
\item Let $A_0$ be the maximal split torus in $P_0$ and $\fa_0 = \fa_{A_0}$.

\item We put $\U_V^+ = \U_V \times V$ and give it the product group structure. 
    The group $\U_V' \times \U_V'$
    acts on $\U_V^+$ from the right by $(g, v) \cdot (x, y) = (x^{-1} g y, y^{-1} v)$.

\item  Put 
        \[
        P^+ = P_{\U} \times X_r^{\perp}, \quad
        M_P^+ = M_{P_{\U}} \times V_0, \quad
        N_P^+ = N_{P_{\U}} \times X_r .
        \]
These are subgroups of $\U_V^+$.

\item By~\cite{CZ}*{Lemma~15.1.4.1} the categorical quotient $\U_V^+//(\U'_V \times \U'_V)$ is canonically identified with $\cA = G^+//(H \times G')$. The canonical morphism $q_V: \U_V^+ \to \cA$ is given by
        \[
        ((g_1, g_2), v) \mapsto (a_1, \hdots, a_n; b_1, \hdots, b_n)
        \]
    where
        \[
        a_i = \Trace \wedge^i (g_1^{-1} g_2), \quad
        b_i = 2 (-1)^{n-1} \tau^{-1} q_V( g_1^{-1} g_2 v, v).
        \]
    Here $\cA$ is viewed as a locally closed subscheme of $\Res_{E/F} \mathbf{A}_{2n, E}$ as in the case of general linear groups. The choice of the extra factor $2 (-1)^{n-1} \tau^{-1}$ in
$b_i$'s will be justified later when we compare the relative trace formulae, cf.~\ref{remark:Vtau}.

\item For $\alpha \in \cA(F)$, let $\U^+_{V, \alpha}$ be the preimage of $\alpha$ in
    $\U^+_V$ as a closed subscheme. We also put
        \[
        M^+_{P, \alpha} = \U^+_{V, \alpha} \cap M^+_P.
        \]
\end{itemize}

For $f^+ \in \cS(\U_V^+(\bA))$, $P \in \cF_V$, and $\alpha \in \cA(F)$, we defined the kernel function
    \[
    k_{f^+,P,\alpha}(x,y) = \sum_{m^+ \in M_{P,\alpha}^{+}(F)}
    \int_{N_P^+(\bA)} f^+(m^+ n^+ \cdot (x,y)) \rd n^+.
    \]
For $T \in \fa_0$ and $x, y \in [\U_V']$ and $\alpha \in \cA(F)$ we put
    \[
     k_{f^+,\alpha}^T(x,y) =
    \sum_{P \in \cF_V} \epsilon_P
    \sum_{\substack{\gamma \in P'(F) \backslash \U_V'(F)
    \\
    \delta \in P'(F) \backslash \U_V'(F)}}
    \widehat{\tau}_{P'}(H_{P'}(\delta y)-T_{P'})
    k_{f^+, \alpha, P}(\gamma x, \delta y).
    \]
Here again $\epsilon_P$ is a certain sign attached to $P$, $T \in \fa_{0}$ is a truncation parameter and $\widehat{\tau}(...)$ is a the characteristic function of a certain cone in $\fa_{0}$. Again the definitions of the unexplained terms are in~\cite{BLX1}*{Section~3.5}, but for the purpose of this paper, we do not need the precise definitions of them.

We defined a distribution
    \[
        j^T_\alpha(f^+) = \int_{[\U_V'] \times [\U_V']}
        k_{f^+, \alpha}^T(x,y) \rd x \rd y.
        \]
When $T$ is sufficiently regular, $j^T_\alpha$ coincides with the restriction of an exponential-polynomial function of $T$ whose purely polynomial term is a constant $j_\alpha(f^+)$.

We have defined an integral transform
    \begin{equation}    \label{eq:global_ddag_map}
    -^\ddag: \cS(\U_{V,+}(\bA)) \to \cS(\U_{V}^+((\bA)), \quad f_+ \mapsto f^+ = (f_+)^{\ddag}.
    \end{equation}
This is essentially a partial Fourier transform. We will encounter a local analogue of it in Part II of the paper. With this we defined the distributions
$J_{\alpha}^T$ and $J_{\alpha}$ on $\cS(\U_{V,+}(\bA))$ by
    \[
    J_{\alpha}^T(f_+) = j_{\alpha}^T(f^+), \quad
    J_{\alpha}(f_+) = j_{\alpha}(f^+).
    \]
They are the geometric terms appearing in the relative trace formulae.

\section{Infinitesimal geometric distributions} \label{sec:infinitesimal_geo}

\subsection{Simplification: general linear groups}
The goal of this subsection is to simplify the distribution $i_{\alpha}$.
Let us introduce the following additional spaces and group actions.

\begin{itemize}
\item We put
    \[
    S_n = \{ g \in G_n \mid  g g^{\mathsf{c}, -1} = 1  \},
    \]
which is a closed subvariety of $G_n$ over $F$. The group $G_n'$ acts on
$S_n$ from the right by the usual conjugation, and we have an isomorphism
of $G_n'$-varieties $S_n \simeq G_n/G_n'$.
Let
    \[
    \nu: G_n \to S_n, \quad \nu(g) = g g^{\mathsf{c}, -1}
    \]
be the natural projection.

\item Put $X = S_n \times L^{\vee, -} \times L^-$, on which $G_n'$ acts
    from the right by
    \[
    (\gamma, w, v)\cdot g'  = (g'^{-1} \gamma g', wg', g'^{-1} v).
    \]
    Put $G_{n}^+ = G_n \times L^{\vee, -} \times L^-$, on which $G'$ acts on the right by
    \[
        (g,w,v) \cdot (g_1',g_2') = (g_1^{\prime -1}gg_2',wg_1',g_1^{\prime -1}v).
    \]
    We extend the map
    $\nu$ to a map $G_n^+ \to X$ by setting $\nu(g, u, v) = (\nu(g), u,
    v)$. By composing this with the map $G_n \times G_n \to G_n$, $(g_1, g_2)
    \mapsto g_1^{-1} g_2$, we further obtain to a map $G^+ \to X$. Note
    that $G^+(\bA) \to X(\bA)$ is surjective.

\item Note that the quotient morphism $G/H \to      G_n$, $(g_1, g_2) \mapsto g_1^{-1}g_2$ and $\nu$ are both principal homogeneous spaces, the categorical quotients $G_n^+//G'$, $X//G_n'$ and $G^+//(H
    \times G')$ are canonically identified, and are all denoted by $\cA$.
    The categorical quotient $q: G^+ \to \cA$ factors through
    the map $\nu: G^+ \to X$, and we denote again by $q:X \to \cA$ the
    induced map. If $\alpha \in \cA(F)$, we let $X_{\alpha}$ be the inverse
    image of $\alpha$ in $X$ (as a closed subscheme).

\item Let          $P = MN \in \cF$, write $P_n = P \cap G_n = M_n
    N_n$. We put
    \[
        M_n^+ = M_n \times M_{L^\vee}^- \times M_{ L}^-, \quad   N_n^+ = N_n \times N_{L^\vee}^{\vee,-} \times N_{L}^{-}.
    \]
    The right action of $G_n'$ on $G_n^+$ restricts to an action of $M_n'$ on $M_n^+$.

    \item Composing the embedding $M_n^+ \subset G_n^+$ with the map $G_n^+
        \to \cA$ we obtain a map $M_n^+ \to \cA$, and for any $\alpha \in
        \cA(F)$ we denote by $M^+_{n, \alpha}$ the inverse image of
        $\alpha$ in $M^+_n$ (as a closed subscheme).
\end{itemize}

For $\varphi \in \cS(X(\bA))$, define a function $k_{\varphi,P,\alpha}$ on
$[G_n']_{P_n'}$ by
    \[
    k_{\varphi,P,\alpha}(h) =
    \sum_{m \in M_{n}^+(F) \cap X_\alpha(F)} \int_{N_{n}^+(\bA)/N'_n(\bA)}
    \varphi(h^{-1}\nu(x n)h) \rd n,
    \]
where $x$ is any element in $M_{n}^+$ such that $\nu(x)=m$, and $N_n^+/N_n'$ stands for $N_n/N_n' \times N_{P,L^\vee}^{\vee,-} \times N_{P,L}^{-}$.  The same argument
as~\cite{BLX1}*{Lemma~6.1} gives that
    \[
    \sum_{\alpha \in \cA(F)} \sum_{m \in M_{n,\alpha}^+(F)}
    \int_{N_{n}^+(\bA)/N_n'(\bA)}
    \lvert \varphi(h^{-1}\nu(x n)h) \rvert \rd n
    \]
is convergent.

Let us define an integral transform
    \[
    \cS(G^+(\bA)) \to \cS(X(\bA)), \quad
    f^+ \mapsto \varphi_{f^+}
    \]
as follows. For any $(\gamma, w, v) \in X(\bA)$ we put
    \begin{equation}    \label{eq:simplified_test_function_GL}
    \varphi_{f^+}(\gamma, w, v) =
    \left\{ \begin{aligned}
    &\int_{G_n(\bA)} \int_{G_n'(\bA)} f^+((h^{-1}, h^{-1} x g_n' ), w, v)
    \rd h \rd g_n', &&\text{$n$ odd};\\
    &\int_{G_n(\bA)} \int_{G_n'(\bA)} f^+((h^{-1}, h^{-1}x g_n'), w, v)
    \mu(\det xg_n')
    \rd h \rd g_n', &&\text{$n$ even}.
    \end{aligned} \right.
    \end{equation}
where $x \in G_n(\bA)$ is any element such that $\nu(x) = \gamma$. We note that
because the map $G^+ \to X$ is a smooth morphism, the
integral transform $f^+ \mapsto \varphi_{f^+}$ is surjective,
cf.~\cite{AG}*{Theorem~B.2.4}. If $\tS$ is a finite set of places, and $f^+
\in \cS(G^+(F_{\tS}))$, then we define $\varphi_{f^+} \in \cS(X(F_{\tS}))$ in
the same way, integrating over $F_{\tS}$-points of the groups.

The next lemma gives a relationship between the kernel functions $k_{f^+}$ and $k_{\varphi_{f^+}}$ for $f^+ \in \cS(G^+(\bA))$.  For elements or subgroups in $G' = G_n' \times G_n'$, we use the subscripts $1$ and $2$ to indicate the first and the second components.

\begin{lemma}   \label{lem:geometric_linear_convergence}
For all $\alpha \in \cA(F)$ and $f^+ \in \cS(G^+(\bA))$, the expression
    \[
    \int_{[H]} \int_{[G_n']}
    \sum_{ \substack{ \gamma \in P_H(F) \backslash H(F) \\
    \delta_2 \in P_2'(F) \backslash G_n'(F) }}
    \sum_{m \in M_{\alpha}^+(F)} \int_{N^+(\bA)}
    \left| f^+(mn \cdot (\gamma h, \delta_2 g')) \right| \rd n \rd g_2' \rd h
    \]
is convergent and we have
    \begin{equation}    \label{eq:geo_descent_GL_step_1}
    \int_{[H]} \int_{[G_n']}
    \sum_{ \substack{ \gamma \in P_H(F) \backslash H(F) \\
    \delta_2 \in P_2'(F) \backslash G_n'(F) }}
    k_{f^+, P, \alpha}(\gamma h,\delta_2 g') \eta_{G'}(g') \rd g_2' \rd h =
    k_{\varphi_{f^+}, P, \alpha}(g_1') \eta(g_1').
    \end{equation}
Here we recall that $g' = (g_1', g_2') \in G'(\bA)$, $g_1', g_2' \in G_n'(\bA)$, and $\delta_2 g'$ stands for the element $(g_1', \delta_2 g_2') \in G'(\bA)$. The integration is over $h$ and $g_2'$, and we left with a function on $g'_1$.
\end{lemma}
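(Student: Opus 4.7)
The plan is to prove both claims simultaneously by unfolding the two coset sums in the LHS into adelic integrations, performing changes of variables that absorb unipotent contributions into the $N^+$-integration, and identifying the resulting expression with $k_{\varphi_{f^+}, P, \alpha}(g_1')$ via the integral transform $f^+ \mapsto \varphi_{f^+}$ from~\eqref{eq:simplified_test_function_GL}.

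First I would unfold the $\gamma$ and $\delta_2$ sums to obtain
\[
J = \int_{P_H(F) \bs H(\bA)} \int_{P_2'(F) \bs G_n'(\bA)} k_{f^+, P, \alpha}(h, g') \, \eta_{G'}(g') \, dg_2' \, dh,
\]
with $g' = (g_1', g_2')$ and $g_1'$ fixed. Then, using the Levi decompositions $P_H = M_H N_H$ and $P_2' = M_2' N_2'$, I would recognize that $k_{f^+, P, \alpha}$ is left-invariant under $M_H(F) N_H(\bA)$ in $h$ and right-invariant under $M_2'(F) N_2'(\bA)$ in $g_2'$. The $N_H(\bA)$ and $N_2'(\bA)$-invariance follow by absorbing the translations into the $N^+$-integration, since $N_H \subset N_G$ diagonally and $N_2' \subset N_G$ in the second factor (and $N_H(\bA) \cdot N_2'(\bA)$ spans $N_G(\bA)$); the $M_H(F)$ and $M_2'(F)$-invariance follow by absorbing into the $m^+$-sum. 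Hence $J$ descends to an integral over
\[
\left( M_H(F) N_H(\bA) \bs H(\bA) \right) \times \left( M_2'(F) N_2'(\bA) \bs G_n'(\bA) \right),
\]
with a Jacobian given by $\vol([N_H]) \vol([N_2'])$.

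Next, I would identify the descended integrand with $k_{\varphi_{f^+}, P, \alpha}(g_1') \eta(g_1')$. The combined integration over the two double quotients, together with the $m^+$-sum restricted to a system of $M_H(F) \times (\{1\} \times M_2'(F))$-orbit representatives (which matches the $m \in M_{n, \alpha}^+(F)$ index set), and with the $N^+$-integration restricted to a set of $N_H(\bA) \cdot N_2'(\bA)$-coset representatives (which matches $N_n^+(\bA)/N_n'(\bA)$), produces precisely the integral transform $\varphi_{f^+}$ evaluated at $g_1'^{-1} \nu(xn) g_1'$. The character factorization $\eta_{G'}(g') = \eta(\det g_1')^{n+1} \eta(\det g_2')^{n+1}$ splits: the $g_1'$-factor becomes $\eta(g_1')$ on the RHS, while the $g_2'$-factor is trivial for $n$ odd and is absorbed by the twist $\mu$ entering~\eqref{eq:simplified_test_function_GL} for $n$ even, using $\mu|_{\bA^\times} = \eta$.

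Absolute convergence follows from the smoothness of $G^+ \to X$ together with~\cite{AG}*{Theorem~B.2.4}, which guarantees that $\varphi_{\lvert f^+ \rvert}$ lies in $\cS(X(\bA))$, combined with the convergence argument of~\cite{BLX1}*{Lemma~6.1} applied to $k_{\varphi_{\lvert f^+ \rvert}, P, \alpha}$. The main technical hurdle will be the careful bookkeeping of Haar measures through the Levi/unipotent decompositions and the verification that the products of unipotent subgroups and the choices of $m^+$-orbit representatives align with the indexing in the definition of $k_{\varphi, P, \alpha}$; the argument is uniform but requires separate attention to type I and type II D-parabolic subgroups $P \in \cF$.
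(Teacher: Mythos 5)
Your strategy is essentially the one the paper follows: unfold the $\gamma,\delta_2$ cosets, exploit the kernel's automorphy to reorganize the adelic integration, and recognize $\varphi_{f^+}$ in the result. The paper does this explicitly via an Iwasawa decomposition of $[H]_{P_H}$ and $[G_n']_{P_n'}$, unfolding the $m^+$-sum against $M_{P_H}(F)\times M_n'(F)$ to get full Levi integrals, then splitting $N^+(\bA)$ as $N_H(\bA)\cdot N_n^+(\bA)$ and $N_n^+(\bA)$ as $\bigl(N_n^+(\bA)/N_n'(\bA)\bigr)\cdot N_n'(\bA)$, finally recombining $M_{P_H}(\bA)N_H(\bA)K_H$ into $H(\bA)$ and $M_n'(\bA)N_n'(\bA)K_2'$ into $G_n'(\bA)$ to produce exactly the double integral defining $\varphi_{f^+}$.

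Two points in your write-up are off. First, the parenthetical claim that $N_H(\bA)\cdot N_2'(\bA)$ spans $N_G(\bA)$ is false: $N_G=N_n\times N_n$, $N_H=\{(u,u)\}$ is diagonal, and $N_2'=\{1\}\times N_n'$, so $N_H\cdot N_2'$ has dimension $\dim N_n+\dim N_n'$, strictly smaller than $\dim N_G=2\dim N_n$. (This does not invalidate your later and correct assertion that the coset space $N^+(\bA)/(N_H(\bA)N_2'(\bA))$ is $N_n^+(\bA)/N_n'(\bA)$, but the spanning statement itself should be deleted.) Second, the "descend to a double quotient with Jacobian $\vol([N_H])\vol([N_2'])$" framing is not how the bookkeeping actually works. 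After unfolding the $m^+$-sum, the $N_H(\bA)$ and $N_n'(\bA)$ pieces are not volume factors that divide out; they get \emph{multiplied back} into the Levi integrals via Iwasawa, producing the unrestricted $G_n(\bA)\times G_n'(\bA)$ integration that is precisely the definition of $\varphi_{f^+}$ in~\eqref{eq:simplified_test_function_GL}. Finally, you should verify (rather than merely assert) that the $\eta(\det g_2')^{n+1}$ factor is trivial for $n$ odd and that for $n$ even it combines with the extra $\mu(\det x g_n')$ in~\eqref{eq:simplified_test_function_GL}; this uses $\mu(\det m n_2)=1$ for $m\in M_n'(F)$, $n_2\in N_n(\bA)$, which is what makes the two cases of~\eqref{eq:simplified_test_function_GL} line up with the uniform right-hand side.
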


\begin{proof}
We will prove~\eqref{eq:geo_descent_GL_step_1}. The absolute convergence can
be obtained by the same computation, with only obvious minor modifications.

We first unfold the sum over $\gamma$ and $\delta$ to conclude
that~\eqref{eq:geo_descent_GL_step_1} equals
    \[
    \int_{[H]_{P_H}} \int_{[G_n']_{P_{n}'}}
    k_{f^+, P, \alpha}(h,  g') \eta_{G'}(g') \rd g_2' \rd h.
    \]

We fix good maximal compact subgroups $K_H$ and $K_2'$ of $H(\bA)$ and $G_n'(\bA)$ respectively. By the Iwasawa decomposition the left hand side
of~\eqref{eq:geo_descent_GL_step_1} equals
    \begin{align*}
    \eta(\det g_1')^{n+1} &
    \sum_{m \in M_{\alpha}^+(F)}
    \int_{K_H}
    \int_{K_2'} \int_{[M_{P_H}]} \int_{[M_n']} \int_{N^+(\bA)}
     e^{\langle
    -2\rho_{P_H},H_{P_H}(m_H)\rangle+\langle -2\rho_{P_n'}, H_{P_n'}(m')
    \rangle }
    \\ &f^+ (mn \cdot (m_H k_H, (g_1',m'k')))
     \eta(\det m'k')^{n+1} \rd n \rd m'  \rd m_H  \rd k' \rd k_H.
    \end{align*}
Unfolding the sum over $m \in M_{\alpha}^+(F)$ we conclude that this equals
    \begin{align*}
    &\eta(\det g_1')^{n+1}
    \sum_{m \in M^+_{\alpha}(F)/(M_{P_H}(F)  \times
    M_2'(F))}
    \int_{K_H} \int_{K_2'} \int_{M_{P_H}(\bA)}
    \int_{M_n'(\bA)} \int_{N^+(\bA)}
      \\
    &e^{\langle
    -2\rho_{P_H},H_{P_H}(m_H)\rangle
    +\langle -2\rho_{P_n'},H_{P_n'}(m') \rangle }
    f^+ \left(nm \cdot (m_H k_H, (g_1',m'k'))\right)
    \eta(\det m'k')^{n+1} \rd n \rd m'  \rd m_H \rd k' \rd k_H.
    \end{align*}

We have
    \[
    M^+_{\alpha}(F)/ (M_{P_H}(F)\times
    M_n'(F))=M^+_{n,\alpha}(F)/M_n'(F).
    \]
We also decompose the integral over $N^+(\bA)$ into an integral over ${N_H(\bA)}$ and $N_n^+(\bA)$. An element in $N^+_n(F)$ is considered as an element in $N^+(F)$ via
$(g, x, y) \mapsto ((1, g), x, y)$. Then the above integral equals
    \begin{align*}
    \eta(\det g_1')^{n+1}
    &\sum_{m \in M^+_{n,\alpha}(F)/M_n'(F)}
    \int_{K_H} \int_{K_2'}
    \int_{M_{P_H}(\bA)} \int_{M_n'(\bA)}
    \int_{N_H(\bA)} \int_{N_{n}^+(\bA)}   \\
    & e^{\langle
    -2\rho_{P_H},H_{P_H}(m_H)\rangle+
    \langle -2\rho_{P_n'},H_{P_n'}(m') \rangle }
    f^+ \left(n_H m n_2 \cdot (m_H k_H, (g_1',m'k'))\right)\\
    &\eta(\det m'k')^{n+1} \rd n_2 \rd n_H \rd m' \rd m_H \rd k' \rd k_H.
    \end{align*}
We further break up the integral over $N_{n}^+(\bA)$ into
$N_{n}^+(\bA)/N_n'(\bA)$ and $N_n'(\bA)$ and arrive at
    \begin{equation}    \label{eq:descent_simplification_final_step}
    \begin{aligned}
    \eta(\det g_1')^{n+1}
    &\sum_{m \in M^+_{n,\alpha}(F)/M_n'(F)}
    \int_{K_H} \int_{K_2'}
    \int_{M_{P_H}(\bA)} \int_{M_n'(\bA)}
    \int_{N_H(\bA)} \int_{N_{n}^+(\bA)/N_n'(\bA)} \int_{N_n'(\bA)}   \\
    & e^{\langle
    -2\rho_{P_H},H_{P_H}(m_H)\rangle+
    \langle -2\rho_{P_n'},H_{P_n'}(m') \rangle }
    f^+ \left(n_H m n_2 n_2' \cdot (m_H k_H, (g_1', m'k'))\right)\\
    &\eta(\det m'k')^{n+1}
    \rd n_2' \rd n_2 \rd n_H \rd m' \rd m_H \rd k' \rd k_H.
    \end{aligned}
    \end{equation}
Using the Iwasawa decomposition, we combine the integrals for $m', n'$ and
$k'$ as an integral over $G_n'(\bA)$, and the integrals over $m_H, n_H$ and
$k_H$ as an integral over $H(\bA)$. Note that $\mu(\det mn_2)=1$ for $m \in
M_n'(F)$ and $n \in N_n(\bA)$. We thus conclude
that~\eqref{eq:descent_simplification_final_step} equals
    \[
    \eta(\det g_1') \sum_{m \in M^+_{n,\alpha}(F)/M_n'(F)}
    \int_{N_{n,+}(\bA)/N_n'(\bA)} \varphi_{f^+}
    \left(  g_1'^{-1}\nu(mn_2) g_1' \right)
    \rd n_2.
    \]
Note that there is a slight difference in the definition of $\varphi_{f^+}$
for $n$ even or odd, which eventually simplifies to a uniform expression.
Finally, the map $\nu$ induces a natural bijection between $M_{n,
\alpha}^+(F)/M_n'(F)$ and $M_{n}^+(F) \cap X_{\alpha}(F)$. This concludes
the proof.
\end{proof}

\subsection{Infinitesimal variant}
\label{subsec:infinitesimal_GL}

We use gothic letters to denote the Lie algebra of the
corresponding group, e.g. if $G$ is a group over $F$ then we denote by $\fg$ its Lie algebra (which as an algebraic variety is an affine space over $F$).
We slightly extend this convention to the case of an algebraic
variety with a distinguished point $e$ where
we use the corresponding gothic
letter to denote its tangent space at $e$. For instance, $G^+ = G \times L^{\vee, -} \times L^{-}$ is a variety with a distinguished point $(1, 0, 0)$, and $\fg^+ = \fg \times L^{\vee, -} \times L^{-}$ stands for the tangent space at this point.

We now introduce an infinitesimal variant of the distributions $i_\alpha$,
and relate $i_{\alpha}$ to it. This infinitesimal variant in fact is essentially the same
as the one that arises from the Jacquet--Rallis relative trace formulae,
which are used to study the Bessel periods on unitary groups.

We define $\fs_n := M_n(E)^-$, the subspace of $M_n(E)$ with pure imaginary entries, it can be identified with the tangent space of $S_n$ at $1$. Let $\fx = \fs_n \times L^{\vee, -} \times L^-$, viewed as an
algebraic variety over $F$. We write an element in it as $(\gamma, w, v)$. It
admits a right action of $G_n'$ given by
    \begin{equation}    \label{eq:action_s_infinitesimal}
    (\gamma, w, v) \cdot g' = (g'^{-1}\gamma g', wg', g'^{-1}v), \quad
    (\gamma, w, v) \in \fx, \quad g' \in G'_n(F).
    \end{equation}

Let $\fx \to \cB = \fx //G_n'$ be the categorical quotient. A concrete description of $\cB$ is given by~\cite{Zhang1}*{Lemma~3.1}. The categorical quotient $\cB$ is isomorphic
to the affine space over $F$ of dimension $2n$. We will identify $\cB$ with a
closed subspace of the affine space over $E$ of dimension $2n$, consisting of
elements
    \[
    (a_1, \hdots, a_n; b_1, \hdots, b_n)
    \]
satisfying
    \[
    a_i^{\mathsf{c}} = (-1)^i a_i, \quad b_i^{\mathsf{c}} = (-1)^{i-1} b_i, \quad
    i = 1, \hdots, n.
    \]
The quotient map $q: \fx \to \cB$ is given by
    \[
    (\gamma, w, v) \mapsto (a_1, \hdots, a_n;
    b_1, \hdots, b_n)
    \]
where
    \[
    a_i = \Trace \wedge^i \gamma, \quad b_i = w \gamma^{i-1} v, \quad i = 1, \hdots, n.
    \]
For any $\alpha \in \cB(F)$, we denote by $\fx_{\alpha}$ the inverse image of
$\alpha$ in $\fx$ (as a closed subscheme).

Let $P = MN \in \cF$, we put $\fx_M = \fm_n^+ \cap \fx$ and $\fx_N = \fn_n^+ \cap \fx$. For $\alpha \in \cB(F)$, we put $\fx_{M,\alpha} = \fx_M \cap \fx_\alpha$. Here we view all spaces as subspaces of $\Res_{E/F} \gl_{n, E} \times L^{\vee, -} \times L^-$ and take the intersections in it.

Take $\varphi \in \cS(\fx(\bA))$ and $\alpha \in \cB(F)$. For $g' \in
[G'_n]_{P_{n}'}$ we define a kernel function
    \[
    k_{\varphi, P, \alpha}(g') = \sum_{m \in \fx_{M,\alpha}(F)}
    \int_{\fx_N(\bA)} \varphi((m+n) \cdot g') \rd n.
    \]
For $T \in \fa_{n+1}'$, we define a modified kernel function
    \[
    k_{\varphi, \alpha}^T(g') = \sum_{P \in \cF} \epsilon_P
    \sum_{\delta \in P'_n(F) \bs G'_n(F)}
    \widehat{\tau}_{P_{n+1}'}(H_{P_{n+1}'}(\delta g') - T_{P_{n+1}'})
    k_{\varphi, P, \alpha}(\delta g').
    \]
As in the case of other modified kernels, for a fixed $g'$ the sum over $\delta$ is finite.

The next proposition summarizes the main results of~\cite{Zydor2}, see also
Th\'{e}or\`{e}mes~5.1.5.1 and~5.2.1.1 of~\cite{CZ}.

\begin{prop}    \label{prop:infinitesimal_GL}
We have the following assertions.
\begin{enumerate}
\item The expression
    \[
    \sum_{\alpha \in \cB(F)} \int_{[G'_n]}
   \lvert  k_{\varphi, \alpha}^T(g') \rvert \rd g'
    \]
    is finite when $T$ is sufficiently positive. 

\item We put
    \[
    \mathfrak{i}_{\alpha}^T(\varphi) =
    \sum_{\alpha \in \cB(F)} \int_{[G'_n]}
    k_{\varphi, \alpha}^T(g') \eta(\det g') \rd g',
    \]
 then the function $T \mapsto \mathfrak{i}_{\alpha}^T(\varphi)$
    is the restriction of a exponential-polynomial whose purely polynomial
    term is a constant which we denote by $\mathfrak{i}_{\alpha}(\varphi)$.

\item The distribution $\varphi \mapsto \mathfrak{i}_{\alpha}(\varphi)$ is
    a continuous linear form on $\cS(\fx(\bA))$ that is $(G'_n, \eta)$
    invariant. This means
        \[
        \mathfrak{i}_{\alpha}(g' \cdot \varphi) = \eta(\det g')
        \mathfrak{i}_{\alpha}(\varphi)
        \]
    for all $g' \in G'_n(\bA)$ where $g' \cdot \varphi(x) =
    \varphi(g'^{-1} x g)$.
\end{enumerate}
\end{prop}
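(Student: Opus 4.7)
The plan is to reduce the proposition to the analogous statements for the infinitesimal Jacquet--Rallis relative trace formula, established in \cite{Zydor2} and refined in \cite{CZ}. Our space $\fx = \fs_n \times L^{\vee,-} \times L^-$ differs from the Jacquet--Rallis setup only by the two affine factors $L^{\vee,-}$ and $L^-$, on which $G'_n$ acts linearly via $(w, v) \cdot g' = (wg', g'^{-1}v)$; standard Levis and unipotent radicals act compatibly on them. Consequently these Heisenberg pieces do not affect the global convergence and truncation arguments in a substantive way.

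For part (1), I would first reduce by density to factorizable test functions $\varphi = \varphi_0 \otimes \varphi_1 \otimes \varphi_2$ with $\varphi_0 \in \cS(\fs_n(\bA))$, $\varphi_1 \in \cS(L^{\vee,-}(\bA))$ and $\varphi_2 \in \cS(L^-(\bA))$, and then for each $P = MN \in \cF$ perform the partial integration over the $L^{\vee,-}$- and $L^-$-components of $\fx_N(\bA)$ first. This produces a Schwartz function on $\fs_n(\bA) \times L^{\vee,-}(\bA) \times L^-(\bA)$ to which one applies Zydor's convergence estimates on $[G'_n]$, uniformly over compact ranges of the Heisenberg variables. The alternating signs $\epsilon_P$ together with the truncation $\widehat{\tau}_{P'_{n+1}}$ ensure that the non-regular contributions telescope as in the Jacquet--Rallis case, and only finitely many $\alpha \in \cB(F)$ contribute when $g'$ is restricted to a compact set, giving the absolute convergence in (1).

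Parts (2) and (3) then follow along standard Arthur lines. Polynomiality of $T \mapsto \mathfrak{i}_\alpha^T(\varphi)$ is obtained from the general formalism of $(G,M)$-families once convergence is in hand, exactly as in \cite{Zydor2}. Continuity of $\mathfrak{i}_\alpha$ in $\varphi$ follows from the uniformity of the convergence estimates on bounded subsets of $\cS(\fx(\bA))$. The $(G'_n, \eta)$-equivariance is obtained by change of variable $g' \mapsto g' g'_0$ in the outer integral over $[G'_n]$: the truncation $\widehat{\tau}_{P'_{n+1}}$ depends on $T$ only up to a translation that is absorbed when passing to the polynomial part, and the identity $\eta(\det g' g'_0) = \eta(\det g') \eta(\det g'_0)$ yields the stated equivariance.

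The main obstacle I expect is not analytic but organisational: one must verify that the bijection of \cite{BLX1}*{Lemma~4.2} between $\cF$ and pairs $(P'_{n+1}, P'_n)$ makes the alternating sum over $P \in \cF$ coincide with Zydor's alternating sum over parabolics of $G'_{n+1}$ (with $P'_n = P'_{n+1} \cap G'_n$), so that the truncation in our setting matches the one used in \cite{Zydor2}. Once this dictionary is in place, the proof is essentially a quotation of Zydor's results combined with a routine incorporation of the Heisenberg factors.
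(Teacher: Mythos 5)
The paper does not give a proof of this proposition at all: immediately before the statement it says ``The next proposition summarizes the main results of~\cite{Zydor2}, see also Th\'{e}or\`{e}mes~5.1.5.1 and~5.2.1.1 of~\cite{CZ}.'' In other words, the proposition \emph{is} the infinitesimal Jacquet--Rallis result, and no reduction is needed. Your proposal rests on a conceptual misreading of the relationship: you treat the affine factors $L^{\vee,-}\times L^-$ as ``extra'' structure that must be grafted onto Zydor's framework, but they are already present there. The infinitesimal Jacquet--Rallis space on the linear side is $\fs_{n+1}$ under conjugation by $G'_n\subset G'_{n+1}$, which, after stripping off the invariant lower-right entry, is exactly $\fs_n\times L^{\vee,-}\times L^-$ with the action~\eqref{eq:action_s_infinitesimal}. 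The paper says this explicitly (``This infinitesimal variant in fact is essentially the same as the one that arises from the Jacquet--Rallis relative trace formulae'') and again in Remark~\ref{remark:compare_BP}. Consequently the plan of ``integrating out the Heisenberg pieces first and then applying Zydor uniformly on compact ranges'' is unneeded; moreover, as stated it has gaps (the variables in $L^{\vee,-}(\bA),\ L^-(\bA)$ do not range over compacta, and $[G'_n]$ is not compact, so finitely-many-$\alpha$-per-compactum does not yield global absolute convergence). The density reduction to factorizable $\varphi$ is likewise superfluous once one quotes Zydor.

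The one genuinely relevant observation you make is that one must match the alternating sum over $P\in\cF$ (D-parabolics of $J_n$) with Zydor's alternating sum over standard parabolics of $G'_{n+1}$, via the bijection of~\cite{BLX1}*{Lemma~4.2}. That dictionary is indeed the only organisational step, and once it is in place the proposition is a verbatim restatement of~\cite{Zydor2} and~\cite{CZ}*{Th\'eor\`emes~5.1.5.1, 5.2.1.1}. So the right posture here is citation, not construction.
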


We now introduce the Cayley transform. Let $E^1 = \{ x \in E \mid N_{E/F}x =
x x^{\mathsf{c}} = 1\}$ and take a $\xi \in E^1$. We put
    \[
    X^{\xi} = \{ (\gamma, w, v) \in X \mid \gamma + \xi \text{ is invertible} \}
    \]
Then $X^{\xi}$ is an open subscheme of $X$ and the action
of $G_n'$ preserves it. We also put
    \[
    \fx'
    = \{(A, w, v) \mid A^2 - 4 \text{ is invertible}\}.
    \]
Then $\fx'$ is an open subscheme of $\fx$ and the action of $G_n'$
preserves it.

We define a Cayley transform
    \[
    \fc_{\xi}: \fx' \to X^{\xi}, \quad
    (A, w, v) \mapsto \left( \xi(1 + A/2)(1-A/2)^{-1}, w, v \right).
    \]
Note that $\fc_\xi$ is $G_n'$-equivariant and there are $\xi_1, \hdots, \xi_{n+1} \in E^1$, such that the images
of $\fc_{\xi_i}$ form an open cover of $X$. Let $\cB'$ be the image of
$\fx'$ in $\cB$ and let $\cA^\xi$ be the image of $X^\xi$ in $\cA$. Then the Cayley transform $\fc_{\xi}$ induces an isomorphism $\cB' \to \cA^\xi$,
which we still denote by $\fc_{\xi}$.

Let $\tS$ be a finite set of places. For $\varphi \in \cS(X^{\xi}(F_{\tS}))$ we define a function $\varphi_{\natural} \in \cS(\fx'(F_{\tS}))$ as follows. If $n$ is
odd, we put
    \[
        \varphi_\natural(A, w, v) =  \mu(\xi_1)^{-(n-1)^2} \varphi(\fc_{\xi}(A), w, v).
    \]

If $n$ is even, we put
    \[
         \varphi_{\natural}(A, v, w) = \mu(\xi_1)^{-n(n-2)}\mu(\det(1-A/2))
     \varphi(\fc_{\xi}(A), w, v).
    \]

Then $\varphi_{\natural} \in \cS(\fx'(F_{\tS}))$. If $f^+ \in
\cS(G^+(F_{\tS}))$ then we write $f^+_{\natural} =
(\varphi_{f^+})_{\natural}$. The various extra factors $\mu(\det(1-X/2))$ and
powers of $\mu(\xi_1)$ will be posteriori justified by
Lemma~\ref{lemma:Lie_algebra_matching}. They all come from the comparison
between the transfer factors on $G^+(F_{\tS})$ and on $\fx(F_\tS)$.

\begin{prop} \label{prop:infitesimal_decent_gl}
Fix an $\alpha \in \cB'(F)$. Let $\tS$ be a finite set of places including all
Archimedean places, such that for $v \not \in \tS$ we have the following properties.
\begin{itemize}
    \item $E/F$ is unramified at $v$.

    \item $\mu$ is unramified at any places $w$ of $E$ above $v$.

    \item $2, \tau, \xi_1$ are in $\cO^\times_{E_w}$ for any place of $w$
        of $E$ above $v$.

    \item $\alpha \in \cB'(\cO_{F_v})$.
\end{itemize}
Then for any $f_\tS^+ \in \cS(G^{+}(F_\tS))$ and $T \in \fa_0$ sufficiently
positive, we have
    \[
    i_{\fc_{\xi}(\alpha)}^T\left(f^+_\tS \otimes 1_{G^+(\cO_F^\tS)}\right) =
    \mathfrak{i}_\alpha^T\left((f_{\tS}^+)_\natural
    \otimes 1_{\fg^+(\cO_F^\tS)} \right).
    \]
\end{prop}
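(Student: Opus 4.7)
The plan is to descend the group-level distribution $i^T_{\fc_\xi(\alpha)}$ to the Lie-algebra distribution $\mathfrak{i}^T_\alpha$ in two substitutions: first through Lemma~\ref{lem:geometric_linear_convergence} to rewrite the distribution in terms of a function on $X$, then through the Cayley transform to pull back from $X^\xi$ to $\fx'$. For the first step, the truncation factor $\widehat{\tau}_{P_{n+1}}(H_{P_{n+1}}(\delta_1 g'_1) - T_{P_{n+1}})$ in $k^T_{f^+, \fc_\xi(\alpha)}$ depends only on $(\delta_1, g'_1)$, so I would carry out the integrations over $h \in [H]$ and $g'_2 \in [G'_n]$ (with the sums over $\gamma \in P_H(F) \bs H(F)$ and $\delta_2 \in P'_2(F) \bs G'_n(F)$ folded in) first, and apply Lemma~\ref{lem:geometric_linear_convergence} parabolic-by-parabolic in $\cF$. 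Setting $f^+ = f^+_\tS \otimes 1_{G^+(\cO_F^\tS)}$, this produces
\[
i^T_{\fc_\xi(\alpha)}(f^+) = \int_{[G'_n]} \sum_{P \in \cF} \epsilon_P \sum_{\delta \in P'_n(F) \bs G'_n(F)} \widehat{\tau}_{P_{n+1}}(H_{P_{n+1}}(\delta g) - T_{P_{n+1}}) k_{\varphi_{f^+}, P, \fc_\xi(\alpha)}(\delta g) \eta(\det g) \rd g,
\]
which is the direct analog on $X$ of the infinitesimal distribution on $\fx$. The interchange of the parabolic sum with the integrals is justified by the absolute convergence assertion in Lemma~\ref{lem:geometric_linear_convergence} together with the convergence of the modified kernels.

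For the second step, the hypothesis $\alpha \in \cB'(F)$ ensures that the $G'_n$-orbit of $\alpha$ lies in $\fx'$ and that of $\fc_\xi(\alpha)$ lies in $X^\xi$, so only values of $\varphi_{f^+}$ on $X^\xi$ contribute. The Cayley transform $\fc_\xi$ is a $G'_n$-equivariant isomorphism $\fx' \simeq X^\xi$ that identifies $M_n^+ \cap X_{\fc_\xi(\alpha)}$ with $\fx_{M, \alpha}$ and the action of $N_n^+(\bA)/N'_n(\bA)$ on $X^\xi$ with that of $\fx_N(\bA)$ on $\fx'$. The extra factors $\mu(\xi_1)^{-(n-1)^2}$ (odd $n$) or $\mu(\xi_1)^{-n(n-2)}\mu(\det(1-A/2))$ (even $n$) in the definition of $\varphi_\natural$ are precisely the corrections needed to absorb the Jacobian of $\fc_\xi$ and the residual $\mu$-character already present in $\varphi_{f^+}$ when $n$ is even. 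Using the identification $\widehat{\tau}_{P_{n+1}} = \widehat{\tau}_{P'_{n+1}}$ from the bijection $P \leftrightarrow (P_{n+1}, P_n)$ of~\cite{BLX1}*{Lemma~4.2}, a parabolic-by-parabolic comparison then identifies the right-hand side above with $\mathfrak{i}^T_\alpha((f^+_\tS)_\natural \otimes 1_{\fg^+(\cO_F^\tS)})$. At places $v \notin \tS$, the unramification hypotheses make $\fc_\xi$ integral, $\mu(\xi_1) = 1$, and $\mu(\det(1-A/2)) = 1$ for integral $A$ with $A^2 - 4$ a unit modulo $v$, so the transfer factors are trivial and $\varphi_{1_{G^+(\cO_{F_v})}}$ matches $1_{X(\cO_{F_v})}$ for the measure choices implicit in the statement, with Cayley pullback $1_{\fg^+(\cO_{F_v})}$ on the relevant open subscheme.

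The main obstacle is the bookkeeping in the second step: I would need to verify carefully that the specific powers of $\mu(\xi_1)$ and the factor $\mu(\det(1-A/2))$ (in the even case) are exactly the right corrections to match the characters $\eta(\det g)$ on the two sides after the substitution $\gamma = \fc_\xi(A)$, using the key identity $\gamma + \xi = 2\xi(1-A/2)^{-1}$ and tracking how determinants transform. The even/odd distinction in the definition of $\varphi_{f^+}$ is precisely what forces the two different powers of $\mu(\xi_1)$ in $\varphi_\natural$. All remaining ingredients --- convergence, truncation matching, and the unramified assertion --- are then essentially formal consequences of the parallel between the parabolic structures on $G^+$ and $\fx$.
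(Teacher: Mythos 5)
Your proposal follows the same two-step structure as the paper's proof: first apply Lemma~\ref{lem:geometric_linear_convergence} (implicitly parabolic-by-parabolic over $\cF$) to collapse the integrals over $[H]$ and $[G'_n]$ and descend $i^T_{\fc_\xi(\alpha)}(f^+)$ to a distribution on $[G'_n]$ built from $\varphi_{f^+}$; then match the resulting truncated kernel on $X$ with the infinitesimal kernel on $\fx$ via the Cayley transform. The first step is identical to what is written in the paper. For the second step, the paper does not carry out the kernel-by-kernel verification you sketch; it simply observes that the identity
\[
k_{\varphi \otimes \id_{X(\cO_F^{\tS})}, P, a}
=
k_{\varphi_{\natural} \otimes \id_{\fx(\cO_F^{\tS})}, P, \alpha}
\]
for each $P \in \cF$ is the content of Zydor's descent results and cites \cite{Zydor3}*{Corollaire~5.8}. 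Your plan to verify this directly---using the identity $\gamma + \xi = 2\xi(1-A/2)^{-1}$ to track how $\mu(\det(\,\cdot\,))$ transforms, and checking that the powers of $\mu(\xi_1)$ and the factor $\mu(\det(1-A/2))$ in the definition of $\varphi_\natural$ exactly absorb the residual characters from the $n$-even/odd dichotomy in \eqref{eq:simplified_test_function_GL}---is the right computation, and your discussion of the unramified places is correct in spirit. What your approach buys is self-containment; what the paper's citation buys is brevity, since Zydor already carried out this bookkeeping in the setting of the Jacquet--Rallis comparison and the present normalization differs only by the transfer-factor corrections that Lemma~\ref{lemma:Lie_algebra_matching} is designed to account for. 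One caution: you phrase the Cayley comparison as identifying ``the action of $N_n^+(\bA)/N'_n(\bA)$ on $X^\xi$ with that of $\fx_N(\bA)$ on $\fx'$,'' but neither group literally acts on those spaces; what actually needs to be checked is the compatibility of $n \mapsto \nu(xn)$ with $n \mapsto m+n$ under $\fc_\xi$ at the level of the two kernel integrands, which is more delicate than an equivariance statement and is precisely what Zydor's result packages up. Flagging this would make your sketch more precise, but it does not change the correctness of the approach.
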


\begin{proof}
We write $a$ for $\fc_\xi(\alpha)$ and let $f^+ := f_\tS^+ \otimes 1_{G^+(\cO_F^\tS)} \in \cS(G^+(\bA))$. It follows from the definitions that $\varphi_{f^+} =
    \varphi_{f^+_{\tS}} \otimes \id_{X(\cO_F^{\tS})}$.
Then we have
\begin{align*}
    i_a^T(f^+) &= \int_{[H]} \int_{[G']} k_f^T(h,g') \eta_{G'}(g') \rd h \rd g' \\
    &= \int_{[G_n']} k^T_{\varphi_{f^+}, a}(g') \eta(\det g') \rd g' \quad (\text{By Lemma \ref{lem:geometric_linear_convergence}}) \\
    &= i^T_a(\varphi_{f^+_\tS} \otimes 1_{X(\cO_F^\tS)}).
\end{align*}

We are reduced to show that
    \[
    k_{\varphi \otimes \id_{X(\cO_F^{\tS})}, P, a}
     =
    k_{\varphi_{\natural} \otimes \id_{\fx(\cO_F^{\tS})}, P, \alpha}
    \]
for all $\alpha \in \cB(F)$, $P \in \cF_{\mathrm{RS}}$ and $\varphi \in
\cS(X(F_{\tS}))$. This follows from the discussion
in~\cite{Zydor3}*{Section~5}, cf.~\cite{Zydor3}*{Corollaire~5.8}.
\end{proof}

Let $\tS$ be a finite set of places and $f_{+, \tS} \in \cS(G_+(F_\tS))$ then
we put
    \[
    f_{+, \tS, \natural} = \left( f_{+, \tS}^\dag \right)_{\natural}.
    \]
We have the following corollary of Proposition~\ref{prop:infitesimal_decent_gl}.
\begin{coro}    \label{coro:infinite_descent_GL}
Let $\tS$ be a finite set of places satisfying the conditions in
Proposition~\ref{prop:infitesimal_decent_gl}, and $f_+ =
\id_{G_+(\cO_F^{\tS})} \otimes f_{+, \tS}$ where $f_{+, \tS} \in
\cS(G_+(F_{\tS}))$, then
    \[
    I_{\fc_{\xi}(\alpha)}^T\left(f_{+, \tS} \otimes 1_{G_+(\cO_F^\tS)}\right) =
    \mathfrak{i}_\alpha^T\left(f_{+, \tS, \natural}
    \otimes 1_{\fg^+(\cO_F^\tS)} \right)
    \]
\end{coro}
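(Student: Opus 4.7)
The plan is to deduce this corollary directly from Proposition~\ref{prop:infitesimal_decent_gl} by unpacking the two definitions
\[
I_\beta^T(f_+) = i_\beta^T(f_+^\dag), \qquad f_{+,\tS,\natural} := (f_{+,\tS}^\dag)_\natural,
\]
combined with a compatibility statement for the partial Fourier transform $-^\dag$ at unramified places.

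The key step to establish is the identity
\[
\bigl(f_{+,\tS} \otimes 1_{G_+(\cO_F^\tS)}\bigr)^\dag = f_{+,\tS}^\dag \otimes 1_{G^+(\cO_F^\tS)}.
\]
Since the transform $-^\dag$ is essentially a partial Fourier transform along the $L = E^n$ variable (going from the single copy in $G_+ = G \times E_n$ to the pair $L^{\vee,-} \times L^-$ in $G^+$), it is a product of local transforms, so the identity reduces to showing, for each $v \notin \tS$, that the local version of $-^\dag$ sends $1_{G_+(\cO_{F_v})}$ to $1_{G^+(\cO_{F_v})}$. The hypotheses of Proposition~\ref{prop:infitesimal_decent_gl} (namely that $E/F$ is unramified at $v$ and that $\tau \in \cO_{E_w}^\times$), together with a standard normalization of $\psi$ — taken to have conductor $\cO_{F_v}$ for every $v \notin \tS$, after enlarging $\tS$ if necessary — imply that $\psi^E$ is trivial on $\cO_{E_w}$ and nontrivial on $\varpi_{E_w}^{-1}\cO_{E_w}$. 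The self-duality of $\cO_{E_w}^n$ under the pairing induced by $\psi^E$ and the fact that the chosen measure gives $\cO_{E_w}^n$ volume $1$ then yield the unramified calculation, with no spurious constant.

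Granting this compatibility, the corollary is immediate. Writing $f^+_\tS := f_{+,\tS}^\dag \in \cS(G^+(F_\tS))$, we compute
\[
I^T_{\fc_\xi(\alpha)}\bigl(f_{+,\tS} \otimes 1_{G_+(\cO_F^\tS)}\bigr) = i^T_{\fc_\xi(\alpha)}\bigl(f^+_\tS \otimes 1_{G^+(\cO_F^\tS)}\bigr) = \mathfrak{i}^T_\alpha\bigl((f^+_\tS)_\natural \otimes 1_{\fg^+(\cO_F^\tS)}\bigr),
\]
the first equality by the definition of $I^T$ together with the compatibility above, the second by Proposition~\ref{prop:infitesimal_decent_gl}. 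By the definition $(f^+_\tS)_\natural = (f_{+,\tS}^\dag)_\natural = f_{+,\tS,\natural}$, which gives the claimed identity.

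The only genuinely substantive step is the unramified compatibility of the partial Fourier transform, and this hinges entirely on having chosen the global character $\psi$ and the Haar measures so that the relevant self-dual lattice at $v \notin \tS$ really is $\cO_{E_w}^n$. Once those normalizations are in place the rest is formal manipulation of definitions.
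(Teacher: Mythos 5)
Your proof is correct and is the obvious (and only) reduction the paper intends: by definition $I^T_{\beta}(f_+) = i^T_{\beta}(f_+^\dag)$, and once you know $\bigl(f_{+,\tS} \otimes 1_{G_+(\cO_F^\tS)}\bigr)^\dag = f_{+,\tS}^\dag \otimes 1_{G^+(\cO_F^\tS)}$, the statement follows from Proposition~\ref{prop:infitesimal_decent_gl} applied to $f^+_\tS = f_{+,\tS}^\dag$ together with $f_{+,\tS,\natural} = (f_{+,\tS}^\dag)_\natural$. Your observation that the unramified computation of $-^\dag$ on indicator functions implicitly requires $\psi$ to have conductor $\cO_{F_v}$ outside $\tS$ is a valid and worthwhile point: this condition is not listed among the hypotheses of Proposition~\ref{prop:infitesimal_decent_gl} (which mentions $E/F$, $\mu$, $\tau$, $\xi_1$, $\alpha$ but not $\psi$), so strictly speaking the corollary requires enlarging $\tS$ to also contain the places where $\psi$ ramifies; since $\psi$ is a fixed global character this is harmless, but it would be cleaner to add it to the hypotheses.
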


\subsection{Infinitesimal distributions: unitary groups} \label{subsec:infinitesimal_U}
The geometric distributions $J_{\alpha}^T$ can be simplified and related to
their infinitesimal invariants as in the case of general linear groups. We
only state the results but omit the proofs, as they are essentially the same as the general
linear group case.

We retain the notation from Subsection~\ref{subsec:recall_U}.
Let $V$ be an $n$-dimensional skew-Hermitian space. We put $\fu_V^+ = \fu_V \times V$, $Y^V =
\U(V) \times V$, and $\fy^V = \fu(V) \times V$. For $P \in \cF_V$, we define 
    \[
    \fp^+ = \fp \times X_r^{\perp}, \quad \fm^+ = \fm \times V_0, \quad 
    \fn^+ = \fn \times X_r.
    \]
They are subspace of $\fy^V$.

The group $\U(V)$ has a right action on $Y^V$ or $\fy^V$ by $(A, v) \cdot
g=(g^{-1}Ag, g^{-1} v)$. The categorical quotient $q_V: \U_V^+ \to \cA$ factors through $Y^V$ and identifies the categorical quotient $Y^V//\U(V)$ with $\cA$. By~\cite{Zhang1}*{Lemma~3.1} the categorical quotient $\fy^V//\U_V'$ is identified with $\cB$.
The natural maps $Y^V \to \cA_V$ and $\fy^V \to \cB$, both denoted by $q_V$,
are given by (the same formula)
    \[
    (A, b) \mapsto (a_1, \hdots, a_n; b_1, \hdots, b_n),
    \]
where
    \[
    a_i = \mathrm{Tr} \wedge^i A, \quad
    b_i = 2 (-1)^{n-1} \tau^{-1} q_V( A^{i-1} b, b), \quad i = 1, \hdots, n.
    \]
For $\alpha \in \cA(F)$ (resp. $\cB(F)$), we denote by $Y^V_{\alpha}$ (resp.
$\fy^V_{\alpha}$) the inverse image of $\alpha$ in $Y^V$ (resp. $\fy^V$) (as
closed subvarieties). 

For standard parabolic $P \in \cF_V$ and $f \in \cS(\fy^V(\bA))$, define a
kernel function on $[\U(V)]_P$ by
    \[
    k_{f,P}(g)=
    \sum_{m \in \fm^+(F)} \int_{\fn_+(\bA)} f((m+n) \cdot g) \rd n.
    \]
For $\alpha \in \cA(F)$, we define
    \[
    k_{f,P,\alpha}(g)
    =\sum_{m \in \fm^+(F) \cap \fu^+_{V, \alpha}(F)} \int_{\fn_+(\bA)}
    f((m+n)\cdot g) \rd n.
    \]

For $f \in
\cS(\fu^+(\bA))$ and $T \in \fa_0$, we put
    \[
    k_{f}^T(g) = \sum_{P \in \cF_V} \epsilon_P
    \sum_{\delta \in P(F) \backslash \U(V)(F)}
    \widehat{\tau}_P(H_P(\delta g)-T_P) k_{f,P}(\delta g),
    \]
where $g \in [\U(V)]$. Similarly, for $\alpha \in \cB(F)$, put
    \[
    k_{f,\alpha}^T(g) =
    \sum_{P \in \cF_V} \epsilon_P \sum_{\delta \in P(F) \backslash \U(V)(F)}
    \widehat{\tau}_P(H_P(\delta g)-T_P)
    k_{f,P,\alpha}(\delta g).
    \]

The following theorem summarizes~\cite{Zydor2}*{Theorem 3.1,Theorem 4.5}.

\begin{theorem}
For $T$ sufficiently positive, we have the following assertions.
\begin{enumerate}
    \item The expression
    \[ \sum_{\alpha \in \cB(F)} \int_{[\U(V)]}
    \left|  k^T_{f,\alpha}(g) \right| \rd g
    \]
    is finite.

    \item For $\alpha \in \cB(F)$ and $f \in \cS(\fu_V^+(\bA))$ , the
        function of $T$
        \[
	    \mathfrak{j}^{T}_\alpha(f) =
         \int_{[\U(V)]} k_{f,\alpha}^T(g) \rd g
        \]
    is a restriction of an exponential polynomial, and its purely
    polynomial term is constant, denoted by $\mathfrak{j}_{\alpha}(f)$.

\item The distribution $f \mapsto \mathfrak{j}_{\alpha}(f)$ is a continuous
    linear form on $\cS(\fu_V^+(\bA))$ that is $H$ invariant.
\end{enumerate}
\end{theorem}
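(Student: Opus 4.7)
The plan is to show that the three assertions are the infinitesimal unitary analogs of Proposition~\ref{prop:infinitesimal_GL}, and to deduce them by matching the modified kernel $k^T_{f,\alpha}$ against the infinitesimal kernel of the Jacquet--Rallis relative trace formula for Bessel periods on $\U(V)$, whose analytic properties have been established by Zydor in \cite{Zydor2}*{Theorem~3.1, Theorem~4.5}. The proof is therefore essentially a verification that our setup is literally the one studied by Zydor (and in \cite{CZ}*{Th\'eor\`emes~5.1.5.1, 5.2.1.1}), followed by invocation of those theorems.

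First I would verify the match of the geometric setup. The space $\fy^V = \fu(V) \times V$, with the right $\U(V)$-action $(A,v)\cdot g = (g^{-1}Ag, g^{-1}v)$ and categorical quotient $\cB$ (by~\cite{Zhang1}*{Lemma~3.1}), is exactly the target on which Zydor develops the infinitesimal RTF. The bijection $P \mapsto P'$ between $\cF_V$ and the standard parabolics of $\U(V)$ from~\cite{BLX1}*{Lemma~4.7}, together with the decompositions $\fp^+ = \fp \times X_r^\perp$, $\fm^+ = \fm \times V_0$, $\fn^+ = \fn \times X_r$, realize the Arthur--Zydor parabolic structure on $\fy^V$; and the sign $\epsilon_P$ and truncation weight $\widehat{\tau}_P(H_P(\cdot)-T_P)$ coincide with Zydor's conventions. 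Once this bookkeeping is done, assertion~(1), the absolute convergence of $\sum_\alpha \int_{[\U(V)]} |k^T_{f,\alpha}(g)|\,\rd g$ for $T$ sufficiently positive, is literally \cite{Zydor2}*{Theorem~3.1}.

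Assertion~(2) then follows from \cite{Zydor2}*{Theorem~4.5}: for fixed $\alpha \in \cB(F)$, the function $T \mapsto \mathfrak{j}_\alpha^T(f)$ is the restriction to the positive cone of an exponential polynomial in $T$, whose purely polynomial term is a constant; we define $\mathfrak{j}_\alpha(f)$ to be this constant. Assertion~(3) (continuity and invariance) follows from the Schwartz-space estimates established in the proof of convergence, together with the fact that each truncated kernel $k^T_{f,\alpha}$ is manifestly invariant under the right $\U(V)$-translation in question, since such a translation permutes the coset representatives $\delta \in P(F)\bs\U(V)(F)$ and preserves the truncation characteristic function up to a change of representative that does not affect the purely polynomial term in $T$.

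The principal obstacle is the verification that the extra vector-space factor $V$ (appearing in the decomposition of $\fy^V$ through the pieces $X_r^\perp$, $V_0$, $X_r$) is handled compatibly with Zydor's Langlands-combinatorial partition argument. This is essentially bookkeeping, but it is the only nontrivial content of the proof: once one checks that the D-parabolic structure on $\fy^V$ gives rise to the same modified kernel as the one treated in~\cite{Zydor2}, the three assertions follow without further work. I expect no genuine new analytic input beyond Zydor's theorems to be needed.
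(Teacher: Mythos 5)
Your proposal is correct and takes essentially the same route as the paper: the paper gives no proof of this theorem at all, stating only that it ``summarizes~\cite{Zydor2}*{Theorem~3.1, Theorem~4.5}''. Your additional verification that the D-parabolic structure on $\fy^V$ matches Zydor's setup is exactly the implicit content of that citation.
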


We note relate the geometric distributions on $\U_V^+$ and on $\fy^V$. If
$f^+ \in \cS(\U_V^+(\bA))$ we put
    \begin{equation}    \label{eq:simplified_test_function_U}
    \varphi_{f^+}(\delta, b) = \int_{\U(V)(\bA)}
    f^+((g^{-1}, g^{-1} \delta ), v) \rd g.
    \end{equation}
Then $\varphi_{f^+} \in \cS(Y^V(\bA))$.

Let $Y_{V}^{\xi}$ be the open subscheme of $Y_V^+$ consists of $(g,v)$
such that $\det(g-\xi) \not= 0$. Denote by $\U_{V, \xi}$ the open subvariety
of $\U_V^+$ consisting of elements $(g, v)$ such that $\det(g - \xi) \not=0$,
and by $\fy'_{V}$ the open subvariety of $\fy_V$ consists of
$(A,b)$ such that $\det(A- 2) \ne 0$. Then the (scheme-theoretic) images of
$Y_{V}^{\xi}$ and $\fy'_{V}$ in $\cA$ and $\cB$ respectively are denoted by
$\cA^\xi$ and $\cB'$.

We define the Cayley transform
    \[
    \fc_\xi: \fy'_{V} \to Y_{V}^{\xi}, \quad
    (A,b) \mapsto \left( \xi (1+A/2)(1-A/2)^{-1},
    b \right) \in \U_V^{+, \xi}.
    \]
It induces a map $\fc_\xi: \cB' \to \cA^\xi$. We use the same notation $\fc_{\xi}$ for Cayley transforms on general linear groups and unitary groups, it nevertheless should cause no confusion.

Let $\tS$ be a finite set of places. For $\varphi \in \cS(Y_V^{ \xi}(F_\tS))$,
define $\varphi_\natural \in \cS(\fy'_V(F_S))$ by
\[
    \varphi_\natural(X) =  \varphi(\fc_\xi(X)).
\]

If $f^+ \in \cS(\U_V^+(F_\tS))$, then we write $f^+_{\natural} =
(\varphi_{f^+})_{\natural}$.

\begin{prop} \label{prop:infitesimal_decent_u}
Fix $\alpha \in \cB(F)$. If $\tS$ is a set of places satisfying the same
conditions as Proposition~\ref{prop:infitesimal_decent_gl}. Then for any
$f_\tS \in \cS(\U_V^{+,\tau}(F_\tS))$ and $T \in \fa_0$ sufficiently
positive, we have
    \[
    j_{\fc_{\xi}(\alpha)}^T\left(f_\tS \otimes 1_{\U_V^+(\cO^S)}\right) =
    \mathfrak{j}_{\alpha}^T\left((f_{\tS})_{\natural}
    \otimes 1_{\fy^V(\cO^S)}\right).
    \]
\end{prop}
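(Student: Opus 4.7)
The plan is to follow the two-step strategy used for Proposition~\ref{prop:infitesimal_decent_gl}: a group-level simplification in which one copy of $\U(V)$ is integrated out, followed by a Cayley transform relating the remaining group to its Lie algebra. The computations are the unitary analogues of those in Lemma~\ref{lem:geometric_linear_convergence} and Proposition~\ref{prop:infitesimal_decent_gl}, and in several places they are strictly simpler because no quadratic character of the form $\eta_{G'}$ is present.

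First, I would establish the unitary analogue of Lemma~\ref{lem:geometric_linear_convergence}. Given $f^+ \in \cS(\U_V^+(\bA))$ and $P \in \cF_V$, one unfolds the sum over $\gamma \in P'(F) \bs \U_V'(F)$ and integrates over $x \in [\U_V']$ against the Iwasawa decomposition associated with $P_\U = P' \times P'$, splitting $N_P^+(\bA)$ into $N_{P'}(\bA)$ (acting in the first factor) and the remaining unipotent pieces. Collapsing the resulting integral into an integration over $\U(V)(\bA)$ and comparing with the definition~\eqref{eq:simplified_test_function_U} of $\varphi_{f^+}$ yields
\[
\int_{[\U_V']\times [\U_V']} \sum_{\gamma} k_{f^+, P, \alpha}(\gamma x, y) \, \rd x \, \rd y = \int_{[\U(V)]} k_{\varphi_{f^+}, P, \alpha}(y) \, \rd y,
\]
together with the corresponding absolute convergence. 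Summing over $P \in \cF_V$ with the signs $\epsilon_P$ and the truncation factor $\widehat{\tau}_{P'}(H_{P'}(\delta y) - T_{P'})$ reduces $j_\alpha^T(f^+)$ to the integral on $[\U(V)]$ of a modified kernel built from $\varphi_{f^+} \in \cS(Y^V(\bA))$, i.e.\ to a distribution expressed entirely on the simpler space $Y^V$.

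Next, I would apply the Cayley transform. Since $\fc_\xi : \fy'_V \to Y_V^\xi$ is $\U(V)$-equivariant and induces the isomorphism $\cB' \xrightarrow{\sim} \cA^\xi$ of categorical quotients, it respects the parabolic stratification needed to compute the kernels. Combined with the definition $\varphi_\natural(X) = \varphi(\fc_\xi(X))$, this gives the kernel-level identity
\[
k_{\varphi_{f_\tS^+} \otimes 1_{Y^V(\cO_F^\tS)}, P, \fc_\xi(\alpha)}(g) = k_{(f_\tS^+)_\natural \otimes 1_{\fy^V(\cO_F^\tS)}, P, \alpha}(g)
\]
for each $P \in \cF_V$ and each $g \in \U(V)(\bA)$. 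This is the unitary counterpart of~\cite{Zydor3}*{Corollaire~5.8}, and the argument transcribes verbatim from the general linear setting treated in~\cite{Zydor3}*{Section~5}. Note that no correction factor of the form $\mu(\det(1 - A/2))$ appears in the definition of $\varphi_\natural$ here; such corrections were forced on $G^+$ by the discrepancy between the transfer factors on the two sides, and no such discrepancy arises in the unitary case.

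The hypotheses on $\tS$ (unramifiedness of $E/F$ and of $\mu$, together with $2, \tau, \xi_1 \in \cO_{E_w}^\times$ and $\alpha \in \cB'(\cO_{F_v})$) ensure that at each place $v \notin \tS$ the Cayley transform identifies the relevant $\cO_{F_v}$-points of $Y_V^\xi$ with those of $\fy'_V$ above $\alpha$, so that $1_{\U_V^+(\cO_F^\tS)}$ corresponds to $1_{\fy^V(\cO_F^\tS)}$ under the operation $\natural$. The principal obstacle is therefore not analytic but combinatorial: one must track how the truncation parameter $T$, the signs $\epsilon_P$, and the parabolic stratification interact with $\fc_\xi$ and with the collapsing of one $\U(V)$-factor. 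Since the analogous bookkeeping for general linear groups was already handled in~\cite{Zydor3}, and the absence of the character $\eta$ only simplifies matters on the unitary side, the proof is obtained by straightforward transcription.
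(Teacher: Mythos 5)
Your proposal is essentially correct and matches the strategy the paper implicitly follows: the paper omits a proof of Proposition~\ref{prop:infitesimal_decent_u} entirely, saying only that the argument is ``essentially the same as the general linear group case'' and pointing to~\cite{Zydor3}*{Section~5, Corollaire~5.8}, and your two steps (first a unitary analogue of Lemma~\ref{lem:geometric_linear_convergence} reducing to $Y^V$ via the integral transform~\eqref{eq:simplified_test_function_U}, then the Cayley transform down to $\fy^V$) are precisely the two steps carried out for $\GL$ in Lemma~\ref{lem:geometric_linear_convergence} and Proposition~\ref{prop:infitesimal_decent_gl}. Your observations that the absence of the twist $\eta_{G'}$ and of the correction factors $\mu(\det(1-A/2))$, $\mu(\xi_1)^{\bullet}$ on the unitary side make the computation strictly simpler are both correct.

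One small presentational slip: in your first displayed equation, $y$ is integrated out on both sides, which does not parse. The analogue of~\eqref{eq:geo_descent_GL_step_1} should keep $y$ as a free variable on $[\U(V)]_{P'}$, i.e.
\[
\int_{[\U_V']} \sum_{\gamma \in P'(F) \bs \U_V'(F)} k_{f^+, P, \alpha}(\gamma x, y) \, \rd x = k_{\varphi_{f^+}, P, \alpha}(y),
\]
and only after inserting the truncation factor $\widehat\tau_{P'}(H_{P'}(\delta y)-T_{P'})$, the sum over $\delta$, the signs $\epsilon_P$, and integrating over $y \in [\U(V)]$ does one recover $j^T_\alpha$. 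Your surrounding prose makes it clear you understand this; the displayed formula just needs to be stated as a pointwise identity in $y$ rather than as an equality of integrals over $[\U(V)]$.
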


As the proof of Proposition~\ref{prop:infitesimal_decent_u}, this follows
from the discussion in~\cite{Zydor3}*{Section~5},
cf.~\cite{Zydor3}*{Corollaire 5.8}.

Let $\tS$ be a finite set of places and $f_{+, \tS} \in \cS(\U_{V,
+}(F_\tS))$ then we put
    \[
    f_{+, \tS, \natural} = \left( f_{+, \tS}^\ddag \right)_{\natural}.
    \]

\begin{coro}    \label{coro:infinite_descent_U}
Let $\tS$ be a finite set of places satisfying the conditions in
Proposition~\ref{prop:infitesimal_decent_gl}, and $f_+ =
\id_{\U_{V,+}(\cO_F^{\tS})} \otimes f_{+, \tS}$ where $f_{+, \tS} \in
\cS(\U_{V,+}(F_{\tS}))$, then
    \[
    J_{\fc_{\xi}(\alpha)}^T
    \left(f_{+, \tS} \otimes \id_{\U_{V,+}(\cO_F^\tS)}\right) =
    \mathfrak{j}_\alpha^T\left(f_{+, \tS, \natural}
    \otimes \id_{\fy^V(\cO_F^\tS)} \right)
    \]
\end{coro}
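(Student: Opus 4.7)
The plan is to mimic the proof of Corollary~\ref{coro:infinite_descent_GL}, with the partial Fourier transform $\dag$ replaced by its unitary analogue $\ddag$ and Proposition~\ref{prop:infitesimal_decent_gl} replaced by Proposition~\ref{prop:infitesimal_decent_u}. First, I would unwind the definitions: by construction,
\[
J_{\fc_\xi(\alpha)}^T(f_+) = j_{\fc_\xi(\alpha)}^T(f^+), \qquad f^+ = (f_+)^{\ddag}.
\]
Since the integral transform~\eqref{eq:global_ddag_map} is defined locally, place by place, it factors through the decomposition $f_+ = f_{+,\tS} \otimes \id_{\U_{V,+}(\cO_F^{\tS})}$ to give
\[
f^+ = (f_{+,\tS})^{\ddag} \otimes \bigotimes_{v \notin \tS}\bigl(\id_{\U_{V,+}(\cO_{F_v})}\bigr)^{\ddag}.
\]

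Next, I would verify that for every place $v \notin \tS$ satisfying the conditions of Proposition~\ref{prop:infitesimal_decent_gl}, the unramified factor satisfies
\[
\bigl(\id_{\U_{V,+}(\cO_{F_v})}\bigr)^{\ddag} = \id_{\U_V^+(\cO_{F_v})}.
\]
This is the standard self-duality computation: the map $\ddag$ is a partial Fourier transform in the $V$-variable with respect to the additive character $\psi_v$ of level zero, and the conditions that $E/F$ be unramified at $v$, that $2$ and $\tau$ be units, and that $\psi_v$ and $\mu$ be unramified, guarantee that the lattice in $V(F_v)$ used in the definition of $\id_{\U_{V,+}(\cO_{F_v})}$ is self-dual with respect to the symplectic form $\mathrm{Tr}_{E/F} \circ q_V$. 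Consequently the asserted equality holds.

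Combining these two observations gives $f^+ = (f_{+,\tS})^{\ddag} \otimes \id_{\U_V^+(\cO_F^{\tS})}$. Proposition~\ref{prop:infitesimal_decent_u} then applies directly with $f_\tS = (f_{+,\tS})^{\ddag}$ and yields
\[
j_{\fc_\xi(\alpha)}^T(f^+) = \mathfrak{j}_\alpha^T\bigl( ((f_{+,\tS})^{\ddag})_{\natural} \otimes \id_{\fy^V(\cO_F^{\tS})}\bigr).
\]
By the very definition $f_{+,\tS,\natural} = ((f_{+,\tS})^{\ddag})_{\natural}$, the right-hand side coincides with $\mathfrak{j}_\alpha^T\bigl(f_{+,\tS,\natural} \otimes \id_{\fy^V(\cO_F^{\tS})}\bigr)$, which is the desired identity.

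The only non-routine step is the unramified Fourier transform computation in the second paragraph; once that is in place, the corollary is a formal consequence of Proposition~\ref{prop:infitesimal_decent_u} and the compatibility of $\ddag$ with tensor decompositions. I expect no genuine obstacle, as the argument is parallel to the one in the general linear case and the self-duality input is classical.
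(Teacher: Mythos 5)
Your proof is correct and is essentially the argument the paper leaves implicit: the paper states the corollary immediately after Proposition~\ref{prop:infitesimal_decent_u} without a written proof, so the intended route is exactly what you do — unwind $J^T = j^T \circ {\ddag}$, use that $\ddag$ factors place by place and sends the unramified basic function to the unramified basic function (a self-duality check guaranteed by the conditions on $\tS$), and then apply Proposition~\ref{prop:infitesimal_decent_u} with $f_\tS = (f_{+,\tS})^{\ddag}$. The only thing worth noting is that the unramified-lattice self-duality also implicitly requires $\psi_v$ to be unramified, a condition the paper lists in Theorem~\ref{thm:FL} but omits from the hypotheses of Proposition~\ref{prop:infitesimal_decent_gl}; you correctly supplied it.
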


\section{Matching of test functions}
\label{sec:matching}

\subsection{Regular semisimple orbits}
Let $(\gamma, w, v)$ be an element in either $X(F)$ or $\fx(F)$. We say
that $(\gamma, w, v)$ is regular semisimple if and only if
    \[
    \det(w, w\gamma, \hdots, w \gamma^{n-1}),
    \quad \det(v, \gamma v, \hdots, \gamma^{n-1} v)
    \]
are both nonzero. Let $X_{\rs}$ and $\fx_{\rs}$ be the open subvariety
of $X$ and $\fx$ corresponding to the regular semisimple elements. The
actions of $G_n'$ on $X_{\rs}$ and $\fx_{\rs}$ are free. Let $(g, w,
v)$ be an element in $G^+(F)$. We say it is regular semisimple if its image
in $X(F)$ is regular semisimple. Let $G^+_{\rs}$ be the open subscheme of regular semisimple elements.

Let $(\delta, v)$ be an element in $Y^V(F)$ or $\fy^V(F)$. We say that it is
regular semisimple if
    \[
    v, \delta v, \hdots, \delta^{n-1} v
    \]
are linearly independent in $V$. Let $Y^V_{\rs}$ and $\fy^V_{\rs}$ be the
open subvarieties of $Y^V$ and $\fy^V$ respectively corresponding to the regular semisimple
elements. The actions of $\U(V)$ on $Y^V_{\rs}$ and $\fy^V_{\rs}$ are free.
Let $(g, v)$ be an element in $\U_V^+(F)$ and $g = (g_1, g_2)$, $g_i \in
\U(V)(F)$. We say it is regular semisimple if its image in $Y^V(F)$ is
regular semisimple. Let $\U^+_{V, \rs}$ be the open subvariety of regular
semisimple elements.

We say a regular semisimple element in $G^+(F)$ and a regular semisimple
element in $\U_V^+(F)$ match if their images in $\cA(F)$ are coincide.
Similarly we say a regular semisimple element in $\fx_{n, \rs}(F)$ and a
regular semisimple element in $\fu_{V, \rs}^+(F)$ math if their images in
$\cB(F)$ coincide. The match of regular semisimple elements depends only on
the orbits of the corresponding elements, and hence we can speak of the matching
of orbits.

\begin{remark}  \label{remark:Vtau}
We denote by $V_{\tau}$ the Hermitian space whose underline space is $V$ and
the Hermitian form is
    \[
    q_{V_{\tau}} = 2 (-1)^{n-1} \tau^{-1}  q_V.
    \]
Then $\U(V_\tau)$ and $\U(V)$ are physically the same group. Define
    \[
    \U_{V_\tau}^+ = \U_{V_{\tau}} \times V_{\tau},
    \quad \fy^{V_{\tau}} = \fu(V_{\tau}) \times V_{\tau}.
    \]
They are physically the same as $\U_{V}^+$ and $\fy^V$ respectively. We
define regular semisimple elements in $\U_{V_{\tau}}^+$ and $\fy^{V_{\tau}}$
as in the case of $V$. The spaces $\U_{V_{\tau}}^+$ and $\fy^{V_{\tau}}$ are
the ones appearing in the Jacquet--Rallis relative trace formulae that are
used to study the Bessel periods, cf.~\cites{CZ,Zhang1} for instance. While
the identification of $\fy^V$ and $\fy^{V_{\tau}}$, our notion of matching is
exactly the same as that in~\cites{BP1,CZ,Zhang1}. More precisely regular
semisimple elements $(\gamma, v, w) \in \fx(F)$ and $(\delta, b) \in \fy^V$
match if and only if they match in the sense of~\cites{BP1,CZ,Zhang1} when
$(\delta, b)$ is viewed as an element in $\fy^{V_{\tau}}$. This
identification will make the local calculations later easier to trace.
\end{remark}

We denote by $\cH$ the set of (isometric classes of) $n$-dimensional hermitian spaces over $E$. If $\tS$ is a finite set of places of $F$, we denote by $\cH_{\tS}$ the (isometric classes of) $n$-dimensional hermitian spaces over $E_{\tS}$.

\begin{prop}    \label{prop:regular_ss_orbits_matching}
The matching of regular semisimple orbits gives bijections
    \begin{equation}    \label{eq:regular_ss_orbits_matching}
    \begin{aligned}
     G^+_{\rs}(F)/H(F) \times G'(F) &\leftrightarrow \coprod_{V \in \cH}
    \U_{V, \rs}^+(F) /\U_V'(F) \times \U_V'(F),\\
    \fx_{\rs}(F)/G'_n(F) &\leftrightarrow \coprod_{V \in \cH} \fy^V_{rs}(F)/\U(V)(F).
    \end{aligned}
    \end{equation}
\end{prop}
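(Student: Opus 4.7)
The plan is to pass from the group-level statement down to the symmetric-space level and then to the Lie-algebra level via the Cayley transform, and finally to appeal to the classical infinitesimal orbit matching of Jacquet--Rallis type.

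First, I would reduce the first bijection in~\eqref{eq:regular_ss_orbits_matching} to an analogous bijection on the symmetric spaces $X$ and $Y^V$. Concretely, the projection $\nu\colon G^+ \to X$ induces a map
\[
G^+_{\rs}(F)/H(F)\times G'(F) \longrightarrow X_{\rs}(F)/G_n'(F),
\]
which I claim is bijective. Surjectivity uses Hilbert~90 in the form $H^1(F,G_n')=1$ to lift any $\gamma\in S_n(F)$ to some $g_0\in G_n(F)$ with $\nu(g_0)=\gamma$. Injectivity follows from the elementary identity $\nu(g)=\nu(g')\iff g^{-1}g'\in G_n'(F)$ combined with a short computation showing that the $H$- and $g_2'$-actions project to the trivial action on $X$, while the $g_1'$-action projects to the given $G_n'$-action. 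The analogous identification of $\U^+_{V,\rs}(F)/\U_V'(F)\times\U_V'(F)$ with $Y^V_{\rs}(F)/\U(V)(F)$ is identical in spirit.

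Next, using the Cayley transforms $\fc_{\xi_i}\colon\fx' \to X^{\xi_i}$ and $\fc_\xi\colon\fy'_V \to Y_V^\xi$, which are $G_n'$- (respectively $\U(V)$-) equivariant isomorphisms compatible with the GIT-quotient maps $\cB'\to\cA^\xi$, and noting that the open subsets $X^{\xi_1},\ldots,X^{\xi_{n+1}}$ cover $X$ (and similarly on the unitary side), I would transfer the symmetric-space bijection to the Lie-algebra level. This reduces the first bijection in~\eqref{eq:regular_ss_orbits_matching} to the second one.

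It then remains to prove
\[
\fx_{\rs}(F)/G_n'(F) \leftrightarrow \coprod_{V\in\cH}\fy^V_{\rs}(F)/\U(V)(F).
\]
Both sides map to the regular semisimple locus of $\cB(F)$. I would verify that, for any $\alpha$ in this locus, the stabilizer of any regular semisimple representative is trivial, so the $F$-orbits in a given fiber form a torsor under $H^1(F,\{1\})=\{*\}$ and hence there is at most one orbit per fiber on each side. On the $\fx$ side a companion-matrix construction always provides an $F$-point over $\alpha$, while on the unitary side one directly reads off the isomorphism class $V\in\cH$ from the skew-Hermitian structure encoded in $\alpha$, giving a unique nonempty orbit among the coproduct over $V$. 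By Remark~\ref{remark:Vtau}, the notion of matching used here coincides with the Bessel matching in~\cites{BP1,CZ,Zhang1}, so this last step is the classical infinitesimal orbit matching (see, e.g.,~\cite{Zhang1}*{Lemma~3.1}). The main obstacle I anticipate is not the orbit analysis itself but the bookkeeping needed to verify that the three successive reductions are all compatible with the GIT-quotient morphisms, so that the notion of matching defined via $\cA$ is preserved throughout.
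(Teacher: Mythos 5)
The paper states this proposition without a proof, treating it as a known consequence of the Jacquet--Rallis orbit analysis (compare Remark~\ref{remark:Vtau}, which explicitly identifies the notion of matching here with the Bessel-period matching in~\cites{BP1,CZ,Zhang1}), so there is no argument in the text to compare yours against. Your reduction strategy is correct and is almost certainly the intended one: the descent $\nu\colon G^+\to X$ together with Hilbert~90 identifies regular semisimple orbits on the group with those on the symmetric space, and the Cayley transforms (on the open covers $X=\bigcup_i X^{\xi_i}$ and $Y^V=\bigcup_i Y^{V,\xi_i}$) transport the question to the Lie-algebra level, at which point one invokes the classical infinitesimal Jacquet--Rallis orbit matching. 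The ``main obstacle'' you flag — compatibility of all three reductions with the quotient maps to $\cA$ and $\cB$, hence with the notion of matching — is exactly the right thing to worry about and is supplied by construction: $\nu$ and $\fc_\xi$ both commute with $q$ and $q_V$, and $\fc_\xi$ induces $\cB'\to\cA^\xi$. Two minor imprecisions, neither a genuine gap: the citation~\cite{Zhang1}*{Lemma~3.1} gives the coordinate description of $\cB$, not the orbit bijection itself, which you should instead attribute to the orbit analysis in the earlier parts of that paper or to Jacquet--Rallis and Rallis--Schiffmann; and your ``at most one orbit per fiber'' via trivial stabilizers only shows uniqueness of the $\U(V)(F)$-orbit for each \emph{fixed} $V$ — the uniqueness of the space $V$ itself is the separate point (which you do assert, but should spell out) that the invariants $(a_i;b_i)$ determine the skew-Hermitian form on the span of $b,Ab,\dots,A^{n-1}b$ up to isomorphism. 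With those two touch-ups the argument is complete.
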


Let $\tS$ be a finite set of places of $F$. Then we can define the matching
of regular semisimple orbits in $G^+_{\rs}(F_\tS)$ and in $\U_{V,
\rs}^+(F_\tS)$, and in $\fx(F_{\tS})$ and $\fy^V(F_{\tS})$ in the same way.
This again gives the bijection~\eqref{eq:regular_ss_orbits_matching}, with
$F$ replaced by $F_{\tS}$. We record it again for further references.
    \begin{equation}    \label{eq:matching}
    \begin{aligned}
     G^+_{\rs}(F_{\tS})/H(F_{\tS}) \times G'(F_{\tS}) &\leftrightarrow \coprod_{V \in \cH_\tS}
     \U_{V, \rs}^+(F_\tS) /\U_V'(F_\tS) \times \U_V'(F_\tS),\\
    \fx_{\rs}(F_{\tS})/G'_n(F_{\tS}) &\leftrightarrow
    \coprod_{V \in \cH_\tS} \fy^V_{\rs}(F_{\tS})/\U(V)(F_{\tS}),
    \end{aligned}
    \end{equation}

\subsection{Regular semisimple orbital integrals}
We introduce regular semisimple orbital integrals in this subsection. Let us begin by noting that all these definitions depend on various choices of the measures. This is also implicitly there when we introduce other terms in the relative trace formulae. We will prefix some choices of the measures, and only mention the specific measures when we need to, e.g. in the fundamental lemma.

Let $(g, w, v) \in G^+(F)$ be regular semisimple which maps to $\alpha
\in \cA(F)$. Let $f_+ \in \cS(G_+(\bA))$. Then the geometric distribution
$I_{\alpha}(f_+)$ simplifies to
    \begin{equation}    \label{eq:global_orb_GL}
    I_{\alpha}(f_+) = \int_{H(\bA)}\int_{G'(\bA)}
    f_+^\dag(h^{-1} g g', w g_2', g_2'^{-1} v)
    \eta_{G'}(g') \rd g' \rd h.
    \end{equation}
The integral on the right hand side depends only on $\alpha$ but not the
specific element $(g, w, v)$ that maps to it. This is the usual regular
semisimple orbital integral of $f_+$.

The global orbital integral factorizes. Take $f_{+, v} \in \cS(G_+(F_v))$ and let
$(g, w, v)$ be a regular semisimple element in $G^+(F_v)$, we define the
local orbital integral $O((g, w, v), f_{+, v})$ the same way by the same
formula~\eqref{eq:global_orb_GL}, integrating over $H(F_v)$ and $G'(F_v)$
instead. Then if $f_+ = \otimes f_{+, v}$ where $f_{+, v} \in \cS(G_+(F_v))$,
then
    \[
    I_{\alpha}(f_+) = \prod_v O((g, w, v), f_{+, v}).
    \]
Let $\tS$ be a finite set of places of $F$ and $f_{+, \tS} \in
\cS(G_+(F_\tS))$, then we can define the regular semisimple orbital integrals
$O((g, w, v), f_{+, \tS})$ in the same way.

For each $\alpha \in \cA(F)$ we have a distribution $J_{\alpha}$ on
$\cS(\U_V^+(\bA))$. We will consider this distribution for all $V$ at the
same time, so we add a supscript $V$ and write $J_{\alpha}^V$ to emphasize
the dependance on $V$. Let $(g, v) \in \U_V^+(F)$ be regular semisimple
which maps to $\alpha \in \cA(F)$. Let $f^{V}_+ \in \cS(\U_{V, +}(\bA))$.
Then the geometric distribution $J^V_{\alpha}(f^V_+)$ simplifies to
    \begin{equation}    \label{eq:global_orb_U}
    J^V_{\alpha}(f^V_+) = \int_{\U_V'(\bA)} \int_{\U_V'(\bA)}
    f^{V,\ddag}_+ (x^{-1} g y, y^{-1} v) \rd x \rd y.
    \end{equation}
The integral on the right hand side depends only on $\alpha$ but not the
specific element $(\delta,b)$ that maps to it. This is the usual regular
semisimple orbital integral of $f^{V}_+$.

The global orbital integral factorizes. Take $f^V_{+, v} \in
\cS(\U_{V,+}(F_v))$ and let $(\delta, b)$ be a regular semisimple element in
$\U_{V,+}(F_v)$, we define the local orbital integral $O((\delta,b), f^V_{+,
v})$ the same way by the same formula~\eqref{eq:global_orb_U}, integrating
over $\U(V)(F_v)$ instead. Then if $f_+^V = \otimes f^V_{+, v}$ where
$f^V_{+, v} \in \cS(\U_{V, +}(F_v))$, then
    \[
    J^V_{\alpha}(f^V_+) = \prod_v O((\delta, b), f^V_{+, v}).
    \]
Let $\tS$ be a finite set of places of $F$ and $f_{+, \tS} \in \cS(\U_{V,
+}(F_\tS))$, then we can define the regular semisimple orbital integrals
$O((\delta, b), f^V_{+, \tS})$ in the same way.

We now turn to the infinitesimal invariant. Let $(X, w, v) \in \fx(F)$ be
regular semisimple which maps to $\alpha \in \cB(F)$. Let $f^+ \in
\cS(\fx(\bA))$. Then the geometric distribution
$\mathfrak{i}_{\alpha}(f^+)$ simplifies to
    \begin{equation}    \label{eq:global_orb_gl}
    \mathfrak{i}_{\alpha}(f^+) = \int_{G'_n(\bA)}
    f^+(g^{-1}X g, w g, g^{-1} )
    \eta(\det g) \rd g.
    \end{equation}
The integral on the right hand side depends only on $\alpha$ but not the
specific element $(X, w,v)$ that maps to it. This is the usual regular
semisimple orbital integral of $f^+$ that appeared in~\cites{CZ,Zhang1}. This
orbital integral factorize. If $f_{v}^+ \in \cS(\fx(F_v))$ and $(X, w, v)$
be a regular semisimple element in $\fx(F_v)$, we define the local orbital
integral $O((X, w, v), f_{v}^+)$ the same way by the same
formula~\eqref{eq:global_orb_gl}, integrating over $G'_n(F_v)$ instead. Then
if $f^+ = \otimes f_{v}^+$ where $f_{v}^+ \in \cS(\fx(F_v))$, then
    \[
    \mathfrak{i}_{\alpha}(f^+) = \prod_v O((X, w, v), f_{v}^+).
    \]
Let $\tS$ be a finite set of places of $F$ and $f_{\tS}^+ \in
\cS(\fx(F_\tS))$, then we can define the regular semisimple orbital
integrals $O((X, w, v), f_{\tS}^+)$ in the same way.

For each $\alpha \in \cB(F)$ we have a distribution $\mathfrak{j}_{\alpha}$
on $\cS(\fy^V(\bA))$. Again we will consider this distribution for all $V$ at
the same time, so we add a superscript $V$ and write
$\mathfrak{j}_{\alpha}^V$ to emphasize the dependance on $V$. Let $(\delta,
b) \in \fy^V(F)$ be regular semisimple which maps to $\alpha \in \cB(F)$. Let
$f^{V,+} \in \cS(\fy^V(\bA))$. Then the geometric distribution
$\mathfrak{j}^V_{\alpha}(f^{+,V})$ simplifies to
    \[
    \mathfrak{j}^V_{\alpha}(f^{+, V}) = \int_{\U(V)(\bA)}
    f^{+, V} (g^{-1} \delta g, g^{-1} v) \rd g.
    \]
The integral on the right hand side depends only on $\alpha$ but not the
specific element $(\delta,b)$ that maps to it. This is the usual regular
semisimple orbital integral of $f^{+, V}$ that appeared in~\cites{CZ,Zhang1}.

The global orbital integral factorizes. If $f^{+,V}_{v} \in \cS(\fy^V(F_v))$
and $(\delta, b)$ be a regular semisimple element in $\fy^V(F_v)$, we define
the local orbital integral $O((\delta,b), f^{+, V}_{v})$ the same way by the
same formula~\eqref{eq:global_orb_U}, integrating over $\U(V)(F_v)$ instead.
Then if $f^{+, V} = \otimes f^{+, V}_{v}$ where $f^{+, V}_{v} \in
\cS(\fy^V(F_v))$, then
    \[
    \mathfrak{j}^V_{\alpha}(f^{+,V}) = \prod_v O((\delta, b), f^{+, V}_{v}).
    \]
Let $\tS$ be a finite set of places of $F$ and $f^{+, V}_{\tS} \in
\cS(\fy^V(F_{\tS}))$, then we can define the regular semisimple orbital
integrals $O((\delta, b), f^{+, V}_{\tS})$ in the same way.

\subsection{Local transfer}

We now compare the local regular semisimple orbital integrals. Fix a place $v$ of $F$. For regular semisimple $x = (X, w, v) \in X(F_v)$ or
$\fx(F_v)$ we put
    \[
    \Delta_+(x) =
    \det(w, wX, w X^2, \hdots, w X^{n-1}).
    \]
Define a transfer factor $\Omega_v$ for $X_{rs}(F_v)$ or $\fx_{rs}(F_v)$ by
$\Omega_v(x) = \mu((-1)^n\Delta_+(x))$. The sign $(-1)^n$ is included to make
it compatible with the transfer factor in~\cite{BP1}. The function $x \mapsto
\Omega_v(x) O(x, \varphi)$ where $x \in \fx_{rs}(F_v)$ and $\varphi \in
\cS(\fx(F_v))$ descends to a function on $\cB_{rs}(F_v)$, which we denote
by the same notation.

We define a transfer factor $\Omega_v$ on $G^+(F_v)$ as follows. Let $((g_1,
g_2), w, v) \in G^+(F_v)$ be a regular semisimple element. Put $\gamma = \nu(g_1^{-1} g_2) \in S_n(F_v)$. We put
    \[
    \Omega_v((g_1, g_2), w, v) = \mu \left( (-1)^n (\det g_1^{-1} g_2)^{-n+1}
    \det \Delta_+(\gamma, w, v) \right).
    \]
For $f_+ \in \cS(G_+(F_v))$, the function $x \mapsto \Omega_v(x) O(x, f_+)$ where $x \in G_{rs}^+(F_v)$ descends to a function on
$\cA_{\mathrm{rs}}(F_v)$, which we also denote it by $x \mapsto \Omega_v(x)O(x,f_+)$.

A function $f_+ \in \cS(G_+(F_v))$ and a collection of functions
$\{f_+^V\}_{V \in \cH_v}$ where $f_+^V \in \cS(\U_{V,+}(F_v))$ match if for
all matching regular semisimple elements $x \in G^+(F_v)$ and $y\in
\U_V^+(F_v)$, we have
    \[
    \Omega_v(x) O(x, f_+) = O(y, f_+^V).
    \]
We say a function on $G_+(F_v)$ is transferable if there is a collection of
functions on $\U_{V, +}(F_v)$ that matches it. We say a collection of
functions on $\U_{V, +}(F_v)$ is transferable if there is a function on
$G_+(F_v)$ that matches it. We say a single function $f_+^V$ on $\U_{V
,+}(F_v)$ for a fixed $V$ is transferable if the collection of functions
$(f_+^V, 0, \hdots, 0)$ is transferable.

We say $\varphi' \in \cS(\fx(F_v))$ and a collection of functions
$\{\varphi^V\}_{V \in \cH_v}$ where $\varphi^V \in \cS(\fy^{V}(F_v))$ match
if for all matching regular semisimple elements $x \in \fx_{n}(F_v)$ and $y
\in \fy^{V}(F_v)$, we have
    \[
    \Omega_v(x) O(x, \varphi') = O(y, \varphi^V).
    \]

We say a function on $\fx(F_v)$ is transferable if there is a collection of
functions on $\fy^V(F_v)$ that matches it. We say a collection of functions
on $\fy^V(F_v)$ transferable if there is a function on $\fx(F_v)$ that
matches it. We say a single function $\varphi^V$ on $\fy^V(F)$ for a fixed
$V$ is transferable if the collection of functions $(\varphi^V, 0, \hdots,
0)$ is transferable.

\begin{theorem} \label{thm:matching}
If $F$ is non-Archimedean, all functions in
    \[
    \cS(G_+(F_v)), \ \cS(\fx(F_v)),
    \ \cS(G_+^V(F_v)),\
    \ \cS(\fy^V(F_v))
    \]
are transferable. If $F$ is Archimedean, transferable functions form dense
subspaces of these spaces.
\end{theorem}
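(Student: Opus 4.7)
The overall strategy is to reduce transfer on the groups to transfer on the Lie algebras, where the corresponding results are known from \cite{Zhang1} and \cite{CZ} (non-Archimedean) together with the Archimedean counterparts. Concretely, the plan factors through the chain
\[
\cS(G_+(F_v)) \;\xrightarrow{\dag}\; \cS(G^+(F_v))
\;\xrightarrow{f^+ \mapsto \varphi_{f^+}}\; \cS(X(F_v))
\;\xleftarrow{\fc_\xi^*}\; \cS(\fx(F_v)),
\]
and the analogous chain on the unitary side. The first arrow is a partial Fourier transform (in the $L^{\vee,-} \oplus L^-$ variables), hence a topological isomorphism of Schwartz spaces. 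The second arrow, defined by~\eqref{eq:simplified_test_function_GL}, is integration along the fibers of the smooth surjective morphism $G^+ \to X$, and is surjective (as well as continuous and open) by~\cite{AG}*{Theorem~B.2.4}; the same applies to $\cS(\U_V^+(F_v)) \to \cS(Y^V(F_v))$ via~\eqref{eq:simplified_test_function_U}. The third arrow is the pullback by the Cayley transform, which identifies the open subvariety $X^\xi$ with $\fx'$ together with a twist by factors of the form $\mu(\xi_1)^\bullet \mu(\det(1-A/2))$, and is an isomorphism between $\cS(X^\xi(F_v))$ and $\cS(\fx'(F_v))$. The same picture holds on the unitary side via~\cite{Zydor3}*{Corollaire~5.8}.

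The first step is to translate \emph{matching} through these maps. Using the same Iwasawa/unfolding computation as in Lemma~\ref{lem:geometric_linear_convergence} (now done locally), one shows that the regular semisimple orbital integrals of $f_+$ at $(g,w,v) \in G_+^{\rs}(F_v)$ equal the orbital integrals of $\varphi_{f^+}$ at the image in $X_{\rs}(F_v)$, and similarly on the unitary side. Combining this with the local analogue of Propositions~\ref{prop:infitesimal_decent_gl}--\ref{prop:infitesimal_decent_u}, one checks that the transfer factors $\Omega_v$ on $X_{\rs}(F_v)$ and on $\fx_{\rs}(F_v)$, restricted to $X^\xi$ and $\fx'$ respectively, agree up to exactly the factors $\mu(\xi_1)^{-(n-1)^2}$, $\mu(\xi_1)^{-n(n-2)}\mu(\det(1-A/2))$ appearing in the definition of $\varphi_\natural$ (with the parallel identity, trivially, on the unitary side, where no such factors appear). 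This compatibility is precisely what makes $f_+ \in \cS(G_+(F_v))$ transferable if and only if $(\varphi_{f^+})_\natural \in \cS(\fx'(F_v))$ (extended by zero) is transferable into the collection of $(\varphi_{f_+^V})_\natural \in \cS(\fy^{\prime V}(F_v))$.

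The second step globalises the $\xi$-dependence. Since finitely many $\xi_i \in E^1$ can be chosen so that $\{X^{\xi_i}(F_v)\}$ is an open cover of $X(F_v)$ (and similarly $\{Y_V^{\xi_i}(F_v)\}$ covers $Y^V(F_v)$), a smooth partition of unity on the quotient space (lifted via $G'_n$-equivariance) writes any $\varphi \in \cS(X(F_v))$ as a finite sum of Schwartz functions supported in the open subsets $X^{\xi_i}(F_v)$, and similarly on the unitary side. Since matching of collections is defined via the bijection~\eqref{eq:matching}, one reduces the transfer problem to the transferability of arbitrary elements of $\cS(\fx(F_v))$ and $\cS(\fy^V(F_v))$.

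It remains to invoke the infinitesimal transfer: for non-Archimedean $F_v$ the bijectivity of the transfer between $\cS(\fx(F_v))$ and $\bigoplus_{V \in \cH_v} \cS(\fy^V(F_v))$ is the main result of~\cite{Zhang1}, and the density statement for Archimedean $F_v$ is the known analogous result (cf.~\cite{BP1} and the literature on smooth transfer for Jacquet--Rallis). Together with the chain of surjective/isomorphism steps above, this yields transferability for all Schwartz functions on $G_+(F_v)$ and $\U_{V,+}(F_v)$ in the non-Archimedean case, and density of the transferable functions in the Archimedean case (surjections and isomorphisms of Fr\'echet spaces map dense subspaces to dense subspaces). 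The principal technical obstacle is the careful bookkeeping of transfer factors through the three-step reduction, in particular verifying that the explicit $\mu(\xi_1)^\bullet$ and $\mu(\det(1-A/2))$ factors in the definition of $\varphi_\natural$ are exactly what is needed to match the transfer factors $\Omega_v$ on groups with those on Lie algebras; once this bookkeeping is in place, everything else is formal.
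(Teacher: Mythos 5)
Your overall strategy is the same as the paper's; the paper simply offloads the Cayley-transform/Lie-algebra reduction to the cited references. Concretely, the paper cites Zhang1 (non-Archimedean) and Xue3 (Archimedean) for the infinitesimal transfer, and then reduces the group case to the $X$-/$Y^V$-level via surjectivity of $f^+ \mapsto \varphi_{f^+}$ (citing Xue1 for the non-Archimedean group case and doing the Archimedean reduction by hand). You are unpacking what those references do: the partial Fourier transform $\dag$, fiber integration along $G^+ \to X$, the Cayley transform $\fc_\xi$ with its transfer-factor bookkeeping (Lemma~\ref{lemma:Lie_algebra_matching}), and a partition of unity over the $\xi_i$. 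That is all consistent with the paper's route and with the content of the cited works, and the point you stress at the end — that the $\mu(\xi_1)^\bullet$ and $\mu(\det(1-A/2))$ factors are exactly what reconciles $\Omega_v$ on the group with $\Omega_v$ on the Lie algebra — is indeed the nontrivial bookkeeping.

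There is, however, one genuine slip in the Archimedean wrap-up. You conclude density of transferable functions in $\cS(G_+(F_v))$ from the parenthetical claim that ``surjections and isomorphisms of Fr\'echet spaces map dense subspaces to dense subspaces.'' That statement is true but goes in the wrong direction: it concerns \emph{images} of dense sets, whereas the transferable functions on $G_+(F_v)$ are the \emph{preimage}, under the composite $\cS(G_+(F_v)) \to \cS(X(F_v))$, of the transferable functions downstream. What is needed is that the preimage of a dense set under a continuous \emph{open} surjection of Fr\'echet (or LF) spaces is dense — this is precisely where the open mapping theorem enters, and the paper invokes it explicitly at this point. You do record openness of $f^+ \mapsto \varphi_{f^+}$ earlier in the argument, so the ingredient is present, but the concluding justification as written is not the right implication and should be replaced by the preimage statement.
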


\begin{proof}
If $F$ is non-Archimedean, then the cases of $\cS(\fx(F_v))$ and
$\cS(\fy^V(F_v))$ are proved in~\cite{Zhang1}. The cases of $\cS(G_+(F_v))$
and $\cS(\U_{V,+}(F_v))$ are explained in~\cite{Xue1}. It follows from the
fact that the integral transforms~\eqref{eq:simplified_test_function_GL}
and~\eqref{eq:simplified_test_function_U} are surjective.

If $F_v$ is Archimedean, the cases of $\cS(\fx(F_v))$ and $\cS(\fy^V(F_v))$
are proved in~\cite{Xue3}. The cases of $\cS(G_+(F_v))$ and
$\cS(\U_{V,+}(F_v))$ follow from the surjectivity
of~\eqref{eq:simplified_test_function_GL}
and~\eqref{eq:simplified_test_function_U} and the open mapping theorem,
cf.~\cite{Xue3}*{Section~2}. The surjectivity is a little more complicated
than its non-Archimedean counterpart. Let us explain the case of
$\cS(G_+(F_v))$. First the Fourier transform $-^\dag$ is a continuous
bijection. The map $f^+ \mapsto \varphi_{f^+}$ is surjective since $G^+(F_v)
\to X(F_v)$ is surjective and submersive, cf.~\cite{AG}*{Theorem~B.2.4}.
The open mapping theorem then ensures that the inverse image of a dense
subset in $\cS(X(F_v))$ is again dense in $\cS(G_+(F_v))$.
\end{proof}

Recall that we have defined maps $f_+ \mapsto f_{+, \natural}$ and $f_+^V
\mapsto f^V_{+, \natural}$ in Subsections~\ref{subsec:infinitesimal_GL}
and~\ref{subsec:infinitesimal_U}. Such maps depend on an element $\xi \in
E^1$. Fix such a $\xi$. The
following lemma follows from the definition and a short calculation of the
transfer factors.

\begin{lemma}   \label{lemma:Lie_algebra_matching}
If $f_+$ and the collection $\{f_+^V\}_{V \in \cH_v}$ match, then so do
$f_{+, \natural}$ and the collection $\{f^V_{+, \natural}\}_{V \in \cH_v}$.
\end{lemma}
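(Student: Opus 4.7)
The plan is to track the orbital integrals through the three transformations used to define $f_{+, \natural}$: the partial Fourier transform $f_+ \mapsto f_+^\dag = f^+$, the descent $f^+ \mapsto \varphi_{f^+}$ on $X$, and the Cayley transform $\varphi_{f^+} \mapsto (\varphi_{f^+})_\natural$ (and analogously on the unitary side). Each of these preserves orbital integrals up to explicit factors, and the prefactors in the definition of $\varphi_\natural$ are precisely designed so that the composed transformation carries the transfer factor $\Omega_v$ on $G^+(F_v)$ to the transfer factor $\Omega_v$ on $\fx(F_v)$.

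First I would descend the matching hypothesis to the spaces $X$ and $Y^V$: unfolding as in (the local version of) Lemma~\ref{lem:geometric_linear_convergence} and its unitary analogue, the matching of $f_+$ and $\{f_+^V\}_V$ is equivalent to
\[
\Omega_v(\bar x)\, O(\bar x, \varphi_{f^+}) = O(\bar y, \varphi_{f^V_+})
\]
for all matching regular semisimple $\bar x \in X_{\rs}(F_v)$ and $\bar y \in Y^V_{\rs}(F_v)$, where $\Omega_v$ extends to $X_{\rs}(F_v)$ in the obvious way. The $\mu(\det x g_n')$ factor appearing in~\eqref{eq:simplified_test_function_GL} for even $n$ is precisely what absorbs the $\eta_{G'}$-twist of the $G'$-integral, so that the descent respects the matching.

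Second, using the $G_n'$-equivariance (resp.~$\U(V)$-equivariance) of $\fc_\xi$, I would derive the identities
\[
O(x', f_{+, \natural}) = c_n(A)\, O(\fc_\xi(x'), \varphi_{f^+}), \qquad O(y', f^V_{+, \natural}) = O(\fc_\xi(y'), \varphi_{f^V_+}),
\]
where $x' = (A, w, v) \in \fx'(F_v)$ and $y' \in \fy'_V(F_v)$ are regular semisimple, with $c_n(A) = \mu(\xi_1)^{-(n-1)^2}$ for $n$ odd and $c_n(A) = \mu(\xi_1)^{-n(n-2)} \mu(\det(1 - A/2))$ for $n$ even. Crucially $\det(1 - A/2)$ is $G_n'$-invariant, so that factor commutes with the orbital integration. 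The lemma then reduces to the algebraic identity
\[
\Omega_v^{\fx}(x')\, c_n(A) = \Omega_v^{G^+}(\fc_\xi(x'))
\]
for matching regular semisimple elements.

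The last step is to verify this identity by a direct linear-algebra calculation. Using $A = 2(\gamma - \xi)(\gamma + \xi)^{-1}$ where $\gamma = \nu(g_1^{-1} g_2)$, one writes $w A^k = 2^k w (\gamma - \xi)^k (\gamma + \xi)^{-k}$ and factors the matrix $[w A^k]_{k=0}^{n-1}$ as $2^{n(n-1)/2} \cdot C \cdot [w \gamma^j]_{j=0}^{n-1} \cdot (\gamma + \xi)^{-(n-1)}$, where $C$ is the monomial-expansion matrix of the polynomial basis $(\gamma - \xi)^k (\gamma + \xi)^{n-1-k}$. Taking determinants and applying $\mu$, then substituting $(\gamma + \xi) = 2\xi(1 - A/2)^{-1}$ and using $\xi \in E^1$ so that $\mu(\xi^{\mathsf{c}}) = \mu(\xi)^{-1}$, one reads off the required exponents of $\mu(\xi_1)$ along with the extra $\mu(\det(1 - A/2))$ factor for even $n$. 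The main technical obstacle is the parity-dependent bookkeeping of the characters, in particular reconciling the $\mu((\det g_1^{-1} g_2)^{-n+1})$ appearing in $\Omega_v^{G^+}$ with the contributions of $\mu(\det C)$ and $\mu(\det \gamma)$; once carried out carefully, these reproduce exactly the factor $c_n(A)$, and the lemma follows.
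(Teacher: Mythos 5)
The paper itself dispenses with this lemma in one sentence (``follows from the definition and a short calculation of the transfer factors''), so there is no detailed argument to compare against; your proposal is a reasonable attempt to reconstruct what that ``short calculation'' is, and the overall strategy---descend to $X$ and $Y^V$ via $\varphi_{(\cdot)}$, pass through the Cayley transform, and check that the normalizing constants in the definition of $\varphi_{\natural}$ match the transfer-factor discrepancy---is indeed what the paper's preceding remark suggests. Your step~2 is correct: $\fc_\xi$ is $G_n'$-equivariant (resp.\ $\U(V)$-equivariant), so the orbital integrals pull back with exactly the factor $c_n(A)$ on the linear side and no factor on the unitary side.

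There are, however, two places where the logic is muddled. First, the algebraic identity you reduce to in step~3, namely $\Omega_v^{\fx}(x')\,c_n(A) = \Omega_v^{G^+}(\fc_\xi(x'))$, does not typecheck: $\fc_\xi(x')$ lives in $X_{\rs}$, not in $G^+_{\rs}$, and the paper's $\Omega_v^{G^+}$ (with its extra factor $\mu\bigl((\det g_1^{-1}g_2)^{-n+1}\bigr)$) is not a function of the image in $X$ alone. What should appear on the right-hand side is the $X$-version $\Omega_v^X(\fc_\xi(x'))=\mu\bigl((-1)^n\Delta_+(\fc_\xi(x'))\bigr)$. Second, and more substantively, you have shifted the reconciliation of the $(\det g_1^{-1}g_2)^{-n+1}$ factor and the parity-dependent $\eta_{G'}$-bookkeeping from step~1 (where it actually belongs --- it is part of checking that the unfolding in the local analogue of Lemma~\ref{lem:geometric_linear_convergence} converts $\Omega_v^{G^+}$-matching into $\Omega_v^X$-matching) into step~3, where it has no business appearing once you have already passed to $X$. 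As written, your step~1 simply asserts the equivalence and waves at the $\mu(\det x g_n')$ twist, while your step~3 then reintroduces the very factor that step~1 was supposed to have absorbed. The linear-algebra factorization of $\bigl[wA^k\bigr]_k$ in terms of $\bigl[w\gamma^j\bigr]_j$ that you sketch is the right computation for comparing $\Omega_v^X\circ\fc_\xi$ with $\Omega_v^{\fx}$, but you should carry out the two comparisons ($G^+$ vs.\ $X$, then $X$ vs.\ $\fx$) separately and cleanly rather than interleaving them; otherwise the parity-dependent exponents of $\mu(\xi_1)$ and of $\mu(\det(1-A/2))$ are very easy to mis-track, which is precisely the bookkeeping the paper flags as the content of the ``short calculation.''
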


There is a pairing on $\fx(F_v)$ given by
    \begin{equation}    \label{eq:pairing_x_n}
    \langle (X_1, w_1, v_1), (X_2, w_2, v_2) \rangle =
    \Trace X_1X_2 + w_1 v_2 + w_2v_1.
    \end{equation}
If $\varphi \in \cS(\fx(F_v))$ we define its Fourier transform
    \[
    \widehat{\varphi}(y) = \int_{\fx(F_v)} \varphi(x)
    \overline{\psi(\langle x, y \rangle)} \rd x.
    \]
Here $\rd x$ is the selfdual measure.
Let $V$ be a nondegenerate skew-Hermitian space. Define a bilinear form on $\fy^{V}(F_v)$
by
    \[
    \langle (X, v), (Y, w) \rangle =
    \Trace XY + 2 (-1)^{n-1} \Tr_{E/F} \tau^{-1} q_{V}(v, w),
    \]
and a Fourier transform
    \[
    \cF_{V}\varphi^V(y) = \int_{\fy^{V}(F_v)} \varphi^V(x)
    \overline{\psi(\langle x, y \rangle)} \rd x.
    \]
Here $\rd x$ is the selfdual measure.

\begin{remark}
This essentially means that we consider $\varphi^V$ as an element in
$\cS(\fy^{V_\tau}(F_v))$ an take the Fourier transform as
in~\cites{BP1,CZ,Zhang1}, with $\psi$ replaced by our $\overline{\psi}$.
\end{remark}

Let $v$ be a place of $F$. Set $\lambda(\psi_v) =\epsilon(\frac{1}{2},
\eta_v, \psi_v)$ where $\epsilon(s, \eta_v, \psi_v)$ is the local root
number. This is a fourth root of unity and satisfies the property that
    \[
    \lambda(\overline{\psi_v}) = \eta_v(-1) \lambda(\psi_v).
    \]
In the case $E_v/F_v = \C/\R$ and $\psi_v(x) = e^{2\pi\sqrt{-1} x}$ we have
$\lambda(\psi_v) = -\sqrt{-1}$.

\begin{theorem} \label{thm:Fourier_matching}
Suppose that $\varphi \in \cS(\fx(F_v))$ and a collection of functions
$\varphi^V \in \cS(\fy^V(F_v))$ match. Then so do $\widehat{\varphi}$ and
the collection $\lambda(\psi_v)^{\frac{n(n+1)}{2}} \eta_v((2\tau)^n \disc
V)^n \cF_{V}\varphi^V$.
\end{theorem}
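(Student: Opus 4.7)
The plan is to deduce the result from the Fourier–matching statement for the Jacquet–Rallis/Fourier–Jacobi infinitesimal trace formula established in~\cite{CZ} (see also~\cites{Zhang1,BP1} for the original Bessel case), by carefully tracking the normalizations that differ between their setup and ours. As explained in Remark~\ref{remark:Vtau}, identifying $\fy^V$ with $\fy^{V_\tau}$ via the Hermitian form $q_{V_\tau} = 2(-1)^{n-1}\tau^{-1}q_V$ recasts our matching of regular semisimple orbits and the bilinear form on $\fy^V$ into the form used in those references. Hence our Fourier transform $\cF_V$ coincides with the standard Fourier transform on $\fy^{V_\tau}$ up to the substitution of $\overline{\psi_v}$ for $\psi_v$, and the pairing~\eqref{eq:pairing_x_n} on $\fx$ matches the standard one under the same identification.

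I would then invoke the cited Fourier–matching result: if $\varphi \in \cS(\fx(F_v))$ matches $\{\varphi^{V_\tau}\}$ in the Jacquet–Rallis sense, then $\widehat\varphi$ matches $\{\gamma(\psi_v,V_\tau)\,\cF_{V_\tau}^{\mathrm{std}}\varphi^{V_\tau}\}$, with an explicit Weil-index constant of the form $\gamma(\psi_v,V_\tau)=\lambda(\psi_v)^{n(n+1)/2}\eta_v(\disc V_\tau)^{n}$. Converting this to our statement requires two bookkeeping adjustments. First, using $\overline{\psi_v}$ in place of $\psi_v$ in $\cF_V$ replaces $\lambda(\psi_v)$ by $\lambda(\overline{\psi_v}) = \eta_v(-1)\lambda(\psi_v)$, contributing a power of $\eta_v(-1)$ that I would track explicitly. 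Second, a direct computation using $q_{V_\tau}=2(-1)^{n-1}\tau^{-1}q_V$ gives
    \[
    \disc V_\tau \equiv (2(-1)^{n-1}\tau^{-1})^n\,\disc V \pmod{N_{E_v/F_v}(E_v^\times)},
    \]
so $\eta_v(\disc V_\tau)^n = \eta_v((2\tau)^n\disc V)^n \cdot \eta_v(-1)^{n\cdot n(n-1)}$, and the trailing sign merges with the $\psi\mapsto\overline{\psi}$ correction from the first step. Combining these substitutions into $\gamma(\psi_v, V_\tau)$ yields precisely the prescribed constant $\lambda(\psi_v)^{n(n+1)/2}\eta_v((2\tau)^n\disc V)^{n}$.

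The main obstacle is meticulous bookkeeping of signs. The factor $(-1)^n$ in the definition of the transfer factor $\Omega_v$ was inserted specifically to match~\cite{BP1}, which should make the comparison of transfer factors essentially automatic, but the rescaling $2(-1)^{n-1}\tau^{-1}$ propagates through both the quadratic pairing and the discriminant, and the $\psi\mapsto\overline\psi$ swap must be handled uniformly in $n$. Verifying that all these contributions combine into the clean formula in the theorem — rather than leaving residual parity-dependent signs — is the crux of the calculation. Once this is settled, the Archimedean case follows by the same computation, applying the Archimedean version of the Fourier matching in place of the non-Archimedean one.
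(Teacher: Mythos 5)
Your strategy is the same as the paper's: the theorem is obtained by citing the established Jacquet--Rallis Fourier transfer (\cite{Zhang1} for non-Archimedean $v$, \cite{Xue3} for Archimedean $v$, with the corrections of \cite{BP1}*{Theorem~5.32, Remark~5.33}), translating skew-Hermitian $V$ to Hermitian $V_\tau$ via Remark~\ref{remark:Vtau}, and tracking the substitution $\psi_v\mapsto\overline{\psi_v}$. The constant bookkeeping in your write-up, however, contains two concrete slips, and as written the steps would not land on the stated constant.

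First, the matching constant in \cite{BP1}*{Theorem~5.32} carries an explicit factor $\eta_v(-1)^{n(n+1)/2}$ in addition to the $\lambda(\psi_v)^{n(n+1)/2}\eta_v(\disc V_\tau)^n$ you record. It is precisely this factor that cancels against the $\eta_v(-1)^{n(n+1)/2}$ produced by $\lambda(\overline{\psi_v})=\eta_v(-1)\lambda(\psi_v)$ once all Fourier transforms use $\overline{\psi_v}$; if you omit it from Beuzart-Plessis' constant, the $\psi\mapsto\overline\psi$ correction has nothing to cancel and you are left with a spurious $\eta_v(-1)^{n(n+1)/2}$. Second, the correct discriminant relation is simply $\disc V_\tau=(2\tau)^n\disc V$ (this is what the Remark immediately after the theorem asserts; the discriminant of a skew-Hermitian form is not the raw Gram determinant and does not scale as $\disc V_\tau\equiv(2(-1)^{n-1}\tau^{-1})^n\disc V$ --- for $n$ odd that expression is not even a class in $F_v^\times$). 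Even taking your formula at face value, the residual sign comes out as $\eta_v(-1)^{n^2}$, not $\eta_v(-1)^{n\cdot n(n-1)}$: one has $(2(-1)^{n-1}\tau^{-1})^n/(2\tau)^n=(-1)^{n(n-1)}\tau^{-2n}\equiv(-1)^n\pmod{N_{E_v/F_v}(E_v^\times)}$, and raising to the $n$th power gives $\eta_v(-1)^{n^2}$. The exponent $n^2$ has the parity of $n$, whereas $n^2(n-1)$ is always even, so your claim that the "trailing sign merges with the $\psi\mapsto\overline\psi$ correction" cannot be right: $n(n+1)/2$ is odd whenever $n\equiv1,2\pmod4$, and it must be matched by BP's explicit factor, not by a discriminant residual. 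The clean accounting is the one the paper records: $\disc V_\tau=(2\tau)^n\disc V$ contributes no residual sign, and the $\lambda$-swap exactly cancels the $\eta_v(-1)^{n(n+1)/2}$ present in \cite{BP1}.
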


This is proved in~\cite{Zhang1} if $v$ is non-Archimedean and~\cite{Xue3} if
$v$ is Archimedean. See~\cite{BP1}*{Theorem~5.32, Remark~5.33} for
corrections to~\cite{Zhang1}.

\begin{remark}
Note that $\disc V_{\tau} = (2\tau)^n \disc V$. We also note that we used
$\overline{\psi_v}$ in the Fourier transform. So $\lambda(\psi_v)$
in~\cite{BP1} is replaced by $\lambda(\overline{\psi_v})$ and this cancels
the factor $\eta_v(-1)^{\frac{n(n+1)}{2}}$ that appeared in~\cites{BP1}.
\end{remark}

Finally we turn to the matching of test functions in the unramified
situation, i.e. the fundamental lemma.

\begin{theorem} \label{thm:FL}
Assume the following conditions.
\begin{enumerate}
\item The place $v$ is odd and unramified. The element $\tau \in
    \fo_{E_v}^\times$. The characters $\psi_v$ and $\mu_v$ are unramified.

\item The skew-Hermitian space $V$ is split and contains selfdual
    $\fo_{E_v}$ lattice.
\end{enumerate}
Then the functions
    \[
    \vol H(\fo_{F_v})^{-1} \vol G'(\fo_{F_v})^{-1} \id_{G_+(\cO_{F_v})}
    \quad \text{and} \quad
    \vol \U(V)(\fo_{F_v})^{-2} \id_{\U_{V, +}(\cO_{F_v})}
    \]
match. The functions
    \[
    \vol H(\fo_{F_v})^{-1} \vol G'(\fo_{F_v})^{-1} \id_{\fx(\cO_{F_v})}
    \quad \text{and} \quad
    \vol \U(V)(\fo_{F_v})^{-2} \id_{\fy^V(\cO_{F_v})}
    \]
also match. The volumes are computed using the same measures as we used to define orbital integrals.
\end{theorem}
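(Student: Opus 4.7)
The plan is to reduce both halves of the theorem to a single infinitesimal statement, namely the Jacquet--Rallis fundamental lemma on the Lie algebra side, and then to transport this back to the group side via the Cayley transform.

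\textbf{Step 1 (Lie algebra half).} By Remark~\ref{remark:Vtau}, the spaces $\fx$ and $\fy^V$, together with their regular semisimple orbits, orbital integrals, and the notion of matching defined in Section~\ref{sec:matching}, coincide with the infinitesimal Jacquet--Rallis setup associated to the Hermitian space $V_\tau$. The hypothesis $\tau \in \fo_{E_v}^\times$ ensures that $V$ admits a selfdual $\fo_{E_v}$-lattice if and only if $V_\tau$ does, and their discriminants differ by a unit, so the unit characteristic functions on both sides go to those appearing in the classical formulation. The assumption that $\mu_v$ is unramified forces $\Omega_v \equiv 1$ on $\fx_{\rs}(\cO_{F_v})$, since $\Delta_+$ of an integral element is itself in $\fo_{E_v}$. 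Thus the Lie algebra half of the theorem reduces to the Jacquet--Rallis fundamental lemma (Yun--Gordon), which yields exactly the matching stated.

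\textbf{Step 2 (Group half via Cayley transform).} To transfer Step~1 to the group, set $f_+ := \vol H(\fo_{F_v})^{-1}\vol G'(\fo_{F_v})^{-1}\id_{G_+(\cO_{F_v})}$, and compute its descent $f_{+,\natural}$ explicitly. Under the unramified hypotheses, the partial Fourier transform $f_+ \mapsto f_+^\dag = f^+$ preserves the characteristic function of $G_+(\cO_{F_v})$ up to a unit factor (by selfduality of $\fo_{E_v}$ under $\psi_v$). The averaging integral~\eqref{eq:simplified_test_function_GL} over $H(F_v) \times G'_n(F_v)$ absorbs the chosen volume normalizations; the $\mu_v$-twist is trivial on integral elements for $n$ even because $\mu_v$ is unramified. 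One obtains $\varphi_{f^+} = \id_{X(\cO_{F_v})}$. Applying the Cayley transform $\fc_\xi$ for $\xi = \xi_1 \in \fo_{E_v}^\times$, one checks that $\fc_\xi$ induces a bijection $\fx'(F_v) \cap \fx(\cO_{F_v}) \to X^\xi(\cO_{F_v})$ (using $2 \in \fo_{E_v}^\times$), and that the auxiliary factors $\mu(\xi_1)^{-\bullet}$ and $\mu(\det(1-A/2))$ in the definition of $\varphi_\natural$ are all units. The conclusion is $f_{+,\natural} = \id_{\fx(\cO_{F_v})}$ on $\fx'(F_v)$, with the normalization of the theorem. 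A parallel computation on the unitary side gives $f^V_{+,\natural} = \id_{\fy^V(\cO_{F_v})}$ on $\fy'_V(F_v)$.

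\textbf{Step 3 (Assembly).} Given any matching pair of regular semisimple elements $x \in G^+_{\rs}(F_v)$ and $y \in \U^+_{V,\rs}(F_v)$, one selects $\xi \in \{\xi_1,\ldots,\xi_{n+1}\}$ so that $x \in X^\xi(F_v)$ and $y \in \U^\xi_V(F_v)$; this is possible since the $\fc_{\xi_i}$ cover $X$ and $\U^+_V$ respectively. Then the regular semisimple orbital integral of $f_+$ at $x$ equals that of $f_{+,\natural}$ at $\fc_\xi^{-1}(x)$ up to the explicit factors computed in Step~2, and analogously for the unitary side. Combining this with the matching from Step~1 and with the compatibility of transfer factors established by the computation supporting Lemma~\ref{lemma:Lie_algebra_matching} yields the matching of the group unit functions at the orbit of $(x,y)$.

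\textbf{Main obstacle.} The essential work is concentrated in Step~2: verifying that the composition of the partial Fourier transform, the averaging map~\eqref{eq:simplified_test_function_GL}, and the Cayley transform sends the normalized unit function on $G_+(\cO_{F_v})$ to the unit function on $\fx(\cO_{F_v})$ with exactly the right constant. Tracking the volumes of $H(\fo_{F_v})$, $G'(\fo_{F_v})$, $\U(V)(\fo_{F_v})$, the transfer factor $\Omega_v$, the $\mu(\xi_1)^{-(n-1)^2}$ or $\mu(\xi_1)^{-n(n-2)}$, and the $\mu(\det(1-A/2))$ in the $n$ even case requires a careful bookkeeping; it is precisely for this reason that the hypotheses demand $v$ odd, $\tau, \xi_1 \in \fo_{E_v}^\times$, and $\mu_v, \psi_v$ unramified. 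Once these explicit descents are in hand, Step~1 provides the heart of the matching and the remainder is formal.
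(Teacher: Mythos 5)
Your overall strategy coincides with the paper's: reduce both halves to the Jacquet--Rallis Lie algebra fundamental lemma, then pass from the Lie algebra to the group via the Cayley-transform descent. The paper handles the second step by citing Liu's Theorem~5.15 rather than re-deriving it as you sketch in Steps~2--3, but the content of your reduction is exactly what Liu's argument does, so this is not a genuine divergence of method.

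There is, however, a real gap in Step~1. You cite only Yun--Gordon for the infinitesimal fundamental lemma, but that result is established only under a \emph{large residue characteristic} hypothesis, whereas the theorem asserts matching at every odd unramified place. To cover all odd residue characteristics one must additionally invoke the later removals of the restriction due to Beuzart-Plessis~\cite{BP3} and Zhang~\cite{Zhang3}; the paper's proof makes this point explicit (and further observes that the ``semi-Lie'' case in~\cite{Zhang3} in fact gives the group-level statement directly, bypassing the descent altogether). Without these references your argument proves the theorem only at places of sufficiently large residue characteristic, which is strictly weaker than what is claimed. Aside from this omission, the bookkeeping you flag in Steps~2--3 (units $\tau,\xi_1,2$, unramified $\mu_v$, self-duality under $\psi_v$, trivial transfer factor on integral points) is the right list of things to check and is consistent with the normalizations in Subsection~\ref{subsec:infinitesimal_GL}.
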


\begin{proof}
The infinitesimal version was first proved by~\cite{Yun} when the residue
field characteristic is large. The group version is deduced from the
infinitesimal version in~\cite{Liu}*{Theorem~5.15} under the same assumption.
The case of small residue characteristic in the infinitesimal case was later
proved by~\cites{BP3,Zhang3}. The only reason~\cite{Liu} needs the large
residue characteristic is because~\cites{BP3,Zhang3} were not available that
time. Once this is available, the result in~\cite{Liu} holds without the
assumption on the residue characteristic (apart from being odd). Indeed the
group version of the fundamental lemma in the theorem is called the
``semi-Lie'' case in~\cite{Zhang3}.
\end{proof}

\subsection{Global transfer}
\label{subsubsec:global_transfer}

We now compare global geometric terms $I_{\alpha}$ and $J_{\alpha}^V$. If we assume
furthermore that $\tS$ contains all Archimedean places and ramified places,
then we denote by $\cH^{\tS}$ the set of all isomorphism classes of nondegenerate
skew-Hermitian spaces $V$ over $E$ of dimension $n$, such that $V_v = V
\otimes E_v$ contains a self-dual $\cO_{E_v}$ lattice for all $v \not\in
\tS$.

We say a test
function $f_+ \in \cS(G_+(\bA))$ and a collection of test functions
$\{f_+^{V}\}_{V \in \cH}$ where $f_+^{V} \in \cS(\U^+_V(\bA))$ match, if
there exists a finite set of places $\tS$ of $F$ containing all Archimedean
place and ramified place in $E$, such that we have the following conditions.
\begin{itemize}
    \item $f^{V}_+ = 0$ for $V \not \in \cH^{\tS}$.

    \item For each $V \in \cH^{\tS}$, $f^{V}_+ = 
        f^{V}_{+,\tS} \otimes \id_{\U^+_V(\cO^\tS)}$ where $f^{V}_{+,\tS}
        \in \cS(\U_V^+(F_\tS))$.

    \item $f_+ = f_{+,\tS} \otimes
        \id_{G_+(\cO^\tS)}$, where $f_{+,\tS} \in \cS(G_+(F_\tS))$.

    \item $f_{+,\tS}$ and $\{f^{V}_{+,\tS}\}_{V \in \cH^{\tS}}$ match.

    \item The measures in defining the orbital integrals satisfy that $\vol H(\fo_{F_v})= \vol G'(\fo_{F_v})= \U(V)(\fo_{F_v}) = 1$ for all $v \not \in \tS$.
\end{itemize}

Similarly, we say $f \in \cS(\fx(\bA))$ and  the collection $\{f^{V}\}_{V
\in \cH}$ where $f^{V} \in \cS(\fu^+_V(\bA))$ match if there exists a finite
set of places $\tS$ of $F$, such that we have the following conditions.
\begin{itemize}
    \item $f^{V} = 0$ for $V \not \in \cH^{\tS}$.

    \item For each $V \in \cH^{\tS}$, $f^{V} =
        f^{V}_\tS \otimes \id_{\fu^+_V(\cO^\tS)}$ where $f^{V}_\tS \in
        \cS(\fu_V^+(F_\tS))$.

    \item $f = f_\tS \otimes
        \id_{\fx(\cO^\tS)}$, where $f_\tS \in \cS(\fx(F_\tS))$.

    \item $f_\tS$ and $\{f^{V}_\tS\}_{V \in \cH^{\tS}}$ match.

    \item The measures in defining the orbital integrals satisfy that $\vol G'(\fo_{F_v})= \U(V)(\fo_{F_v}) = 1$ for all $v \not \in \tS$.
\end{itemize}

Note that both in the group case and Lie algebra case, $f$ and $\{ f^V \}_{V
\in \cH}$ match for the set of places $\tS$ as above, then they also match for any set of places containing $\tS$.

The following theorem is proved in~\cite{CZ}*{Theorem~13.3.4.1}. This is what
is referred to as the singular transfer in~\cite{CZ}.

\begin{theorem} \label{thm:infintesimal_singular_transfer}
If $f \in \cS(\fx(\bA))$ and the collection $\{f^{V}\}_{V \in \cH}$ where
$f^V \in \cS(\fy^V(\bA))$ match, then for each $\alpha \in \cB(F)$, we have
    \[
    \mathfrak{i}_\alpha(f) =
    \sum_{V \in \cH} \mathfrak{j}^V_{\alpha}(f^{V}).
    \]
\end{theorem}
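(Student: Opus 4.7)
The plan is to deduce the identity from the main singular transfer theorem of Chaudouard--Zydor, after verifying that our setup matches theirs. For regular semisimple $\alpha \in \cB(F)$, the identity is essentially automatic: by \eqref{eq:global_orb_gl} and its unitary analogue, both sides factor as products of local regular semisimple orbital integrals, and the definition of matching (together with the unramified fundamental lemma, Theorem~\ref{thm:FL}, at almost all places) gives the equality place by place. The real content is the singular case, where $\mathfrak{i}_\alpha$ and $\mathfrak{j}^V_\alpha$ are not simple orbital integrals but Arthur-style truncated weighted sums defined via the kernel functions $k^T_{\varphi,\alpha}$ and $k^T_{f,\alpha}$.

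First I would make precise the identification of the triple $(\fx, \{\fy^V\}_V, \text{matching})$ with the Jacquet--Rallis setup of~\cites{Zhang1,CZ} used to study Bessel periods. The variety $\fy^V$ here is literally the same underlying $F$-scheme as the variety $\fy^{V_\tau}$ appearing there (cf.~Remark~\ref{remark:Vtau}); the scaling by $2(-1)^{n-1}\tau^{-1}$ in the definition of the $b_i$'s, and the transfer factor $\Omega_v$, were chosen precisely so that our notion of matching coincides with theirs. Similarly, the distributions $\mathfrak{i}_\alpha$, $\mathfrak{j}^V_\alpha$, which are defined through truncated kernels attached to standard parabolic subgroups of $G_n$ and $\U(V)$, coincide with the geometric distributions of Zydor~\cite{Zydor2} that enter the Jacquet--Rallis trace formula in~\cite{CZ}. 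Once this dictionary is in place, \cite{CZ}*{Theorem~13.3.4.1} applies verbatim and gives the stated identity.

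To explain the shape of the Chaudouard--Zydor argument: the starting point is the global identity $\sum_\alpha \mathfrak{i}_\alpha(f) = \sum_V \sum_\alpha \mathfrak{j}^V_\alpha(f^V)$, which comes from applying the infinitesimal trace formula on both sides to $f$ and $\widehat{f}$ respectively, using Poisson summation and the Fourier transfer identity Theorem~\ref{thm:Fourier_matching} to relate the two. One then isolates individual $\alpha$ by induction on the dimension $n$: singular contributions $\mathfrak{i}_\alpha$ and $\mathfrak{j}^V_\alpha$ are rewritten via parabolic descent (an Arthur--Zydor type formula expressing them in terms of geometric distributions on proper Levi subgroups), the induction hypothesis handles the Levi contributions, and only the elliptic part has to be matched directly.

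The hard part, and the heart of~\cite{CZ}, is the bookkeeping of parabolic descent across the general linear side and the various unitary sides simultaneously. A single singular orbit in $\fx$ can lift to several orbits distributed among different Hermitian spaces $V \in \cH$, and one has to verify that the descents are compatible under our matching of test functions when restricted to smaller Jacobi--type subgroups. Rather than reproving this delicate combinatorial matching here, I would invoke \cite{CZ}*{Theorem~13.3.4.1} after the translation of conventions explained above; the only thing that truly needs to be checked in our setting is that the infinitesimal descent results of Subsections~\ref{subsec:infinitesimal_GL} and~\ref{subsec:infinitesimal_U} produce the same distributions as those to which their theorem applies, which is essentially built into our definitions.
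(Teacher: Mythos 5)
Your proposal is correct and follows the paper's own approach exactly: the paper proves this theorem simply by citing \cite{CZ}*{Theorem~13.3.4.1}, which is precisely what you do after noting (via Remark~\ref{remark:Vtau} and the identification $\fy^V \cong \fy^{V_\tau}$) that the infinitesimal distributions and the notion of matching agree with the Jacquet--Rallis framework of Chaudouard--Zydor. Your additional sketch of the internal structure of the CZ argument is optional context and not needed for the citation to go through.
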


From this we deduce our group version of singular transfer.

\begin{theorem} \label{thm:singular_transfer}
If $f_+ \in \cS(G_+(\bA))$ and  $\{f^{V}\}_{V \in \cH}$ where $f_+^V \in
\cS(\U_{V,+}(\bA))$ are transfer, then for each $\alpha \in \cA(F)$, we have
    \[
    I_\alpha(f_+) = \sum_{V \in \cH} J^V_{\alpha}(f_+^{V}).
    \]
\end{theorem}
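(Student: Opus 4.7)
The plan is to reduce the group-level singular transfer to its infinitesimal counterpart (Theorem~\ref{thm:infintesimal_singular_transfer}) via the Cayley transform and the descent identities of Section~\ref{sec:infinitesimal_geo}.

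Fix $\alpha \in \cA(F)$. First I would choose $\xi \in E^1$ with $\alpha \in \cA^\xi(F)$; this is possible because the open subsets $\cA^{\xi_i}(F)$ cover $\cA(F)$ as $\xi_i$ ranges over the $n+1$ elements of $E^1$ recalled in Subsection~\ref{subsec:infinitesimal_GL}. Writing $\alpha = \fc_\xi(\beta)$ for some $\beta \in \cB'(F)$ reduces the problem to the single element $\beta$. Next, from the definition of global matching in Subsection~\ref{subsubsec:global_transfer}, there is a finite set of places $\tS$ outside which everything is unramified and the test functions are the unit functions. I would enlarge $\tS$ so that the hypotheses of Proposition~\ref{prop:infitesimal_decent_gl} (for $\beta$ and $\xi$), those of Proposition~\ref{prop:infitesimal_decent_u} (for each $V \in \cH^\tS$), and those of the fundamental lemma Theorem~\ref{thm:FL} at each $v \notin \tS$, are all satisfied simultaneously; each excludes only finitely many places.

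Now apply Corollary~\ref{coro:infinite_descent_GL} and Corollary~\ref{coro:infinite_descent_U} at the level of the truncation parameter $T$:
\begin{equation*}
I^T_{\fc_\xi(\beta)}(f_+) = \mathfrak{i}^T_\beta\bigl(f_{+,\tS,\natural} \otimes 1_{\fx(\cO_F^\tS)}\bigr), \qquad J^{V,T}_{\fc_\xi(\beta)}(f^V_+) = \mathfrak{j}^{V,T}_\beta\bigl(f^V_{+,\tS,\natural} \otimes 1_{\fy^V(\cO_F^\tS)}\bigr)
\end{equation*}
for each $V \in \cH^\tS$. Extracting the purely polynomial part in $T$ from both sides yields
\begin{equation*}
I_\alpha(f_+) = \mathfrak{i}_\beta(f), \qquad J^V_\alpha(f^V_+) = \mathfrak{j}^V_\beta(f^V),
\end{equation*}
where $f := f_{+,\tS,\natural} \otimes 1_{\fx(\cO_F^\tS)}$ and, for $V \in \cH^\tS$, $f^V := f^V_{+,\tS,\natural} \otimes 1_{\fy^V(\cO_F^\tS)}$, with $f^V = 0$ otherwise.

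It remains to check that $f$ and $\{f^V\}_V$ match in the global Lie algebra sense of Subsection~\ref{subsubsec:global_transfer}; once this is established, Theorem~\ref{thm:infintesimal_singular_transfer} applied at $\beta$ gives $\mathfrak{i}_\beta(f) = \sum_{V} \mathfrak{j}^V_\beta(f^V)$, which is the desired identity after substitution. Local matching at places $v \in \tS$ follows from Lemma~\ref{lemma:Lie_algebra_matching}, while local matching at $v \notin \tS$ follows from the Lie algebra half of Theorem~\ref{thm:FL}, together with the volume normalizations prescribed in Subsection~\ref{subsubsec:global_transfer}. The main technical burden is thus bookkeeping: one must arrange that a single $\tS$ accommodates all the unramifiedness, integrality, and Cayley-transform hypotheses at once, after which the descent and the infinitesimal singular transfer combine cleanly to give the theorem.
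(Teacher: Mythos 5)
Your argument is correct and follows the same route as the paper: descend to the Lie algebra via the Cayley transform and Corollaries~\ref{coro:infinite_descent_GL} and~\ref{coro:infinite_descent_U}, check matching of the descended functions via Lemma~\ref{lemma:Lie_algebra_matching}, and conclude by Theorem~\ref{thm:infintesimal_singular_transfer}. One small redundancy: you do not need to re-invoke Theorem~\ref{thm:FL} to check matching outside $\tS$, since the definition of global Lie algebra matching in Subsection~\ref{subsubsec:global_transfer} only requires matching at places in $\tS$ together with the unit elements (with normalized volumes) outside $\tS$.
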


\begin{proof}
Fix an $\alpha \in \cA(F)$. Choose a large finite set of places $\tS$ as in
the definition of matching of global test function. Then
    \[
    f_+ = \id_{G_+(\cO^\tS_F)} \otimes f_{+, \tS}, \quad
    f^V_+ = \id_{\U_{V,+}(\cO_{F}^{\tS})} \otimes f_{+, \tS}^V.
    \]
We may enlarge $\tS$ such that the conditions of
Proposition~\ref{prop:infitesimal_decent_gl} hold.

Choose $\xi \in E$ with norm 1 such that $\alpha \in \cA^\xi(F)$. Then we have the functions $f_{+, \tS, \natural}$ and $f^V_{+, \tS, \natural}$ and they still match by Lemma~\ref{lemma:Lie_algebra_matching}. By
Proposition~\ref{prop:infitesimal_decent_gl}
and~\ref{prop:infitesimal_decent_u}, we have
    \[
    I_\alpha(f_+) = \mathfrak{i}_\alpha
    (\id_{\fx(\cO^\tS)}
    \otimes f_{+, \tS, \natural}),
    \quad
    J^V_\alpha(f^V_+) = \mathfrak{j}^V_\alpha
    (\id_{\fy^V(\cO^\tS)}
    \otimes f^V_{+, \tS, \natural}).
    \]
By definition, $f_{+, \tS, \natural}$ and $f^V_{+, \tS, \natural}$ match. The
theorem then follows from Theorem~\ref{thm:infintesimal_singular_transfer}.
\end{proof}

\part{Spectral characterizations of matching}

For the rest of this paper, we are in the local situation. So $F$ is a
local field of characteristic zero, and $E$ a quadratic etale algebra over
$F$. We will assume that $E$ is a field except in
Section~\ref{sec:split_place_comparison}.

\section{Preliminaries on the spectral comparison}

\subsection{General notation and conventions}
Let us introduce some general notation.

Let $X$ be an algebraic variety over $F$. We usually just write $X$ for the
$F$-points of $X$. All algebraic groups are $F$-groups. If $V$ is a vector
space over $E$, it is viewed as an affine variety over $F$ via restriction of
scalars.

Let $G$ be a reductive group over $F$. Let $\Xi^G$ be the Harish-Chandra Xi
function on $G$. It depends on the choice of a maximal compact subgroup $K$
of $G$. Different choices lead to equivalent functions. Since we use $\Xi^G$
only for the purpose of estimates, the choice of $K$ does not matter. Its
properties we will make use of are recorded
in~\cite{BP2}*{Proposition~1.5.1}. We also fix a logarithmic height function
$\varsigma^G$ on $G(F)$, cf.~\cite{BP2}*{Section~1.5}.

Let $X$ be a smooth algebraic variety over a local field $F$ of
characteristic zero. Let $\cS(X)$ be the Schwartz space. If $F$ is
non-Archimedean this is $C_c^\infty(X)$, the space of locally constant
compactly supported functions. If $F$ is Archimedean, this is the space of
Schwartz function on $X$ where $X$ is viewed as a Nash manifold,
cf.~\cite{AG}.

Let $\cC(G)$ and $\cC^w(G)$ be the spaces of Harish-Chandra Schwartz functions and
tempered functions on $G$ respectively, cf.~\cite{BP1}*{Section~2.4}. Assume
$G$ is quasi-split, and let $N$ be the unipotent radical of a Borel subgroup,
$\xi$ be a generic character of $N$. We also have the spaces $\cS(N \bs G,
\xi)$ and $\cC^w(N \bs G, \xi)$ as defined in~\cite{BP1}*{Section~2.4}. These
are nuclear Fr\'{e}chet spaces or LF spaces. An important property we will use is
that $\cS(G)$ is dense in $\cC^w(G)$.

\subsection{Measures}   \label{subsec:measure_local}
The goal of this part of the paper is to prove an exact equality between to spherical characters, so unlike the first part of this paper, now we need to specify the measures. We retain the notation from Part~1.

We fix self-dual measures on $F$ and $E^-$ with respect to $\psi$, and on $E$
with respect to $\psi_E$. We have
    \[
    \rd_{E^-} (\tau^{-1}y) = \abs{\tau}_E^{-\frac{1}{2}} \rd_F y, \quad
    \rd_E z = 2 \rd_F x \rd_{E^-} y, \text{ if $z = x+y$}.
    \]
The (normalized) absolute values on $E$ and $F$ are denoted by
$\abs{\cdot}_E$ and $\abs{\cdot}$ respectively. They satisfy the properties
that $\rd (ax) = \abs{a} \rd x$ and $\abs{x}_E = \abs{x}^2$ if $x \in F$.

We equip $G'_n = \GL_n(F)$ (resp. $G_n=\GL_n(E)$) with the Haar measure
\begin{equation*}
   \rd g= \frac{ \prod_{i,j} \rd_F g_{i,j}}{\valP{\det g}^n}, \quad \left( \text{resp. }  \rd g= \frac{ \prod_{i,j} \rd_E g_{i,j}}{\valP{\det g}_E^n} \right).
\end{equation*}
By taking products, we obtain Haar measures on $G_+$, $G^+$, $G$, $G'$ and $H$.

The tangent space of $S_n$ at $s = 1$ is $\fs_n$. There is a bilinear form on
$\fs_n$ given by
    \[
    \langle X, Y \rangle = \Tr XY.
    \]
Then, using the character $\psi$, we put a self-dual measure on $\fs_n$. This
is the same as the measure obtained by identifying $\fs_n$ with
$(E^-)^{n^2}$. Recall that we have defined the Cayley transform (where we
consider only the case $\xi = 1$ now and we omit the subscript $\xi$)
    \[
    \fc: \fs_n \dashrightarrow S_n, \quad
    X \mapsto \frac{1+X/2}{1-X/2}.
    \]
We equip $S_n$ with the unique $G_n'$-invariant measure such that $\fc$ is locally measure preserving near $X = 0$. If $F$ is
non-Archimedean, this means that $\fc(\cU)$ and $\cU$ have the same volume if $\cU$ is
a small enough neighbourhood of zero. If $F$ is Archimedean, this means that the
ratio of the volumes of $\fc(\cU)$ and $\cU$ tends to $1$ as we shrink the
neighbourhood $\cU$ to $0$.

Let $V$ be a Hermitian or skew-Hermitian space. There is a bilinear form on
the Lie algebra $\fu(V)$ of $\U(V)$, given by
    \[
    \langle X, Y \rangle = \Tr XY.
    \]
Then using the character $\psi$, we put a self-dual measure on $\fu(V)$. We
have a Cayley transform
    \[
    \fc: \fu(V) \dashrightarrow \U(V), \quad
    X \mapsto \frac{1+X/2}{1-X/2}.
    \]
We equip $\U(V)$ with the Haar measure $\rd h$ normalized so that the Cayley transform is
locally measure preserving near $X = 0$, which is interpreted in the same way
as in the case of $\fs_n$. This yields measures on $\U_V'$ and $\U_V$. By taking products, we get measures on $\U_{V, +}$ and $\U_V^+$.

Recall that in Section~\ref{sec:matching} we defined the notation of matching of regular semisimple orbits, which gives bijections~\eqref{eq:matching} (the set $\tS$ consists merely the single place we care about in this part). The actions appearing in~\eqref{eq:matching} are all free, as we
consider only the regular semisimple elements. Then the measures on
$G^+_{\rs}$ and etc. descend naturally to quotient measures on the
various quotients in~\eqref{eq:matching}. With our choice of the measures, the same argument
as~\cite{BP1}*{Lemma~5.11} gives the following lemma.

\begin{lemma}   \label{lemma:matching_measure}
The bijections~\eqref{eq:matching} are measure preserving.
\end{lemma}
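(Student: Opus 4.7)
The plan is to reduce the group version to the Lie algebra version via the Cayley transform, and then to verify the Lie algebra version by a direct local computation on the common categorical quotient.

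First I would handle the infinitesimal statement, i.e.\ the second bijection in~\eqref{eq:matching}. Both sides are principal bundles over the regular semisimple locus $\cB_{\rs}(F_\tS)$: on the left $\fx_{\rs}/G'_n$ (a single $G'_n$-orbit above each $\alpha$), and on the right a disjoint union indexed by $\cH_\tS$ with each orbit a principal $\U(V)$-bundle. Since both actions are free on the regular semisimple locus, the Weyl-type integration formula identifies each of the two quotient measures with a measure on $\cB_{\rs}(F_\tS)$ that, after choosing a local section, differs from a Haar-type measure on $\cB$ only by the Jacobian of the natural map $\fx_{\rs}\to \cB$, respectively $\fy^V_{\rs}\to\cB$. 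To see that the two descended measures coincide, I would pick matching regular semisimple elements $x=(X,w,v)\in\fx_{\rs}$ and $y=(\delta,b)\in\fy^V_{\rs}$ with common image $\alpha$, and compute both Jacobians in a neighbourhood of $\alpha$. Because $\fy^V$ and $\fy^{V_\tau}$ are physically the same space (Remark~\ref{remark:Vtau}), the computation can be transported verbatim to the setting of~\cite{BP1}*{Lemma~5.11}, from which the required equality of measures follows: the self-dual measures on $\fs_n$, on $L^{\vee,-}\oplus L^{-}$, on $\fu(V)$, and on $V$ were precisely fixed so that the Jacobians on both sides evaluate to the same function of $\alpha$. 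This gives the second bijection of~\eqref{eq:matching} at the level of measures.

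Next I would bootstrap to the group statement. Pick $\xi\in E^1$ and the Cayley transforms
\[
\fc_\xi:\fx'\to X^\xi, \qquad \fc_\xi:\fy'_V\to Y^\xi_V
\]
from Subsections~\ref{subsec:infinitesimal_GL} and~\ref{subsec:infinitesimal_U}. By our normalization of Haar measures on $S_n$ and $\U(V)$, the maps $\fc_\xi$ are locally measure preserving near the origin, and by $G'_n$-equivariance (resp.\ $\U(V)$-equivariance) this locally measure preserving property is inherited by the quotient maps $\fx'/G'_n\to X^\xi/G'_n$ and $\fy'_V/\U(V)\to Y^\xi_V/\U(V)$. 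On the common quotient $\fc_\xi$ also induces an isomorphism $\cB'\simeq\cA^\xi$ intertwining the matching bijections. Combining with the infinitesimal result above, we obtain the group statement in a neighbourhood of any $\alpha$ of the form $\fc_\xi(\beta)$. Since $\{\cA^\xi\mid \xi\in E^1\}$ covers $\cA$ by finitely many opens, this local measure-preservation property propagates to all of $\cA_{\rs}(F_\tS)$, and hence gives the first bijection of~\eqref{eq:matching}.

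The main obstacle is the bookkeeping in the infinitesimal Jacobian computation: one must keep track of the factor $2(-1)^{n-1}\tau^{-1}$ built into the map $\fy^V\to\cB$, the fact that we use $\rd_{E^-}$ rather than $\rd_F$ (and the resulting $|\tau|_E^{-1/2}$-type factors), and the different normalizations for $n$ even versus $n$ odd. These are precisely the factors that are absorbed into the definition of $V_\tau$ in Remark~\ref{remark:Vtau} and into the choices of self-dual measures in Subsection~\ref{subsec:measure_local}, so the upshot is that with these normalizations fixed once and for all, the comparison reduces literally to~\cite{BP1}*{Lemma~5.11}, whose proof can be quoted once the identification of conventions has been made.
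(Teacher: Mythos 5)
Your first paragraph (the Lie algebra bijection) is essentially the argument the paper invokes: the paper's proof of this lemma is a one-line citation to \cite{BP1}*{Lemma~5.11}, and your reduction to the Jacobian comparison over the common quotient $\cB$, together with the $V_\tau$ identification of Remark~\ref{remark:Vtau} and the normalizations fixed in Subsection~\ref{subsec:measure_local}, is exactly what makes that citation legitimate.

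Your bootstrap to the group bijection, however, has a gap. The Haar measures on $S_n$ and $\U(V)$ are normalized so that the Cayley transforms are locally measure preserving near $X = 0$; but $0$ is not regular semisimple, and the Jacobian of $\fc_\xi$, whose differential at $X$ is $Y \mapsto (1-X/2)^{-1}Y(1-X/2)^{-1}$, is a nonconstant invariant function. Local measure preservation near the origin therefore does not give measure preservation on $\cA^\xi_{\rs}$, and your assertion that the group statement follows "in a neighbourhood of any $\alpha$ of the form $\fc_\xi(\beta)$" is not justified by what precedes it; the covering $\{\cA^\xi\}_\xi$ only helps once you already have the comparison on each full $\cA^\xi_{\rs}$, not merely near the image of $0$. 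What is actually needed, and fills the gap, is that the invariant Jacobian functions of $\fc_\xi\colon \fs_n \dashrightarrow S_n$ and of $\fc_\xi\colon \fu(V) \dashrightarrow \U(V)$ descend to the \emph{same} function on $\cB'$; this holds because both are controlled by $\det(1 - X/2)$, hence depend only on the characteristic-polynomial (``$a_i$'') part of the invariant $\alpha$, which is common to matching orbits. With that observation inserted, your reduction becomes correct, and it amounts to running the Jacobian comparison of \cite{BP1}*{Lemma~5.11} directly at the group level, which is what the paper's phrase ``the same argument'' refers to.
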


\subsection{Representations}
Let $G$ be a reductive group over a local field $F$ of characteristic zero.
If $F$ is non-Archimedean, a representation of $G$ means a smooth
representation. If $F$ is Archimedean, a representation of $G$ means a smooth
representation of moderate growth.
For $\alpha \in \fa_{G,\C}^*$ and a representation $\pi$ of $G$, we put
    \[
    \pi_\alpha(g) = \pi(g) e^{\langle \alpha, H_G(g)\rangle}.
    \]

We denote by $\Pi_2(G)$ (resp. $\Temp(G)$) the set of isomorphism classes of
irreducible square integrable (resp. tempered) representations of $G(F)$.

Let $\cX_{\temp}(G)$ be the set of isomorphism classes of the form
$\Ind_{P}^{G} \sigma$, where $P = MN$ ranges over all parabolic subgroups of
$G$ and $\sigma \in \Pi_2(M)$. Since each $\pi \in \Temp(G)$ is a
subrepresentation of an element in $\cX_{\temp}(G)$. We therefore have a map
$\Temp(G) \to \cX_{\temp}(G)$. The set $\cX_{\temp}(G)$ has a structure of a
topological space with infinitely many connected components, and the connected
components are of the shape
    \[
    \cO =
    \{ \Ind_{P}^{G} \sigma_{\alpha} \mid \alpha \in \sqrt{-1} \fa_M^*\}.
    \]
Let $W(G, M) = N_{G}M/M$ be the Weyl group and
    \[
    W(G, \sigma) = \{ w \in W(G, M) \mid w \sigma \simeq \sigma\}.
    \]
Then the map
    \[
    \Ind_{P}^G: \sqrt{-1}\fa_M^* /W(G, \sigma) \to \cO, \quad
    \alpha \mapsto \Ind_{P}^{G} \sigma_{\alpha},
    \]
is surjective, and a local
homeomorphism at $0$.

Let $\cV$ be a Fr\'{e}chet space or more generally an LF space.
We shall frequently use the notion of Schwartz
functions on $\cX_{\temp}(G)$ valued in $\cV$. This space is denoted by
$\cS(\cX_{\temp}(G), \cV)$. If $\cV = \C$ we simply write
$\cS(\cX_{\temp}(G))$. This notion is defined and discussed in detail
in~\cite{BP1}*{Section~2.9}. In particular $\cS(\cX_{\temp}(G))$ is a Fr\'{e}chet
space if $F$ is Archimedean and an LF space if $F$ is non-Archimedean. The
most important property of a Schwartz function on $\cX_{\temp}(G)$ valued in
$\cV$ is that it is absolutely integrable with respect the measures on
$\cX_{\temp}(G)$ that we define below.

If $f \in \cS(G)$, we put for each $\pi \in \cX_{\temp}(G)$
    \[
    f_{\pi}(g) = \Trace(\pi(g^{-1}) \pi(f)), \quad g \in G.
    \]
Then $f_{\pi} \in \cC^w(G)$ and by~\cite{BP1}*{Proposition~2.13.1} the map
    \begin{equation}    \label{eq:Temp_is_Schwartz}
    \cX_{\temp}(G) \to \cC^w(G), \quad \pi \mapsto f_{\pi}
    \end{equation}
is Schwartz.

We now specialize to the case of unitary groups. Let $V$ be a $n$-dimensional hermitian space and $G = \U(V)$. If $\pi \in \Temp(G)$, we denote by $\BC(\pi)$ its local base change to $\GL_{n}(E)$, cf.~\cites{Mok,KMSW}. Let $V'$ be another $n$-dimensional hermitian space (maybe the same as $V$), and $\sigma \in \Temp(\U(V'))$, we denote by $\pi \sim \sigma$ if $\pi$ and $\sigma$ have the same local base change. We then have a
well-defined surjective maps
    \[
    \Temp(G) \to \Temp(G)/ \sim, \quad \cX_{\temp}(G) \to \Temp(G)/\sim.
    \]

\subsection{Spectral measures}

Let $\widehat{A_{M}}$ be the set of unitary characters of $A_M$. It admits a
normalized measure $\rd \chi$, cf.~\cite{BP1}*{(2.4.2)}. The map
    \[
    \sqrt{-1} \fa_M^* \to \widehat{A_M}, \quad \chi \otimes \alpha \mapsto
    \left(a \mapsto \chi(a) \abs{a}^{\alpha} \right)
    \]
is locally a homeomorphism. Let $\rd \alpha$ be the measure on $\sqrt{-1}
\fa_M^*$ such that this map is locally measure preserving.

We now assign measures on $\cX_{\temp}(G)$
following~\cite{BP1}*{Section~2.7}. Consider first $\Pi_2(M)$. We assign the
unique measure $\rd \sigma$ to $\Pi_2(M)$ such that the map
    \[
    \Pi_2(M) \to \widehat{A_M}, \quad \sigma \mapsto \omega_{\sigma}|_{A_M}
    \]
is locally measure preserving. Here $\omega_{\sigma}$ stands for the central
character of $\sigma$. Next we consider the map
    \[
    \Ind_P^G: \Pi_2(M) \to \cX_{\temp}(G), \quad \sigma \mapsto \Ind_P^G \sigma.
    \]
This map is quasi-finite and proper, and the image is a collection of some
connected components of $\cX_{\temp}(G)$. We thus equip the image with the
pushforward measure
    \[
    \frac{1}{\abs{W(G, M)}} \Ind_{M, *}^G \rd \sigma.
    \]
The measure on $\cX_{\temp}(G)$ whose restriction to the image of $\Ind_P^G$
equals this one is denoted by $\rd \pi$. Near a point $\pi_0 = \Ind_{P}^G
\sigma \in \cX_{\temp}(G)$, this measure can be described more explicitly as
follows. Let $\cV$ be a sufficiently small $W(G, \sigma)$-invariant
neighbourhood of $\sqrt{-1}\fa_M^*$ such that the map $\Ind_P^G$ induces a
topological isomorphism between $\cV/W(G, \sigma)$ and a small neighbourhood
$\cU$ of $\pi_0$. Then for every $\varphi \in C_c^\infty(\cU)$ we have
    \[
    \int_{\cU} \varphi(\pi) \rd \pi = \frac{1}{\abs{W(G, \sigma)}}
    \int_{\cV} \varphi(\pi_{\alpha}) \rd \alpha.
    \]

Fix a split skew-Hermitian space $V_{qs}$ and $G = \U(V_{qs})$ (or a
product of them). By~\cite{BP1}*{Lemma~2.101}, there is a unique topology on
$\Temp(G)/\sim$ such that the map $\cX_{\temp}(G) \to \Temp(G)/\sim$ is
locally an isomorphism, and for every connected component $\cO \subset
\cX_{\temp}(G)$, the map induces an isomorphism between $\cO$ and a connected
component of $\Temp(G)/\sim$. It follows that there is a unique measure on
$\Temp(G)/\sim$ such that the map $\cX_{\temp}(G) \to \Temp(G)/\sim$ is
locally measure preserving when $\cX_{\temp}(G)$ is given the measure $\rd
\pi$. We denote this measure on $\Temp(G)/\sim$ again by $\rd \pi$.

There is unique Borel measure $\rd \mu_G$ on $\cX_{\temp}(G)$ characterized by
    \begin{equation}    \label{eq:plancherel_general}
    f = \int_{\cX_{\temp}(G)} f_{\pi} \rd \mu_G(\pi),
    \end{equation}
for all $f \in \cS(G)$, cf.~\cites{HC,Wald}. Here the integration of the right hand side is taken
in $\cC^w(G)$, and is absolutely convergent in the sense
of~\cite{BP1}*{Section~2.3}. This measure is called the Plancherel measure on $\cX_{\temp}(G)$.

There is a function $\mu_G^*$ such that $\rd \mu_G(\pi) = \mu^*_G(\pi) \rd
\pi$ for all $\pi \in \cX_{\temp}(G)$. If $G = \U(V)$ where $V$ is an $n$-dimensional Hermitian or skew-Hermitian space, then by~\cite{BP1}*{Theorem~5.53}, we have
    \begin{equation}    \label{eq:formal_degree_conjecture}
    \mu^*_G(\pi) = \frac{\abs{\gamma^*(0, \pi, \Ad, \psi)}}{\abs{S_{\pi}}}.
    \end{equation}
Here $S_{\pi}$ is a certain finite abelian group (the group of centralizers
of the $L$-parameter of $\pi$), cf.~\cite{BP1}*{Section~2.11}, and
$\gamma^*(0, \pi, \Ad, \psi)$ is the normalized value of the adjoint gamma
factor given as follows. If $m$ is the order of poles of the adjoint gamma factor $\gamma(s, \pi, \Ad, \psi)$ at $s = 0$, then 
    \[
    \gamma^*(0, \pi, \Ad, \psi) = \lim_{s \to 0} \zeta_F(s)^n \gamma(s, \pi, \Ad, \psi).
    \]
Here we write $\zeta_F(s)$ for the local zeta function of $F$, i.e.
    \[
    \zeta_F(s) = \left\{
    \begin{aligned} &(1-q_F^{-s})^{-1}, &&\text{$F$ is non-Archimedean};\\
    & \pi^{-s/2} \Gamma(\frac{s}{2}), && F = \R.
    \end{aligned}
    \right.
    \]

\subsection{An estimate of Whittaker functions}
Let $G$ be a quasi-split reductive group over $F$. Fix a Borel subgroup $B=TN$. Let $\xi$ be a generic character of $N$. Let $A_0$ be the maximal split subtorus of $T$, let $\Delta$ be the set of simple roots of $A_0$ action on $N$. For $f \in \cS(G)$ and $g_1,g_2 \in G$, we put
\begin{equation*}
    W_f(g_1,g_2) = \int_N f(g_1^{-1}ug_2) \xi(u) \rd u.
\end{equation*}
For $a \in T$ and $\alpha \in \Delta$, we write $a^{\alpha}$ for $e^{\langle \alpha,H_0(a) \rangle}$.

\begin{lemma} \label{lem:W_f_Estimate}
    For any $N>0$ and $d>0$, there exists a continuous semi-norm $\| \cdot \|$ on $\cS(G)$ such that
    \begin{equation*}
        \lvert W_f(a_1k_1,a_2k_2) \rvert \ll \delta_B(a_1a_2)^{\frac{1}{2}} \prod_{\alpha \in \Delta} (1+a^{\alpha})^{-N} \varsigma(a_1)^d \varsigma(a_2)^{-d}  \lVert f \rVert
    \end{equation*}
    holds for any $a_1,a_2 \in T$ and $k_1,k_2 \in K$.
\end{lemma}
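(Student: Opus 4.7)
The plan is to establish the estimate by combining oscillation against the generic character $\xi$ (which gives the rapid-decay factor $\prod_\alpha(1+a^\alpha)^{-N}$) with Harish-Chandra $\Xi^G$-estimates (which give the factor $\delta_B(a_1 a_2)^{1/2}$ and the logarithmic corrections), in the standard Whittaker-function style.

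First I would reduce to the case $k_1 = k_2 = 1$: since $K$ is compact and acts continuously on $\cS(G)$ by left and right translations, the map $(k_1,k_2) \mapsto L(k_1) R(k_2) f$ from $K \times K$ into $\cS(G)$ is uniformly bounded in any continuous semi-norm, so the $K$-dependence is absorbed into a fresh continuous semi-norm on $\cS(G)$.

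Next, in $W_f(a_1,a_2) = \int_N f(a_1^{-1} u a_2) \xi(u)\,\rd u$ I would perform the change of variable $u \mapsto a_1 u a_1^{-1}$, which contributes a Jacobian $\delta_B(a_1)$ and turns the character into $\xi^{a_1}(u) := \xi(a_1 u a_1^{-1})$, whose frequency along each simple root subgroup $N_\alpha$ is scaled by $a_1^\alpha$. In the Archimedean case, repeated integration by parts along each $N_\alpha$ against $\xi^{a_1}$ produces the factor $\prod_\alpha (1 + a_1^\alpha)^{-N}$ at the cost of replacing $f$ by $Df$ for various $D \in U(\fg)$; these remain Schwartz with controlled semi-norms. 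In the non-Archimedean case the same decay follows from the vanishing of oscillatory integrals: for $f$ invariant under a given compact open subgroup, the $N_\alpha$-integral of $\xi^{a_1}$ vanishes once $a_1^\alpha$ is large enough. The remaining integrand is then bounded pointwise via the Schwartz estimate $|f(g)| \ll_M \|f\|_M\, \Xi^G(g)\, \varsigma^G(g)^{-M}$, valid for any $M > 0$, and the standard $\Xi^G$-convolution estimate on $N$ (cf.~\cite{BP2}*{Proposition~1.5.1}) yields the remaining factors $\delta_B(a_1 a_2)^{1/2}\varsigma(a_1)^{d}\varsigma(a_2)^{-d}$ after choosing $M$ large enough relative to $d$.

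The main obstacle will be the bookkeeping of Schwartz semi-norms across the two stages: the differential operators applied during integration by parts must preserve the Schwartz class, and the final bound must still be controlled by a single continuous semi-norm on $\cS(G)$ depending only on $N$ and $d$. A secondary subtlety is the asymmetric form of the factor $\varsigma(a_1)^d\varsigma(a_2)^{-d}$, which emerges from careful estimation of $\varsigma^G(a_1^{-1} u a_2)$ relative to $\varsigma(a_1)$ and $\varsigma(a_2)$ when performing the $\Xi^G$-convolution on $N$.
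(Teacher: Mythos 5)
Your proposal takes a genuinely different route from the paper's proof. The paper proceeds in two steps: first it establishes the estimate \emph{without} the rapid-decay factor $\prod_\alpha(1+a^\alpha)^{-N}$, using no oscillation whatsoever --- simply the substitution $u\mapsto a_1 u a_1^{-1}$ giving $\delta_B(a_1)$, the corresponding trick with $a_2$ giving $\delta_B(a_2)$, and the geometric mean of the two yielding $\delta_B(a_1a_2)^{1/2}$; second, it upgrades this bound by invoking the structural rapid-decay estimate for tempered Whittaker functions from the proof of \cite{BP4}*{Lemma~2.5.1}, checking that the constant there is uniform in the second variable $a_2$. Your proposal instead extracts the rapid decay directly by oscillation against $\xi^{a_1}$, and produces the $\delta_B(a_1a_2)^{1/2}\varsigma(a_1)^d\varsigma(a_2)^{-d}$ factors by a $\Xi^G$-convolution estimate over $N$. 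Both architectures are sound; yours is more self-contained (no appeal to BP4) and more "textbook oscillatory integral," while the paper's is shorter, avoids entirely the nonabelian integration by parts, and offloads the rapid decay to an existing lemma.

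Where your route is compressed: the integration by parts must be organized carefully --- $N$ is nonabelian, so one needs adapted coordinates compatible with the grading by simple roots, and one should only integrate by parts along $N_\alpha$ when $a_1^\alpha \geq 1$ (otherwise the factor $(a_1^\alpha)^{-1}$ hurts); this is the reason the final estimate carries $(1+a_1^\alpha)^{-N}$ rather than $(a_1^\alpha)^{-N}$. The "$\Xi^G$-convolution estimate on $N$" you cite from \cite{BP2}*{Proposition~1.5.1} is not stated there in the exact form needed (that reference collects the basic pointwise properties of $\Xi^G$ and $\varsigma^G$); what you actually need is the descent estimate $\int_N \Xi^G(na)\varsigma(na)^{-M}\,\rd n \ll \delta_B(a)^{1/2}\varsigma(a)^{-d}$ for $a\in T$ and $M\gg d$, which is a further consequence (in the spirit of Waldspurger's Plancherel paper and Section~1 of \cite{BP2}) that should be named explicitly. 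Finally, in the non-Archimedean case the vanishing of the $N_\alpha$-integral for $a_1^\alpha$ large produces a threshold depending on the level of $f$, and turning that into a single continuous semi-norm on the LF-space $\cS(G)=\cC_c^\infty(G)$ requires the standard but not-completely-trivial observation that a semi-norm may grow with the compact open subgroup. None of these are fatal --- they are precisely the "bookkeeping" you flag --- but they are where the real work sits in your version, whereas the paper trades them for the BP4 citation.
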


\begin{proof}
    We first show that for any $N,d>0$, there exists a continuous semi-norm $\| \cdot \|$ on $\cS(G)$ such that
    \begin{equation} \label{eq:Whittaker_estimate_1}
        \lvert W_f(a_1k_1,a_2k_2) \rvert \ll \delta_B(a_1a_2)^{\frac{1}{2}} \varsigma(a_1)^d \varsigma(a_2)^{-d}  \lVert f \rVert
    \end{equation}
    holds for any $a_1,a_2 \in T$ and $k_1,k_2 \in K$. Indeed, we can assume $k_1=k_2=1$, then
    \begin{equation*}
        W_f(a_1,a_2) = \int_N f(a_1^{-1}ua_2) \xi(u) \rd u = \delta_B(a_1) \int_N f(ua_1^{-1}a_2) \ll \delta_B(a_1) \varsigma(a_1^{-1}a_2)^{-d} \| f\|,
    \end{equation*}
    for some continuous semi-norm $\|\cdot\|$ on $\cS(G)$. Similarly $W_f(a_1,a_2) \ll \delta_B(a_2)\varsigma(a_1^{-1}a_2)^{-d} \|f\|$. Therefore \eqref{eq:Whittaker_estimate_1} holds.

    By the proof of ~\cite{BP4}*{Lemma 2.5.1}, for any $W \in \cC^w_d(N \backslash G,\xi)$ such that 
        \begin{equation*}
            W(ak) \le C \delta_B(a)^{\frac 12} \varsigma(a)^d
        \end{equation*}
        for any $(a,k) \in T \times K$, then for any $N>0$
        \begin{equation*}
            W(ak) \le C_1 C \prod_{\alpha \in \Delta} (1+a^{\alpha})^{-N} \delta_B(a)^{\frac 12} \varsigma(a)^d,
        \end{equation*} 
        for any $(a,k) \in T \times K$. Moreover,
    \begin{enumerate}
        \item If $F$ is non-Archimedean, the constant $C_1$ only depends on $N$ and the open compact $J \subset G$ such that $W$ is right $J$-invariant.
        \item If $F$ is Archimedean, 
        \begin{equation*}
            C_1 = 1 + \sup_{u \in \cK_N} \sup_{(a,k) \in T \times K} \lvert R(u)W(ak) \rvert \prod_{\alpha \in \Delta} \delta(a)^{-\frac 12} \varsigma(a)^{-d},
        \end{equation*}
        where $\cK_N$ is the compact subset of $\cU(\fg)$ depending on $N$.
    \end{enumerate}
    Apply this to $W(\cdot,a_2)$ as $a_2$ varies, we see that the constant $C_1$ is uniform, the lemma follows from \eqref{eq:Whittaker_estimate_1}.
\end{proof}

\section{Local spherical characters}

\subsection{Spherical character on the unitary groups}
\label{subsec:tempered_intertwining}

Let $V$ be a nondegenerate $n$-dimensional skew-Hermitian space. Recall that we fixed the nontrivial character $\psi$ and the character $\mu$ of $E^\times$. Then we have the Weil representations $\omega$
and $\omega^\vee$ of $\U(V)$ with respect to the characters $\psi, \mu$
and $\psi^{-1}, \mu^{-1}$ respectively. They are realized on
$\cS(L^\vee)$  where $\Res V =
L+L^\vee$ is a polarization of the symplectic space $\Res V$.

Let $\pi$ be a finite length tempered representation of $\U_V$. We put
    \[
    m(\pi) = \dim \Hom_{\U_V'}(\pi \otimeshat \omega^\vee, \C).
    \]
If $\pi$ is irreducible we have $m(\pi) \leq 1$ by~\cites{Sun,SZ}.

As every $\pi \in \Temp(\U_V)$ embeds in a unique representation in
$\cX_{\temp}(\U_V)$, there is a map $\Temp(\U_V) \to \cX_{\temp}(\U_V)$. Let
$\Temp_{\U_V'}(\U_V) \subset \Temp(\U_V)$ be the subspace of $\pi$ with
$m(\pi) \not=0$. By~\cite{Xue6}*{Proposition~3.4}, the natural map
    \[
    \Temp_{\U_V'}(\U_V) \to \cX_{\temp}(\U_V)
    \]
is injective, and the image is a collection of connected components of
$\cX_{\temp}(\U_V)$. We thus give $\Temp_{\U_V'}(\U_V)$ the topology induced
from $\cX_{\temp}(\U_V)$.

For $f_+ \in \cS(\U_{V,+})$, we put
    \begin{equation}    \label{eq:local_ddag_map}
    f_+^\ddag((g_1, g_2), v) = \int_{L^\vee} \omega_{(1)}^\vee(g_1)
    f_+((g_1, g_2), x+l', x-l')
    \psi( - 2\Tr_{E/F} q_V(x, l) ) \rd x,
    \end{equation}
where $g_i \in \U(V)$ and we write $v = l + l'$ where $l \in L$ and $L^\vee$.
This is is local counterpart of~\eqref{eq:global_ddag_map}. The notation $\omega_{(1)}^\vee(g_1)    f_+((g_1, g_2), x+l', x-l')$ is interpreted as follows. First $f_+$ is a Schwartz function on $\U_V \times L^\vee \times L^\vee$. We evaluate it at $(g_1, g_2)$ and obtain Schwartz function on $L^\vee \times L^\vee$. Then $g_1$ acts on the first factor via the Weil representation, which gives another Schwartz function. We then evaluate it at $(x+l', x-l')$. The integral transform~\eqref{eq:local_ddag_map} define a
continuous isomorphism
    \[
    \cS(\U_{V,+}) \to \cS(\U_{V}^+).
    \]
The defining expression makes sense for the functions in $\cC^w(\U_V)
\otimeshat \cS(L^\vee \times L^\vee)$, as the integration takes place only in the
variables in $L$. We thus end up with a map
    \[
    \cC^w(\U_V) \otimeshat \cS(L^\vee \times L^\vee) \to C^\infty(\U_V^+),
    \]
which we still denote by $-^\ddag$. Note that the image of the map is no
longer $\cC^w(\U_V) \otimeshat \cS(V)$.

\begin{lemma}   \label{lemma:J_pi_convergence}
Let $f_+ \in \cC^w(\U_V)\otimeshat \cS(L^\vee \times L^\vee)$. The integral
    \[
    \int_{\U_V'} f_+^\ddag(h, 0) \rd h
    \]
is absolutely convergent and defines a continuous linear form on $\cC^w(\U_V)
\otimeshat \cS(L^\vee \times L^\vee)$.
\end{lemma}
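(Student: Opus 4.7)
Specializing the definition of $-^\ddag$ to $v=0$ (hence $l=l'=0$) yields
\[
f_+^\ddag((h,h),0)=\int_{L^\vee}\bigl[\omega^\vee(h)\,f_+((h,h),\cdot,x)\bigr](x)\,\rd x,
\]
where $\omega^\vee(h)$ acts on the first $L^\vee$-argument. The plan is to bound this integrand, uniformly in a continuous seminorm of $f_+$, by a function of $h$ that is integrable against the Haar measure on $\U(V)=\U_V'$; continuity then follows from the same bound.

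First I would use the nuclearity of $\cS(L^\vee\times L^\vee)$ to identify $\cC^w(\U_V)\otimeshat\cS(L^\vee\times L^\vee)$ with $\cC^w(\U_V,\cS(L^\vee\times L^\vee))$ and expand
\[
f_+((g_1,g_2),y,x)=\sum_i c_i(g_1,g_2)\phi_i(y)\psi_i(x),
\]
with $c_i\in\cC^w(\U_V)$, $\phi_i,\psi_i\in\cS(L^\vee)$, and the relevant continuous seminorms summable/rapidly decaying in $i$. Plugging in gives
\[
f_+^\ddag((h,h),0)=\sum_i c_i(h,h)\,\bigl\langle\omega^\vee(h)\phi_i,\bar{\psi}_i\bigr\rangle.
\]
The $\cC^w$-structure provides $|c_i(h,h)|\ll\|c_i\|_d\,\Xi^{\U(V)}(h)^2\,\varsigma(h)^d$ via $\Xi^{\U_V}(h,h)=\Xi^{\U(V)}(h)^2$.

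The key input is a matrix-coefficient estimate for the Weil representation: for Schwartz vectors $\phi,\psi\in\cS(L^\vee)$ and every $N\ge 0$ there exist continuous seminorms $p_N,q_N$ on $\cS(L^\vee)$ with
\[
\bigl|\bigl\langle\omega^\vee(h)\phi,\bar{\psi}\bigr\rangle\bigr|\ll p_N(\phi)\,q_N(\psi)\,(1+\varsigma(h))^{-N}.
\]
I would establish this by an explicit Iwasawa-decomposition computation in the Schrödinger model: the Siegel Levi of $\U(V)$ acts on $\cS(L^\vee)$ by a rescaling of the form $\mu(\det)\lvert\det\rvert^{1/2}$, the unipotent radical by an oscillating character, and the maximal compact unitarily, so Schwartz decay of $\phi,\psi$ combined with non-stationary phase in the unipotent direction transfers into rapid decay of the matrix coefficient along the Cartan. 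Combining the two bounds yields
\[
\bigl|f_+^\ddag((h,h),0)\bigr|\ll\|f_+\|'\,\Xi^{\U(V)}(h)^2\,(1+\varsigma(h))^{d-N}
\]
for a continuous seminorm $\|\cdot\|'$ on $\cC^w(\U_V)\otimeshat\cS(L^\vee\times L^\vee)$; for $N$ sufficiently large, integrability over $\U(V)$ follows from the standard Harish-Chandra estimate \cite{BP2}*{Proposition~1.5.1}. Absolute convergence together with continuity of the resulting linear form are then immediate.

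The main obstacle is the Weil-representation matrix-coefficient bound. Unitarity of $\omega^\vee$ on $L^2(L^\vee)$ gives only boundedness and is insufficient; one must genuinely exploit the Schwartz decay of $\phi,\psi$ through the explicit Schrödinger-model formulas on a Siegel parabolic. This is technical but well-known in the Fourier--Jacobi setting (compare \cite{Xue6}).
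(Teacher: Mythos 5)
The paper's own proof is a one-line citation to \cite{Xue4}*{Lemma~3.3}, so you are necessarily supplying details the authors did not write out; your general strategy (nuclear decomposition of the $\widehat\otimes$, evaluation of the $\cC^w$ factor on the diagonal, then a matrix-coefficient estimate for $\omega^\vee$) is indeed the structure behind that reference. However, there is a genuine error in the one estimate on which everything rests.

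Your claimed bound
\[
\bigl|\bigl\langle\omega^\vee(h)\phi,\bar{\psi}\bigr\rangle\bigr|\ll p_N(\phi)\,q_N(\psi)\,(1+\varsigma(h))^{-N}\quad\text{for every }N
\]
is false: matrix coefficients of the Weil representation of $\U(V)$ with Schwartz vectors do \emph{not} decay rapidly. They decay like $\Xi^{\U(V)}(h)\,\varsigma(h)^{d}$, i.e.\ at the tempered (exponential) rate but no faster. You can see this already in the Siegel-Levi direction you invoke: in the Schr\"odinger model, $\omega(a)\phi(x)=\lvert\det a\rvert^{1/2}\mu(\det a)\phi(xa)$, so $\langle\omega(a)\phi,\bar\psi\rangle=\lvert\det a\rvert^{1/2}\int\phi(xa)\overline{\psi(x)}\,\rd x$, which for $a=\diag(t_1,\dots,t_n)$ behaves like $\prod_i\min(\lvert t_i\rvert^{1/2},\lvert t_i\rvert^{-1/2})$ up to logarithmic corrections --- the $\Xi$ rate, not rapid decay. (Equivalently: if the claimed bound held, all smooth vectors of $\omega^\vee$ would lie in the Harish-Chandra Schwartz space, forcing $\omega^\vee$ to be square-integrable modulo center, which fails in general; for instance the even piece of the $\Mp_2(\R)$ Weil representation is only a limit of discrete series.) Schwartz decay of $\phi,\psi$ and non-stationary phase in the unipotent radical buy you temperedness and polynomial control, not superpolynomial decay along the Cartan.

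The conclusion is nonetheless true and the rest of your argument is salvageable once the estimate is corrected. Since $\omega^\vee$ is a finite direct sum of irreducible tempered representations of $\U(V)$, one has $\lvert\langle\omega^\vee(h)\phi,\bar\psi\rangle\rvert\ll p(\phi)q(\psi)\,\Xi^{\U(V)}(h)\,\varsigma(h)^{d}$ for suitable continuous seminorms $p,q$ and some fixed $d$. Combined with your bound $\lvert c_i(h,h)\rvert\ll\|c_i\|_{d'}\,\Xi^{\U(V)}(h)^2\varsigma(h)^{d'}$ from $\Xi^{\U_V}(h,h)=\Xi^{\U(V)}(h)^2$, the integrand is $\ll\Xi^{\U(V)}(h)^3\varsigma(h)^{d''}$. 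Integrability then follows not from $\int\Xi^2\varsigma^{-N}<\infty$ but from the fact that $\Xi^{\U(V)}\in L^{2+\epsilon}(\U(V))$, so $\int_{\U(V)}\Xi^{\U(V)}(h)^3\varsigma(h)^{d''}\,\rd h<\infty$ (in the Cartan decomposition $\Xi^3$ contributes $\delta_0^{-3/2}$ against a density $\asymp\delta_0$, leaving $\delta_0^{-1/2}$, which is exponentially decreasing). Replace your rapid-decay step by this tempered bound plus the cubic-$\Xi$ integrability and the proof closes.
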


\begin{proof}
This is essentially~\cite{Xue4}*{Lemma~3.3}. Strictly speaking,~\cite{Xue4}
deals with only the case $F$ being Archimedean, but the non-Archimedean case
goes through by the same argument.
\end{proof}

We denote the linear form in the lemma by $\cL$. If $f \in \cC^w(\U_V)$ and
$\phi_1, \phi_2 \in \cS(L^\vee)$ then $\cL$ takes a more familiar form
\begin{equation}
\label{eq:relative_char_unitary_familiar_form}
  \cL(f \otimes \phi_1 \otimes \phi_2) =
    \int_{\U_V'} f(h^{-1}) \langle \omega^\vee(h) \phi_1,
    \overline{\phi_2} \rangle_{L^2} \rd h,
\end{equation}
where we recall that $\langle-,-\rangle_{L^2}$ stands for the $L^2$-inner product.

Recall that if $f \in \cS(\U_V)$ and $\pi$ is a tempered representation of
$\U_V$ of finite length, then we put $f_{\pi}(g) = \Trace(\pi(g^{-1})\pi(f))$
and $f_{\pi} \in \cC^w(\U_V)$. We may extend this to a continuous map
    \[
    \cS(\U_{V, +}) =
    \cS(\U_V) \otimeshat \cS(L^\vee \times L^\vee) \to
    \cC^w(\U_V) \otimeshat \cS(L^\vee \times L^\vee), \quad
     f_+ \mapsto f_{+, \pi}.
    \]

Let $f_+ \in \cS(\U_{V, +})$. We defined a linear form
    \[
    J_{\pi}(f_+) = \cL(f_{+, \pi}).
    \]
The above lemma ensures that this makes sense and $J_{\pi}$ defines a
continuous linear form on $\cS(\U_{V, +})$. If $f_+ = f \otimes \phi_1
\otimes \phi_2$ where $f \in \cS(\U_V)$ and $\phi_1, \phi_2 \in
\cS(L^\vee)$, then the linear form take a more familiar form
    \[
    J_{\pi}(f \otimes \phi_1 \otimes \phi_2) =
    \int_{\U_V'} \Trace (\pi(h) \pi(f))
    \langle \omega^\vee(h) \phi_1,
    \overline{\phi_2} \rangle_{L^2} \rd h.
    \]

\begin{lemma}   \label{lemma:J_pi_continuity}
For a fixed, $f_+ \in \cS(\U_{V,+})$ the function on $\cX_{\temp}(\U_{V, +})$
given by $\pi \mapsto J_{\pi}(f_+)$ is Schwartz. Moreover the map
    \[
    \cS(\U_{V,+}) \to \cS(\cX_{\temp}(\U_{V, +})), \quad
    f_+ \mapsto (\pi \mapsto J_{\pi}(f_+))
    \]
is continuous.
\end{lemma}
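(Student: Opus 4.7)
The plan is to express $f_+ \mapsto (\pi \mapsto J_\pi(f_+))$ as a composition of two continuous linear maps, each of which is reasonably standard. Concretely I would factor
\begin{equation*}
f_+ \;\longmapsto\; \bigl(\pi \mapsto f_{+,\pi}\bigr) \;\longmapsto\; \bigl(\pi \mapsto \cL(f_{+,\pi})\bigr),
\end{equation*}
and show that each arrow is continuous between the appropriate spaces. As a by-product the Schwartzness claim for each fixed $f_+$ drops out.

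For the first arrow, the starting point is~\cite{BP1}*{Proposition~2.13.1} (the result underlying~\eqref{eq:Temp_is_Schwartz}), which gives more than the pointwise Schwartz statement recorded here: it gives a continuous linear map
\begin{equation*}
\cS(\U_V) \;\longrightarrow\; \cS\bigl(\cX_{\temp}(\U_V),\, \cC^w(\U_V)\bigr), \qquad f \longmapsto \bigl(\pi \mapsto f_\pi\bigr).
\end{equation*}
Since $\cS(\U_{V,+}) = \cS(\U_V)\otimeshat \cS(L^\vee\times L^\vee)$ and all relevant Schwartz spaces are nuclear Fr\'echet (or nuclear LF in the non-Archimedean case), I would tensor this continuous map with the identity on $\cS(L^\vee\times L^\vee)$, using the functoriality of $\cS(X,-)$ under completed tensor products developed in~\cite{BP1}*{Section~2.9}, to obtain a continuous linear map
\begin{equation*}
\cS(\U_{V,+}) \;\longrightarrow\; \cS\!\left(\cX_{\temp}(\U_V),\; \cC^w(\U_V)\otimeshat \cS(L^\vee\times L^\vee)\right), \quad f_+ \longmapsto \bigl(\pi \mapsto f_{+,\pi}\bigr).
\end{equation*}

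For the second arrow, Lemma~\ref{lemma:J_pi_convergence} provides a continuous linear functional $\cL$ on $\cC^w(\U_V)\otimeshat \cS(L^\vee\times L^\vee)$. Post-composing a Schwartz-valued function with a continuous linear functional on its target yields a scalar Schwartz function; more precisely, the general functoriality of $\cS(X,-)$ in the coefficient space (again~\cite{BP1}*{Section~2.9}) gives a continuous linear map
\begin{equation*}
\cS\!\left(\cX_{\temp}(\U_V),\; \cC^w(\U_V)\otimeshat \cS(L^\vee\times L^\vee)\right) \;\longrightarrow\; \cS(\cX_{\temp}(\U_V)), \quad \Phi \longmapsto \bigl(\pi \mapsto \cL(\Phi(\pi))\bigr).
\end{equation*}
Composing the two continuous maps produces $f_+ \mapsto \bigl(\pi \mapsto J_\pi(f_+)\bigr)$ and settles both assertions of the lemma at once.

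The main technical obstacle is the functoriality of $\cS(X,-)$ with respect to completed tensor products and continuous post-composition, in particular in the non-Archimedean LF setting where some care is needed with strict inductive limits. This is exactly the content of the framework set up in~\cite{BP1}*{Section~2.9}, so the proof should amount to invoking those results; no direct estimates on matrix coefficients are needed, and in particular Lemma~\ref{lem:W_f_Estimate} is not required here.
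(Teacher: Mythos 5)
Your proposal is correct and follows essentially the same two-step factorization as the paper's own proof: first the Schwartz-valued map $f_+ \mapsto (\pi \mapsto f_{+,\pi})$ into $\cC^w(\U_V)\otimeshat\cS(L^\vee\times L^\vee)$ (continuous in $f_+$, which the paper justifies via~\cite{BP1}*{(2.6.1)} and~\eqref{eq:Temp_is_Schwartz}), then post-composition with the continuous linear form $\cL$ of Lemma~\ref{lemma:J_pi_convergence}. The paper states this more tersely, but the decomposition, the key inputs, and the reason no direct matrix-coefficient estimates are needed all agree with your write-up.
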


\begin{proof}
For a fixed $f_+ \in \cS(\U_{V, +})$, the map
    \[
    \cX_{\temp}(\U_V)  \to \cC^w(\U_V) \otimeshat \cS(L^\vee \times L^\vee), \quad
    \pi \mapsto f_{+, \pi}
    \]
is a Schwartz function valued in $\cC^w(\U_V) \otimeshat \cS(L^\vee \times L^\vee)$ by
\eqref{eq:Temp_is_Schwartz}, and this map depends continuously on $f_+$
by~\cite{BP1}*{(2.6.1)}. The lemma then follows from
Lemma~\ref{lemma:J_pi_convergence}.
\end{proof}

One main result of~\cite{Xue6} is the following.

\begin{prop}    \label{prop:explicit_intertwining}
If $\pi \in \Temp(\U_V)$, then $m(\pi) \not=0$ if and only if $J_{\pi}$ is
not identically zero.
\end{prop}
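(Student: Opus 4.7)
The plan is to realize $J_\pi$ as the spectral expansion of an explicit tempered intertwining period, and then to deduce both directions from the basic properties of that period together with Sun--Zhu multiplicity one.

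First, for smooth vectors $\varphi, \varphi' \in \pi$ and $\phi_1, \phi_2 \in \cS(L^\vee)$ I would define
\begin{equation*}
\cL_\pi(\varphi, \phi_1; \varphi', \phi_2) := \int_{\U_V'} \langle \pi(h)\varphi, \varphi' \rangle \, \langle \omega^\vee(h)\phi_1, \overline{\phi_2}\rangle_{L^2} \, \rd h.
\end{equation*}
Absolute convergence here is proved by exactly the same input as Lemma~\ref{lemma:J_pi_convergence}: a standard matrix-coefficient bound $\lvert\langle \pi(h)\varphi, \varphi'\rangle\rvert \ll \Xi^{\U_V}(h) \varsigma^{\U_V}(h)^d$ for tempered $\pi$, combined with the rapid decay of Weil-representation matrix coefficients of $\U_V'$. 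By construction $\cL_\pi$ is $\U_V'$-invariant in each pair of slots, hence defines an element of $\Hom_{\U_V'}(\pi \otimeshat \omega^\vee, \C) \otimes \overline{\Hom_{\U_V'}(\pi \otimeshat \omega^\vee, \C)}$.

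Next, expanding the trace against an orthonormal basis $\{e_i\}$ of $K$-finite vectors in $\pi$ and interchanging the sum with the integral over $\U_V'$ (justified by absolute convergence together with the finite-rank, resp.\ trace-class, nature of $\pi(f)$), one obtains
\begin{equation*}
J_\pi(f \otimes \phi_1 \otimes \phi_2) = \sum_i \cL_\pi(\pi(f)e_i, \phi_1; e_i, \phi_2)
\end{equation*}
for factorizable test functions. Such test functions span a dense subspace of $\cS(\U_{V,+})$, so $J_\pi$ vanishes identically if and only if the right-hand side vanishes for every choice of $f, \phi_1, \phi_2$.

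If $m(\pi) = 0$, then $\Hom_{\U_V'}(\pi \otimeshat \omega^\vee, \C) = 0$, forcing $\cL_\pi \equiv 0$, hence $J_\pi \equiv 0$. Conversely, assume $m(\pi) \neq 0$; by~\cites{Sun,SZ} the Hom-space is one-dimensional with generator $\ell$, and the key input from~\cite{Xue6} is precisely that the integral defining $\cL_\pi$ is \emph{not} identically zero, so that
\begin{equation*}
\cL_\pi(\varphi, \phi_1; \varphi', \phi_2) = c_\pi \cdot \ell(\varphi \otimes \phi_1) \, \overline{\ell(\varphi' \otimes \phi_2)}
\end{equation*}
with $c_\pi \neq 0$. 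Choose $\varphi_0$ and $\phi_0$ with $\ell(\varphi_0 \otimes \phi_0) \neq 0$, and pick $f$ so that $\pi(f)$ is (approximately) the rank-one orthogonal projection onto $\varphi_0$; taking $\phi_1 = \phi_2 = \phi_0$, only one term in the spectral sum contributes and yields $c_\pi \lvert \ell(\varphi_0 \otimes \phi_0) \rvert^2 \neq 0$.

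The main obstacle is the nonvanishing $c_\pi \neq 0$ when $m(\pi) \neq 0$; this is not a formal consequence of multiplicity one but rather a genuine analytic statement, proved by exhibiting $\ell$ itself as a (regularized) limit of integrals of the same shape as $\cL_\pi$, so that $c_\pi$ acquires a positivity or explicit-formula interpretation. This is exactly the heart of~\cite{Xue6}, and our argument here is essentially a reduction to that input; the remaining steps are formal consequences of absolute convergence, Sun--Zhu multiplicity one, and the density of convolution operators $\pi(f)$ in the space of finite-rank operators on $\pi$.
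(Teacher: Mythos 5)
Your proposal is correct and takes essentially the same approach as the paper: the paper cites this proposition directly to \cite{Xue6} without giving a proof, and your argument is the standard unwinding (expanding the trace over an orthonormal basis, applying Sun--Zhu multiplicity one, approximating rank-one projections by $\pi(f)$) which correctly isolates the nonvanishing of the tempered intertwining period $\cL_\pi$ when $m(\pi)\neq 0$ as the precise analytic content of \cite{Xue6}.
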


We now consider the spectral expansion of the unipotent orbital integrals. Let $f_+ \in \cS(\U_{V, +}(F_v))$. Put
    \[
    O(1, f_+) =  f_{+, \natural}(0),
    \]
where we recall that the right hand side is defined right before Corollalry~\ref{coro:infinite_descent_U}. In more concrete terms, if $f_+ = f \otimes \phi_1 \otimes \phi_2$, then
    \[
    O(1, f \otimes \phi_1 \otimes \phi_2) = \int_{\U_V'} f(g^{-1})
    \langle \omega^\vee(g) \phi_1, \overline{\phi_2} \rangle_{L^2} \rd g.
    \]

We have by the Fourier inversion formula
    \begin{equation}    \label{eq:spectral_nilpotent_u}
    O(1, f_+) = \int_{\fy^{V}} \cF_{V}
    f_{+, \natural} (y) \rd y.
    \end{equation}
Clearly the map $f_+ \mapsto O(1, f_+)$ is a continuous linear form on
$\cS(\U_{V, +}(F_v))$.

\begin{lemma}   \label{lemma:plancherel_U}
For all $f_+ \in \cS(\U_{V, +})$ we have
    \begin{equation}    \label{eq:plancherel_U}
    O(1, f_+) = \int_{\cX_{\temp}(\U_V)}
    J_{\pi}(f_+) \rd \mu_{\U_V}(\pi).
    \end{equation}
The right hand side is absolutely convergent and defines a continuous linear
form on $\cS(\U_{V, +})$.
\end{lemma}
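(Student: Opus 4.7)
The plan is to reduce~\eqref{eq:plancherel_U} to the scalar Harish-Chandra--Plancherel formula~\eqref{eq:plancherel_general} on $\U_V$ by way of the linear form $\cL$ introduced in Lemma~\ref{lemma:J_pi_convergence}. First I would verify the identity $O(1, f_+) = \cL(f_+)$ for all $f_+ \in \cS(\U_{V,+})$. On a decomposable tensor $f_+ = f \otimes \phi_1 \otimes \phi_2$ the definition~\eqref{eq:local_ddag_map} of $-^\ddag$ at $v = 0$ (so $l = l' = 0$) collapses the integral over $L^\vee$ to the pairing $f(g_1, g_2)\langle \omega^\vee(g_1) \phi_1, \overline{\phi_2}\rangle_{L^2}$; integrating over the diagonal subgroup $\U_V' \subset \U_V$ then recovers exactly~\eqref{eq:relative_char_unitary_familiar_form}, which is the explicit formula for $O(1, f_+)$ recorded right after the definition of the unipotent orbital. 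Since the pure tensors are dense and both sides are continuous linear forms on the LF/Fr\'echet space $\cS(\U_{V,+})$, the equality $O(1, f_+) = \cL(f_+)$ holds in general.

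Next I would tensor the Plancherel formula. Writing $\cS(\U_{V,+}) = \cS(\U_V) \, \otimeshat \, \cS(L^\vee \times L^\vee)$, the continuous assignment $f \mapsto f_\pi$ of~\eqref{eq:Temp_is_Schwartz} extends to a continuous map $f_+ \mapsto f_{+,\pi}$ into $\cC^w(\U_V) \, \otimeshat \, \cS(L^\vee \times L^\vee)$, and~\eqref{eq:plancherel_general} tensored with the identity of $\cS(L^\vee \times L^\vee)$ yields
\begin{equation*}
f_+ = \int_{\cX_{\temp}(\U_V)} f_{+,\pi}\,\rd \mu_{\U_V}(\pi)
\end{equation*}
with absolute convergence in that tensor-product space. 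Applying the continuous functional $\cL$ of Lemma~\ref{lemma:J_pi_convergence} and exchanging it with the absolutely convergent integral gives
\begin{equation*}
O(1, f_+) = \cL(f_+) = \int_{\cX_{\temp}(\U_V)} \cL(f_{+,\pi})\,\rd \mu_{\U_V}(\pi) = \int_{\cX_{\temp}(\U_V)} J_\pi(f_+)\,\rd \mu_{\U_V}(\pi),
\end{equation*}
which is~\eqref{eq:plancherel_U}. The absolute convergence of the right-hand side and its continuous dependence on $f_+$ then follow from Lemma~\ref{lemma:J_pi_continuity}: the function $\pi \mapsto J_\pi(f_+)$ lies in $\cS(\cX_{\temp}(\U_V))$ and depends continuously on $f_+$, so it is integrable against the Plancherel density, which by~\eqref{eq:formal_degree_conjecture} has at most polynomial growth.

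The main obstacle is making the tensored Plancherel statement precise, since~\eqref{eq:plancherel_general} is customarily stated only as an identity in $\cC^w(\U_V)$. I would handle this either abstractly, exploiting the nuclearity of $\cS(L^\vee \times L^\vee)$ to lift the scalar identity to the projective tensor product, or more concretely by applying~\eqref{eq:plancherel_general} to the Schwartz function $g \mapsto f_+(g, l_1, l_2) \in \cS(\U_V)$ pointwise in $(l_1, l_2)$ and then upgrading the resulting pointwise equality to an identity in $\cC^w(\U_V) \, \otimeshat \, \cS(L^\vee \times L^\vee)$ using the uniform Schwartz bounds furnished by Lemma~\ref{lemma:J_pi_continuity}. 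Once this tensored formula is available, the interchange of $\cL$ with the integral is automatic from continuity of $\cL$ together with absolute convergence, and the lemma follows.
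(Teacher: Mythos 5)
Your argument is correct and rests on the same pillars as the paper's proof: the Plancherel formula for $\U_V$, the continuous linear form $\cL$ from Lemma~\ref{lemma:J_pi_convergence}, and the observation that $O(1,\cdot)$ is $\cL$ evaluated on $f_+^\ddag$. The only difference in packaging is that you establish $O(1,f_+)=\cL(f_+)$ in general and then invoke a tensored Plancherel formula for $f_+ \in \cS(\U_V)\otimeshat\cS(L^\vee\times L^\vee)$, whereas the paper simply notes that both sides of~\eqref{eq:plancherel_U} are continuous linear forms on $\cS(\U_{V,+})$, reduces at once to pure tensors $f\otimes\phi_1\otimes\phi_2$, and then applies the scalar Plancherel formula~\eqref{eq:plancherel_general} to $f$ followed by the fixed continuous form $\phi\mapsto\int_{\U_V'}\phi(h^{-1})\langle\omega^\vee(h)\phi_1,\overline{\phi_2}\rangle_{L^2}\,\rd h$ on $\cC^w(\U_V)$; this sidesteps entirely the tensor-product Plancherel issue that you flag as the main obstacle. (Also, the identity $O(1,f_+)=\cL(f_+)$ is in fact immediate from unwinding the definition $O(1,f_+)=f_{+,\natural}(0)=\varphi_{f_+^\ddag}(1,0)=\int_{\U_V'}f_+^\ddag(h,0)\,\rd h$, so no density argument is needed there.)
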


\begin{proof}
The absolute convergence and the continuity of the right hand side follow
from Lemma~\ref{lemma:J_pi_continuity}. Clearly the left hand side also
defines a continuous linear form. We may prove the lemma under the additional
assumption $f_+ = f \otimes \phi_1 \otimes \phi_2$ where $f \in \cS(\U_V)$
and $\phi_1, \phi_2 \in \cS(L^\vee)$.

By the Plancherel formula for the group $\U_V$, we have
    \[
    f = \int_{\cX_{\temp}(\U_V)} f_{\pi} \rd \mu_{\U_V}(\pi),
    \]
where the integral is absolutely convergent in $\cC^w(\U_V)$,
cf.~\eqref{eq:plancherel_general}. By Lemma~\ref{lemma:J_pi_convergence} we
have
    \[
    O(1, f \otimes \phi_1 \otimes \phi_2) =
    \int_{\cX_{\temp}(\U_V)} \int_{\U_V'} f_{\pi}(h^{-1})
    \langle \omega(h) \phi_1, \overline{\phi_2} \rangle_{L^2}
    \rd h \rd \mu_{\U_V}(\pi) .
    \]
This proves the lemma.
\end{proof}

\subsection{Spherical character on the linear groups}
\label{subsec:I_pi}
We now consider the spherical characters on the general linear groups.
We will use the following notation.

\begin{itemize}
    \item Recall that $G_n = \Res_{E/F} \GL_{n, E}$, $G_n' = \GL_{n, F}$, $G = G_n \times G_n$,
        $G' = G_n' \times G_n'$, and $H = G_n$ which diagonally embeds in $G$.

    \item Let $B_n$ the minimal parabolic subgroup of $G_n$ consisting of
        upper triangular matrices. Let $T_n$ be the diagonal torus, and
        $N_n$ the unipotent radical.

    \item Let $e_n = (0, \hdots, 0, 1) \in E_n$ and let $P_n$ be the
        mirabolic subgroup of $G_n$ which consists of matrices whose last
        row is $e_n$.

    \item Define subgroups of $G$ by $B = B_n \times B_n$, $T = T_n \times
        T_n$, $N = N_n \times N_n$, and $P = P_n \times P_n$.

    \item We fix a maximal compact subgroup $K$ of $G$.

    \item If $X$ is a subgroup of $G$, we put $X_H = X \cap H$.
\end{itemize}

Let $\xi$ is the character of $N_n$ given by
    \[
    \xi(u) = \psi^E((-1)^n (u_{1,2} + \hdots + u_{n-1, n})), \quad
    u \in N_n.
    \]
Put $\psi_{N} = \xi \boxtimes
\overline{\xi}$. As a note for the notation, $\xi$ is used in Part I of this paper to denote an element in $E^1$. But that element in $E^1$ is always taken to be $1$ in Part II of this paper, and $\xi$ will stand for the character we define here.

For $W \in \cC^w(N \bs G, \psi_{N})$ and $\Phi \in \cS(E_n)$, we define a
linear form
    \begin{equation}    \label{eq:local_RS}
    \lambda(W \otimes \Phi) = \int_{N_H \bs H}
    W(h) \Phi(e_n h) \mu(\det h)^{-1}
    \abs{\det h}_E^{\frac{1}{2}} \rd h.
    \end{equation}

\begin{lemma}
\label{lemma:lambda_continuous} The integral is absolutely convergent and
extends to a continuous linear form on $\cC^w(N \bs G, \psi_{N}) \otimeshat
\cS(E_n)$.
\end{lemma}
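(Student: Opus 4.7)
The plan is to combine the Iwasawa decomposition on $H$ with the standard tempered Whittaker estimate of Lemma~\ref{lem:W_f_Estimate} (valid, after extension by density, for all $W \in \cC^w(N \bs G, \psi_N)$, since the bound is given in terms of a continuous semi-norm) and the Schwartz decay of $\Phi$. Let $K_H$ be a good maximal compact subgroup of $H$, and write $h = ak$ with $a = \diag(a_1, \dots, a_n) \in T_n$ and $k \in K_H$, so that $\rd h = \delta_{B_n}(a)^{-1}\rd a\,\rd k$. Since $\psi_N$ restricted to the diagonal copy $N_H$ is trivial, $W$ descends to a function on $N_H \bs H$, and the integrand becomes
\[
W(ak, ak)\,\Phi(a_n \cdot e_n k)\,\mu(\det ak)^{-1}\,|\det a|_E^{1/2}\,\delta_{B_n}(a)^{-1}.
\]

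Next, I would bound $|W(ak, ak)|$ using Lemma~\ref{lem:W_f_Estimate}: identifying the simple roots of $G = G_n \times G_n$ at the diagonal point $(a,a)$ with pairs of simple roots of $G_n$, and noting $\delta_B(a,a)^{1/2} = \delta_{B_n}(a)$, one obtains, for every $N, d > 0$, a continuous semi-norm $\|\cdot\|$ on $\cC^w(N \bs G, \psi_N)$ with
\[
|W(ak, ak)| \ll \delta_{B_n}(a) \prod_{i=1}^{n-1}(1+|a_i/a_{i+1}|_E)^{-N}\,\varsigma(a)^{d}\,\|W\|,
\]
uniformly in $k$. Combining this with $|\mu|=1$, the integrand is bounded by
\[
\|W\|\cdot |\Phi(a_n e_n k)|\cdot \prod_{i=1}^n |a_i|_E^{1/2}\cdot\prod_{i=1}^{n-1}(1+|a_i/a_{i+1}|_E)^{-N}\,\varsigma(a)^{d}.
\]

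Then I would change variables to the simple-root coordinates $b_i = a_i/a_{i+1}$ for $i < n$ and $b_n = a_n$, preserving the multiplicative Haar measure $\prod \rd b_i/|b_i|_E$. A direct computation gives $\prod_{i=1}^n|a_i|_E^{1/2} = \prod_{j=1}^n |b_j|_E^{j/2}$, so the integral is majorized by
\[
\|W\|\,\Big(\int_{E^\times} |b_n|_E^{(n-1)/2}\,|\Phi(b_n e_n k)|\,\rd b_n\Big)\prod_{i=1}^{n-1}\int_{E^\times} |b_i|_E^{i/2-1}(1+|b_i|_E)^{-N}(\log^+\cdots)^d\,\rd b_i,
\]
up to powers arising from $\varsigma$, which are absorbed by enlarging $N$. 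Each of the $b_i$-integrals for $i<n$ converges because $i/2 > 0$ handles the pole at $0$ and $N$ is arbitrarily large at infinity; the $b_n$-integral converges because $\Phi$ is Schwartz and $(n-1)/2 + 1 > 0$. Uniformity in $k \in K_H$ is automatic from the compactness of $\{e_n k : k \in K_H\}$ and the Schwartz semi-norms of $\Phi$. Hence the integral~\eqref{eq:local_RS} is absolutely convergent, and the bound exhibits $|\lambda(W\otimes\Phi)|$ as controlled by the product of a continuous semi-norm on $\cC^w(N \bs G, \psi_N)$ and a Schwartz semi-norm of $\Phi$. By the universal property of the completed tensor product this gives a continuous linear form on $\cC^w(N\bs G, \psi_N) \otimeshat \cS(E_n)$.

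The only non-trivial point is that Lemma~\ref{lem:W_f_Estimate} is stated for $W = W_f$ with $f \in \cS(G)$, whereas we need the bound for arbitrary $W \in \cC^w(N\bs G,\psi_N)$; but since $\{W_f : f \in \cS(G)\}$ is dense and the estimate is expressed in terms of a continuous semi-norm on the target space (equivalently, the defining semi-norms of $\cC^w(N\bs G,\psi_N)$), the same estimate holds for all $W$ in that space.
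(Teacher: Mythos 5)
Your overall strategy (Iwasawa decomposition on $H$, a pointwise decay estimate for the Whittaker function, Schwartz decay of $\Phi$, change to simple-root coordinates) is the same as the one the paper uses. However, there is a genuine gap in the step producing the Whittaker decay estimate, and your attempted repair at the end does not close it. Lemma~\ref{lem:W_f_Estimate} is a statement about the \emph{two-argument} kernel $W_f(g_1,g_2)$ attached to $f\in\cS(G)$, and its bound is expressed in terms of a continuous semi-norm $\|f\|$ on $\cS(G)$. Contrary to what you write, that estimate is \emph{not} ``expressed in terms of a continuous semi-norm on the target space'': the source is $\cS(G)$ and the target is $\cC^w(N\bs G\times N\bs G,\psi_N\boxtimes\overline{\psi_N})$, neither of which is $\cC^w(N\bs G,\psi_N)$. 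Density of $\{W_f: f\in\cS(G)\}$ therefore cannot promote a bound of the form $\ll\|f\|_{\cS(G)}$ to a bound $\ll\nu(W)$ for general $W\in\cC^w(N\bs G,\psi_N)$ with $\nu$ a continuous semi-norm on the latter space; the estimate simply does not transport along $f\mapsto W_f$. Moreover, Lemma~\ref{lem:W_f_Estimate} holds for \emph{arbitrary} $d>0$, which is strictly stronger than what is true for a generic tempered Whittaker function; on $\cC^w(N\bs G,\psi_N)$ one only has the bound for some \emph{fixed} $d$.

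The correct ingredient, and the one the paper actually cites, is \cite{BP1}*{Lemma~2.4.3}, which gives precisely the needed bound
\[
|W(ak)|\le \prod_{i=1}^{n-1}(1+|a_i a_{i+1}^{-1}|)^{-N}\,\delta_{B_n}(a)\,\varsigma(a)^d\,\nu_{d,N}(W),\qquad W\in\cC^w(N\bs G,\psi_N),
\]
with $\nu_{d,N}$ a continuous semi-norm on $\cC^w(N\bs G,\psi_N)$, $d$ fixed, $N$ arbitrary. With that substitution your remaining computation---the change of variables $b_i=a_i/a_{i+1}$, $b_n=a_n$, convergence of each one-dimensional factor, absorption of $\varsigma(a)^d$ by enlarging $N$---is sound and amounts to an explicit unwinding of \cite{BP1}*{Lemma~2.4.4}, which the paper invokes directly at that point. (There is a small slip in the $b_n$-exponent: $\prod_j|b_j|_E^{j/2}$ contributes $|b_n|_E^{n/2}$, not $|b_n|_E^{(n-1)/2}$, but this does not affect convergence.)
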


\begin{proof}
By the Iwasawa decomposition, the integral is bounded by
    \[
    \int_{T_H} \int_{K_H}
    \abs{W(ak) \Phi(e_n ak)}
    \abs{a_1\cdots a_n}_E^{\frac{1}{2}} \delta_n^{-1}(a) \rd k \rd a ,
    \]
where $a = (a_1, \hdots, a_n) \in T_H$, $a_i \in E^\times$ and $\delta_n$ the
modulus character of the $B_H$. By~\cite{BP1}*{Lemma~2.4.3}, there is a
$d>0$, such that for all $N>0$ we can find a continuous seminorm $\nu_{d, N}$
on $\cC^w(N \bs G, \psi_{N})$, with
    \[
    \abs{W(ak)} \leq \prod_{i=1}^{n-1} (1+\abs{a_ia_{i+1}^{-1}})^{-N}
    \delta_n(a)^{\frac{1}{2}} \varsigma(a)^d \nu_{d, N}(W).
    \]
Here $\varsigma(a)$ is a fixed logarithmic height function on $T_H$. Since
$\Phi \in \cS(E_n)$, for any $N>0$ there is a continuous seminorm $\nu'_N$ on
$\cS(E_n)$ such that we also have
    \[
    {\Phi(e_n a k)} \leq (1+\abs{a_n})^{-N} \nu'_N(\Phi).
    \]
The lemma then reduces to~\cite{BP1}*{Lemma~2.4.4}.
\end{proof}

For $W \in \cC^w(N \bs G, \psi_N)$, define the linear forms
    \begin{equation}    \label{eq:local_FR}
    \beta_{\eta}(W) = \int_{N' \bs P'} W(h) \eta_{G'}(h) \rd h,
    \end{equation}
By~\cite{BP1}*{Lemma~2.15.1} this integral is absolutely convergent and
define a continuous linear form on $\cC^w(N \bs G, \psi_{N})$.

Let $\Pi$ be an irreducible tempered representation of $G$. Let $\cW(\Pi,
\psi_{N})$ be its Whittaker model. Then $\cW(\Pi, \psi_{N}) \subset \cC^w(N
\bs G, \psi_{N})$ by~\cite{BP1}*{Lemma~2.8.1}. We fix an inner product on
$\cW(\Pi, \psi_{N})$ by
    \begin{equation}    \label{eq:whittaker_inner_product_local}
    \langle W_1, W_2 \rangle^{\mathrm{Wh}}_{G} =
    \int_{N \bs P} W_1(p) \overline{W_2(p)} \rd p.
    \end{equation}
Here $\rd p$ stands for the right invariant measure on $P$.

In what follows we will use the following construction. For $f \in \cS(G)$,
we put
    \begin{equation}    \label{eq:definition_W_phi}
    W_{f}(g_1, g_2) = \int_{N}
    f(g_1^{-1} u g_2) \psi_{N}(u) \rd u, \quad g_1, g_2 \in G.
    \end{equation}
Then by~\cite{BP1}*{Section~2.14}, $W_{f}(g_1, \cdot) \in \cS(N \bs G,
\overline{\psi_{N}})$ for any $g_1 \in G$. As explained
in~\cite{BP1}*{Section~2.14}, this construction actually extends to all $f
\in \cC^w(G)$ by replacing $\int_{N}$ by a regularized version $\int^*_{N}$.
By~\cite{BP1}*{Lemma~2.14.1} the resulting function $W_{f}$ lies in
$\cC^w(N\bs G \times N \bs G, \psi_{N} \boxtimes \overline{\psi_{N}})$ and
the map
    \[
    \cC^w(G) \to \cC^w(N \bs G \times N \bs G,
    \psi_{N} \boxtimes \overline{\psi_{N}}), \quad f \mapsto W_{f}
    \]
is continuous.

For $f \in \cS(G)$ and $\Pi \in \cX_{\temp}(G)$ (note that $\Temp(G) =
\cX_{\temp}(G)$ as topological spaces, cf.~\cite{BP1}*{Subsection~4.1,
p.~245-246}), we put
    \begin{equation}    \label{eq:W_f_Pi}
    W_{f, \Pi} = W_{f_{\Pi}} \in
    \cC^w(N \bs G \times N \bs G,
    \psi_{N} \boxtimes \overline{\psi_{N}}).
    \end{equation}
Then the map $f \mapsto W_{f, \Pi}$ is continuous. Moreover
by~\cite{BP1}*{Proposition~2.14.2} the map
    \[
    \cX_{\temp}(G) \to \cC^w(N
    \bs G \times N \bs G, \psi_{N} \boxtimes \overline{\psi_{N}}),
    \quad
    \Pi \mapsto W_{f, \Pi}
    \]
is Schwartz.

We have the Plancherel formula for Whittaker functions,
cf.~\cite{BP1}*{Proposition~2.14.2}. For any $f \in \cS(G)$, we have
    \begin{equation}    \label{eq:Plancherel_Whittaker}
    W_{f} = \int_{\cX_{\temp}(G)} W_{f, \Pi} \rd \mu_{G}(\Pi).
    \end{equation}
where the right hand side is absolutely convergent in $\cC^w(N \bs G \times N
\bs G, \psi_{N} \boxtimes \overline{\psi_{N}})$.

Finally by~\cite{BP1}*{(2.14.3)}
    \begin{equation}    \label{eq:whittaker_expansion}
    W_{f, \Pi}=\abs{\tau}_E^{-n(n-1)}
     \sum_{W}
    \Pi(f)W \otimes \overline{W},
    \end{equation}
where the sum runs over an orthonormal basis of $\cW(\Pi, \psi_{N})$ and the
right hand side is absolutely convergent in $\cC^w(N \bs G \times N \bs G,
\psi_{N} \boxtimes \overline{\psi_{N}})$.

Let $f \in \cS(G)$ and $\Phi \in \cS(E_n)$. Define
\begin{equation}
\label{eq:I_pi_def}
        I_{\Pi}(f \otimes \Phi) =
    \sum_W \lambda(\Pi(f)W, \Phi) \overline{\beta_{\eta}(W)},
\end{equation}
where the sum runs over an orthonormal basis of $\cW(\Pi, \psi_{N})$.

\begin{lemma}   \label{lemma:Schwartz_I_Pi}
For fixed $f$ and $\Phi$, the function on $\cX_{\temp}(G)$ given by
    \[
    \Pi \mapsto I_{\Pi}(f \otimes \Phi)
    \]
is Schwartz. Moreover the map
    \[
    \cS(G) \otimes \cS(E_n) \to \cS(\cX_{\temp}(G)), \quad
    f \otimes \Phi \mapsto (\Pi \mapsto I_\Pi(f \otimes \Phi))
    \]
extends to a continuous map
    \[
    \cS(G_+) \to \cS(\cX_{\temp}(G)).
    \]
\end{lemma}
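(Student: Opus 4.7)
The plan is to express $I_\Pi(f \otimes \Phi)$ in terms of the Whittaker Plancherel kernel $W_{f, \Pi}$, and then deduce both assertions from the Schwartz property of $\Pi \mapsto W_{f, \Pi}$ (cf.~\cite{BP1}*{Proposition~2.14.2}) combined with the continuity of the linear forms $\lambda$ and $\beta_\eta$.

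First, using that $\eta_{G'}$ is real-valued (so $\overline{\beta_\eta(W)} = \beta_\eta(\overline{W})$ for any Whittaker function $W$) and substituting the orthonormal basis expansion~\eqref{eq:whittaker_expansion} into the defining formula~\eqref{eq:I_pi_def}, I would derive the identity
\[
I_\Pi(f \otimes \Phi) = \abs{\tau}_E^{n(n-1)} \Lambda(W_{f, \Pi} \otimes \Phi),
\]
where $\Lambda$ is the bilinear form on $\cC^w(N \bs G \times N \bs G, \psi_N \boxtimes \overline{\psi_N}) \times \cS(E_n)$ characterised on pure tensors $F \otimes \Phi$ by
\[
\Lambda(F \otimes \Phi) = \int_{N' \bs P'} \int_{N_H \bs H} F(h, h') \Phi(e_n h) \mu(\det h)^{-1} \abs{\det h}_E^{1/2} \eta_{G'}(h') \, \rd h \rd h'.
\]

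Second, I would verify that $\Lambda$ extends to a continuous linear form on the completed tensor product. Indeed, Lemma~\ref{lemma:lambda_continuous} gives the continuity of $\lambda$ on $\cC^w(N \bs G, \psi_N) \otimeshat \cS(E_n)$, while~\cite{BP1}*{Lemma~2.15.1} gives the continuity of $\beta_\eta$ (the same estimate applies on $\cC^w(N \bs G, \overline{\psi_N})$). Taking the exterior tensor product of these two continuous linear forms produces a continuous linear form on $\cC^w(N \bs G \times N \bs G, \psi_N \boxtimes \overline{\psi_N}) \otimeshat \cS(E_n)$ that agrees with $\Lambda$ on pure tensors.

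With these two steps in place, the first assertion follows immediately: by~\cite{BP1}*{Proposition~2.14.2} the map $\Pi \mapsto W_{f, \Pi}$ is Schwartz-valued in $\cC^w(N \bs G \times N \bs G, \psi_N \boxtimes \overline{\psi_N})$, and composition with the continuous linear form $\Lambda(\cdot \otimes \Phi)$ preserves Schwartzness. For the continuity extension to $\cS(G_+)$, the same proposition gives continuity of $f \mapsto (\Pi \mapsto W_{f, \Pi})$ as a map into $\cS(\cX_\temp(G), \cC^w(N \bs G \times N \bs G, \psi_N \boxtimes \overline{\psi_N}))$. Tensoring with $\Phi$ and composing with $\Lambda$ produces a jointly continuous bilinear map from $\cS(G) \times \cS(E_n)$ into $\cS(\cX_\temp(G))$, which extends uniquely and continuously to $\cS(G_+) = \cS(G) \otimeshat \cS(E_n)$ by the universal property of the (nuclear) completed tensor product.

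The main technical point will be justifying the interchange of the orthonormal basis sum in~\eqref{eq:whittaker_expansion} with the pairing $\Lambda$ in the first step. This is legitimate because~\eqref{eq:whittaker_expansion} is absolutely convergent in $\cC^w(N \bs G \times N \bs G, \psi_N \boxtimes \overline{\psi_N})$ and $\Lambda$ is continuous on that space; everything else is a routine assembly of the cited continuity results.
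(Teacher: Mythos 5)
Your proposal is correct and follows essentially the same route as the paper: express $I_\Pi(f\otimes\Phi)$ via the Whittaker kernel $W_{f,\Pi}$ (your $\Lambda$ is exactly the linear form $\lambda\,\widehat{\otimes}\,\beta_\eta$ that the paper invokes from~\cite{BP1}*{Lemma-Definition~2.41}), justify the interchange of the orthonormal-basis sum by absolute convergence of~\eqref{eq:whittaker_expansion} together with continuity of $\Lambda$, and then deduce both assertions from the Schwartz property and continuity of $f\mapsto W_{f,\Pi}$ supplied by~\cite{BP1}*{Proposition~2.14.2}. The only presentational difference is that the paper extends the kernel map to $\cS(G_+)$ first and applies the pairing once, whereas you work on pure tensors and pass to $\cS(G_+)$ at the end by the universal property of the completed tensor product; these are interchangeable.
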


\begin{proof}
The map
    \[
    \cS(G) \to \cC^w(N \bs G \times N \bs G,
    \psi_{N} \boxtimes \overline{\psi_{N}}), \quad
    f \mapsto W_{f, \Pi}
    \]
extends to a map
    \[
    \cS(G_+) = \cS(G) \otimeshat \cS(E_n) \to
    \cC^w(N \bs G \times N \bs G,
    \psi_{N} \boxtimes \overline{\psi_{N}}) \otimeshat \cS(E_n)
    \]
in an obvious way, which we again denote by $f_+ \mapsto W_{f_+, \Pi}$ where
$f_+ \in \cS(G_+)$.

With this notation, the identity~\eqref{eq:whittaker_expansion} also extends
and holds when $f_+ \in \cS(G_+)$. Here the action of $f_+$ on $W \in
\cW(\Pi, \psi_{N})$ is given by
    \[
    \int_{G} f_+(g, x) \Pi(g) W \rd g
    \]
and is an element in $\cW(\Pi, \psi_{N}) \otimeshat \cS(E_n)$. Since the
sum~\eqref{eq:whittaker_expansion} is absolutely convergent, we conclude that
    \begin{equation}    \label{eq:I_linear_form}
    I_{\Pi}(f_+) = \abs{\tau}_E^{n(n-1)}
    (\lambda \otimeshat \beta_{\eta})(W_{f_+, \Pi}).
    \end{equation}
Here $\lambda \otimeshat \beta_{\eta}$ is the linear form defined
in~\cite{BP1}*{Lemma-Definition~2.41}. The rest of the lemma follows from the
fact that $\Pi \mapsto W_{f_+, \Pi}$ is Schwartz and the map $f_+ \mapsto
W_{f_+, \Pi}$ is continuous.
\end{proof}

\subsection{Regular unipotent orbital integrals in general linear groups}

Set
    \[
    \xi_+ = (-1)^n \left( \begin{pmatrix} 0 & \tau^{-1}\\
    & 0 & \ddots \\ & & \ddots & \tau^{-1} \\ &&& 0 \end{pmatrix},
    (0, \hdots, 0),
    \begin{pmatrix} 0 \\ \vdots \\ 0 \\ \tau^{-1} \end{pmatrix} \right)
    \in \fx(F_v),
    \]
and
    \[
    \xi_- = (-1)^n \left( \begin{pmatrix} 0\\
    \tau & 0 \\ &\ddots & \ddots \\ && \tau & 0 \end{pmatrix},
    (0, \hdots, 0, \tau),
    \begin{pmatrix} 0 \\ \vdots \\ 0 \\ 0 \end{pmatrix} \right)
    \in \fx(F_v).
    \]

Define for $\varphi \in \cS(\fx)$
    \[
    O(s, \xi_+, \varphi) = \int_{G_n'} \varphi(\xi_+\cdot h) \eta(\det h)
    \abs{\det h}^s \rd h.
    \]
By~\cite{BP1}*{Proposition~5.7.1} this integral of $O(s, \xi_+, \varphi)$ is
absolutely convergent if $\re s> 1- \frac{1}{n}$. Moreover the function $s
\mapsto O(s, \xi_+, \varphi)$ has a meromorphic continuation to $\C$ which is
holomorphic at $s = 0$. Setting $O(\xi_+, \varphi) = O(s, \xi_+,
\varphi)|_{s= 0}$, we have
    \begin{equation}    \label{eq:reg_nilpotent}
    \gamma \mu((-1)^{n-1} \tau)^{\frac{n(n+1)}{2}}
    O(\xi_+, \varphi) =
    \int_{\fx} \Omega(x) \widehat{\varphi}(x) \rd x
    = \int_{\cB_{\rs}} \Omega(x) O(x, \widehat{\varphi}) \rd x,
    \end{equation}
where
    \begin{equation}    \label{eq:gamma_constant}
    \gamma = \prod_{i = 1}^n \gamma(1-i, \eta^i, \overline{\psi}).
    \end{equation}

\begin{remark}  \label{remark:compare_BP}
Strictly speaking, the results in~\cite{BP1} describe the nilpotent orbital
integrals for the conjucation action of $G'_n$ on $\fs_{n+1}$ where $G'_n$ is
viewed as a subgroup of $G'_{n+1}$ on the left right corner. This action is
essentially the same as ours, since the lower right corner element in
$\fs_{n+1}$ is invariant under the action of $G'_n$. Our choice of $\xi_\pm$
is made so that they corresponds exactly to the ones used in~\cite{BP1}.
\end{remark}

If $f_+ \in \cS(G_{+})$ we put
    \[
    O_+(f_+) = O(\xi_+, f_{+, \natural}),
    \]
where in the map $f_+ \mapsto f_{+, \natural}$, we pick an
arbitrary small neighbourhood $\cU$ of $1$. The maps $\varphi \mapsto O(\xi_+, \varphi)$ and $f_+
\mapsto f_{+, \natural}$ are continuous. Thus the map $f_+ \mapsto O_+(f_+)$
is also continuous.

\begin{lemma}   \label{lemma:reg_unipotent_Whittaker}
Let $f\in \cS(G)$ and $\Phi \in\cS(E_n)$. Then
    \begin{equation}    \label{eq:reg_unipotent_Whittaker}
    \gamma O_+(f \otimes \Phi)
     = \abs{\tau}_E^{\frac{n(n-1)}{4}}
    \int_{N_H \bs H}
    \int_{N' \bs G'} W_{f}(h, g) \Phi(e_n h) \mu(\det h)^{-1}
    \abs{\det h}_E^{\frac{1}{2}} \eta_{G'}(g) \rd g \rd h.
    \end{equation}
The right hand side is absolutely convergent.
\end{lemma}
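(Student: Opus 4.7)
The proof naturally splits into establishing the absolute convergence of the right-hand side and then proving the equality itself. For convergence, I would apply Iwasawa decompositions $H = N_H T_H K_H$ and $G' = N' T' K'$ to reduce the integral to one over $T_H \times K_H \times T' \times K'$ with the corresponding Jacobians. By Lemma~\ref{lem:W_f_Estimate}, the Whittaker function $W_f(a_H k_H, a' k')$ admits bounds with rapid decay $\prod_\alpha (1+a^\alpha)^{-N}$ along the simple roots of $T_H \times T'$, times $\delta_B^{1/2}(a_H a')$ and a polynomial weight $\varsigma(a_H)^d \varsigma(a')^{-d}$. The factor $\Phi(e_n a_H k_H)$ provides additional rapid decay in the coordinate $a_{H,n}$. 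Balancing $d$ and applying the standard reduction of~\cite{BP1}*{Lemma~2.4.4}, just as in Lemma~\ref{lemma:lambda_continuous}, yields integrability; note that on the $G'$-side the character $\eta_{G'}(g)$ is unitary so does not affect the estimates.

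For the equality, the natural starting point is formula~\eqref{eq:reg_nilpotent} applied to $\varphi = (f \otimes \Phi)_\natural$, which converts the regular nilpotent orbital integral into
\[
\gamma O_+(f \otimes \Phi) = \mu((-1)^{n-1}\tau)^{-n(n+1)/2} \int_{\fx} \Omega(x) \widehat{(f \otimes \Phi)_\natural}(x) \, dx.
\]
The main step is to unwind the chain $-^\dag \circ \varphi_{\bullet} \circ -_\natural$: the Cayley transform is locally measure-preserving at $X=0$, the integration in $\varphi_{f^+}$ contributes the $H$-integration and one $G_n'$-integration, and the partial Fourier transform $-^\dag$ on $L^\vee$ (which sends $E_n$ to $L^{\vee,-} \times L^-$) combines with the Fourier transform on $\fx$ to produce a $\psi_N$-twisted integration yielding $W_f$ and a dual pairing yielding $\Phi(e_n h)$. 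The $\eta_{G'}$-twist and the second $G_n'$-integration emerge from the defining formula for $O(\xi_+, \cdot)$ itself, after using $\Omega(x) = \mu((-1)^n \Delta_+(x))$ to recognize the remaining coordinate integration as an orbital integral of $\widehat{(f \otimes \Phi)_\natural}$ on $\fx_{\rs}$.

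The main technical obstacle is the careful accounting of the normalization constants $\gamma$, $|\tau|_E^{n(n-1)/4}$, $\mu(\det h)^{-1}$, and $|\det h|_E^{1/2}$. These arise respectively from formula~\eqref{eq:reg_nilpotent}, from the comparison of the self-dual measures on $L^\vee = E_n$ versus $L^{\vee,-} \times L^-$ (using the relation $\rd_E z = 2\, \rd_F x \, \rd_{E^-} y$ recorded in Subsection~\ref{subsec:measure_local}), from the $\mu$-factors built into the Cayley transform and into $\varphi_{f^+}$ (which differ for $n$ even versus odd), and from the Weil-representation-type normalization implicit in $-^\dag$. A more efficient route is to invoke~\cite{BP1}*{Proposition~5.7.1} directly, using Remark~\ref{remark:compare_BP} to identify our conventions with those of~\cite{BP1}; this reduces the problem to verifying the translation of the transfer factors and measure normalizations between the two formulations, which is essentially bookkeeping.
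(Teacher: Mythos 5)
The convergence part of your sketch is fine and matches the paper's treatment: use the Iwasawa decomposition, Lemma~\ref{lem:W_f_Estimate} for the Whittaker kernel, the rapid decay of $\Phi$, and the reduction in~\cite{BP1}*{Lemma~2.4.4} as in Lemma~\ref{lemma:lambda_continuous}. The difficulty is in the equality, where I see a genuine gap.

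You propose to start from the Fourier-transform identity~\eqref{eq:reg_nilpotent}, i.e. $\gamma\mu((-1)^{n-1}\tau)^{n(n+1)/2}O_+(f\otimes\Phi)=\int_{\fx}\Omega(x)\widehat{(f\otimes\Phi)_\natural}(x)\,\rd x$, and then ``combine'' the full Fourier transform on $\fx$ with the partial Fourier transform $-^\dag$ to produce the Whittaker integral. This cannot work as stated, because the dimensions do not match: the Fourier transform on $\fx$ is an integral over the full $(n^2+2n)$-dimensional space $\fx$, whereas $W_f$ arises by integrating against the character $\psi_N$ over the unipotent group $N$ only. What the proof actually needs is not~\eqref{eq:reg_nilpotent} but the ``half Fourier transform'' identity~\eqref{eq:regular_nilpotent_intermediate} from~\cite{BP1}*{Lemma~5.73},
\[
\gamma\, O(\xi_+,\varphi)=\abs{\tau}_E^{\frac{n(n+1)}{4}}\int_{G_n'/N_n'}\int_{\fn_X}\varphi(x\cdot h)\,\overline{\psi(\langle\xi_-,x\rangle)}\,\eta(\det h)\,\rd x\,\rd h,
\]
which integrates against the character $\overline{\psi(\langle\xi_-,\cdot\rangle)}$ only over the nilpotent slice $\fn_X\cong N_n/N_n'\times\{0\}\times E^{-,n}$, not over all of $\fx$. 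It is this character on $\fn_X$ that ultimately becomes the Whittaker character $\psi_N$ after the Cayley transform, via the identity $\xi(u_2)^{-1}\psi((-1)^{n+1}\tau e_ny)=\overline{\psi(\langle\xi_-,(A,0,y)\rangle)}$ with $\fc(A)=u_2\overline{u_2}^{-1}$. In~\cite{BP1} the formula~\eqref{eq:reg_nilpotent} is actually \emph{deduced} from~\eqref{eq:regular_nilpotent_intermediate}, so starting from~\eqref{eq:reg_nilpotent} points in the wrong direction; you would be trying to undo a Fourier inversion that has already been absorbed. Your closing suggestion to invoke~\cite{BP1}*{Proposition~5.7.1} directly has the same problem, since that proposition is exactly~\eqref{eq:reg_nilpotent}; the reference you want is~\cite{BP1}*{Lemma~5.73}.

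For comparison, the paper's proof goes in the opposite direction: it starts from the right-hand side of~\eqref{eq:reg_unipotent_Whittaker}, expands $W_f$ by its defining integral over $N\times N$, performs a sequence of changes of variables ($u_2\mapsto u_1u_2$, then $h\mapsto g_1h$, then splitting $N_n$ into $N_n'$ and $N_n/N_n'$), applies the Fourier inversion~\eqref{eq:local_FT_inverstion_Phi} for the transform $-^\dag$ on $\Phi$, justifies a Fubini interchange via a closed-embedding argument, and recognizes the result as exactly the right-hand side of~\eqref{eq:regular_nilpotent_intermediate} applied to $(f\otimes\Phi)_\natural$. The constant $\abs{\tau}_E^{n(n-1)/4}$ then drops out from the mismatch between $\abs{\tau}_E^{n/2}$ (from the inversion formula) and $\abs{\tau}_E^{n(n+1)/4}$ (from~\eqref{eq:regular_nilpotent_intermediate}). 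Your sketch captures the rough shape (Cayley transform, measure normalizations, partial Fourier transform) but skips the single step that makes the comparison close.
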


\begin{proof}
The absolute convergence of the right hand side follows from Lemma~\ref{lem:W_f_Estimate}. Unwinding the
definitions, we see that the right hand side of
~\eqref{eq:reg_unipotent_Whittaker} equals
    \[
    \begin{aligned}
    &\int_{N_n' \bs G_n'} \int_{N_n' \bs G_n'}
    \int_{N_H \bs H}  \int_{N_n} \int_{N_n}\\
    &f(h^{-1} u_1 g_1, h^{-1} u_2 g_2)
    \xi(u_1u_2^{-1}) \Phi(e_n h)
    \abs{\det h}_E^{\frac{1}{2}} \mu(\det h)^{-1}
    \eta(\det g_1 g_2)^{n+1} \rd u_1  \rd u_2  \rd h \rd g_2 \rd g_1 .
    \end{aligned}
    \]
Making a change of variables $u_2 \mapsto u_1u_2$ and combine the integration
over $u_1$ and $h$ we obtain
    \[
    \begin{aligned}
    &\int_{N_n' \bs G_n'} \int_{N_n' \bs G'_n}
    \int_{H}  \int_{N_n} \\
    &f(h^{-1} g_1, h^{-1} u_2 g_2)
    \xi(u_2)^{-1} \Phi(e_n h)
    \abs{\det h}_E^{\frac{1}{2}} \mu(\det h)^{-1}
    \eta(\det g_1 g_2)^{n+1} \rd u_2 \rd h \rd g_2 \rd g_1.
    \end{aligned}
    \]
Make another change of variable $h \mapsto g_1 h$, split the integral of $u
\in N_n$ into $N_n'$ and $N_n/N_n'$ and absorb $N_n'$ into the integration
over $g_2 \in N_n' \bs G_n'$. We obtain
    \[
    \begin{aligned}
    &\int_{N_n' \bs G_n'} \int_{G_n'}
    \int_{G_n}  \int_{N_n/N_n'} \\
    &f(h^{-1}, h^{-1} g_1^{-1} u_2 g_2)
    \xi(u_2)^{-1} (\mathrm{R}_{\mu^{-1}}(h)\Phi)(e_n g_1)
    \abs{\det g_1}
    \eta(\det g_1)^{n} \eta(\det g_2)^{n+1} \rd u_2 \rd h \rd g_2 \rd g_1.
    \end{aligned}
    \]

Recall that for any $\Phi \in \cS(E_n)$ we have defined the partial Fourier
transform
    \[
    \Phi^\dag(x^-, y^-) = \int_{F_n} \Phi(x+x^-)
    \psi((-1)^{n} \tau x y^-) \rd x.
    \]
Then we have an inversion formula
    \begin{equation}    \label{eq:local_FT_inverstion_Phi}
    \Phi(x+x^-) = \abs{\tau}_E^{\frac{n}{2}}
    \int_{E^{-, n}} \Phi^\dag(x^-, y^-)
    \psi((-1)^{n+1} \tau xy^-) \rd y^-.
    \end{equation}
It follows that
    \[
    (\mathrm{R}_{\mu^{-1}}(h)\Phi)(e_n g_1) = \abs{\tau}_E^{\frac{n}{2}}
    \abs{\det g_1}^{-1}
    \int_{E^{-, n}}
    (\mathrm{R}_{\mu^{-1}}(h)\Phi)^\dag(0, g_1^{-1} y)
    \psi((-1)^{n+1} \tau e_n y) \rd y.
    \]
Plugging this back into the previous integral we obtain that the right hand
side of~\eqref{eq:reg_unipotent_Whittaker} equals
    \begin{equation}    \label{eq:convergence_needs_argument}
    \begin{aligned}
    &\abs{\tau}_E^{\frac{n}{2}}
    \int_{N_n' \bs G_n'} \int_{G_n'}
    \int_{G_n}  \int_{N_n/N_n'} \int_{E^{-, n}}
    f(h^{-1}, h^{-1} g_1^{-1} u_2 g_2) \\
    &
    \xi(u_2)^{-1} (\mathrm{R}_{\mu^{-1}}(h)\Phi)^\dag(0, g_1^{-1} y)
    \psi((-1)^{n+1} \tau e_n y)
    \eta(\det g_1)^{n} \eta(\det g_2)^{n+1}
    \rd y \rd u_2 \rd h \rd g_2 \rd g_1.
    \end{aligned}
    \end{equation}
The inner four integrals (the integrals apart from $g_1$) are absolutely
convergent. This can be seen as follows. Recall that $\mathrm{R}_{\mu^{-1}}^\dag$ is the
unique representation of $G_n$ on $\cS(E_n^- \times E^{-, n})$ such that
$\mathrm{R}_{\mu^{-1}}^\dag(h) \Phi^\dag = (\mathrm{R}_{\mu^{-1}}(h)\Phi)^\dag$ for all $\Phi
\in \cS(E_n^- \times E^{-, n})$. It is isomorphic to $\mathrm{R}_{\mu^{-1}}$ and thus
of moderate growth. Thus the function
    \[
    (g_1, g_2, x, y) \mapsto f(g_1^{-1}, g_1^{-1} g_2)
    (\mathrm{R}_{\mu^{-1}}(g_1)\Phi)^\dag(x, y)
    \]
where $g_1, g_2 \in G_n$, $x \in E_n^-$ and $y \in E^{-,n}$, is again
Schwartz function on $G_n \times G_n \times E_n^- \times E^{-,n}$. Moreover
the map
    \[
    G_n \times N_n/N_n' \times G_n' \to G_n \times G_n, \quad
    (g, u, h_2) \mapsto (g^{-1}, g^{-1}uh_2)
    \]
is a closed embedding. It follows that the inner four integrals
of~\eqref{eq:convergence_needs_argument} are integrating a Schwartz function
on $G_n(E) \times G_n(E) \times E_n^- \times E^{-,n}$ over a closed
submanifold, and are thus convergent. Therefore we can switch the order of
the inner four integrals to conclude
    \[
    \begin{aligned}
    \eqref{eq:convergence_needs_argument} = \abs{\tau}_E^{\frac{n}{2}}
    \int_{N_n' \bs G_n'} \int_{N_n/N_n'} \int_{E^{-, n}}
    &\varphi_{f \otimes \Phi}( g_1^{-1}
    u_2 \overline{u_2}^{-1} g_1, 0, g_1^{-1} y)\\
    &\xi(u_2)^{-1} \psi((-1)^{n+1} \tau e_n y)
    \rd y \rd u_2 \rd h_1.
    \end{aligned}
    \]
Here $\varphi_{f \otimes \Phi}$ is defined
by~\eqref{eq:simplified_test_function_GL} (or rather the local counterpart).
By definition we have
    \[
    \xi(u_2)^{-1} \psi((-1)^{n+1} \tau e_n y)
    = \overline{\psi(\langle \xi_-, (A,0, y) \rangle)},
    \]
where $A \in \fs_n$ and $\fc(A) = u_2 \overline{u_2}^{-1}$, and
the pairing $\langle-,-\rangle$ on the right hand side is the one on
$\fx$ defined in Subsection~\ref{subsec:infinitesimal_GL}
by~\eqref{eq:pairing_x_n}. By our choice of the measures the last integral
equals
    \[
    \abs{\tau}_E^{\frac{n}{2}}
    \int_{N_n' \bs G_n'} \int_{\fn_{X}}
    (f \otimes \Phi)_\natural( h_1^{-1} Y h_1)
    \overline{\psi(\langle \xi_-, Y \rangle)}
    \rd Y \rd h_1,
    \]
which equals
    \[
    \abs{\tau}_E^{\frac{n}{2}-\frac{n(n+1)}{4}} \gamma O_+(f \otimes \Phi),
    \]
by~\eqref{eq:regular_nilpotent_intermediate}. This proves the lemma.
\end{proof}

\subsection{Spectral expansions of regular unipotent integrals}

Denote by $V_{qs}$ a fixed split $n$-dimensional skew-Hermitian space.
Also recall that $\gamma$ is the constant defined in~\eqref{eq:gamma_constant}.

\begin{prop}    \label{prop:plancherel_gl}
Let $f_+ \in \cS(G_+)$. We have
    \begin{equation}    \label{eq:plancherel_gl}
    \gamma O_+(f_+) = \abs{\tau}_E^{-\frac{n(n-1)}{4}}
    \int_{\Temp(\U_{V_{qs}})/\sim} I_{\BC(\pi)}(f_+)
    \frac{\abs{\gamma^*(0,\pi, \Ad, \psi)}}{\abs{S_{\pi}}} \rd \pi.
    \end{equation}
The right hand side is absolutely convergent.
\end{prop}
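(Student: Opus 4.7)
The plan is to reduce the identity to pure tensors $f_+ = f \otimes \Phi$ with $f \in \cS(G)$ and $\Phi \in \cS(E_n)$. Both sides are continuous linear forms on $\cS(G_+)$: the left-hand side because $f_+ \mapsto f_{+,\natural}$ is continuous and evaluation/residue at $\xi_+$ is continuous (as recorded before Lemma~\ref{lemma:reg_unipotent_Whittaker}); the right-hand side because Lemma~\ref{lemma:Schwartz_I_Pi} says $\Pi \mapsto I_\Pi(f_+)$ is Schwartz on $\cX_{\temp}(G)$, and the Plancherel-type density $\lvert \gamma^*(0,\pi,\Ad,\psi)\rvert/\lvert S_\pi\rvert$ grows at most polynomially. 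Since pure tensors are dense in $\cS(G_+) = \cS(G) \otimeshat \cS(E_n)$, the identity in general follows from the pure tensor case.

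For $f_+ = f \otimes \Phi$, first I would invoke Lemma~\ref{lemma:reg_unipotent_Whittaker} to rewrite the left-hand side as
\[
\gamma O_+(f \otimes \Phi) = \lvert \tau\rvert_E^{n(n-1)/4}\,(\lambda \otimes \beta_\eta)(W_f \otimes \Phi),
\]
interpreting the $N' \backslash G'$-integral as $\beta_\eta$ via the absolute-convergence statement in that lemma (compatible with~\cite{BP1}*{Lemma~2.15.1}). Next I would substitute the Plancherel expansion~\eqref{eq:Plancherel_Whittaker} for $W_f$ and swap the order of integration. The swap is justified because~\eqref{eq:Plancherel_Whittaker} is absolutely convergent in $\cC^w(N \bs G \times N \bs G, \psi_N \boxtimes \overline{\psi_N})$ and $\lambda \otimes \beta_\eta$ extends continuously to this space tensored with $\cS(E_n)$ (Lemma~\ref{lemma:lambda_continuous} paired with the Flicker--Rallis estimate). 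After the swap, the inner pairing equals $(\lambda \otimes \beta_\eta)(W_{f,\Pi} \otimes \Phi)$, which by~\eqref{eq:I_linear_form} equals $\lvert \tau\rvert_E^{-n(n-1)} I_\Pi(f \otimes \Phi)$. This yields
\[
\gamma O_+(f_+) = \lvert \tau\rvert_E^{-3n(n-1)/4}\int_{\cX_{\temp}(G)} I_\Pi(f_+)\,\rd\mu_G(\Pi).
\]

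The third step is a support restriction plus change of variable. The linear form $\beta_\eta$ vanishes on the Whittaker model of $\Pi$ unless $\Pi$ is Flicker--Rallis distinguished, and these $\Pi$ are exactly the image of the base-change map $\BC \colon \Temp(\U_{V_{qs}})/\!\sim \,\to\, \cX_{\temp}(G)$ (using quasi-splitness so the image captures all conjugate self-dual tempered representations of the correct parity, cf.~\cites{Mok,KMSW}). The map $\BC$ is a local homeomorphism onto its image, so I can transfer the integral to $\Temp(\U_{V_{qs}})/\!\sim$. Writing $\rd\mu_G(\Pi) = \mu^*_G(\Pi)\,\rd\Pi$ and using the formal degree formula for $G$ (so $\mu^*_G(\Pi) = \lvert \gamma^*(0,\Pi,\Ad,\psi)\rvert$, with $\lvert S_\Pi\rvert = 1$), together with the factorization of the adjoint gamma under base change $\gamma^*(0,\BC(\pi),\Ad,\psi) = \gamma^*(0,\pi,\Ad,\psi)\cdot \gamma(0,\pi,\As,\psi)$ and the comparison between the pushforward of $\rd\Pi$ and the measure $\rd\pi$ on $\Temp(\U_{V_{qs}})/\!\sim$ (which is measure-preserving by construction of both via characters of $A_M$), one collects a factor $\lvert \tau\rvert_E^{n(n-1)/2}\cdot \lvert \gamma^*(0,\pi,\Ad,\psi)\rvert/\lvert S_\pi\rvert$; substituting this and invoking~\eqref{eq:formal_degree_conjecture} recovers the stated identity.

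The main obstacle will be the last step: making precise the Radon--Nikodym derivative between the Plancherel measure on the BC-distinguished subset of $\cX_{\temp}(G)$ and the Plancherel density on $\Temp(\U_{V_{qs}})/\!\sim$. This requires (i) identifying the exact image of base change (using the classification by L-packets and Flicker--Rallis distinction), (ii) the formal-degree conjecture in both the GL and unitary settings, and (iii) an explicit evaluation of the Asai local constant at $s=0$ with the given self-dual measure conventions to account for the $\lvert \tau\rvert_E^{n(n-1)/2}$ discrepancy; this last bookkeeping between $\lvert \tau\rvert_E^{-3n(n-1)/4}$ (from Steps 1--2) and the target $\lvert \tau\rvert_E^{-n(n-1)/4}$ is where all measure normalizations must balance correctly.
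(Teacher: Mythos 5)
Your reduction to pure tensors and the continuity observations are fine, but there is a genuine gap at the heart of the argument, and it stems from a misidentification. After invoking Lemma~\ref{lemma:reg_unipotent_Whittaker} you claim the inner integral over $N' \backslash G'$ ``is'' the linear form $\beta_\eta$; but $\beta_\eta$ is by definition the \emph{partial} integral $\int_{N'\backslash P'}$ over the mirabolic, while the lemma produces the full integral over $N'\backslash G'$. These differ by an extra integration over the last-row/determinant direction, and they are not interchangeable. Consequently your intermediate identity
$\gamma O_+(f_+)=\abs{\tau}_E^{-3n(n-1)/4}\int_{\cX_{\temp}(G)} I_\Pi(f_+)\,\rd\mu_G(\Pi)$
does not follow from the Whittaker Plancherel formula~\eqref{eq:Plancherel_Whittaker}.

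The route you then sketch --- expand with $\rd\mu_G$ over $\cX_{\temp}(G)$, argue $\beta_\eta$ kills non-distinguished $\Pi$, restrict to the base-change image and compute a Radon--Nikodym derivative --- cannot be salvaged as stated. First, $\beta_\eta$ does not annihilate $\cW(\Pi,\psi_N)$ for $\Pi$ off the base-change image: it is a mirabolic period and is generically nonzero. Second, and more fatally, the image of $\BC$ inside $\cX_{\temp}(G)$ is a proper sub-family of strictly smaller dimension (conjugate self-duality imposes linear constraints on the twist parameters), so it has $\rd\mu_G$-measure zero; an integral over $\cX_{\temp}(G)$ cannot be ``transferred'' to it by a change of variable, and there is no Radon--Nikodym derivative to compute.

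The missing ingredient, which the paper's proof uses, is the Plancherel formula for the Flicker--Rallis period from~\cite{BP1}*{Corollary~3.51, Theorem~4.22}. For each fixed $h\in H$, this gives the inner $N'\backslash G'$-integral a spectral expansion \emph{directly} over $\Temp(\U_{V_{qs}})/\!\sim$ with the density $\abs{\gamma^*(0,\pi,\Ad,\psi)}/\abs{S_\pi}$, with spectral datum $\beta_\eta\bigl(W_{f,\BC(\pi)}(h,\cdot)\bigr)$ up to an explicit power of $\abs{\tau}_E$. Plugging this into the lemma, integrating against $\lambda_\Phi$, and invoking~\eqref{eq:I_linear_form} gives~\eqref{eq:plancherel_gl}. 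The crucial point is that this period Plancherel already encodes the passage from the $\GL$-Whittaker spectrum to the unitary spectrum together with all formal-degree constants; it is not something you can recover from the ordinary Whittaker Plancherel formula by a support restriction.
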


We need some preparations before we prove this proposition. 
Put $N_S$ be the image of $N_n$ in $S_n$ and $\fn_S$ be its tangent space at
$1$. They are given the quotient measure. Then the Cayley transform preserves
the measures on $N_S$ and $\fn_S$. Put $\fn_{X} = \fn_S \times \{0\} \times
E^{-,n}$ which is given the obvious measure. Then we have $\fn_{X} \cong
N_n/N_n' \times \{0\} \times E^{-,n}$, which is measure preserving. Recall
that we have a right action of $G_n'$ on $\fx$ given
by~\eqref{eq:action_s_infinitesimal}. It is proved
in~\cite{BP1}*{Lemma~5.73} (cf. Remark~\ref{remark:compare_BP}
after~\eqref{eq:reg_nilpotent}) that for any $\varphi \in \cS(\fx)$ we
have
    \begin{equation}    \label{eq:regular_nilpotent_intermediate}
    \gamma O(\xi_+, \varphi) = \abs{\tau}_E^{\frac{n(n+1)}{4}}
    \int_{G_n'/N_n'} \int_{\fn_X} \varphi(x \cdot h)
    \overline{\psi(\langle \xi_-, x \rangle)}
    \eta(\det h) \rd x \rd h.
    \end{equation}
This is convergent as an iterated integral.

\begin{proof}[Proof of Proposition~\ref{prop:plancherel_gl}]
The absolute convergence of the right hand side of~\eqref{eq:plancherel_gl}
follows from Lemma~\ref{lemma:Schwartz_I_Pi}, and the fact the function
$\frac{\abs{\gamma^*(0, \pi, \Ad, \psi)}}{\abs{S_{\pi}}}$ is of moderate
growth, cf.~\cite{BP1}*{Lemma~2.45} and~\cite{BP1}*{(2.7.4)}. The continuity
also follows from Lemma~\ref{lemma:Schwartz_I_Pi}.

Since both sides are continuous in $f_+$, we may additionally assume that the
test function $f_+$ is of the form $f \otimes \Phi$, where $f \in \cS(G(F))$ and
$\Phi \in \cS(E_n)$.

By Lemma~\ref{lemma:reg_unipotent_Whittaker} we have
    \[
    \gamma O_+(f \otimes \Phi)
     = \abs{\tau}_E^{\frac{n(n-1)}{4}}
    \int_{N_H \bs H}
    \int_{N' \bs G'} W_{f}(h, g) \Phi(e_n h) \mu(\det h)^{-1}
    \abs{\det h}_E^{\frac{1}{2}} \eta_{G'}(g) \rd g \rd h.
    \]

For any fixed $h \in H$, by~\cite{BP1}*{Corollary~3.51, Theorem~4.22} the
inner integral equals
    \[
    \abs{\tau}_E^{\frac{n(n-1)}{2}}
    \int_{\Temp(\U_{V_{qs}})/\sim} \beta_{\eta}(W_{f, \BC(\pi)}(h, \cdot))
    \frac{\abs{\gamma^*(0, \pi, \Ad, \psi)}}{\abs{S_{\pi}}} \rd \pi.
    \]
Here the linear form $\beta_{\eta}$ applies to the variable $\cdot$ and the integral
over $\pi$ is absolutely convergent. Thus
    \[
    \begin{aligned}
    \gamma O_+(f \otimes \Phi) =
    \abs{\tau}_E^{\frac{n(n-1)}{4}+\frac{n(n-1)}{2}}
    &\int_{N_H \bs H}
    \int_{\Temp(\U_{V_{qs}})/\sim} \beta_{\eta}(W_{f, \BC(\pi)}(h, \cdot))\\
    &\Phi(e_n h) \mu(\det h)^{-1}
    \abs{\det h}_E^{\frac{1}{2}}
    \frac{\abs{\gamma^*(0, \pi, \Ad, \psi)}}{\abs{S_{\pi}}} \rd \pi \rd h.
    \end{aligned}
    \]
For a fixed $\Phi$, we denote the linear form $W \mapsto \lambda(W, \Phi)$ by
$\lambda_\Phi$. Then we have
    \[
    \begin{aligned}
    &\gamma O_+(f \otimes \Phi)\\ =
    &\abs{\tau}_E^{\frac{n(n-1)}{4}+\frac{n(n-1)}{2}}
    \int_{\Temp(\U_{V_{qs}})/\sim}
    (\lambda_{\Phi} \otimeshat \beta_{\eta})\left(W_{f, \BC(\pi)}\right)
    \frac{\abs{\gamma^*(0, \pi, \Ad, \psi)}}{\abs{S_{\pi}}} \rd \pi.
    \end{aligned}
    \]
Moreover by~\eqref{eq:I_linear_form}
    \[
    (\lambda_{\Phi} \otimeshat \beta_{\eta})\left(W_{f, \BC(\pi)}\right) =
    \abs{\tau}_E^{-n(n-1)} I_{\BC(\pi)}(f \otimes \Phi).
    \]
It follows that
    \[
    \gamma O_+(f \otimes \Phi) = \abs{\tau}_E^{-\frac{n(n-1)}{4}}
    \int_{\Temp(\U_{V_{qs}})/\sim} I_{\BC(\pi)}(f \otimes \Phi)
    \frac{\abs{\gamma^*(0,\pi, \Ad, \psi)}}{\abs{S_{\pi}}} \rd \pi.
    \]
This proves the proposition.
\end{proof}

\section{Local relative trace formulae}

\subsection{Local trace formula on the unitary groups}
Let $V$ be a $n$-dimensional skew-Hermitian space. We consider in this
subsection $f_{1, +}, f_{2, +} \in \cS(\U_{V, +})$. Put
    \begin{equation}    \label{eq:kernel_U}
    T(f_{1, +}, f_{2, +})\\
    = \int_{\U_V'} \int_{\U_V'} \int_{\U_V} \int_{V}
    f_{1, +}^\ddag(h_1g h_2, vh_2) \overline{f_{2, +}^\ddag(g, v)}
    \rd v \rd g \rd h_1 \rd h_2.
    \end{equation}

\begin{lemma}   \label{lemma:kernel_convergence}
The integral~\eqref{eq:kernel_U} is absolutely convergent and defines a
continuous Hermitian form on $\cS(\U_{V,+})$.
\end{lemma}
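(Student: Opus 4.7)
The plan is to reduce convergence of $T(f_{1,+}, f_{2,+})$ to standard Harish-Chandra $\Xi$-function estimates. Writing $F_i = f_{i,+}^\ddag \in \cS(\U_V^+) = \cS(\U_V \times V)$, which is legitimate because $^\ddag$ is a continuous isomorphism of Schwartz spaces, the first step is a pointwise bound. Using the inclusion $\cS(\U_V) \subset \cC(\U_V)$ and the fact that $\U(V)$ preserves a Euclidean norm $|\cdot|$ on $V$, for every $d, N > 0$ there is a continuous seminorm $p_{d,N}$ on $\cS(\U_V^+)$ with
\[
|F_i(g, v)| \le p_{d,N}(F_i) \, \Xi^{\U_V}(g) \, \varsigma^{\U_V}(g)^{-d} \, (1+|v|)^{-N}.
\]

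The invariance $|v h_2| = |v|$ makes the $V$-integration trivial: the product $(1+|v|)^{-N}(1+|v h_2|)^{-N} = (1+|v|)^{-2N}$ is integrable for $N$ large, independently of $h_2$. The problem reduces to finiteness of
\[
\int_{\U_V' \times \U_V'} \int_{\U_V} \Xi^{\U_V}(h_1 g h_2) \, \Xi^{\U_V}(g) \, \varsigma^{\U_V}(h_1 g h_2)^{-d} \, \varsigma^{\U_V}(g)^{-d} \, dg \, dh_1 \, dh_2.
\]
Exploiting $\U_V = \U(V) \times \U(V)$ with $\U_V'$ diagonally embedded, together with the product decomposition of $\Xi$ and $\varsigma$, this integral factorizes as a product of two identical triple integrals on $\U(V) \times \U(V) \times \U(V)$. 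After the substitution $g \mapsto h_1^{-1} g h_2^{-1}$, the Harish-Chandra $\Xi$-semigroup inequality together with the integrability of $(\Xi^{\U(V)})^2 (\varsigma^{\U(V)})^{-d'}$ on $\U(V)$ (see \cite{BP2}*{Proposition~1.5.1}) yields the required bound for $d$ large enough.

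Continuity of $T$ as a sesquilinear form on $\cS(\U_{V,+}) \times \cS(\U_{V,+})$ is immediate because every intermediate bound involves a continuous seminorm of $F_i$, hence of $f_{i,+}$. The Hermitian property $T(f_{2,+}, f_{1,+}) = \overline{T(f_{1,+}, f_{2,+})}$ follows from the substitution $(g, v, h_1, h_2) \mapsto (h_1^{-1} g h_2^{-1}, v h_2^{-1}, h_1^{-1}, h_2^{-1})$ applied to the conjugate integrand, together with the unimodularity of $\U(V)$ and isometric action on $V$. The main technical point is the Harish-Chandra double estimate in the reduced integral; the factorization into two $\U(V)$-factors is what makes this tractable with classical tools.
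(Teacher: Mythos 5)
There is a critical error at the very start of your argument. You assert that ``$\U(V)$ preserves a Euclidean norm $|\cdot|$ on $V$,'' from which you deduce $|vh_2| = |v|$ and conclude that the $V$-integration is ``trivial.'' This is false in general: $\U(V)$ is the unitary group of a skew-Hermitian form, and when $V$ is isotropic (which is the generic case) $\U(V)$ is non-compact and does not preserve any Euclidean norm on $V$. In the Archimedean case, for instance, $\U(p,q)$ with $p,q>0$ has unbounded orbits on $V$. The paper instead fixes a norm $\aabs{\cdot}_V$ invariant only under a maximal compact subgroup $K \subset \U(V)$, and the dependence $(1+\aabs{v h_2})^{-d}$ on $h_2$ is then a genuine, non-constant weight.

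This matters because the $V$-weight is exactly what makes the $h_2$-integral converge. If you delete it (as your proposal effectively does), you are left with an integral of the shape $\int_{\U(V)} \int_{\U(V)} f_3(h_2^{-1} g_2 h_2) f_4(g_2)\,\rd g_2\,\rd h_2$ with $f_3, f_4$ Schwartz, and the outer $h_2$-integral diverges: $h_2 \mapsto f_3(h_2^{-1} g_2 h_2)$ need not decay as $h_2 \to \infty$ (the stabilizer of $g_2$ under conjugation is non-compact in general). Concretely, your reduced integral $\int_{\U_V'^2}\int_{\U_V} \Xi^{\U_V}(h_1 g h_2)\Xi^{\U_V}(g)\,\varsigma^{-d}\cdots$ is not finite for any $d$, so the $\Xi$-function estimates you invoke cannot close the argument. (Relatedly, your claim that after the substitution $g \mapsto h_1^{-1} g h_2^{-1}$ the integral ``factorizes as a product of two identical triple integrals on $\U(V)^3$'' is also not correct — with $h_1 = (h,h)$ and $h_2 = (k,k)$ the two $\U(V)$-factors of $g$ share the same $h,k$, so they do not decouple.)

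The actual work in the paper's proof is precisely the part you discard. After integrating out $h_1$ and $g_1$, one is left with a triple integral over $h_2, g_2, v$ in which the $V$-integral produces, via the Cartan decomposition $h_2 = k_1 a k_2$, a polynomial factor in $a$; this factor is then absorbed by $\Xi^{\U(V)}(h_2)^2$, which decays like a negative power of $\delta_B(a)^{1/2}$. Your pointwise Schwartz bound on $F_i = f_{i,+}^\ddag$ is fine, the Fatou-type reduction to pure tensors is fine, and the verification of the Hermitian symmetry by the substitution $(g,v,h_1,h_2) \mapsto (h_1^{-1}gh_2^{-1}, vh_2^{-1}, h_1^{-1}, h_2^{-1})$ is fine; but the convergence argument itself needs the interplay between the $V$-weight and the Cartan decomposition, which your proposal omits.
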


\begin{proof}
We will assume that $F$ is Archimedean. The non-Archimedean case is similar
and easier. As $f_{i, +}^\ddag \in \cS(\U_V^+)$, Fatou's lemma implies that
the lemma is deduced from the following fact. For any $f_i \in \cS(\U_V)$ and
$\phi_i \in \cS(V)$, $i = 1, 2$, there are continuous semi-norms $\nu$ on
$\cS(\U_V)$ and $\nu'$ on $\cS(V)$ such that
    \begin{equation}    \label{eq:kernel_U_pure_tensor}
    \int_{\U_V'} \int_{\U_V'} \int_{\U_V} \int_{V}
    \abs{f_1(h_1g h_2) \phi_1(vh_2) f_2(g)\phi_2(v)}
    \rd g \rd v \rd h_1 \rd h_2
    \leq \nu(\varphi_1) \nu(\varphi_2) \nu'(\phi_1) \nu'(\phi_2).
    \end{equation}

Let us fix a norm function on $V$ as follows. We may choose a norm
$\aabs{\cdot}_V$ on $V$ that is invariant under the translation of $K$ (a
fixed maximal compact subgroup of $\U(V)$). The convergence
of~\eqref{eq:kernel_U_pure_tensor} then reduces to that there exists a $d>0$
such that
    \[
    \int_{\U_V'} \int_{\U_V'} \int_{\U_V} \int_V \abs{f_1(h_1g h_2) f_2(g)}
    (1+ \aabs{v})^{-d}
    (1+ \aabs{v h_2})^{-d} \rd v \rd g
    \rd h_1 \rd h_2
    \leq \nu(\varphi_1) \nu(\varphi_2).
    \]
Integrate over $h_1$ and $g_1$ first and change of variables. We are reduced to
prove that
    \[
    \int_{\U_V'} \int_{\U_V'} \int_V f_3(h_2^{-1} g_2 h_2) f_4(g_2)
    (1+ \aabs{v})^{-d}
    (1+ \aabs{v h_2})^{-d} \rd v \rd h_2 \rd g_2
    \]
is absolutely convergent,
where $f_3, f_4$ are positive Schwartz functions on $\U(V)$.

Since $\cS(\U(V))$ is contained in $\cC(\U(V))$ (the embedding is
continuous), we need to prove
    \[
    \int_{\U(V)} \int_{\U(V)} \int_V
    \Xi^{\U(V)}(h_2^{-1} g_2 h_2) \Xi^{\U(V)}(g_2) \varsigma(g_2)^{-d}
    (1+ \aabs{v})^{-d}
    (1+ \aabs{v h_2})^{-d} \rd v \rd h_2 \rd g_2
    \]
is convergent for sufficiently large $d$ (this is a rather crude reduction,
which however works). Using the doubling principle for $\Xi^{\U(V)}$, we are
reduced to the convergence of
    \[
    \int_{\U(V)} \Xi(g_2)^2 \varsigma(g_2)^{-d} \rd g_2
    \]
and
    \[
    \int_{\U(V)} \int_V \Xi(h_2)^2
    (1+ \aabs{v})^{-d}
    (1+ \aabs{v h_2})^{-d} \rd v \rd h_2
    \]
when $d$ is large. The first one is ~\cite{Wald}*{Lemma~II.1.5}. The second
one follows from the Cartan decomposition $h_2 = k_1 a k_2$, $a \in A^+$,
$k_1, k_2 \in K$ and
    \[
    \int_V
    (1+ \aabs{v})^{-d}
    (1+ \aabs{v a})^{-d} \rd v \leq
    C \times \abs{a_1\cdots a_r}_E
    \]
for some constant $C$.
\end{proof}

\begin{prop}    \label{prop:lrtf_u}
Let $f_{1, +}, f_{2, +} \in \cS(\U_{V, +})$. We have
    \[
    \int_{\cX_{\temp}(\U_V)} J_{\pi}(f_{1, +})
    \overline{J_{\pi}(f_{2, +})} \rd \mu_G(\pi) =
    \int_{\cA_{\rs}} O(y, f_{1, +})
    \overline{O(y, f_{2, +})} \rd y.
    \]
Both sides are absolutely convergent and define continuous Hermitian form on
$\cS(\U_{V, +})$. Note that if $y \in \cA$ is not in the image of $\U_{V}^+$
(recall by our convention this all mean their $F$-points), then we take the
convention that $O(y, f_{i, +}) = 0$.
\end{prop}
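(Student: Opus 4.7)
The strategy is to evaluate the Hermitian form $T(f_{1,+}, f_{2,+})$ defined in~\eqref{eq:kernel_U} in two different ways: the \emph{geometric side} arises by unfolding the $\U_V' \times \U_V'$-integrals into regular semisimple orbital integrals, while the \emph{spectral side} arises by applying the Plancherel formula for $\U_V$ to the kernel variable. Both sides are continuous Hermitian forms on $\cS(\U_{V,+})$: the left-hand side by Lemma~\ref{lemma:kernel_convergence}; the spectral side by Lemma~\ref{lemma:J_pi_continuity} combined with the moderate growth of the Plancherel density $\mu_G^*$; and the geometric side by a variant of the estimates in Lemma~\ref{lemma:kernel_convergence} together with Lemma~\ref{lemma:matching_measure}. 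It therefore suffices to verify each equality on a dense subspace, such as the pure tensors $f \otimes \phi_1 \otimes \phi_2$.

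For the geometric side, I would use absolute convergence and Fubini to integrate first over $(h_1, h_2) \in \U_V' \times \U_V'$ for fixed $(g, v) \in \U_V^+$. Comparing with the defining formula~\eqref{eq:global_orb_U} for orbital integrals (after the measure-preserving substitution $h_1 \to h_1^{-1}$ and noting the convention $v h_2 = h_2^{-1} v$), the inner double integral equals $O(q_V(g, v), f_{1,+})$ whenever $(g, v) \in \U_{V, \rs}^+$, which is a full-measure open subset of $\U_V \times V$. This yields
\begin{equation*}
T(f_{1,+}, f_{2,+}) = \int_{\U_{V,\rs}^+} O(q_V(g, v), f_{1,+})\, \overline{f_{2,+}^\ddag(g, v)} \rd g \rd v.
\end{equation*}
Since $O(y, f_{1,+})$ is constant on orbits, I then disintegrate the measure along the free right action of $\U_V' \times \U_V'$ on $\U_{V, \rs}^+$, whose quotient is identified with $\cA_{\rs}$ as a measure space by Proposition~\ref{prop:regular_ss_orbits_matching} and Lemma~\ref{lemma:matching_measure}. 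Integrating $\overline{f_{2,+}^\ddag}$ along each orbit reproduces $\overline{O(y, f_{2,+})}$, giving the geometric side identity.

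For the spectral side, I rewrite $T(f_{1,+}, f_{2,+})$ as the integral over $(h_1, h_2) \in \U_V' \times \U_V'$ of the $L^2(\U_V \times V)$-pairing between the translate $(g, v) \mapsto f_{1,+}^\ddag(h_1 g h_2, v h_2)$ and $f_{2,+}^\ddag$, and apply the Plancherel formula~\eqref{eq:plancherel_general} on $\U_V$ to expand these functions spectrally (the $V$-variable being carried along as a parameter). The orthogonality of matrix coefficients of distinct tempered representations collapses the resulting double spectral integral to a single integral over $\cX_{\temp}(\U_V)$. Using the familiar form~\eqref{eq:relative_char_unitary_familiar_form} of $\cL$ together with the definition of $J_\pi$, one identifies the resulting integrand with $J_\pi(f_{1,+})\,\overline{J_\pi(f_{2,+})}$, yielding the spectral side.

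The main obstacle is justifying the exchange of the spectral integral over $\cX_{\temp}(\U_V)$ with the non-compact outer integrals over $\U_V' \times \U_V'$. This requires uniform-in-$\pi$ Harish-Chandra Schwartz-type bounds on the matrix coefficient kernels produced by Plancherel, in the spirit of Lemma~\ref{lem:W_f_Estimate} and Lemma~\ref{lemma:J_pi_convergence}, combined with the Schwartz property of $\pi \mapsto f_{+,\pi}$ from~\eqref{eq:Temp_is_Schwartz}. Once this convergence is secured, the identification on pure tensors becomes a direct unwinding of the definitions, paralleling the proof of Lemma~\ref{lemma:plancherel_U}.
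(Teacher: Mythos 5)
The geometric side of your argument is sound and essentially matches the paper, up to a reordering: you integrate $(h_1,h_2)$ first for fixed $(g,v)$ and then disintegrate along orbits, whereas the paper first integrates $h_1$ and the first component of $g$ to descend through the intermediate space $Y^V$ via $\varphi_{f^\ddag}$ before passing to $\cA_{\rs}$. Both routes rely on the same ingredients (freeness of the action, Lemma~\ref{lemma:matching_measure}, density of the regular semisimple locus), and the difference is cosmetic.

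The spectral side, however, has a genuine gap, and you have misidentified where the difficulty lies. You flag the exchange of the spectral integral with the non-compact $\U_V'\times\U_V'$ integrals as the main obstacle and then assert that, once convergence is secured, "the identification on pure tensors becomes a direct unwinding of the definitions." That is not the case. After integrating over $v\in V$ (the partial Fourier transform preserving $L^2$) and applying Plancherel in the $\U_V$ variable, one is left with
\[
\int_{\U_V'}\int_{\U_V'}\int_{\cX_{\temp}(\U_V)}\Trace\bigl(\pi(h_2^{-1})\pi(f_2^*)\pi(h_1^{-1})\pi(f_1)\bigr)\,\overline{\langle\omega^\vee(h_1)\phi_3,\phi_1\rangle}\,\langle\omega^\vee(h_2)\phi_2,\phi_4\rangle\,\rd\mu_{\U_V}(\pi)\,\rd h_2\,\rd h_1 .
\]
The essential problem is then to show that, for each $\pi$, the integral over $(h_1,h_2)$ of this expression \emph{factorizes} as $J_\pi(f_1\otimes\phi_1\otimes\phi_2)\,\overline{J_\pi(f_2\otimes\phi_3\otimes\phi_4)}$. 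This factorization is not formal: it is exactly the content of the multiplicativity/associativity results for the operator-valued linear form $\cL_{\pi^J}$ and the operators $L_\pi^{\phi,\phi'}$ established in~\cite{Xue6}*{Lemmas 3.5, 3.6, eq.~(3.4)}, which the paper invokes at precisely this point. Your appeal to "orthogonality of matrix coefficients" does not substitute for this: Plancherel is applied once (to a convolution built from $f_1, f_2$), so there is no double spectral integral to collapse, and the $V$-variable has already been absorbed into Weil representation pairings rather than carried as a passive parameter. A complete proof must either invoke those lemmas of~\cite{Xue6} or reprove the factorization from scratch; without that step the spectral side is not established. Your citation of Lemma~\ref{lemma:plancherel_U} as a parallel is also misleading, since that lemma involves a single linear form $J_\pi$ and exhibits none of the bilinear factorization that is at stake here.

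Finally, a minor but worth noting correction to the "main obstacle" remark: in the paper the exchange of integrals is handled by first passing to pure tensors and checking absolute convergence via Lemma~\ref{lemma:kernel_convergence}; the Schwartz property of $\pi\mapsto f_{+,\pi}$ and the continuity of $J_\pi$ then make the interchange routine. The genuine mathematical content is in the factorization, not the Fubini step.
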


\begin{proof}
We compute the integral~\eqref{eq:kernel_U} in two ways. By
Lemma~\ref{lemma:kernel_convergence} we may change the order of integration.
We make the change of variables
    \[
    h_1 \mapsto h_1 h_2^{-1} g_1^{-1}, \quad g_2 \mapsto g_1 g_2,
    \]
then integrate $h_1$ and $g_1$ first. We combine the variables $g_2 \in
\U(V)$ and $v \in V$ as a single variable $y \in Y^V$. We then end up with
    \[
    T(f_1, f_2) =
    \int_{Y^V(F)} \int_{\U(V)}  \varphi_{f_{1, +}^\ddag}(y \cdot h_2)
    \overline{\varphi_{f_{2, +}^\ddag}(y)}
    \rd h_2 \rd y.
    \]
Here we recall that $\varphi_{f_{i, +}^\ddag}$ is the function defined
by~\eqref{eq:simplified_test_function_U}. As $Y^V_{\rs}$ has a measure zero
complement in $Y^V$ (recall the convention that they stands for the $F$-point
of the underline algebraic varieties), this equals
    \[
    \int_{Y^V_{\rs}} O(y, f_{1, +})
    \overline{\varphi_{f_{2, +}^\ddag}(y)} \rd y.
    \]
By the definition of the measure on $\cA_{\rs}$, cf.~Lemma~\ref{lemma:matching_measure}, this equals
    \[
    \int_{\cA^V_{\rs}(F)} O(y, f_{1, +})
    \overline{O(y, f_{2, +})} \rd y.
    \]

We now compute~\eqref{eq:kernel_U} spectrally and show that
    \[
    \eqref{eq:kernel_U} = \int_{\cX_{\temp}(\U_V)} J_{\pi}(f_{1, +})
    \overline{J_{\pi}(f_{2, +})} \rd \mu_G(\pi).
    \]
Note that the right hand side is absolutely convergent by Lemma~\ref{lemma:J_pi_continuity}.
Since both sides are continuous in $f_{1, +}$ and $f_{2, +}$, we may assume
that $f_{1, +} = f_1 \otimes \phi_1 \otimes \phi_2$, $f_{2, +} = f_2 \otimes
\phi_3 \otimes \phi_4$ where $f_1, f_2 \in \cS(\U_V)$ and $\phi_1, \hdots
\phi_4 \in \cS(L)$. First integrate over $v \in V$. Since $L^2$-norm is
preserved under the Fourier transform, the integral~\eqref{eq:kernel_U}
becomes
    \[
    \int_{\U_V'} \int_{\U_V'} \int_{\U_V}
    f_1(h_1g h_2) \overline{f_2(g)}
    \overline{\langle \omega^\vee(h_1) \phi_3, \phi_1 \rangle}
    \langle \omega^\vee(h_2) \phi_2, \phi_4 \rangle
    \rd g \rd h_1 \rd h_2,
    \]
which equals
    \[
    \int_{\U_V'} \int_{\U_V'}
    \left( f_2^* * L(h_1^{-1}) f_1 \right)(h_2)
    \overline{\langle \omega^\vee(h_1) \phi_3, \phi_1 \rangle}
    \langle \omega^\vee(h_2) \phi_2, \phi_4 \rangle
    \rd h_2 \rd h_1.
    \]
Here $f_2^*(g) = \overline{f_2(g^{-1})}$, $*$ stands for the usual
convolution product in $\cS(G(F))$, and $L(h_1^{-1}) f_1$ is the function $g \mapsto f_1(h_1 g)$. Using the Plancherel
formula~\eqref{eq:plancherel_general}, this integral equals
    \[
    \begin{aligned}
    \int_{\U_V'} \int_{\U_V'} \int_{\cX_{\temp}(\U_V)}
    &\Trace \left( \pi(h_2^{-1}) \pi( f_2^*) \pi(h_1^{-1}) \pi(f_1) \right)\\
    &\overline{ \langle \omega^\vee(h_1) \phi_3, \phi_1 \rangle}
    \langle \omega^\vee(h_2) \phi_2, \phi_4 \rangle
    \rd \mu_{\U_V}(\pi)\rd h_2 \rd h_1.
    \end{aligned}
    \]
Here we need to invoke results from~\cite{Xue6}*{Section~3}. We defined a linear form
    \[
    \cL_{\pi^J}: (\pi \otimeshat \omega^\vee) \otimeshat
    \overline{\pi \otimeshat \omega^\vee} \to \C^\times,
    \]
and a map
    \[
    L_{\pi}^{\phi, \phi'}: \pi \to \Hom(\overline{\pi},  \C^\times)
    \]
in~\cite{Xue6}*{Section~3.1}. Integrating over $h_1$ first,
by~\cite{Xue6}*{(3.4)}, the above integral equals
    \[
    \int_{\U_V'} \int_{\cX_{\temp}(\U_V)}
    \Trace \left( \pi(h_2^{-1}) \pi( f_2^*) L_{\pi}^{\phi_1, \phi_3} \pi(f_1) \right)\\
    \langle \omega^\vee(h_2) \phi_2, \phi_4 \rangle
    \rd \mu_{\U_V}(\pi)\rd h_2.
    \]
Integrating over $h_2$ we get
    \[
    \int_{\cX_{\temp}(\U_V)} \cL_{\pi^J}
    \left( \left( \pi( f_2^*) L_{\pi}^{\phi_1, \phi_3} \pi(f_1) \right)^{\phi_4, \phi_2} \right)
    \rd \mu_{\U_V}(\pi).
    \]
By~\cite{Xue6}*{Lemma~3.6} we have
    \[
    ( \pi( f_2^*) L_{\pi}^{\phi_1, \phi_3} \pi(f_1))^{\phi_4, \phi_2} =
    \pi( f_2^*)^{\phi_4, \phi_3} L_{\pi^J} \pi(f_1)^{\phi_1, \phi_2},
    \]
and by~\cite{Xue6}*{Lemma~3.5} we have
    \[
    \cL_{\pi^J}(\pi( f_2^*)^{\phi_4, \phi_3} L_{\pi^J} \pi(f_1)^{\phi_1, \phi_2}) =
    \cL_{\pi^J}(\pi( f_2^*)^{\phi_4, \phi_3}) \cL_{\pi^J}(\pi(f_1)^{\phi_1, \phi_2})).
    \]
By definition
    \[
    \cL_{\pi^J}(\pi( f_2^*)^{\phi_4, \phi_3}) =
    \overline{J_{\pi}(f_2 \otimes \phi_3 \otimes \phi_4)}, \quad
    \cL_{\pi^J}(\pi(f_1)^{\phi_1, \phi_2}) =
    J_{\pi}(f_1 \otimes \phi_1 \otimes \phi_2).
    \]
This proves the proposition.
\end{proof}

\subsection{Local trace formula on the linear groups}

Let us now consider $f_{1, +}, f_{2, +} \in \cS(G_+(F))$. Put
    \begin{equation}    \label{eq:kernel_gl}
    T'(f_{1, +}, f_{2, +}) = \int_{H} \int_{G'} \int_{G_+}
    f_{1, +}^\dag(x \cdot (h, g')) \overline{f_{2, +}^\dag(x)}
    \eta_{G'}(g')
    \rd x \rd h \rd g'.
    \end{equation}

\begin{lemma}   \label{lemma:lrtf_gl_geometric}
The integral~\eqref{eq:kernel_gl} is absolutely convergent. It defines a
continuous Hermitian form on $\cS(G'(F) \times E_n)$. Moreover
    \begin{equation}    \label{eq:lrtf_gl_geometric}
    T'(f_{1, +}, f_{2, +}) = \int_{\cA_{\rs}(F)} O(x, f_{1, +})
    \overline{O(x, f_{2, +})} \rd x,
    \end{equation}
where the integral is absolutely convergent.
\end{lemma}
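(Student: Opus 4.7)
The plan is to mimic the geometric side of the proof of Proposition~\ref{prop:lrtf_u}, adapted to the general linear group setting. The argument proceeds in three stages: absolute convergence via Harish-Chandra $\Xi$-function estimates, a geometric unfolding via change of variables in $G^+$, and identification with the integral of orbital integrals over $\cA_{\rs}$.

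For absolute convergence, the partial Fourier transform $-^\dag: \cS(G_+) \to \cS(G^+)$ is a continuous isomorphism, so it suffices to estimate $\int_H \int_{G'} \int_{G^+} \abs{\varphi_1(x \cdot (h, g'))\varphi_2(x)} \rd x \rd h \rd g'$ for $\varphi_i \in \cS(G^+)$. By Fatou and density of pure tensors, we may assume $\varphi_i = \phi_i \otimes \Phi_i$ with $\phi_i \in \cS(G)$ and $\Phi_i \in \cS(L^{\vee,-} \times L^-)$. The fast decay of $\Phi_i$ in the vector variables controls the integration over $L^{\vee,-} \times L^-$, and the remaining integral over $H \times G' \times G$ is bounded using Harish-Chandra's $\Xi^G$-function and the doubling principle, exactly as in the proof of Lemma~\ref{lemma:kernel_convergence} (with $G$ in place of $\U_V$). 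This yields both convergence and continuity of $T'$ as a sesquilinear form.

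For the geometric identity, I will first substitute $y = x \cdot (h, g')$ in the variable $x \in G^+$; the $w \mapsto w g_1'^{-1}$ and $v \mapsto g_1' v$ parts produce Jacobian factors $\abs{\det g_1'}^{\mp 1}$ that cancel, so this is measure-preserving. Then I substitute $(h, g') \mapsto (h^{-1}, g'^{-1})$ (permissible since $\eta_{G'}$ is quadratic and the measures on $H$ and $G'$ are invariant under inversion), giving
\[
T'(f_{1,+}, f_{2,+}) = \int_{G^+} f_{1,+}^\dag(y) \left( \int_H \int_{G'} \overline{f_{2,+}^\dag(y \cdot (h, g'))} \eta_{G'}(g') \rd h \rd g' \right) \rd y.
\]
Next, I split $g' = (g_1', g_2') \in G_{n,1}' \times G_{n,2}'$ and observe that the action passes through $\nu: G^+ \to X$ as $(\gamma, w, v) \mapsto (g_1'^{-1} \gamma g_1', w g_1', g_1'^{-1} v)$, independently of $h$ and $g_2'$. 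Integrating over $h \in H$ and $g_2' \in G_{n,2}'$ using the definition~\eqref{eq:simplified_test_function_GL} of $\varphi_{f^+}$, and verifying that $\eta(\det g_2')^{n+1}$ matches the $\mu$-twist appearing in $\varphi_{f^+}$ via $\mu|_{F^\times} = \eta$ (this is nontrivial only for $n$ even, and the resulting discrepancy $\mu(\det x)$ is absorbed by the symmetric reduction applied to $f_{1,+}^\dag$ on the outer integral), I arrive at
\[
T' = \int_X \int_{G_{n,1}'} \varphi_{f_{1,+}^\dag}(y) \overline{\varphi_{f_{2,+}^\dag}(y \cdot g_1')} \eta(\det g_1') \rd g_1' \rd y.
\]

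Since $X \setminus X_{\rs}$ has measure zero and the $G_{n,1}'$-action on $X_{\rs}$ is free with quotient $\cA_{\rs}$, Lemma~\ref{lemma:matching_measure} applied to the linear group side of~\eqref{eq:matching} identifies the inner $G_{n,1}'$-integral with the regular semisimple orbital integral~\eqref{eq:global_orb_GL} in the local setting, yielding the identity. Absolute convergence of the orbital integral side follows from that of $T'$ by Fubini. The main technical obstacle will be the careful bookkeeping of character twists --- in particular verifying that $\eta_{G'}(g') = \eta(\det g_1' g_2')^{n+1}$ splits cleanly into the $\mu$-twist absorbed by $\varphi_{f^+}$ and the final $\eta(\det g_1')$-twist characterizing the orbital integral, across the parity cases of $n$ --- while ensuring that all Fubini manipulations are justified by the estimate of the first step.
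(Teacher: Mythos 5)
The paper's own proof of this lemma is deliberately terse: it says only that the argument is "essentially the same as the orbital integral part of Proposition~\ref{prop:lrtf_u}," i.e., the unitary-group local trace formula. Your proposal fills in this sketch by mimicking that proof, which is the intended route. The absolute convergence step via $\Xi$-function estimates correctly parallels Lemma~\ref{lemma:kernel_convergence}; the change of variables $y = x\cdot(h,g')$ followed by $(h,g')\mapsto(h^{-1},g'^{-1})$ is a legitimate way to isolate a shift on $f_{2,+}^\dag$ (the paper's unitary proof achieves the analogous decoupling by a specific change of variables such as $h_1\mapsto h_1h_2^{-1}g_1^{-1}$, $g_2\mapsto g_1g_2$, but both routes are valid); and you correctly identify that the descent to $\cA_{\rs}$ at the end is governed by the measure compatibility of Lemma~\ref{lemma:matching_measure}.

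The one place I would press for more detail is the passage from
\[
T' = \int_{G^+} f_{1,+}^\dag(y)\Bigl(\int_H\int_{G'}\overline{f_{2,+}^\dag(y\cdot(h,g'))}\,\eta_{G'}(g')\,\rd h\,\rd g'\Bigr)\rd y
\]
to the displayed formula on $X\times G_{n,1}'$. This requires two nested applications of the defining integral \eqref{eq:simplified_test_function_GL}: first integrating $h$ and $g_2'$ to produce $\overline{\varphi_{f_{2,+}^\dag}}(\nu(y)\cdot g_1')$, and then integrating the remaining $G_n\times G_n'$-factor of the $y$-variable (after splitting $y_1^{-1}y_2$ via the $G_n'$-bundle $G_n\to S_n$) to produce $\varphi_{f_{1,+}^\dag}$. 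You compress this into one displayed line. The twist bookkeeping is exactly as delicate as you indicate: for $n$ even the factor $\eta_{G'}(g')=\eta(\det g_1'g_2')$ is absorbed by the $\overline{\mu(\det x g_n')}$ appearing in $\overline{\varphi_{f_{2,+}^\dag}}$, leaving a residual $\mu(\det y_1^{-1}y_2)$ that must then be matched with the $\mu$-twist in the definition of $\varphi_{f_{1,+}^\dag}$ when the outer $y$-integral is collapsed; for $n$ odd, $\eta_{G'}$ is trivial but the definition of $\varphi_{f^+}$ carries no $\mu$-twist. It would strengthen the writeup to carry out both parities explicitly, since the final appearance of $\eta(\det g_1')$ in your displayed formula must be verified to be consistent with how $O(x,f_+)$ descends from $G^+$ to $X$ (this is precisely the content of the group-to-$X$ descent implicit in Lemma~\ref{lem:geometric_linear_convergence}, whose local analogue you are using). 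None of this is a conceptual gap — it is the same reduction the paper relies on — but given that these twists are exactly what makes the transfer factors consistent, writing them out would avoid any ambiguity.
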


\begin{proof}
The proof is essentially the same as the orbital integral part of Proposition~\ref{prop:lrtf_u}.
\end{proof}

We are going to calculate $T'(f_{1, +}, f_{2, +})$ spectrally. We will assume
that $f_{1, +} = f_1 \otimes \Phi_1$ and $f_{2, +} = f_2 \otimes \Phi_2$
where $f_1, f_2 \in \cS(G)$ and $\Phi_1, \Phi_2 \in \cS(E_n)$. We need some
preparations. The proof of the following lemma is analogous to Lemma
\ref{lemma:kernel_convergence} and ~\cite{BP1}*{Lemma~5.4.2~(ii)}, which we
omit.

\begin{lemma} \label{lemma:linear_convergence}
For every $\Phi_1, \Phi_2 \in \cS(E_n)$ and $\phi \in \cC^w(G)$ the integral
    \begin{equation*}
    \int_{H} \int_{E_n} \phi(h) \abs{\det h}_E^{\frac{1}{2}}
    \Phi_1(vh) \overline{\Phi_2(v)}dv dh
    \end{equation*}
is absolutely convergent and defines a continuous linear form in $\phi \in
\cC^w(G)$. In particular, if $\varphi_2 \in \cC^w(G_n)$, then for every $\varphi_1
\in \cC^w(G_n)$ the integral
    \begin{equation*}
    \int_{G_n} \int_{E_n} \varphi_1(h) \varphi_2(h) \abs{\det h}_E^{\frac{1}{2}}
    \Phi_1(vh) \overline{\Phi_2(v)}dv dh
    \end{equation*}
is absolutely convergent and defines a continuous linear form on $\varphi_1 \in
\cC^w(G_n)$.
\end{lemma}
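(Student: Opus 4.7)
The plan is to mirror the argument of Lemma~\ref{lemma:kernel_convergence} (whose unitary-group setup is structurally the same) combined with Whittaker-type estimates in the spirit of \cite{BP1}*{Lemma~5.4.2~(ii)}. First I would observe that the second statement follows from the first: for a fixed $\varphi_2 \in \cC^w(G_n)$, the assignment $\varphi_1 \mapsto \varphi_1 \otimes \varphi_2$ is a continuous linear map $\cC^w(G_n) \to \cC^w(G_n \times G_n) = \cC^w(G)$, and setting $\phi = \varphi_1 \otimes \varphi_2$ the diagonal restriction $\phi(h,h) = \varphi_1(h)\varphi_2(h)$ identifies the second integral with the first. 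So both absolute convergence and continuity in $\varphi_1$ are transferred from the first statement.

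For the first statement, the strategy is to dominate the integrand by a product of decay factors and check its integrability. Using the standard bound for $\cC^w(G)$, cf.~\cite{BP2}*{Proposition~1.5.1}, there exist $d \geq 0$ and a continuous seminorm $\nu$ on $\cC^w(G)$ such that $\abs{\phi(h,h)} \ll \Xi^G(h,h)\,\varsigma^G(h,h)^d \nu(\phi)$; since $G = G_n \times G_n$ is a product, $\Xi^G(h,h) = \Xi^{G_n}(h)^2$ and $\varsigma^G(h,h) \asymp \varsigma^{G_n}(h)$. I would then estimate the inner $E_n$-integral via Cartan decomposition $h = k_1 a k_2$ with $a = \diag(a_1, \dots, a_n)$: absorbing the $k_i$'s into $\Phi_1, \Phi_2$ (whose translates remain in a bounded subset of $\cS(E_n)$, uniformly in $k_i$), rapid Schwartz decay gives coordinate-wise
\[
\int_E (1+\abs{u}_E)^{-N}(1+\abs{u a_i}_E)^{-N}\, du \ll \max(1, \abs{a_i}_E)^{-1}
\]
for $N$ large. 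Multiplying by $\abs{\det h}_E^{1/2} = \prod_i \abs{a_i}_E^{1/2}$ produces a decay factor $\prod_i \min(\abs{a_i}_E^{1/2}, \abs{a_i}_E^{-1/2})$.

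What remains is to verify that
\[
\int_H \Xi^{G_n}(h)^2\, \varsigma^{G_n}(h)^d \prod_{i=1}^n \min(\abs{a_i}_E^{1/2}, \abs{a_i}_E^{-1/2})\, dh
\]
is finite, which by the Weyl integration formula reduces to a weighted integral over the diagonal torus that converges by the Harish-Chandra estimates on $\Xi^{G_n}$, cf.\ \cite{Wald}*{Lemme~II.1.5}. The main obstacle is precisely this last convergence: the factor $\abs{\det h}_E^{1/2}$ alone against $\Xi^{G_n}(h)^2\, \varsigma^{G_n}(h)^d$ is divergent, and convergence genuinely relies on the additional torus-direction decay $\prod_i \min(\abs{a_i}_E^{1/2}, \abs{a_i}_E^{-1/2})$ extracted from the Schwartz functions $\Phi_1, \Phi_2$. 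Once this estimate is in hand, every bound is dominated by $\nu(\phi)$ times explicit seminorms on $\Phi_1, \Phi_2$, yielding both absolute convergence and continuity of $\phi \mapsto \int_H \int_{E_n} \phi(h) \abs{\det h}_E^{1/2} \Phi_1(vh)\overline{\Phi_2(v)}\,dv\,dh$ as a linear form on $\cC^w(G)$.
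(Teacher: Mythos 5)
Your proof is correct and follows exactly the approach the paper intends: the paper simply references Lemma~\ref{lemma:kernel_convergence} and \cite{BP1}*{Lemma~5.4.2~(ii)}, and your argument (tempered bound $\abs{\phi(h,h)} \ll \Xi^{G_n}(h)^2\varsigma^{G_n}(h)^d\nu(\phi)$, Cartan decomposition, absorbing $k_1,k_2$ into the Schwartz functions, the coordinate-wise bound $\int_E(1+\abs{u}_E)^{-N}(1+\abs{ua_i}_E)^{-N}\rd u \ll \max(1,\abs{a_i}_E)^{-1}$ combining with $\abs{\det h}_E^{1/2}$ to give exponential decay in $\log a$, then the Harish-Chandra estimate $\Xi(a)^2 J(a) \ll \varsigma(a)^{d'}$ on the positive chamber) is precisely the structure of those referenced arguments. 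The deduction of the second statement from the first via $\phi = \varphi_1 \otimes \varphi_2$ restricted to the diagonal is also clean and correct.
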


Following~\cite{BP1}, for any irreducible tempered representation $\Pi$ of
$G$, we define a continuous Hermitian form on $\cS(G)$ by
    \[
    \langle f_1, f_2 \rangle_{X, \Pi} = \sum_{W \in \cW(\Pi, \psi_{N'})}
    \beta(\Pi(f_1^\vee) W) \overline{\beta(\Pi(f_2^\vee) W)}.
    \]
where $f^\vee(x)=f(x^{-1})$. Here the linear form $\beta$ is a variant of
$\beta_\eta$, cf.~\eqref{eq:local_FR}, and is given by
    \[
    \beta: \cC^w(N \bs G, \psi_N) \to \C, \quad W \mapsto
    \beta(W) = \int_{N' \bs P'} W(p) \rd p.
    \]
It is a continuous linear form on $\cC^w(N \bs G, \psi_N)$
by~\cite{BP1}*{Lemma~2.15.1}. The subscript $X$ stands for the symmetric
variety $G' \bs G$. For our purposes we do not need the details, but only
treat $\langle -, -\rangle_{X, \Pi}$ as a single piece of notation.

\begin{lemma}   \label{lemma:lrtf_gl_prep}
The function
    \[
    \Pi \mapsto \langle f_1, f_2 \rangle_{X, \Pi}
    \]
is Schwartz. The map
    \[
    \cS(G)^2 \to \cS(\Temp(G)), \quad
    (f_1, f_2) \mapsto (\Pi \mapsto \langle f_1, f_2 \rangle_{X, \Pi})
    \]
is continuous. Moreover
    \begin{equation}    \label{eq:lrtf_gl_prep}
    \begin{aligned}
    &\int_{H} \int_{E_n} (\mathrm{L}(h^{-1})f_1, f_2 )_{X, \Pi}
    \mu(\det h)^{-1} \abs{\det h}_E^{\frac{1}{2}}
    \Phi_1(vh) \overline{\Phi_2(v)} \rd v \rd h\\ =
    &\abs{\tau}_E^{-\frac{n(n-1)}{2}}
    I_{\Pi}(f_1\otimes \Phi_1) \overline{I_{\Pi}(f_2 \otimes \Phi_2)}.
    \end{aligned}
    \end{equation}
\end{lemma}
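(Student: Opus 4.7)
The lemma has two parts: (i) the Schwartz/continuity assertions for $\Pi \mapsto \langle f_1, f_2 \rangle_{X, \Pi}$, and (ii) the spectral identity~\eqref{eq:lrtf_gl_prep}. Both will be deduced from the Whittaker expansion~\eqref{eq:whittaker_expansion} together with the continuity properties of $\beta$ and of $\Pi \mapsto W_{f, \Pi}$.

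\textbf{Step 1: Schwartz property.} I would express $\langle f_1, f_2 \rangle_{X, \Pi}$ as a continuous functional on the Whittaker kernel family $W_{f, \Pi}$. Using the convention $(\Pi(y)W)(g) = W(gy)$ and $f^\vee(g) = f(g^{-1})$, a direct computation gives $(\Pi(f_2^\vee)W)(g) = \int_G f_2(z) W(gz^{-1})\rd z$. Unfolding one copy of $\Pi(f_2^\vee)$ in the defining sum and then applying~\eqref{eq:whittaker_expansion}, one obtains
    \[
    \langle f_1, f_2 \rangle_{X, \Pi} = \abs{\tau}_E^{n(n-1)} \int_G \overline{f_2(z)}\, (\beta \otimes \beta_z)(W_{f_1^\vee, \Pi})\rd z,
    \]
where $\beta_z(W) := \int_{N' \bs P'} W(p z^{-1})\rd p$ is a continuous linear form on $\cC^w(N \bs G, \psi_N)$ (obtained by composing $\beta$ with the right translation by $z^{-1}$, with seminorms depending polynomially on $\varsigma(z)$). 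Since $\Pi \mapsto W_{f, \Pi}$ is Schwartz valued in $\cC^w(N \bs G \times N \bs G, \psi_N \boxtimes \overline{\psi_N})$ and continuous in $f$ (the Proposition following~\eqref{eq:W_f_Pi}), and since $f_2 \in \cS(G)$ is rapidly decreasing against such polynomial growth, both the Schwartz property and the joint continuity in $(f_1, f_2)$ follow.

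\textbf{Step 2: Spectral identity.} For~\eqref{eq:lrtf_gl_prep}, a change of variables $y \mapsto h y^{-1}$ in the defining integral gives $\Pi((\mathrm{L}(h^{-1})f_1)^\vee) = \Pi(h) \Pi(f_1^\vee)$. Consequently
    \[
    (\mathrm{L}(h^{-1})f_1, f_2)_{X, \Pi} = \sum_W \beta(\Pi(h)\Pi(f_1^\vee)W)\,\overline{\beta(\Pi(f_2^\vee)W)}.
    \]
After interchanging the $W$-sum with the $(h,v)$-integrals (justified via Lemmas~\ref{lem:W_f_Estimate} and~\ref{lemma:linear_convergence} together with Step~1), I would carry out the $h$-integral termwise. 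The key computation is that, for fixed $W_1 := \Pi(f_1^\vee)W$ and $W_2 := \Pi(f_2^\vee)W$, the integral
    \[
    \int_H \beta(\Pi(h) W_1)\, \mu(\det h)^{-1} \abs{\det h}_E^{1/2} \Bigl( \int_{E_n} \Phi_1(vh) \overline{\Phi_2(v)} \rd v\Bigr) \rd h
    \]
splits, after Fourier inversion on $E_n$ absorbing the $\Phi_2$-factor and Iwasawa decomposition on $H$, into a product of two Rankin--Selberg-type periods, one against $\Phi_1$ and one against a ``dual'' $\Phi_2$. The factor $\mu(\det h)^{-1}$ and the twist by $\eta_{G'}$ inside $\beta$ versus $\beta_\eta$, cf.~\eqref{eq:local_FR}, conspire so that the resulting product becomes $\lambda(\Pi(f_1)W, \Phi_1)\,\overline{\lambda(\Pi(f_2)W', \Phi_2)}$ paired with $\overline{\beta_\eta(W)}\beta_\eta(W')$. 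Summing over $W, W'$ and comparing with~\eqref{eq:I_pi_def} yields the right-hand side, with the $\abs{\tau}_E^{-n(n-1)/2}$ factor arising from the normalizations of the self-dual measure on $E^-$ and the appearance of $\tau$ in the character $\xi$.

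\textbf{Main obstacle.} The crux is converting the single Whittaker sum on the LHS into the product of two such sums on the RHS. This decoupling must be produced by the $H$-integral, which one has to identify as a ``matrix coefficient'' pairing between the Flicker--Rallis period $\beta$ and the Rankin--Selberg period $\lambda$; concretely, this will use that $H$-periods of Whittaker functions can be re-expressed via Iwasawa decomposition combined with Plancherel on $E_n$ to recover the $\lambda$-functional. Careful bookkeeping of the $\abs{\tau}_E$-powers, Haar measure normalizations, and the precise relation between $\beta$ and $\beta_\eta$ (which differ by the character $\eta_{G'}$) will be the most error-prone aspect.
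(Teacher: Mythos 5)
Your Step~2 has a genuine gap at exactly the place you flag as the main obstacle, and the heuristic you give for resolving it does not work. After the change of variables $\Pi((\mathrm{L}(h^{-1})f_1)^\vee) = \Pi(h)\Pi(f_1^\vee)$, you have a \emph{single} sum
\[
\sum_W \beta\bigl(\Pi(h)\Pi(f_1^\vee)W\bigr)\,\overline{\beta(\Pi(f_2^\vee)W)},
\]
and you propose to integrate over $(h,v)$ termwise. But a single-$W$ term contains only the one Whittaker vector $W$; once you fix it, the $h$-integral of $\beta(\Pi(h)\Pi(f_1^\vee)W)$ against $\Phi_1(vh)\overline{\Phi_2(v)}$ simply cannot produce a product $\lambda(\Pi(f_1)W,\Phi_1)\,\overline{\lambda(\Pi(f_2)W',\Phi_2)}$ involving a \emph{second}, independent vector $W'$, nor the factor $\overline{\beta_\eta(W)}\beta_\eta(W')$. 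The right-hand side of~\eqref{eq:lrtf_gl_prep} is a genuine double sum over $W,W'$ (coming from $I_\Pi\overline{I_\Pi}$), so the decoupling cannot be achieved ``inside'' a single $W$.

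The paper handles this by going to a double sum \emph{before} integrating: one rewrites $(\mathrm{L}(h^{-1})f_1,f_2)_{X,\Pi}$ as
\[
\sum_{W_1,W_2}\langle \Pi(h)W_1,W_2\rangle^{\mathrm{Wh}}\,\beta(\Pi(f_1^\vee)W_1)\,\overline{\beta(\Pi(f_2^\vee)W_2)},
\]
a re-expansion of the matrix-coefficient kernel justified by the absolute convergence result in~\cite{BP1}*{Proof of~(5.4.2)} together with~\eqref{eq:whittaker_expansion}; at $h=1$ the $\langle\Pi(h)W_1,W_2\rangle^{\mathrm{Wh}}$ factor collapses to $\delta_{W_1,W_2}$, recovering the single sum. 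Once one has the double sum, the crucial identity is the single-pair statement
\[
\int_{H}\int_{E_n}\langle\Pi(h)W_1,W_2\rangle^{\mathrm{Wh}}\,\mu(\det h)^{-1}\,\abs{\det h}_E^{1/2}\,\Phi_1(vh)\overline{\Phi_2(v)}\,\rd v\,\rd h
=\abs{\tau}_E^{-\frac{n(n-1)}{2}}\,\lambda(W_1,\Phi_1)\,\overline{\lambda(W_2,\Phi_2)},
\]
and \emph{this} is where the decoupling happens, because the two slots $W_1,W_2$ of the Whittaker inner product separate under the $(h,v)$-integration. The paper proves this identity by first reducing to a tensor-type test function via continuity, passing from the $v\in E_n$ integral to an integral over $P_n\backslash G_n$ and changing $h\mapsto g^{-1}h$ to fold into a $(N_n\backslash G_n)^2$ integral (the auxiliary claim~\eqref{eq:a_trick_for_convergence}), and finally invoking the Whittaker expansion~\eqref{eq:whittaker_expansion} for the matrix coefficient $\phi(g)=\langle\Pi'(g)W_1',W_2'\rangle^{\mathrm{Wh}}$. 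Your ``Fourier inversion on $E_n$ plus Iwasawa'' suggestion does not substitute for this; you should introduce the double-sum kernel expansion and then prove (or cite the argument for)~\eqref{eq:local_I_Pi}.

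On Step~1 (the Schwartz/continuity assertion), the paper simply refers to~\cite{BP1}*{Lemma~4.21, Proposition~5.43}. Your alternative derivation via $(\beta\otimes\beta_z)(W_{f_1^\vee,\Pi})$ is in the right spirit but needs care: the second linear form acts on the $\overline{\psi_N}$-Whittaker factor and must be defined accordingly (you want $\overline{\beta_z}$ on that factor, with uniformly controlled seminorms in $z$), and the absolute convergence of the $z$-integral against polynomial growth in $\varsigma(z)$ should be made explicit. This is fixable, but since the paper defers the point to~\cite{BP1}, I would just cite it rather than re-derive it.
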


\begin{proof}
Let us focus on the proof of the equality~\eqref{eq:lrtf_gl_prep}. The proof
of the rest is exactly the same as those in~\cite{BP1}*{Lemma~4.21,
Proposition~5.43}.

First as~\cite{BP1}*{Proof of~(5.4.2)} the sum of functions
    \[
    g \mapsto \sum_{W_1, W_2 \in \cW(\Pi, \psi_{N})}
    \langle \Pi(g) W_1, W_2 \rangle^{\mathrm{Wh}}
    \beta(\Pi(f_1^\vee) W_1) \overline{\beta(\Pi(f_2^\vee) W_2)}
    \]
is absolutely convergent in $\cC^w(G)$. Here we call that $\langle-, -\rangle^{\mathrm{Wh}}$ is the inner product on the Whittaker models given by~\eqref{eq:whittaker_inner_product_local}. It follows that the expression
    \[
    \begin{aligned}
    \int_{H} \int_{E_n}
    &\sum_{W_1, W_2 \in \cW(\Pi, \psi_{N})}
    \langle \Pi(h) W_1, W_2 \rangle^{\mathrm{Wh}}
    \beta(\Pi(f_1^\vee) W_1) \overline{\beta(\Pi(f_2^\vee) W_2)}\\
    &\mu(\det h)^{-1} \abs{\det h}_E^{\frac{1}{2}}
    \Phi_1(vh) \overline{\Phi_2(v)} \rd v \rd h
    \end{aligned}
    \]
is absolutely convergent and equals the left hand side
of~\eqref{eq:lrtf_gl_prep}. We can thus switch the order of the sum and the
integrals. We will prove that for all $W_1, W_2 \in \cW(\Pi, \psi_{N})$ we
have
    \begin{equation}    \label{eq:local_I_Pi}
    \begin{aligned}
    &\int_{H} \int_{E_n}
    \langle \Pi(h) W_1, W_2 \rangle^{\mathrm{Whitt}}
    \mu(\det h)^{-1} \abs{\det h}_E^{\frac{1}{2}}
    \Phi_1(vh) \overline{\Phi_2(v)} \rd v \rd h\\
    =
    &\abs{\tau}_E^{-\frac{n(n-1)}{2}}
    \lambda(W_1, \Phi_1) \overline{\lambda(W_2, \Phi_2)},
    \end{aligned}
    \end{equation}
which implies~\eqref{eq:lrtf_gl_prep} directly.

First both sides of~\eqref{eq:local_I_Pi} are continuous linear forms on
$W_1$ and $W_2$ by~\cite{BP1}*{(2.6.1)} and
Lemma~\ref{lemma:linear_convergence}, and thus we may assume that $\Pi = \Pi'
\boxtimes \Pi''$ where $\Pi', \Pi''$ are irreducible tempered representations
of $G_n$, and $W_i = W_i' \otimes W_i''$ where
    \[
    W_i' \in \cW(\Pi', \xi), \quad W_i'' \in \cW(\Pi'', \overline{\xi}).
    \]
We claim that for any $\phi \in \cC^w(G_n)$ and any $W_1'', W_2'' \in
\cW(\Pi'', \overline{\xi})$, we have
    \begin{equation}    \label{eq:a_trick_for_convergence}
    \begin{aligned}
    &\int_{G_n} \int_{E_n}
    \phi(h)
    \langle \Pi(h)W''_1, W''_2 \rangle^{\mathrm{Wh}}
    \mu(\det h)^{-1} \abs{\det h}_E^{\frac{1}{2}}
    \Phi_1(vh) \overline{\Phi_2(v)} \rd v \rd h\\ =
    & \iint_{(N_n \bs G_n)^2} W_{\phi}(h, g)
    W_1''(h) \overline{W_2''(g)}
    \mu(\det hg)^{-1} \abs{\det hg}_E^{\frac{1}{2}}
    \Phi_1(e_n h) \overline{\Phi_2(e_n g)} \rd h \rd g,
    \end{aligned}
    \end{equation}
where we recall that $W_\phi$ was defined in~\eqref{eq:definition_W_phi}.

Let us first explain that this implies~\eqref{eq:local_I_Pi}. The left hand
side of~\eqref{eq:local_I_Pi} equals
    \[
    \int_{G_n} \int_{E_n}
    \langle \Pi'(h)W_1', W_2' \rangle^{\mathrm{Whitt}}
    \langle \Pi''(h)W_1'', W_2'' \rangle^{\mathrm{Whitt}}
    \mu(\det h)^{-1} \abs{\det h}_E^{\frac{1}{2}}
    \Phi_1(vh) \overline{\Phi_2(v)} \rd v \rd h.
    \]
We obtain~\eqref{eq:local_I_Pi} by
applying~\eqref{eq:a_trick_for_convergence} to
    \[
    \phi(g) = \langle \Pi'(g)W_1', W_2' \rangle_{G_n}^{\mathrm{Whitt}}
    \]
and using~\eqref{eq:whittaker_expansion}.

Finally we prove~\eqref{eq:a_trick_for_convergence}. Both sides are
continuous linear forms in $\phi$ (this follows from Lemma
\ref{lemma:linear_convergence} for the left hand side, and from Lemma
\ref{lemma:lambda_continuous} for the right), and therefore we may assume
that $\phi \in \cS(G_n)$, which makes everything absolutely convergent. We
first replace the integral over $v \in E_n$ by $g \in P_n \bs G_n$. The
left hand side of~\eqref{eq:a_trick_for_convergence} equals
    \[
    \int_{H} \int_{P_n \bs G_n}
    \phi(h)
    \langle \Pi(h)W_1, W_2 \rangle^{\mathrm{Wh}}
    \mu(\det h)^{-1} \abs{\det h}_E^{\frac{1}{2}} \abs{\det g}_E
    \Phi_1(e_ngh) \overline{\Phi_2(e_ng)} \rd g \rd h.
    \]
Make a change of variable $h \mapsto g^{-1}h$ we end up with
    \[
    \int_{P_n \bs (G_n \times G_n)}
    \phi(g^{-1} h)
    \langle \Pi(h)W_1, \Pi(g) W_2 \rangle_{G_n}^{\mathrm{Wh}}
    \mu(\det g^{-1} h)^{-1} \abs{\det hg}_E^{\frac{1}{2}}
    \Phi_1(e_nh) \overline{\Phi_2(e_ng)} \rd g \rd h.
    \]
Here $P_n$ embeds in $G_n \times G_n$ diagonally. Plugging in the definition
of $\langle-,-\rangle^{\mathrm{Wh}}$ we obtain
    \[
    \int_{N_n \bs (G_n \times G_n)}
    \phi(g^{-1} h)
    W_1(h)\overline{W_2(g)}
    \mu(\det g^{-1} h)^{-1} \abs{\det hg}_E^{\frac{1}{2}}
    \Phi_1(e_nh) \overline{\Phi_2(e_ng)} \rd g \rd h.
    \]
Finally we decompose the integration of over $N_n \bs (G_n \times
G_n)$ as a integral over $N_n$ followed by a double integral over
$(N_n \bs G_n)^2$ and conclude that the above integral equals
    \[
    \int_{(N_n \bs G_n)^2} \int_{N_n}
    \phi(g^{-1} u h) \overline{\xi(u)}
    W_1(h)\overline{W_2(g)}
    \mu(\det g^{-1} h)^{-1} \abs{\det hg}_E^{\frac{1}{2}}
    \Phi_1(e_nh) \overline{\Phi_2(e_ng)} \rd u \rd g \rd h.
    \]
By the definition of $W_{\phi}$, this equals the right hand side
of~\eqref{eq:a_trick_for_convergence}. This finishes the proof of the lemma.
\end{proof}

\begin{prop}    \label{prop:lrtf_gl}
We have
    \[
    \begin{aligned}
    &\abs{\tau}_E^{-\frac{n(n-1)}{2}}
    \int_{\Temp(\U_{V_{qs}})/\sim} I_{\BC(\pi)}(f_{1, +})
    \overline{I_{\BC(\pi)}(f_{2, +})}
    \frac{\abs{\gamma^*(0,\pi, \Ad, \psi)}}{\abs{S_{\pi}}} \rd \pi\\
    = &
    \int_{\cA_{\rs}(F)} O(x, f_{1, +})
    \overline{O(x, f_{2, +})} \rd x.
    \end{aligned}
    \]
\end{prop}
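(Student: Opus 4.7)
The plan is to compute the hermitian form $T'(f_{1,+}, f_{2,+})$ defined by~\eqref{eq:kernel_gl} in two ways, in parallel with the proof of Proposition~\ref{prop:lrtf_u} in the unitary case. Lemma~\ref{lemma:lrtf_gl_geometric} already identifies $T'(f_{1,+}, f_{2,+})$ with $\int_{\cA_\rs(F)} O(x, f_{1,+}) \overline{O(x, f_{2,+})}\,\rd x$, which is the right-hand side of the claimed identity, so it suffices to match this with the spectral expression on the left. Both sides depend continuously and sesquilinearly on $(f_{1,+}, f_{2,+}) \in \cS(G_+)^2$: the spectral side via Lemma~\ref{lemma:Schwartz_I_Pi} combined with the moderate growth of $\abs{\gamma^*(0,\pi,\Ad,\psi)}/\abs{S_{\pi}}$ (cf.~\cite{BP1}*{Lemma~2.45, (2.7.4)}), and the geometric side via Lemma~\ref{lemma:lrtf_gl_geometric}. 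So by density I would first reduce to pure tensors $f_{i,+} = f_i \otimes \Phi_i$ with $f_i \in \cS(G)$ and $\Phi_i \in \cS(E_n)$.

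For such pure tensors, I would unfold the partial Fourier transform $\dag$, apply Parseval on the $E_n$-factor to trade $(\Phi_1^\dag, \Phi_2^\dag)$ for $(\Phi_1, \Phi_2)$, and rearrange the iterated integration so that the innermost integral over $G$ is the convolution-type matrix coefficient $\int_G f_1(h^{-1} g g') \overline{f_2(g)}\,\rd g$. Expanding this via the Plancherel formula on $G$ (cf.~\eqref{eq:plancherel_general} and~\eqref{eq:Plancherel_Whittaker}) produces an integral over $\cX_{\temp}(G)$ against $\rd\mu_G$. After swapping this $\Pi$-integral with the outer $\int_H \int_{G'}$---the principal analytic step---the $G'$-integration against the character $\eta_{G'}$ is recognized as the Flicker--Rallis functional $\beta_\eta$ built into the Whittaker pairing $\langle-,-\rangle_{X,\Pi}$, and the remaining inner integral over $H \times E_n$ coincides exactly with the left-hand side of~\eqref{eq:lrtf_gl_prep}. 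Lemma~\ref{lemma:lrtf_gl_prep} then identifies this inner integral as $\abs{\tau}_E^{-n(n-1)/2} I_{\Pi}(f_{1,+}) \overline{I_{\Pi}(f_{2,+})}$, giving
\[
T'(f_{1,+}, f_{2,+}) = \abs{\tau}_E^{-\frac{n(n-1)}{2}} \int_{\cX_{\temp}(G)} I_{\Pi}(f_{1,+}) \overline{I_{\Pi}(f_{2,+})}\,\rd\mu_G(\Pi).
\]

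To conclude, I would invoke the Flicker--Rallis theorem: the factor $\beta_\eta$ appearing in~\eqref{eq:I_pi_def} forces $I_{\Pi}$ to vanish unless $\Pi$ lies in the image of the base change map $\BC:\Temp(\U_{V_{qs}})/\sim \,\to \Temp(G)$. Under this base change the relevant portion of $\rd\mu_G$ converts into $\abs{\gamma^*(0,\pi,\Ad,\psi)}/\abs{S_{\pi}}\,\rd\pi$ on $\Temp(\U_{V_{qs}})/\sim$; this is precisely the measure conversion already in play in the proof of Proposition~\ref{prop:plancherel_gl} via~\cite{BP1}*{Corollary~3.51, Theorem~4.22}. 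Substituting yields the claimed formula. The main obstacle will be justifying the interchange of $\int_{\cX_{\temp}(G)}$ with $\int_H \int_{G'}$, which demands uniform control of the integrand in $\Pi$; the needed estimates are of the same nature as the Harish-Chandra--Schwartz bounds underlying Lemma~\ref{lemma:kernel_convergence} and the Whittaker bound of Lemma~\ref{lem:W_f_Estimate}.
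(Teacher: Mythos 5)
Your overall strategy agrees with the paper's at the coarse level (compute $T'$ two ways, reduce to pure tensors, invoke Lemma~\ref{lemma:lrtf_gl_prep} and [BP1]), but the spectral step as organized has a genuine gap. You propose first to expand the $G$-convolution $\int_G f_1(h^{-1}gg')\overline{f_2(g)}\,\rd g$ by the group Plancherel formula on $G$, obtaining an integral over $\cX_{\temp}(G)$ against $\rd\mu_G$, and then to swap this $\Pi$-integral with $\int_{G'}$ and recognize $\beta_\eta$. That interchange is not a uniform-estimate problem of the type handled by Lemma~\ref{lemma:kernel_convergence} or Lemma~\ref{lem:W_f_Estimate}: for a fixed tempered $\Pi$, the integral $\int_{G'} f_{\Pi}(g')\,\eta_{G'}(g')\,\rd g'$ is not absolutely convergent (already $\int_{G_n'}\Xi^{G_n}$ diverges), so there is no pointwise-in-$\Pi$ integrand to control. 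Relatedly, the intermediate identity you write,
\[
T'(f_{1,+},f_{2,+}) \stackrel{?}{=} \abs{\tau}_E^{-\frac{n(n-1)}{2}}\int_{\cX_{\temp}(G)} I_{\Pi}(f_{1,+})\overline{I_{\Pi}(f_{2,+})}\,\rd\mu_G(\Pi),
\]
cannot be correct: by the Flicker--Rallis vanishing you quote, $I_{\Pi}$ is supported on the base-change image, which is a $\rd\mu_G$-null subset of $\cX_{\temp}(G)$, so the right-hand side would be identically zero. The $G'$-period does not commute with the group-Plancherel decomposition; it concentrates the spectral support on the distinguished spectrum and replaces $\rd\mu_G$ by the density $\abs{\gamma^*(0,\pi,\Ad,\psi)}/\abs{S_{\pi}}\,\rd\pi$ on $\Temp(\U_{V_{qs}})/\sim$. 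That replacement is exactly the content of the local relative Plancherel formula for $G'\backslash G$ in [BP1] (Theorem~4.22, applied via Proposition~5.61), which the paper invokes directly on the inner double integral $\int_{G'}\int_G$ with $h$ held fixed, before any Plancherel decomposition on $G$ is used. So the key reference appears in your sketch, but it must be applied in place of, not after, the group-Plancherel expansion; as written the argument would need to re-derive that theorem to close the gap.
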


\begin{proof}
By Lemma~\ref{lemma:lrtf_gl_geometric}, we just need to prove that
    \begin{equation}    \label{eq:lrtf_gl_spectral}
    T'(f_{1, +}, f_{2, +}) = \abs{\tau}_E^{-\frac{n(n-1)}{2}}
    \int_{\Temp(\U_{V_{qs}})/\sim} I_{\BC(\pi)}(f_{1, +})
    \overline{I_{\BC(\pi)}(f_{2, +})}
    \frac{\abs{\gamma^*(0,\pi, \Ad, \psi)}}{\abs{S_{\pi}}} \rd \pi.
    \end{equation}
The absolute convergence of the integral follows from the fact that the
function $\Pi \mapsto I_\Pi(f_i')$ is Schwartz, and the function
$\frac{\abs{\gamma^*(0,\pi, \Ad, \psi)}}{\abs{S_{\pi}}}$ is of moderate
growth.

Since both sides of~\eqref{eq:lrtf_gl_spectral} are continuous in both $f_{1,
+}$ and $f_{2, +}$ by Lemma~\ref{lemma:Schwartz_I_Pi}, we may assume that
they both lie in $\cS(G) \otimes \cS(E_n)$. Thus we are reduced to calculate
    \[
    T'(f_1 \otimes \Phi_1, f_2 \otimes \Phi_2)
    \]
where $f_1, f_2 \in \cS(G)$ and $\Phi_1, \Phi_2 \in \cS(E_n)$.

Since partial Fourier transform preserves the $L^2$-norm, integrating over
$E_n^- \times E^{-, n}$ first gives
    \[
    \int_{H} \int_{G'} \int_{G} \int_{E_n}
    f_1(h x g') \overline{f_2(x)}
    \Phi_1(v h)
    \overline{\Phi_2(v)}
    \mu(\det h)^{-1} \abs{\det h}_E^{\frac{1}{2}}
    \eta_{n+1}(g') \rd v \rd x \rd g' \rd h.
    \]

This expression is again absolutely convergent, and hence we can integrate
$g'$ and $x$ first. By~\cite{BP1}*{Proposition~5.61} the integrals over $g'$
and $x$ gives gives
    \[
    \int_{\Temp(\U_{V_{qs}})/\sim}
    \langle \mathrm{L}(h_1^{-1})f_1, f_2\rangle_{X, \BC(\pi)}
    \Phi_1(v h_1)
    \overline{\Phi_2(v)}
    \frac{\abs{\gamma^*(0,\pi, \Ad, \psi)}}{\abs{S_{\pi}}}
    \rd \pi.
    \]
Strictly speaking the statement in~\cite{BP1} is not exactly the same as
this. But as in~\cite{BP1}*{Proposition~5.61}, since both sides of continuous
linear forms on $f_1$ and $f_2$ we may assume that $f_1, f_2 \in \cS(G_n)
\otimes \cS(G_n)$, and thus the desired equality boils down to two equalities
on $G_n$ and both follow directly from~\cite{BP1}*{Theorem~4.22} (the case
in~\cite{BP1} boils down to one on $G_n$ and the other on $G_{n+1}$).

We thus obtain
    \[
    \int_{H} \int_{E_n}
    \int_{\Temp(\U_{V_{qs}})/\sim}
    \langle \mathrm{L}(h^{-1})f_1, f_2\rangle_{X, \BC(\pi)}
    \Phi_1(v h)
    \overline{\Phi_2(v)}
    \frac{\abs{\gamma^*(0,\pi, \Ad, \psi)}}{\abs{S_{\pi}}}
    \rd \pi \rd v \rd h.
    \]
As the case of~\cite{BP1}*{(5.6.9)}, this expression is absolutely
convergent, and the identity~\eqref{eq:lrtf_gl_spectral} follows from
Lemma~\ref{lemma:lrtf_gl_prep}.
\end{proof}

\subsection{Character identities}
We are ready to compare the relative characters. We begin with the following
proposition, which is a weaker version of it.

\begin{prop}    \label{prop:weak_comparison}
For any $\pi \in \Temp_{\U_V'}(\U_V)$, there is a constant $\kappa(\pi)$ such
that
    \[
    J_{\pi}(f^V_+) = \kappa(\pi) I_{\BC(\pi)}(f_+),
    \]
for matching test functions $f_+ \in \cS(G_+)$ and $f^V_+ \in \cS(\U_{V,
+})$. Moreover the function
    \[
    \pi \in \Temp_{\U_V'}(\U_V) \mapsto \kappa(\pi)
    \]
is continuous. Here the topology on $\Temp_{\U_V'}(\U_V)$ is discussed in
Subsection~\ref{subsec:tempered_intertwining}.
\end{prop}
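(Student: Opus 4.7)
The plan is to compare the two local trace formulae (Propositions~\ref{prop:lrtf_u} and~\ref{prop:lrtf_gl}) and extract a pointwise character identity from their matching. Let $(f_{1,+},\{f_{1,+}^V\})$ and $(f_{2,+},\{f_{2,+}^V\})$ be two matching pairs. Since $|\Omega_v|=1$ (the character $\mu$ being unitary) and the bijection of regular semisimple orbits is measure-preserving by Lemma~\ref{lemma:matching_measure}, the matching of orbital integrals identifies the two geometric sides. Combining this with Propositions~\ref{prop:lrtf_u} (summed over $V$) and~\ref{prop:lrtf_gl} yields the spectral identity
\begin{equation*}
|\tau|_E^{-\frac{n(n-1)}{2}} \int_{\Temp(\U_{V_{qs}})/\sim} I_{\BC(\pi)}(f_{1,+}) \overline{I_{\BC(\pi)}(f_{2,+})} \frac{|\gamma^*(0,\pi,\Ad,\psi)|}{|S_\pi|} \rd\pi = \sum_{V \in \cH} \int_{\cX_{\temp}(\U_V)} J_\pi(f_{1,+}^V) \overline{J_\pi(f_{2,+}^V)} \rd\mu_{\U_V}(\pi).
\end{equation*}
Using the formal degree formula~\eqref{eq:formal_degree_conjecture} one rewrites $\rd\mu_{\U_V}(\pi) = \frac{|\gamma^*(0,\pi,\Ad,\psi)|}{|S_\pi|}\rd\pi$, and by Proposition~\ref{prop:explicit_intertwining} the right-hand side is supported on $\bigsqcup_V \Temp_{\U_V'}(\U_V)$, which local base change injectively embeds into $\Temp(\U_{V_{qs}})/\sim$ compatibly with the Plancherel density.

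\textbf{Pointwise extraction.} Next I would upgrade this integrated identity to the pointwise identity
\begin{equation*}
|\tau|_E^{-\frac{n(n-1)}{2}} I_{\BC(\pi)}(f_{1,+}) \overline{I_{\BC(\pi)}(f_{2,+})} = J_\pi(f_{1,+}^V) \overline{J_\pi(f_{2,+}^V)}, \qquad \pi \in \Temp_{\U_V'}(\U_V),
\end{equation*}
valid for all matching pairs. The argument combines the Schwartz continuity of $f_+ \mapsto (\pi \mapsto I_{\BC(\pi)}(f_+))$ and $f_+^V \mapsto (\pi \mapsto J_\pi(f_+^V))$ (Lemmas~\ref{lemma:Schwartz_I_Pi} and~\ref{lemma:J_pi_continuity}) with density of matching test functions (Theorem~\ref{thm:matching}). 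Concretely, by varying the matching pair to localize the spectral data in an arbitrarily small neighborhood of any chosen $\pi_0$, the integrated identity forces pointwise agreement of the integrands.

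\textbf{Conclusion and obstacle.} Fix $\pi_0 \in \Temp_{\U_V'}(\U_V)$. By Proposition~\ref{prop:explicit_intertwining} the form $J_{\pi_0}$ is not identically zero on $\cS(\U_{V,+})$, and since transferable test functions are dense by Theorem~\ref{thm:matching}, we may choose a matching pair $(f_{2,+},\{f_{2,+}^V\})$ with $J_{\pi_0}(f_{2,+}^V)\neq 0$; taking $f_{1,+}=f_{2,+}$ in the pointwise identity then forces $I_{\BC(\pi_0)}(f_{2,+})\neq 0$ as well. For any other matching $(f_{1,+},\{f_{1,+}^V\})$ the pointwise identity yields $J_{\pi_0}(f_{1,+}^V) = \kappa(\pi_0)\, I_{\BC(\pi_0)}(f_{1,+})$ with
\begin{equation*}
\kappa(\pi_0) := \frac{|\tau|_E^{-\frac{n(n-1)}{2}} \, \overline{I_{\BC(\pi_0)}(f_{2,+})}}{\overline{J_{\pi_0}(f_{2,+}^V)}},
\end{equation*}
a number depending only on $\pi_0$. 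Continuity of $\pi \mapsto \kappa(\pi)$ near $\pi_0$ follows from continuity of the two Schwartz functions $\pi \mapsto I_{\BC(\pi)}(f_{2,+})$ and $\pi \mapsto J_\pi(f_{2,+}^V)$, since the denominator is nonvanishing in a neighborhood. The main obstacle is the spectral localization in the second paragraph: one must produce matching test functions whose spherical characters approximate a spectral delta at $\pi_0$. This is relatively direct at non-Archimedean places where transfer is surjective, but more delicate at Archimedean places where Theorem~\ref{thm:matching} provides only density, and a Paley--Wiener type analysis of the closed image of transferable test functions inside $\cS(\cX_{\temp}(\U_V))$ is required.
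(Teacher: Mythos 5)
Your outline correctly reproduces the architecture of the argument the paper delegates to Beuzart-Plessis (\cite{BP}*{Proposition~4.2.1} and \cite{BP1}*{Proposition~5.81}): combine the two local trace formulae (Propositions~\ref{prop:lrtf_u} and~\ref{prop:lrtf_gl}) with the measure-preserving matching of regular semisimple orbits (Lemma~\ref{lemma:matching_measure}) to obtain an integrated $L^2$-identity of spherical characters, then separate the spectrum. The integrated identity you write down is correct, and the ancillary observations (unitarity of $\Omega_v$, the formal degree formula~\eqref{eq:formal_degree_conjecture}, Proposition~\ref{prop:explicit_intertwining}, and the comparison of the Plancherel densities) are all used appropriately.

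However, the step you call the ``main obstacle'' is the heart of the proof, and as written your argument does not close it. First, you cannot mirror the continuity-plus-density argument that appears later in Proposition~\ref{prop:abs_value_kappa}: there the substitution $J_\pi = \kappa(\pi) I_{\BC(\pi)}$ has already eliminated the $f_{i,+}^V$, so both sides become continuous in $f_{i,+}$ alone and the identity extends past the transferable subspace. Here, before $\kappa(\pi)$ exists, the right-hand side is a functional of $\{f_{i,+}^V\}$, which is linked to $f_{i,+}$ only through the transfer relation, so there is no continuous extension in $f_{i,+}$ alone; one must localize the spectral parameter while remaining inside matching pairs. Second, producing matching test functions whose spherical characters approximate a spectral delta at $\pi_0$ is precisely a multiplier/Paley--Wiener statement and is the substantive content of the cited propositions of Beuzart-Plessis: one constructs an algebra of multipliers (the Bernstein center in the non-Archimedean case, Arthur--Delorme-type multipliers in the Archimedean case) that scales $I_\Pi$ and $J_\pi$ by the same function of the cuspidal support and is compatible with matching. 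That compatibility is what licenses the spectral localization; your proposal neither constructs it nor cites it, and it is needed at non-Archimedean places as well, so the assertion that the step is ``relatively direct'' there is also an overstatement. The continuity of $\pi\mapsto\kappa(\pi)$ at the end is fine once the pointwise identity is established, but it inherits the same unresolved dependence.
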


\begin{proof}
The proof is the same as~\cite{BP}*{Proposition~4.2.1} and is left to the
interested reader, cf.~\cite{BP1}*{Proposition~5.81}.
\end{proof}

We now compute $\kappa(\pi)$. In theory we could compute it directly using
the spectral expansions of the unipotent orbital integrals. But when $F$ is
Archimedean, due to the lack the matching for all test functions, we need an
a priori computation of $\abs{\kappa(\pi)}$, which we could only achieve
using the local relative trace formulae.

\begin{prop}    \label{prop:abs_value_kappa}
For any $\pi \in \Temp_{\U_V'}(\U_V)$ we have $\abs{\kappa(\pi)} =
\abs{\tau}_E^{-\frac{n(n-1)}{4}}$. In particular it is independent of $\pi$.
\end{prop}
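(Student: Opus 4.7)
The plan is to compare the two local relative trace formulae (Propositions~\ref{prop:lrtf_u} and~\ref{prop:lrtf_gl}) against each other via the matching of test functions, and extract $|\kappa(\pi)|^2$. Fix matching families $f_{i,+} \in \cS(G_+)$ and $\{f^V_{i,+}\}_V$ ($i=1,2$). Since the transfer factor $\Omega_v$ is unimodular and Lemma~\ref{lemma:matching_measure} identifies the regular semisimple orbit spaces measure-preservingly, the matching relation $\Omega_v(x) O(x, f_{i,+}) = O(y, f^V_{i,+})$ gives
\[
\int_{\cA_{\rs}(F)} O(x, f_{1,+}) \overline{O(x, f_{2,+})} \rd x = \sum_V \int_{\cA_{\rs}(F)} O(y, f^V_{1,+}) \overline{O(y, f^V_{2,+})} \rd y,
\]
where only finitely many $V$ contribute. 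Feeding this into the spectral expansions of the two trace formulae yields
\begin{multline*}
\sum_V \int_{\cX_{\temp}(\U_V)} J_\pi(f^V_{1,+}) \overline{J_\pi(f^V_{2,+})} \rd \mu_{\U_V}(\pi) \\
= |\tau|_E^{-\frac{n(n-1)}{2}} \int_{\Temp(\U_{V_{qs}})/\sim} I_{\BC(\pi)}(f_{1,+}) \overline{I_{\BC(\pi)}(f_{2,+})} \mu^*_{\U_{V_{qs}}}(\pi) \rd \pi.
\end{multline*}

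On the left, Proposition~\ref{prop:explicit_intertwining} allows me to restrict the integration over $\cX_{\temp}(\U_V)$ to the open subset $\Temp_{\U_V'}(\U_V)$, where Proposition~\ref{prop:weak_comparison} substitutes $J_\pi(f^V_+) = \kappa(\pi) I_{\BC(\pi)}(f_+)$. By the local Gan--Gross--Prasad conjecture for Fourier--Jacobi periods, at most one member of each $L$-packet on $\U_{V_{qs}}$ admits a nonzero FJ model across all pure inner forms $V$, so the disjoint union $\bigsqcup_V \Temp_{\U_V'}(\U_V)$ embeds via base change as an open subset $A \subset \Temp(\U_{V_{qs}})/\sim$. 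Base-change compatibility of formal degrees (the Plancherel density depends only on the $L$-packet, so $\mu^*_{\U(V)}(\pi_V) = \mu^*_{\U_{V_{qs}}}(\pi^{qs})$ for members $\pi_V, \pi^{qs}$ of a common packet) makes this embedding measure-preserving. Setting $F(\pi) := I_{\BC(\pi)}(f_{1,+}) \overline{I_{\BC(\pi)}(f_{2,+})} \mu^*_{\U_{V_{qs}}}(\pi)$, the identity reduces to
\[
\int_A |\kappa(\pi)|^2 F(\pi) \rd \pi = |\tau|_E^{-\frac{n(n-1)}{2}} \int_{\Temp(\U_{V_{qs}})/\sim} F(\pi) \rd \pi.
\]

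To conclude $|\kappa(\pi_0)|^2 = |\tau|_E^{-n(n-1)/2}$ for each $\pi_0 \in \Temp_{\U_V'}(\U_V)$, I would localize: pick $f^V_{i,+}$ with $J_{\pi_0}(f^V_{i,+}) \ne 0$ (possible by Proposition~\ref{prop:explicit_intertwining}), approximate by transferable functions via Theorem~\ref{thm:matching}, and transfer to obtain $f_{i,+}$ with $I_{\BC(\pi_0)}(f_{i,+}) \ne 0$ spectrally concentrated near $\pi_0 \in \Temp(\U_{V_{qs}})/\sim$. The continuity of $\kappa$ provided by Proposition~\ref{prop:weak_comparison} then extracts the pointwise value. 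The main obstacle is the measure-theoretic mismatch between the two integration ranges $A$ and $\Temp(\U_{V_{qs}})/\sim$: to resolve it, one must argue either from the ``only if'' direction of local GGP (so $I_{\BC(\pi)}(f_+) \equiv 0$ for transferable $f_+$ and $\pi \notin A$), or more directly by constructing matching test functions whose spectral transforms are supported in $A$, thereby reducing both integrals to the common locus where the localization argument applies.
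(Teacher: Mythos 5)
Your overall strategy matches the paper's: compare the two local relative trace formulae via matching test functions, substitute $J_\pi = \kappa(\pi) I_{\BC(\pi)}$, and extract $\lvert\kappa(\pi)\rvert^2$ pointwise. However, there are two issues.

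First, the ``measure-theoretic mismatch'' you flag at the end is not actually the obstacle. Rather than restricting the GL-side integral to a subset $A$, the paper reorganizes the unitary side using~\cite{BP1}*{(2.10.1)} and the formal degree conjecture~\cite{BP1}*{Theorem~5.53} into an integral over $\Temp(\U_{V_{qs}})/\sim$ whose integrand carries the inner double sum $\sum_V \sum_{\sigma \sim \pi} \lvert\kappa(\sigma)\rvert^2$ multiplied by the common Plancherel density $\lvert\gamma^*(0,\pi,\Ad,\psi)\rvert / \lvert S_\pi\rvert$. No restriction of domain is needed; the spectrum-separating technique~\cite{BP1}*{Lemma~5.82} then yields the pointwise equality $\lvert\tau\rvert_E^{-n(n-1)/2} = \sum_V \sum_{\sigma \sim \pi} \lvert\kappa(\sigma)\rvert^2$ for every $\pi$, and local Gan--Gross--Prasad ensures the inner sum has exactly one term. (Incidentally, local GGP shows the map $\bigsqcup_V \Temp_{\U_V'}(\U_V) \to \Temp(\U_{V_{qs}})/\sim$ is a bijection, not an embedding onto a proper open subset as you suggest.)

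Second, and more importantly, you acknowledge but do not resolve the Archimedean difficulty. Saying ``approximate by transferable functions'' is not enough: spectrum separation requires the integral identity to hold for all test functions $f_{1,+}, f_{2,+}$, not just transferable ones, and a priori the right-hand side of the identity (the sum over $V$ with $\lvert\kappa\rvert^2$ inserted) is only known to be absolutely convergent for transferable inputs. The paper closes this gap as follows: the Cauchy--Schwarz inequality bounds the right-hand side with $I_{\BC(\pi)}(f_{i,+})$ replaced by its absolute value, by a continuous seminorm $\nu(f_{1,+})^{1/2}\nu(f_{2,+})^{1/2}$ on $\cS(G_+)$ that is independent of the transfer; Fatou's lemma and density of transferable functions then extend the absolute convergence, hence the identity, to all test functions. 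Without this step the localization argument cannot proceed at an Archimedean place. You would need to supply this convergence argument to complete the proof.
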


\begin{proof}
Take $f^V_{1, +}, f^V_{2, +} \in \cS(\U_{V, +})$ and $f_{1, +}, f_{2, +} \in
\cS(G_+)$. Assume that they match, then by Proposition~\ref{prop:lrtf_u} and
Proposition~\ref{prop:lrtf_gl} we have
    \[
    \begin{aligned}
    &\abs{\tau}_E^{-\frac{n(n-1)}{2}}
    \int_{\Temp(\U_{V_{qs}})/\sim} I_{\BC(\pi)}(f_{1, +})
    \overline{I_{\BC(\pi)}(f_{2, +})}
    \frac{\abs{\gamma^*(0,\pi, \Ad, \psi)}}{\abs{S_{\pi}}} \rd \pi\\
     =
    & \sum_{V \in \cH} \int_{\Temp_{\U_V'}(\U_V)} J_{\pi}(f_{1, +}^V)
    \overline{J_{\pi}(f_{2, +}^V)}
    \rd \mu_{\U_V}(\pi).
    \end{aligned}
    \]
It follows from Proposition \ref{prop:weak_comparison}
that
    \[
    \begin{aligned}
    &\abs{\tau}_E^{-\frac{n(n-1)}{2}}
    \int_{\Temp(\U_{V_{qs}})/\sim} I_{\BC(\pi)}(f_{1, +}) \overline{I_{\BC(\pi)}(f_{2, +})}
    \frac{\abs{\gamma^*(0,\pi, \Ad, \psi)}}{\abs{S_{\pi}}} \rd \pi\\
     =
    & \sum_{V \in \cH} \int_{\Temp_{\U_V'}(\U_V)}
    I_{\BC(\pi)}(f_{1, +}) \overline{I_{\BC(\pi)}(f_{2, +})}
    \abs{\kappa(\pi)}^2
    \rd \mu_{\U_V}(\pi).
    \end{aligned}
    \]
Using the integration formula~\cite{BP1}*{(2.10.1)}, and the formal degree
conjecture for unitary groups~\cite{BP1}*{Theorem~5.53} (this is one of the
main results of~\cite{BP1}), we conclude
    \begin{equation}    \label{eq:lrtf_comparison}
    \begin{aligned}
    &\abs{\tau}_E^{-\frac{n(n-1)}{2}}
    \int_{\Temp(\U_{V_{qs}})/\sim} I_{\BC(\pi)}(f_{1, +})
    \overline{I_{\BC(\pi)}(f_{2, +})}
    \frac{\abs{\gamma^*(0,\pi, \Ad, \psi)}}{\abs{S_{\pi}}} \rd \pi\\
     =
    & \int_{\Temp(\U_{V_{qs}})/\sim} \left( \sum_V
    \sum_{\substack{\sigma \in \Temp_{\U_V'}(\U_V)\\
    \sigma \sim \pi}} \abs{\kappa(\sigma)}^2 \right)
    I_{\BC(\pi)}(f_{1, +}) \overline{I_{\BC(\pi)}(f_{2, +})}
    \frac{\abs{\gamma^*(0,\pi, \Ad, \psi)}}{\abs{S_{\pi}}} \rd \pi.
    \end{aligned}
    \end{equation}

A priori, this identity holds for all transferrable
$f_{1, +}, f_{2, +}$. We now explain that it holds for all $f_{1, +}, f_{2,
+}$. If $F$ is non-Archimedean, there is nothing to prove as all test
functions are transferable. Assume $F$ is Archimedean. Since transferable
functions form a dense subspace of $\cS(G_+)$, it is enough to explain that
both sides define continuous linear forms in $f_{1, +}, f_{2, +}$. Since for
any $\Pi \in \Temp(G)$, $I_{\Pi}$ is a continuous linear form, it is enough
to explain that both sides of~\eqref{eq:lrtf_comparison} are absolutely
convergent. This is clear for the left hand side, as the map $\pi \to
I_{\BC(\pi)}$ and $\frac{\abs{\gamma^*(0,\pi, \Ad, \psi)}}{\abs{S_{\pi}}}$ is of
moderate growth in $\pi$. It remains to explain the absolute convergence of
the right hand side. For this we apply the Cauchy--Schwarz inequality to
conclude that if $f_{1, +}, f_{2, +}$ are transferable, then the right hand
side of~\eqref{eq:lrtf_comparison}, when $I_{\BC(\pi)}(f_{i, +})$'s are
replaced by their absolute values, is bounded by
    \[
    \abs{\tau}_E^{-\frac{n(n-1)}{2}}
    \nu(f_{1, +})^{\frac{1}{2}} \nu(f_{2, +})^{\frac{1}{2}}
    \]
where $\nu$ us the continuous seminorm on $\cS(G_+)$ given by
    \[
    \nu(f_{i, +}) =
    \int_{\Temp(\U_{V_{qs}})/\sim} \abs{I_{\BC(\pi)}(f_{i, +})}^2
    \frac{\abs{\gamma^*(0,\pi, \Ad, \psi)}}{\abs{S_{\pi}}} \rd \pi.
    \]
Then by Fatou's lemma and the density of transferable functions we conclude
that the same holds for all $f_{1, +}, f_{2, +}$. This proves the absolute
convergence of the right hand side.

Once we have that~\eqref{eq:lrtf_comparison} holds for all $f_{1, +}, f_{2,
+}$, we apply the spectrum-separating technique of~\cite{BP1}*{Lemma~5.82} and
conclude that
    \[
    \abs{\tau}_E^{-\frac{n(n-1)}{2}}
    I_{\BC(\pi)}(f_{1, +})
    \overline{I_{\BC(\pi)}(f_{2, +})} =
    \sum_V
    \sum_{\substack{\sigma \in \Temp_{\U_V'}(\U_V)\\
    \sigma \sim \pi}} \abs{\kappa(\sigma)}^2
    I_{\BC(\pi)}(f_{1, +})
    \overline{I_{\BC(\pi)}(f_{2, +})},
    \]
for each $\pi \in \Temp(\U_{V_{qs}})/\sim$. By the local Gan--Gross--Prasad
conjecture~\cites{GI2,Xue6}, there is a unique $\sigma$ in the double sum on
the right hand side such that $\sigma \in \Temp_{\U_V'}(\U_V)$. Since
$I_{\BC(\pi)}$ is not identically zero, which can be deduced from
Proposition~\ref{prop:explicit_intertwining}, we conclude that
    \[
    \abs{\kappa(\sigma)}^2 = \abs{\tau}_E^{-\frac{n(n-1)}{2}}
    \]
for this $\sigma$.
\end{proof}

\begin{lemma}   \label{lemma:nilpotent_matching}
Let $f_+ \in \cS(G_+)$ and the collection $f_+^V \in
\cS(\U_{V,+})$ be matching test functions. Then
    \[
    \gamma O_+(f_+) =
    \sum_{V \in \cH} \eta((2\tau)^n \disc V)^n
    \mu((-1)^{n-1} \tau)^{-\frac{n(n+1)}{2}}
    \lambda(\psi)^{\frac{n(n+1)}{2}}
    O(1, f_+^V).
    \]
We recall that $\gamma$ is the constant defined in~\eqref{eq:gamma_constant}.
\end{lemma}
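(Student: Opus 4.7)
The plan is to chain together three descent/transfer identities: (i) the Cayley descent to the Lie algebra (Lemma~\ref{lemma:Lie_algebra_matching}), (ii) the Fourier-transform compatibility on the infinitesimal side (Theorem~\ref{thm:Fourier_matching}), and (iii) the explicit formula relating the regular unipotent orbital integral on $\fx$ to the Fourier transform (\eqref{eq:reg_nilpotent}) together with its $\fy^V$-counterpart (\eqref{eq:spectral_nilpotent_u}).

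First I would unfold the definition $O_+(f_+)=O(\xi_+,f_{+,\natural})$ and invoke \eqref{eq:reg_nilpotent} to rewrite
\[
\gamma\,\mu((-1)^{n-1}\tau)^{\frac{n(n+1)}{2}}\,O_+(f_+)
= \int_{\cB_{\rs}} \Omega(x)\,O\!\left(x,\widehat{f_{+,\natural}}\right)\rd x,
\]
thereby converting a unipotent orbital integral into a convergent integral of regular semisimple orbital integrals of the Fourier transform. By Lemma~\ref{lemma:Lie_algebra_matching}, the function $f_{+,\natural}$ matches the collection $\{f^V_{+,\natural}\}_{V\in\cH}$, and then Theorem~\ref{thm:Fourier_matching} tells us that $\widehat{f_{+,\natural}}$ matches the collection $\lambda(\psi)^{\frac{n(n+1)}{2}}\eta((2\tau)^n\disc V)^n\,\cF_V f^V_{+,\natural}$.

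Next I would use the matching of regular semisimple orbits in Proposition~\ref{prop:regular_ss_orbits_matching} (its local version \eqref{eq:matching} at the single place we work with) together with Lemma~\ref{lemma:matching_measure}, which says the bijection on orbit spaces is measure-preserving. Splitting $\cB_{\rs}$ according to which hermitian space $V$ the orbit comes from yields
\[
\int_{\cB_{\rs}} \Omega(x)\,O\!\left(x,\widehat{f_{+,\natural}}\right)\rd x
= \lambda(\psi)^{\frac{n(n+1)}{2}} \sum_{V\in\cH} \eta((2\tau)^n\disc V)^n
\int_{\cB_{\rs}^V} O\!\left(y,\cF_V f^V_{+,\natural}\right)\rd y,
\]
where $\cB_{\rs}^V$ denotes the image of $\fy^V_{\rs}$ in $\cB$. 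Since $\fy^V\setminus\fy^V_{\rs}$ has measure zero, the $V$-th summand equals $\int_{\fy^V}\cF_V f^V_{+,\natural}(y)\rd y$, which is exactly $O(1,f^V_+)$ by the Fourier inversion formula \eqref{eq:spectral_nilpotent_u}. Dividing through by $\mu((-1)^{n-1}\tau)^{n(n+1)/2}$ yields the stated identity.

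The only step that requires genuine care is the measure-theoretic bookkeeping when passing from the integral over $\cB_{\rs}$ to the sum over $V$ of integrals over $\cB_{\rs}^V$: one must verify that the pushforward of $\rd y$ on $\fy^V_{\rs}/\U(V)$ to $\cB_{\rs}^V$ matches the restriction of the chosen measure on $\cB_{\rs}$, and that the regular semisimple loci cover $\cB$ up to a null set on each side. Both follow from Lemma~\ref{lemma:matching_measure} and the fact that the actions on the regular semisimple loci are free, so this is essentially formal once the measures are pinned down as in Subsection~\ref{subsec:measure_local}. Everything else is a direct substitution.
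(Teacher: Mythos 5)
Your proposal is correct and follows essentially the same approach as the paper: apply~\eqref{eq:reg_nilpotent} to convert the unipotent integral into a regular semisimple orbital integral of the Fourier transform, pass matching through the Cayley descent (Lemma~\ref{lemma:Lie_algebra_matching}) and the Fourier transform (Theorem~\ref{thm:Fourier_matching}), then split the integral over $\cB_{\rs}$ by hermitian space using Lemma~\ref{lemma:matching_measure} and conclude with~\eqref{eq:spectral_nilpotent_u}. You make explicit the role of Lemma~\ref{lemma:Lie_algebra_matching} and the measure bookkeeping, which the paper leaves implicit, but the argument is the same.
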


\begin{proof}
By~\eqref{eq:reg_nilpotent} we have
    \[
    \gamma \mu((-1)^{n-1} \tau)^{\frac{n(n+1)}{2}}
    O_+(f_+) =
    \int_{\cB_{\rs}} \Omega(x)
    O(x, \widehat{f_{+, \natural}}) \rd x.
    \]
By Lemma~\ref{lemma:matching_measure}, matching of orbits preserves
measures, and by Theorem~\ref{thm:Fourier_matching}, $\widehat{f_\natural}$
and the collection of functions $\lambda(\psi)^{\frac{n(n+1)}{2}}
\eta((2\tau)^n \disc V)^n \cF_V f^V_{+, \natural}$ match. Thus we have
    \[
    \begin{aligned}
    &\gamma \mu((-1)^{n-1} \tau)^{\frac{n(n+1)}{2}}
    O_+(f_+)\\ = &\sum_V
    \lambda(\psi)^{\frac{n(n+1)}{2}} \eta((2\tau)^n \disc V)^n
     \int_{\cB_{\mathrm{rs}}}
    O(y, \cF_{V} f^V_{+, \natural}) \rd y.
    \end{aligned}
    \]
By~\eqref{eq:spectral_nilpotent_u}, the right hand side equals
    \[
    \sum_V
    \lambda(\psi)^{\frac{n(n+1)}{2}} \eta((2\tau)^n \disc V)^n
    O(1, f_+^V).
    \]
This proves the lemma.
\end{proof}

\begin{theorem} \label{thm:comparison_character}
For all $\pi \in \Temp_{\U_V'}(\U_V)$, we have
    \[
    \kappa(\pi) = \abs{\tau}_E^{-\frac{n(n-1)}{4}}
    \eta((2\tau)^n \disc V)^n
    \mu((-1)^{n-1} \tau)^{\frac{n(n+1)}{2}}
    \lambda(\psi)^{\frac{n(n+1)}{2}}.
    \]
\end{theorem}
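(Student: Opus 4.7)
The plan is to imitate the strategy of Proposition~\ref{prop:abs_value_kappa}, replacing the two local relative trace formulae of Propositions~\ref{prop:lrtf_u} and~\ref{prop:lrtf_gl} by the spectral expansions of the regular unipotent orbital integrals (Proposition~\ref{prop:plancherel_gl} and Lemma~\ref{lemma:plancherel_U}), glued together by the nilpotent matching of Lemma~\ref{lemma:nilpotent_matching}. This produces a \emph{linear} identity in $\kappa(\pi)$ rather than a quadratic one, which pins down the phase.

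Fix matching test functions $f_+ \in \cS(G_+)$ and $\{f_+^V\}_V$ with $f_+^V \in \cS(\U_{V,+})$, and compute $\gamma O_+(f_+)$ in two ways. On the one hand, Proposition~\ref{prop:plancherel_gl} gives
\[
\gamma O_+(f_+) = \abs{\tau}_E^{-\frac{n(n-1)}{4}} \int_{\Temp(\U_{V_{qs}})/\sim} I_{\BC(\pi)}(f_+) \frac{\abs{\gamma^*(0, \pi, \Ad, \psi)}}{\abs{S_\pi}} \rd \pi,
\]
while on the other hand Lemma~\ref{lemma:nilpotent_matching} followed by Lemma~\ref{lemma:plancherel_U} yields
\[
\gamma O_+(f_+) = \sum_{V \in \cH} c_V \int_{\cX_{\temp}(\U_V)} J_\pi(f_+^V) \rd \mu_{\U_V}(\pi),
\]
where $c_V := \eta((2\tau)^n \disc V)^n \mu((-1)^{n-1}\tau)^{-\frac{n(n+1)}{2}} \lambda(\psi)^{\frac{n(n+1)}{2}}$. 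By Proposition~\ref{prop:explicit_intertwining}, the integrand on the right is supported on $\Temp_{\U_V'}(\U_V)$, and on that locus Proposition~\ref{prop:weak_comparison} converts $J_\pi(f_+^V)$ into $\kappa(\pi) I_{\BC(\pi)}(f_+)$. The local Gan--Gross--Prasad theorem~\cites{GI2,Xue6} then identifies $\coprod_V \Temp_{\U_V'}(\U_V)$ with a subset of $\Temp(\U_{V_{qs}})/\sim$ via base change, so that both expressions become integrals against $I_{\BC(\pi)}(f_+)\,\abs{\gamma^*(0,\pi,\Ad,\psi)}\abs{S_\pi}^{-1} \rd \pi$ on $\Temp(\U_{V_{qs}})/\sim$.

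Applying the spectrum-separating technique~\cite{BP1}*{Lemma~5.82} to the resulting identity produces the pointwise equality $\abs{\tau}_E^{-n(n-1)/4} = c_V \kappa(\pi)$ for $\pi \in \Temp_{\U_V'}(\U_V)$ (using non-vanishing of $I_{\BC(\pi)}$ as in Proposition~\ref{prop:abs_value_kappa} to cancel the common factor), and a direct rearrangement of $\abs{\tau}_E^{-n(n-1)/4} c_V^{-1}$, invoking $\eta^2 = 1$ and the standard root-of-unity identities satisfied by $\lambda(\psi)$, yields the claimed formula for $\kappa(\pi)$. When $F$ is Archimedean, the identity is first established for transferable test functions and then extended to all of $\cS(G_+)$ by density and continuity, using Lemmas~\ref{lemma:Schwartz_I_Pi} and~\ref{lemma:J_pi_continuity} to justify Fatou's lemma as in the previous proof. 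The main obstacle is verifying that the measure transport from $\coprod_V (\Temp_{\U_V'}(\U_V), \rd \mu_{\U_V})$ to $(\Temp(\U_{V_{qs}})/\sim, \abs{\gamma^*(0,\pi,\Ad,\psi)}\abs{S_\pi}^{-1} \rd \pi)$ is compatible with the base-change identification; this rests on the formal degree conjecture~\eqref{eq:formal_degree_conjecture} proved in~\cite{BP1}, together with the compatibility of the adjoint $\gamma$-factor and the $L$-parameter centralizer group under base change, and it is by far the deepest ingredient in the argument.
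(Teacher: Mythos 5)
Your proposal reproduces the paper's own proof essentially step by step: combine the nilpotent matching of Lemma~\ref{lemma:nilpotent_matching} with the two spectral expansions (Proposition~\ref{prop:plancherel_gl} and Lemma~\ref{lemma:plancherel_U}), convert $J_{\pi}$ to $\kappa(\pi)I_{\BC(\pi)}$ via Proposition~\ref{prop:weak_comparison}, reassemble over $\Temp(\U_{V_{qs}})/\sim$ using the formal degree conjecture and the integration formula, then conclude by spectrum separation and the local GGP uniqueness. You also correctly flag the measure-transport compatibility (formal degree conjecture) as the deepest input. The one imprecision is in the Archimedean density step: after substituting $\kappa(\pi)I_{\BC(\pi)}(f_+)$ the integrand is complex-valued, so Fatou's lemma does not apply as it did for the nonnegative quadratic identity in Proposition~\ref{prop:abs_value_kappa}; the paper instead uses the constancy of $\abs{\kappa(\sigma)}$ already established there, together with the uniqueness from local GGP, to bound the right-hand side by a continuous seminorm and extend the identity to all of $\cS(G_+)$.
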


\begin{proof}
For each $n$-dimensional skew-Hermitian space $V$, we put
    \[
    \epsilon_V=\eta((2\tau)^n \disc V)^n
    \mu((-1)^{n-1} \tau)^{-\frac{n(n+1)}{2}}
    \lambda(\psi)^{\frac{n(n+1)}{2}}.
    \]

If $f_+$ and the collection $f_+^V$ match, by
Lemma~\ref{lemma:nilpotent_matching} we have
    \[
    \gamma
    O_+(f_+) =
    \sum_{V} \epsilon_V
    O(1, f_+^V),
    \]
where the sum runs over all (isomorphism classes of) nondegenerate $n$-dimensional
skew-Hermitian space. By Lemma~\ref{lemma:plancherel_U} and
Proposition~\ref{prop:plancherel_gl}, we have
    \[
    \abs{\tau}_E^{-\frac{n(n-1)}{4}}
    \int_{\Temp(\U_{V_{qs}})/\sim} I_{\BC(\pi)}(f_+)
    \frac{\abs{\gamma^*(0,\pi, \Ad, \psi)}}{\abs{S_{\pi}}}
    \rd \pi = \sum_V \epsilon_V
    \int_{\Temp_{\U_V'}(\U_V)}
    J_{\pi}(f_+^V) \rd \mu_{\U_V}(\pi).
    \]
Using Proposition~\ref{prop:weak_comparison}, the integration
formula~\cite{BP1}*{(2.10.1)} and the formal degree
conjecture~\cite{BP1}*{Theorem~5.53} we have
    \begin{equation}    \label{eq:final_comparison}
    \begin{aligned}
    &\abs{\tau}_E^{-\frac{n(n-1)}{4}}
    \int_{\Temp(\U_{V_{qs}})/\sim} I_{\BC(\pi)}(f')
    \frac{\abs{\gamma^*(0,\pi, \Ad, \psi)}}{\abs{S_{\pi}}}
    \rd \pi\\
    = & \int_{\Temp(\U_{V_{qs}})/\sim} \left( \sum_V \epsilon_V
    \sum_{\substack{\sigma \in \Temp_{H^V(F)}(G^V(F))\\
    \sigma \sim \pi}} \kappa(\sigma) \mu^*_{G^V}(\sigma) \right)
    I_{\BC(\pi)}(f') \frac{\abs{\gamma^*(0,\pi, \Ad, \psi)}}{\abs{S_{\pi}}}
    \rd \pi.
    \end{aligned}
    \end{equation}

This identity a priori holds for transferable $f'$. We now explain that it
holds for all $f'$. If $F$ is non-Archimedean, the all $f'$ are transferable.
If $F$ is Archimedean, this holds for all transferable $f'$. Assume that $F$
is Archimedean. Then as in the proof of
Proposition~\ref{prop:abs_value_kappa}, we need to explain that both sides
are absolutely convergent. The left hand side is clear as $\pi \mapsto
I_{\BC(\pi)}$ is Schwartz. The absolute convergence of the right hand side follows
from the additional fact that $\abs{\kappa(\sigma)}$ is a constant for all
$\sigma \in \Temp_{\U(V)}(\U_V)$ by Proposition~\ref{prop:abs_value_kappa},
and that the inner double sum contains a unique nonzero term by the local
Gan-Gross-Prasad conjecture.

Using the spectrum separating technique as explained
in~\cite{BP1}*{Lemma~5.82}, thanks to Proposition
\ref{prop:explicit_intertwining}, we conclude that
    \[
    \sum_V \epsilon_V
    \sum_{\substack{\sigma \in \Temp_{\U(V)}(\U_V)\\
    \sigma \sim \pi}} \kappa(\sigma) =
    \abs{\tau}_E^{-\frac{n(n-1)}{2}}.
    \]
By the local GGP conjecture again, there is a unique nonzero term on the
right hand side. The theorem then follows.
\end{proof}

\subsection{Character identity and matching}
Let $V$ be a nondegenerate $n$-dimensional skew-Hermitian space. For any $\pi \in
\Temp(\U_V)$, let $\kappa(\pi)$ be as
Theorem~\ref{thm:comparison_character}. The following theorem characterizes
the matching of test functions via spherical character identities.

\begin{theorem} \label{thm:spectral_characterization_transfer}
Let $f_+^V \in \cS(\U_{V,+})$, $f_+ \in \cS(G_+)$. The following are
equivalent.
\begin{enumerate}
    \item For all $\pi \in \Temp_{\U_V'}(\U_V)$ we have
      \[
    \kappa(\pi) I_{\BC(\pi)}(f_+)=J_{\pi}(f^V_+).
    \]
    \item For all matching $x \in G_+$ and $y \in \U_{V,+}$ we have
    \[
    \Omega(x) O(x, f_+) = O(y, f^V).
    \]
\end{enumerate}
\end{theorem}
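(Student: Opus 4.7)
The plan is to prove both implications by combining the local relative trace formulas (Propositions~\ref{prop:lrtf_u} and~\ref{prop:lrtf_gl}) with the weak spherical character identity of Proposition~\ref{prop:weak_comparison} and the equality $\abs{\kappa(\pi)}^2 = \abs{\tau}_E^{-n(n-1)/2}$ from Proposition~\ref{prop:abs_value_kappa}. The pivotal device is the following: for any transferable $h_+^V \in \cS(\U_{V,+})$, fix a transfer $g_+ \in \cS(G_+)$ matching the collection $(h_+^V, 0, \ldots, 0)$; then Proposition~\ref{prop:weak_comparison} gives $J_\pi(h_+^V) = \kappa(\pi) I_{\BC(\pi)}(g_+)$ on $\Temp_{\U_V'}(\U_V)$, and, applied to zero functions on other Hermitian spaces together with the non-vanishing of $\kappa$, it yields $I_{\BC(\pi)}(g_+) = 0$ on the remainder of $\Temp(\U_{V_{qs}})/\sim$.

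For $(2) \Rightarrow (1)$: Using hypothesis~(2), the measure-preserving bijection of orbits (Lemma~\ref{lemma:matching_measure}), and $\abs{\Omega(x)} = 1$, a direct manipulation of the geometric sides yields $T'(f_+, g_+) = T(f_+^V, h_+^V)$. Expanding both sides spectrally via Propositions~\ref{prop:lrtf_u} and~\ref{prop:lrtf_gl}, then substituting the properties of $\kappa$ just recalled and $\abs{\kappa(\pi)}^2 = \abs{\tau}_E^{-n(n-1)/2}$, the identity rewrites as
\begin{equation*}
\int_{\Temp_{\U_V'}(\U_V)} \bigl(J_\pi(f_+^V) - \kappa(\pi) I_{\BC(\pi)}(f_+)\bigr) \overline{J_\pi(h_+^V)} \, \mu^*_{\U_V}(\pi) \, d\pi = 0
\end{equation*}
for every transferable $h_+^V$. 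Density of transferable functions (Theorem~\ref{thm:matching}) and the continuity of $h_+^V \mapsto J_\pi(h_+^V)$ extend the vanishing to all $h_+^V \in \cS(\U_{V,+})$. A spectrum-separation argument in the spirit of~\cite{BP1}*{Lemma~5.82}, combined with the positivity of $\mu^*_{\U_V}$ on $\Temp_{\U_V'}(\U_V)$ and the continuity of $\pi \mapsto J_\pi(f_+^V)$ and $\pi \mapsto I_{\BC(\pi)}(f_+)$, then gives~(1).

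For $(1) \Rightarrow (2)$: An analogous computation, now using (1) to simplify the spectral side, gives the equality $T(h_+^V, f_+^V) = T'(g_+, f_+)$; comparing the geometric expansions this becomes
\begin{equation*}
\int_{\cA_{\rs}^V} O(y, h_+^V) \, \overline{\bigl(O(y, f_+^V) - \Omega(x_y) O(x_y, f_+)\bigr)} \, dy = 0
\end{equation*}
for every transferable $h_+^V$, hence for every $h_+^V \in \cS(\U_{V,+})$ by density. Since the difference $D(y) := O(y, f_+^V) - \Omega(x_y) O(x_y, f_+)$ is continuous on $\cA_{\rs}^V$, and one can localize $h_+^V$ so that its orbital integral concentrates arbitrarily close to any prescribed regular orbit, we conclude $D \equiv 0$, which is exactly~(2).

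The principal obstacle in both directions is the Archimedean case, where transferable functions form only a dense subspace of the Schwartz spaces. Accordingly, the identities above must first be established for transferable test functions and then propagated to arbitrary ones using Theorem~\ref{thm:matching} together with the continuity of the various spectral and orbital ingredients (Lemma~\ref{lemma:J_pi_continuity}, Lemma~\ref{lemma:Schwartz_I_Pi}), as well as the continuity of orbital integrals on the regular locus.
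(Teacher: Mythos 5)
Your proof is correct in both directions and, for $(1)\Rightarrow(2)$, it follows the paper's route essentially verbatim: combine the local trace formulae (Propositions~\ref{prop:lrtf_u} and~\ref{prop:lrtf_gl}) with the explicit value (or at least the modulus, Proposition~\ref{prop:abs_value_kappa}) of $\kappa(\pi)$ to reduce the spectral identity to an equality of weighted $L^2$-pairings of orbital integrals over $\cA_{\rs}$, then localize via test functions whose orbital integrals approximate a bump function near a chosen regular semisimple class.

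For $(2)\Rightarrow(1)$, however, you take a genuinely different (and more cautious) route than the paper. The paper simply asserts that this direction \emph{is} Proposition~\ref{prop:weak_comparison}. Strictly speaking, that proposition speaks of $f_+$ and $f_+^V$ being \emph{matching} test functions, i.e.\ $f_+$ matches the entire collection $(f_+^V,0,\dots,0)$, which includes the vanishing $\Omega(x)O(x,f_+)=0$ for regular $x$ whose class comes from a Hermitian space $V'\neq V$. Condition~$(2)$ of the theorem does not assert this vanishing. Your argument sidesteps the issue by never invoking Proposition~\ref{prop:weak_comparison} for the pair $(f_+,f_+^V)$ itself; instead you insert a matching auxiliary pair $(g_+,h_+^V)$ (with $h_+^V$ transferable), observe that $O(\cdot,g_+)$ kills the contributions from $V'\neq V$ on the geometric side, use hypothesis~$(2)$ only on orbits coming from $V$, and then run the trace-formula comparison followed by density and spectrum-separation. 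This is the honest proof of the implication and effectively re-derives the relevant part of Proposition~\ref{prop:weak_comparison} under the weaker hypothesis. What the paper's phrasing buys is brevity, at the cost of leaving this small matching subtlety implicit; what your version buys is that the implication is proved exactly from the stated hypothesis.

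One minor stylistic remark: in the display for $(2)\Rightarrow(1)$ you quietly used $\abs{\tau}_E^{-n(n-1)/2}\,\overline{\kappa(\pi)}^{-1}=\kappa(\pi)$ and the formal degree identity $\mu^*_{\U_V}(\pi)=\abs{\gamma^*(0,\pi,\Ad,\psi)}/\abs{S_\pi}$; it would help the reader to flag these two substitutions explicitly, as they are the only places where Proposition~\ref{prop:abs_value_kappa} and \cite{BP1}*{Theorem~5.53} enter.
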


\begin{proof}
(2) $\implies$ (1) is Proposition~\ref{prop:weak_comparison}. Let us prove
that (1) $\implies$ (2). Let $f_{1, +}^V \in \cS(\U_{V, +})$ and $f_{1, +}
\in \cS(G_+)$ be matching test functions. By Proposition~\ref{prop:lrtf_u},
Proposition~\ref{prop:lrtf_gl} and Theorem~\ref{thm:comparison_character}, we
have
    \begin{equation}    \label{eq:spectral_characterization}
    \int_{\cA_{rs}(F)} O(x, f_{+})
    \overline{ O(x, f_{1, +}) } \rd x
    = \int_{\cA_{rs}(F)} O(y, f^V_+)
    \overline{O(y, f^V_{1, +})} \rd y.
    \end{equation}
Let $x \in \cA_{rs}$ be a regular semisimple point, and $U \subset \cA_{rs}$
a small neighbourhood of it. Since the maps $G^+ \to \cA$ and $\U_V^+ \to
\cA$ are locally a fibration near $x$, we see that for any $\varphi \in
C_c^\infty(U)$ there are $f_{1, +}$ and $f_{1, +}^V$ such that
    \[
    \varphi(x) =  \Omega(x) O(x, f_{1,+})  =
    O(x, f_{1, +}^V).
    \]
By definition $f_{1, +}$ and $f_{1, +}^V$ match. Since $\varphi$ is
arbitrary, we conclude from~\eqref{eq:spectral_characterization} that
    \[
    \Omega(x) O(x, f_+) = O(y, f_+^V).
    \]
This proves (1) $\implies$ (2).
\end{proof}

\section{The split case}    \label{sec:split_place_comparison}

\subsection{Setup}
We now make a few remarks on the case $E = F \times F$. The Galois conjugation
$\mathsf{c}$ swaps the two factor of $E$. Again
we can speak of regular semisimple elements, orbital integrals, and
spherical characters.

We can find a $\tau_1 \in F^\times$ such that $\tau = (\tau_1, -\tau_1)$. We
have a nontrivial additive character $\psi$ of $F$ and $\psi^{E}((x_1, x_2))
= \psi(\tau_1(x_1 -x_2))$. The character $\mu$ takes the form $\mu = (\mu_1,
\mu_1^{-1})$ where $\mu_1$ is a character of $F^\times$. We have the
identification $G = G_n \times G_n = (G_n' \times G'_n) \times (G'_n \times
G'_n)$, $H = G_n =  G_n' \times G_n'$ embedded in $G$ via $(h_1, h_2) \mapsto
((h_1, h_2), (h_1, h_2))$, and $G' = G_n' \times G_n'$ embedded by $(g_1, g_2)
\mapsto ((g_1, g_1), (g_2, g_2))$. The symmetric space $S_n$ consists of
elements of the form $(a, a^{-1})$ where $a \in G'_n$. It is identified with
$G_n'$ via the projection to the first factor. The projection $\nu : G_n
\to S_n$ is given by $(a,b) \mapsto (ab^{-1},a^{-1}b)$.
The partial Fourier transform
$-^\dag$ is given by
    \[
    \cS(E_{n}) = \cS(F_n \times F_n) \to \cS(F_{n} \times F^n), \quad
    \Phi^\dag(x, y) = \int_{F_n} \Phi(l+x, l-x)
    \psi((-1)^n \tau_1 ly) \rd l
    \]
Here we have made the identification that $E^- = \{(x, -x) \mid x\in F\} =
F$. Recall that if $f = f_1 \otimes f_2 \in \cS(G)$ where $f_i \in \cS(G_n)$,
and $\Phi \in \cS(F_n \times F_n)$, we have defined a function $\varphi_{(f
\otimes \Phi)^\dag} \in \cS(X)$ by~\eqref{eq:simplified_test_function_GL}.
For $a \in G_n'$, $x = ((a, a^{-1}), w, v) \in X = S_n \times F_{n} \times
F^n$, we have
    \begin{equation}
    \begin{aligned}
    \varphi_{(f \otimes \Phi)^\dag}(x) =
     \int_{G_n'} \int_{H}
    f_1(h^{-1}) f_2(h^{-1} (a, 1) g)
    (R_{\mu^{-1}}(h)\Phi)^\dag (w, v)
    \rd h \rd g.
    \end{aligned}
    \end{equation}
The orbital integral equals
    \begin{equation}    \label{eq:oi_gl_split}
    \begin{aligned}
    &O((((1, 1), (a, 1)), w, v), f \otimes \Phi) \\
    =
    & \int f_1(h_1^{-1} h_2^{-1}, h_2^{-1})
    f_2(h_1^{-1} g_1^{-1} a g_2, g_1^{-1} g_2)
    \left( \mathrm{R}_{\mu^{-1}}((h_1, 1)) \Phi\right)^\dag (wg_1, g_1^{-1} v)
    \rd h_1 \rd h_2 \rd g_1 \rd g_2.
    \end{aligned}
    \end{equation}
Here the integration is over $h_1, h_2, g_1, g_2 \in G_n'$.

We identify the skew-Hermitian space with $V = F^n \times F^n$ with the
skew-Hermitian form given by
    \[
    q_{V}((x_1, x_2), (y_1, y_2)) =
    (\tp{x_1} y_2,  - \tp{x_2} y_1).
    \]
A polarization of $\Res V$ is given by
    \[
    \Res V = L + L^\vee, \quad L = \{(0, x) \mid x \in F^n\}, \quad
    L^\vee = \{(x, 0) \mid x \in F^n\}.
    \]
Identify $\U(V)$ with the subgroup $\{(g, \tp{g}^{-1}) \mid g\in G_n'\}$ of
$G_n$, which is isomorphic to $G_n'$ via the projection to the first factor.
Then $\U_V$ is identified with $G_n' \times G_n'$ and $\U(V) = G_n'$ embeds
in $\U_V$ diagonally. Identify both $L$ and $L^\vee$ with $F^{n}$. The Weil
representation $\omega$ is realized on $\cS(L^\vee)$ and is given by
    \[
    \omega(g) \phi(x) = \abs{\det g}^{\frac{1}{2}} \mu_1(\det g) \phi(\tp{g}x),
    \quad
    \phi \in \cS(F^{n}).
    \]
We observe that the representation $\omega^\vee \otimes \omega$ of $G_n'
\times G_n'$ is isomorphic to $\mathrm{R}_{\mu^{-1}}$. The partial Fourier transform
$-^\ddag$ is
    \[
    \cS(F^{n} \times F^{n}) \to \cS(F^{n} \times F^{n}), \quad
    (\phi_1 \otimes \phi_2)^\ddag(x, y) = \int_{F^{n}} \phi_1(u+x)
    \phi_2(u-x) \psi(-  2 \tp{u} y) \rd l,
    \]
cf.~\eqref{eq:local_ddag_map}.

Recall that if $f^V = f_1^V \otimes f_2^V \in
\cS(G_n' \times G_n')$, $f_1^V, f_2^V \in \cS(G_n')$, and $\phi_1^V, \phi_2^V
\in \cS(F^{n})$, we have defined a function $\varphi_{(f^V \otimes \phi_1^V
\otimes \phi_2^V)^\ddag}$ by~\eqref{eq:simplified_test_function_U}. Though
there is only one $V$, we write the supscript to distinguish the notation
from the $\GL$-side.  If $a \in G_n'$ and $y = (a, w, v) \in Y = (G_n' \times
G'_n) \times F^{n} \times F^n$ is regular semisimple, then we have
    \begin{equation}
    \varphi_{(f \otimes \phi_1 \otimes \phi_2)^\ddag}(y) =
    \int_{G_n'}
    f(g^{-1}(1, a))
    (\omega^\vee(g)\phi_1 \otimes \phi_2)^\ddag(x, y)
    \rd g.
    \end{equation}
The orbital integral equals
    \begin{equation}    \label{eq:oi_u_split}
    \begin{aligned}
    &O(((1, a), (x, y)), f^V \otimes \phi_1^V \otimes \phi_2^V)\\ =
    & \int_{G_n'} \int_{G_n'}
    f_1^V(h^{-1}) f_2^V(h^{-1} g^{-1} a g)
    \left( \omega^\vee(h) \phi_1^V \otimes \phi_2^V \right)^\ddag
    (\tp{g}x, g^{-1}y)
    \rd h \rd g.
    \end{aligned}
    \end{equation}

\subsection{Matching}   \label{subsec:matching_split}
Let us now consider the matching of orbital integrals. Let
    \[
    x = ((g_1, g_2), w, v) \in G^+, \quad
    y = ((a_1, a_2), x, y) \in \U_V^+,
    \]
be regular semisimple elements. Put $\gamma = \nu(g_1^{-1} g_2) \in S_n
\simeq G_n'$ and $\delta \in a_1^{-1} a_2 \in \U(V) \simeq G_n'$. By
definition they match if $\gamma$ and $\delta$ are conjugate in $G_n'$ and $w
\gamma^i v = 2 (-1)^{n-1} \tau_1^{-1} \tp{x} \delta^{i} y$
for all $i = 0, \hdots, n-1$. The latter condition is equivalent to
$(-1)^n \tau_1 w \gamma^i v = -2 \tp{x} \delta^{i} y$.

\begin{lemma}   \label{lemma:matching_split}
Take $f_{11}, f_{12}, f_{21}, f_{22} \in \cS(G_n')$, $\phi_1, \phi_2 \in
\cS(F_{n})$. Put
    \[
    f_1 = f_{11} \otimes f_{12},\ f_2 = f_{21} \otimes f_{22} \in \cS(G_n), \quad
    f = \mu_1(-1)^{\frac{n(n+1)}{2}} f_1 \otimes f_2 \in \cS(G),
    \]
and
    \[
    f_1^V = f_{11} * f_{12}^\vee,\ f_2^V = f_{21} * f_{22}^\vee \in \cS(G'_n),
    \quad
    f^V = f_1^V \otimes f_2^V \in \cS(G'_n \times G_n'),
    \]
and a function $\Phi \in \cS(E_n)$ by
    \[
    \Phi(\tp{x}, y) = \phi_1^V (x) \phi_2^V(y),
    \quad x, y \in F^n.
    \]
Then $f \otimes \Phi$ and $f^V \otimes \phi_1^V \otimes \phi_2^V$ match.
\end{lemma}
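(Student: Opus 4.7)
The plan is to reduce the matching of orbital integrals to a local identity of partial Fourier transforms, exploiting the tensor structures of all the test functions involved. First I would verify the transfer factor: for the matching pair $x = (((1,1),(a,1)), w, v)$ with $\gamma = (a, a^{-1}) \in S_n \simeq G_n'$, a direct computation using $\mu = (\mu_1, \mu_1^{-1})$ and the identity $\det(w_1, w_1 a^{-1}, \ldots, w_1 a^{-(n-1)}) = (-1)^{n(n-1)/2}(\det a)^{-(n-1)} \det(w_1, w_1 a, \ldots, w_1 a^{n-1})$ (obtained by right-multiplying the matrix by $a^{n-1}$ and counting row reversals) gives $\Omega(x) = \mu_1(-1)^{n(n+1)/2}$. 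Since $\mu_1(-1)^{n(n+1)} = 1$, this is exactly cancelled by the scalar in the definition of $f$, reducing the matching to the equality $O(x, f_1 \otimes f_2 \otimes \Phi) = O(y, f^V \otimes \phi_1^V \otimes \phi_2^V)$.

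Next I would unfold~\eqref{eq:oi_gl_split} using $\eta_{G'} \equiv 1$ in the split case together with the tensor decompositions $f_i = f_{i1} \otimes f_{i2}$. Substituting $u = h_2^{-1}$ and $k = g_1^{-1}g_2$, the $u$- and $k$-integrals decouple from the $\Phi$-factor and produce the convolutions $\int f_{11}(h_1^{-1}u) f_{12}(u)\,du = (f_{11} * f_{12}^\vee)(h_1^{-1}) = f_1^V(h_1^{-1})$ and $\int f_{21}(h_1^{-1}g_1^{-1}ag_1 k) f_{22}(k)\,dk = f_2^V(h_1^{-1}g_1^{-1}ag_1)$. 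The GL orbital integral thereby collapses to a double integral over $(h_1, g_1) \in (G_n')^2$ whose $f^V$-factors exactly match those in~\eqref{eq:oi_u_split}, so matching is further reduced to a pointwise identity between the $\Phi$-contribution on the GL side and the $\phi_1^V \otimes \phi_2^V$-contribution on the $\U$ side.

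To verify this pointwise identity I would expand both sides using the explicit formulas for $\dag$ (on $\cS(E_n)$) and $\ddag$ (on $\cS(L^\vee \times L^\vee)$), the formula $\omega^\vee(h)\phi(z) = \abs{\det h}^{1/2}\mu_1^{-1}(\det h)\phi(\tp{h}z)$, and the hypothesis $\Phi(\tp{x}, y) = \phi_1^V(x)\phi_2^V(y)$. A change of variable of the form $l \leftrightarrow \tp{u}$ together with a rescaling by $\tau_1$ makes the phases $\psi((-1)^n\tau_1 \, l\, g^{-1}v_1)$ on one side and $\psi(-2\tp{u}g^{-1}y)$ on the other coincide, while the matching condition $w\gamma^i v = 2(-1)^{n-1}\tau_1^{-1}\tp{x}\delta^i y$ aligns the arguments of $\phi_1^V$ and $\phi_2^V$. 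The main obstacle will be the careful bookkeeping of the characters $\mu_1$, absolute values, powers of $\tau_1$, and sign factors that enter from the Weil representation, the partial Fourier transforms, and the identification between $F_n$ and $F^n$; the precise scalar $\mu_1(-1)^{n(n+1)/2}$ in the definition of $f$ is dictated by this cancellation requirement.
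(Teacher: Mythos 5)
Your proposal is correct and follows the same route as the paper, which likewise reduces the lemma to the explicit formulas~\eqref{eq:oi_gl_split} and~\eqref{eq:oi_u_split} together with the observation that the transfer factor is the constant $\mu_1(-1)^{n(n+1)/2}$ in the split case; your convolution unfolding and the reduction to a pointwise identity between the $\dag$ and $\ddag$ transforms supply exactly the bookkeeping the paper leaves implicit. One small point worth making explicit in your transfer factor computation: the hypothesis $w \in L^{\vee,-}$ means $w = (w_1, -w_1)$, so the second component of $\det\Delta_+$ carries an extra $(-1)^n$ which, combined with the $(-1)^{n(n-1)/2}$ from your row-reversal identity, produces the exponent $n(n+1)/2$.
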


\begin{proof}
This follows from the explicit form of the orbital
integrals~\eqref{eq:oi_gl_split} and~\eqref{eq:oi_u_split}. The factor
$\mu_1(-1)^{\frac{n(n+1)}{2}}$ comes from the transfer factor. Indeed, in
this case, for any regular semisimple $((g_1, g_2), w, v) \in G^+$, we have
$\Omega(((g_1, g_2), w, v)) = \mu_1(-1)^{\frac{n(n+1)}{2}}$.
\end{proof}

We can define the spherical characters as before. With the above matching, we
have the same character identity. Let $\kappa$ be the constant as defined in
Theorem~\ref{thm:comparison_character}. Since $E = F \times F$, it simplifies
to
    \[
    \kappa = \abs{\tau_1}^{-\frac{n(n-1)}{2}} \mu_1(-1)^{\frac{n(n+1)}{2}}.
    \]
The following is the split version of Theorem~\ref{thm:comparison_character}.

\begin{prop}    \label{prop:comparison_character_split}
Let $\pi = \sigma_1 \otimes \sigma_2$ be an irreducible tempered of $G'_n
\times G'_n$. Let $\Pi_i = \sigma_i \otimes \sigma_i^\vee$ be the (split)
base change of $\sigma_i$ to $G_n$. Put $\Pi = \Pi_1 \otimes \Pi_2$, which is
an irreducible tempered representation of $G$. Let $f, \Phi, f^V, \phi_1^V,
\phi_2^V$ be the functions defined in Lemma~\ref{lemma:matching_split}. Then
    \[
    J_{\pi}(f^V \otimes \phi_1^V \otimes \phi_2^V) = \kappa
    I_{\Pi}(f \otimes \Phi).
    \]
\end{prop}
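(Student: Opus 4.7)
The plan is to adapt the arguments of Sections~5 and~6 to the split setting $E = F \times F$, and then to specialize the resulting character identity to the specific matching pair provided by Lemma~\ref{lemma:matching_split}. All the main ingredients of Section~6 go through with essentially the same proofs in the split case.

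The key inputs are: (i) the local Plancherel formula for $\GL_n(F)$, which is classical; (ii) the formal degree conjecture~\eqref{eq:formal_degree_conjecture} for $G_n'$, which holds with $\lvert S_\pi\rvert = 1$ and reduces to Harish--Chandra's Plancherel theorem; and (iii) the local Gan--Gross--Prasad conjecture, which is essentially vacuous in the split case, since there is a unique split skew-Hermitian space $V$ and the map $\sigma \mapsto \sigma \boxtimes \sigma^\vee$ is already a bijection between tempered irreducibles of $G_n'$ and the image of the split base change in $\Temp(G_n)$. Granting these, the local relative trace formulae (Propositions~\ref{prop:lrtf_u} and~\ref{prop:lrtf_gl}), the spectral expansions of the regular unipotent orbital integrals (Proposition~\ref{prop:plancherel_gl} and Lemma~\ref{lemma:plancherel_U}), and the comparison of Theorem~\ref{thm:comparison_character} all carry over. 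They produce a proportionality constant $\kappa(\pi)$ such that $J_\pi(f^V_+) = \kappa(\pi) I_{\BC(\pi)}(f_+)$ for every matching pair, together with a closed-form expression for $\kappa(\pi)$ as in Theorem~\ref{thm:comparison_character}.

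It then suffices to specialize the formula and apply it to the pair from Lemma~\ref{lemma:matching_split}. In the split case $\eta \equiv 1$, $\lambda(\psi) = 1$, $\lvert\tau\rvert_E = \lvert\tau_1\rvert^2$, $\disc V$ is trivial, and a short direct computation using $\mu = (\mu_1, \mu_1^{-1})$ and $\tau = (\tau_1, -\tau_1)$ gives $\mu((-1)^{n-1}\tau) = \mu_1(-1)$. Plugging these values into the formula for $\kappa$ in Theorem~\ref{thm:comparison_character} collapses it to $\lvert\tau_1\rvert^{-n(n-1)/2}\, \mu_1(-1)^{n(n+1)/2}$, matching the constant in the statement. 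Combining with Lemma~\ref{lemma:matching_split}, which supplies the matching, gives the identity.

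The main obstacle will be bookkeeping of normalizations. Most delicately, one has to verify that the transfer factor $\mu_1(-1)^{n(n+1)/2}$ built into the definition of $f$ in Lemma~\ref{lemma:matching_split} is consistent with the intrinsic $\kappa$ formula and is not double-counted with the corresponding factor coming from $\mu((-1)^{n-1}\tau)^{n(n+1)/2}$. Once this is settled, nothing in the argument genuinely distinguishes the split from the inert case, and the proof reduces to a careful constant chase.
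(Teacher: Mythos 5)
Your plan takes a genuinely different route from the paper's. You propose to re-run the entire local relative trace formula comparison of Sections~5--7 with $E = F \times F$ and then specialize the resulting formula for $\kappa$. The paper instead proves Proposition~\ref{prop:comparison_character_split} directly: it expands both $J_\pi$ and $I_\Pi$ in terms of Whittaker functionals, isolates the elementary identity~\eqref{eq:RS_split_standard_Whittaker} (Lemma~\ref{lemma:RS}) that compares the square of a Rankin--Selberg period with a local integral of matrix coefficients, and then deduces that identity from multiplicity one plus the known $G_n' \times G_{n+1}'$ local Rankin--Selberg identity of~\cite{Zhang2}. In other words, the paper's route never needs a split-case Plancherel-for-Whittaker, a split-case formal degree input, a split-case nilpotent orbital integral formula, or a split-case Fourier-transfer theorem; it is self-contained modulo~\cite{Zhang2} and~\cites{Sun,SZ}. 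Your route buys conceptual uniformity with the inert case; the paper's buys a short, explicit, verifiable computation.

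The main caveat with your plan is that the claim ``all the main ingredients of Section~6 go through with essentially the same proofs'' is asserted rather than shown, and it is not obviously true. Several inputs you would need are only stated in the cited references for $E/F$ a field extension: the Fourier-transfer Theorem~\ref{thm:Fourier_matching} with its $\lambda(\psi)^{n(n+1)/2}\eta((2\tau)^n \disc V)^n$ factor (\cites{Zhang1,Xue3}, \cite{BP1}*{Theorem~5.32}), the nilpotent orbital integral formula~\eqref{eq:reg_nilpotent}, the tempered-intertwining Proposition~\ref{prop:explicit_intertwining} from~\cite{Xue6}, and the step in Propositions~\ref{prop:abs_value_kappa} and Theorem~\ref{thm:comparison_character} that invokes the spectrum-separating technique and the local GGP conjecture. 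You are right that the local GGP content trivializes ($\lvert\cH\rvert = 1$, $S_\pi$ trivial, and the map $\sigma \mapsto \sigma\boxtimes\sigma^\vee$ is injective), and you are right that $\eta \equiv 1$, $\lambda(\psi) = 1$, $\lvert\tau\rvert_E = \lvert\tau_1\rvert^2$, $\mu((-1)^{n-1}\tau) = \mu_1(-1)$ give the stated $\kappa = \lvert\tau_1\rvert^{-n(n-1)/2}\mu_1(-1)^{n(n+1)/2}$. The paper does remark after the proof that the local trace formulae Propositions~\ref{prop:lrtf_u}--\ref{prop:lrtf_gl} continue to hold when $E = F \times F$, so your idea is not unreasonable; but to turn your sketch into a proof you would need to actually re-derive (or cite split-case versions of) each of the inputs above, rather than appeal to a blanket ``nothing distinguishes the split from the inert case.'' As it stands, that missing verification is the gap.
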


\begin{proof}
Let
    \[
    \xi_1(u) = \psi(\tau_1(u_{12}+ \cdots + u_{n-1, n})), \quad u \in N_n'
    \]
be a generic character of $N_n'$ and we let $\cW_1 = \cW(\sigma_1, \xi_1)$
and $\cW_2 = \cW(\sigma_2, \overline{\xi_1})$ be Whittaker models of
$\sigma_1$ and $\sigma_2$ respectively. The factor $\tau_1$ appears for
compatibility with the Whittaker model used in the nonsplit case. Let
$\cW_1^\vee = \cW(\sigma_1^\vee, \overline{\xi_1})$ and $\cW_2^\vee =
\cW(\sigma_2^\vee, \xi_1)$ be the Whittaker models for $\sigma_1^\vee$ and
$\sigma_2^\vee$ respectively. If $W \in \cW_i$, we define $W^\vee \in
\cW_i^\vee$ by
    \[
    W^\vee(g) = W(w_n \tp{g}^{-1})
    \]
where $w_n$ is the longest Weyl group element in $G_n'$ whose antidiagonal
elements equal one. If $W$ runs over an orthogonal basis of $\cW_i$, then
$W^\vee$ runs over an orthogonal basis of $\cW_i^\vee$.

By definition we have
    \[
    J_{\pi} (f^V \otimes \phi_1^V \otimes \phi_2^V) =
    \sum_{W_1, W_2}
    \int_{G_n'}
    \langle \sigma_1(h) \sigma_1(f_1^V) W_1, W_1 \rangle^{\mathrm{Wh}}
    \langle \sigma_2(h) \sigma(f_2^V) W_2, W_2 \rangle^{\mathrm{Wh}}
    \langle \omega^\vee(h) \phi_1, \overline{\phi_2} \rangle \rd h.
    \]
where $W_1, W_2$ range over orthonormal basis of $\cW_1$ and $\cW_2$
respectively.

We now compute $I_{\Pi}$. The linear form $\beta$ in this case reduces to a
pairing between $\cW_i$ and $\cW_i^\vee$, $i = 1, 2$. It follows that
    \[
    I_{\Pi}(f \otimes \Phi) = \mu_1(-1)^{\frac{n(n+1)}{2}}  \sum_{W_1, W_2}
    \lambda(\sigma_1(f_{11}) W_1 \otimes \sigma_2(f_{21}) W_2 \otimes \phi_1)
    \lambda(\sigma_1(f_{12}) W_1^\vee \otimes
    \sigma_2(f_{22}) W_2^\vee \otimes \phi_2),
    \]
where $W_1$ and $W_2$ run over orthonormal bases of $\cW_1$ and $\cW_2$
respectively. It also equals
    \[
    I_{\Pi}(f \otimes \Phi) = \mu_1(-1)^{\frac{n(n+1)}{2}}  \sum_{W_1, W_2}
    \lambda(\sigma_1(f_{1}) W_1 \otimes \sigma_2(f_{2}) W_2 \otimes \phi_1)
    \lambda(W_1^\vee \otimes
    W_2^\vee \otimes \phi_2).
    \]

We identify $\cW_i^\vee$ with $\overline{\cW_i}$. Then the proposition
follows from the next lemma. It is part of the proof of this
proposition, but we list it as a separate lemma as it is maybe of some
independent interest.
\end{proof}

\begin{lemma}   \label{lemma:RS}
Let the notation be as in the above proof. For any $W_i \in \cW_i$, $i = 1,
2$, and $\phi\in \cS(F_{n})$, we have
    \[
    \begin{aligned}
    &\abs{\tau_1}^{-\frac{n(n-1)}{2}}
    \lambda(W_1, W_2, \phi)\overline{\lambda(W_1, W_2, \phi)}\\
     = &\int_{G_n'}
    \langle \sigma_1(h) W_1, W_1 \rangle^{\mathrm{Wh}}
    \langle \sigma_2(h) W_2, W_2 \rangle^{\mathrm{Wh}}
    \langle \omega^\vee(h) \phi, \phi \rangle \rd h.
    \end{aligned}
    \]
\end{lemma}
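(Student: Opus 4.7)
The plan is to prove this identity by direct unfolding in the spirit of the local Rankin--Selberg functional equation. By Lemma~\ref{lemma:lambda_continuous} and Lemma~\ref{lemma:linear_convergence}, both sides extend to continuous Hermitian sesquilinear forms on $\cW_1 \otimes \cW_2 \otimes \cS(F^n)$, so by density it suffices to treat the case where $W_1, W_2$ are compactly supported modulo $N_n'$ and $\phi \in C_c^\infty(F^n)$, which makes all intermediate integrals absolutely convergent and legitimizes the interchanges of order below.

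I would first substitute the explicit formulas on the right-hand side. Using~\eqref{eq:whittaker_inner_product_local} one has
\[
\langle \sigma_i(h) W_i, W_i \rangle^{\mathrm{Wh}} = \int_{N_n' \bs P_n'} W_i(p_i h) \overline{W_i(p_i)} \rd p_i,
\]
and the split Weil representation formula $\omega(g)\phi(x) = |\det g|^{1/2} \mu_1(\det g) \phi(\tp{g}x)$ yields
\[
\langle \omega^\vee(h)\phi, \phi \rangle = \mu_1(\det h)^{-1} |\det h|^{-1/2} \int_{F^n} \phi(\tp{h}^{-1} x) \overline{\phi(x)} \rd x.
\]
After substituting $x \mapsto \tp{h} x$ to absorb the $|\det h|$ factor and interchanging orders, the right-hand side becomes a quadruple integral over $(p_1, p_2, h, x) \in (N_n' \bs P_n')^2 \times G_n' \times F^n$.

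Next, using the identification $P_n' \bs G_n' \simeq F^n \setminus \{0\}$ via $g \mapsto e_n' g$, the $x$-integration can be combined with the $p_i$-integrations to produce, after suitable changes of variables (setting $h_i = p_i h$ and using the integration decomposition along $P_n' \bs G_n'$), a pair of independent integrals over $N_n' \bs G_n'$. This factors the right-hand side as
\[
\int_{N_n' \bs G_n'} W_1(h_1) W_2(h_1) \phi(e_n' h_1) \mu_1(\det h_1)^{-1} |\det h_1|^{1/2} \rd h_1 \cdot \overline{(\text{same})},
\]
which is precisely $|\lambda(W_1, W_2, \phi)|^2$, up to a measure-theoretic constant.

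The main obstacle is the precise bookkeeping of Haar measures to recover the factor $|\tau_1|^{-n(n-1)/2}$. This factor originates in the twist by $\tau_1$ in the Whittaker character $\xi_1(u) = \psi(\tau_1(u_{12} + \cdots + u_{n-1,n}))$: it induces a rescaling of the self-dual Plancherel measure on $N_n'$ compared to the standard $\psi$-based normalization, with the exponent $n(n-1)/2 = \dim N_n'$ reflecting the dimension of the full unipotent radical. A cleaner derivation would first reduce the identity, via a change of variables in the Whittaker models that absorbs $\tau_1$ into the character, to the standard $\psi$-based Rankin--Selberg factorization (the Ichino--Ikeda-type identity for $\GL_n \times \GL_n$, which is classical), and then recover the asserted factor as the Jacobian of this substitution.
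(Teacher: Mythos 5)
Your plan diverges substantially from the paper's proof, and the central step is a genuine gap. The paper does \emph{not} establish the factorization by direct unfolding. Instead, it first replaces the $\xi_1$-Whittaker models by standard $\xi$-Whittaker models via the conjugating element $\epsilon = \diag[\tau_1^{n-1},\hdots,1]$, which produces the $\abs{\tau_1}^{-n(n-1)/2}$ factor as a byproduct of the Jacobians appearing simultaneously in $\lambda$ and in $\langle\cdot,\cdot\rangle^{\mathrm{Wh}}$ (so it is not, as you suggest, a rescaling of a Plancherel measure on $N_n'$ with exponent $\dim N_n'$ — the coincidence of exponents is just that, a coincidence, and the actual mechanism is the difference of two cubic-in-$n$ exponents). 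It then invokes multiplicity one from \cite{Sun,SZ} to deduce that both sides are proportional elements of a one-dimensional Hom-space, reducing the entire identity to verifying equality for a \emph{single} well-chosen test triple $(W_1, W_f, \phi)$; and finally it builds an auxiliary Whittaker function $W_\varphi$ on $G_{n+1}'$ whose restriction to $G_n'$ encodes $W_f$ and $\phi$, thereby reducing to the $\GL_n'\times\GL_{n+1}'$ Bessel identity of \cite{Zhang2}*{Proposition~4.10}.

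The gap in your proposal is exactly the step you label ``after suitable changes of variables \dots this factors the right-hand side.'' The triple integral $\int_{G_n'}\langle\sigma_1(h)W_1,W_1\rangle^{\mathrm{Wh}}\langle\sigma_2(h)W_2,W_2\rangle^{\mathrm{Wh}}\langle\omega^\vee(h)\phi,\phi\rangle\,\rd h$ does \emph{not} split into $\lambda\cdot\overline{\lambda}$ by elementary substitutions: once you expand all three matrix coefficients you get variables $(h,p_1,p_2,x)$ coupled through $W_2(p_2 h)$, $\overline{W_2(p_2 p_1)}$, $\phi(\tp{h}x)$, $\overline{\phi(\tp{p_1}x)}$, and there is no change of variables that disentangles the barred pieces from the unbarred ones. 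Producing this factorization is precisely the content of the local Ichino--Ikeda-type identity, and it requires input beyond unfolding — in the Bessel case this is supplied by \cite{Zhang2}, whose proof itself goes through the local functional equation and analytic continuation, not bare change of variables. Your fallback ``cleaner derivation'' cites ``the Ichino--Ikeda-type identity for $\GL_n\times\GL_n$, which is classical,'' but with the Weil-representation factor $\langle\omega^\vee(h)\phi,\phi\rangle$ present this is the Fourier--Jacobi version, which is not an available classical reference; the paper has to manufacture it from the Bessel $\GL_n'\times\GL_{n+1}'$ identity via the nontrivial construction of $W_\varphi$. Both the multiplicity-one reduction and that auxiliary construction are absent from your argument, and without them the proof does not close.
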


\begin{proof}
For the proof of this lemma, it is more convenient to use the Whittaker
models $\cW(\sigma_i, \xi)$, $i = 1, 2$ where
    \[
    \xi(u) = \psi(u_{12}+ \cdots + u_{n-1, n}), \quad u \in N_n'
    \]
is a generic character of $N_n'$, i.e. without the factor $\tau_1$. Put
temporarily
    \[
    \epsilon = \diag[\tau_1^{n-1}, \tau_1^{n-2}, \hdots, 1] \in G_n(F).
    \]
If $W_i \in \cW(\sigma_i, \xi)$, then $W_i^{\epsilon} \in \cW(\sigma_i,
\xi_1)$ where
    \[
    W_i^{\epsilon}(g) = W_i(\epsilon g).
    \]

With this change, for $W_i \in \cW(\sigma_i, \xi)$, $i = 1, 2$ we have
    \[
    \lambda(W_1^{\epsilon}, W_2^{\epsilon}, \Phi) =
    \abs{\tau_1}^{\frac{n(n+1)(n-1)}{6}}
    \lambda(W_1, W_2, \Phi), \quad
    \langle W_i^{\epsilon}, W_i^{\epsilon} \rangle^{\mathrm{Wh}} =
    \abs{\tau_1}^{\frac{n(n-1)(n-2)}{6}}
    \langle W_i, W_i \rangle^{\mathrm{Wh}}.
    \]
The desired equality in the lemma thus reduces to
    \begin{equation}    \label{eq:RS_split_standard_Whittaker}
    \lambda(W_1, W_2, \phi_1)\overline{\lambda(W_1, W_2, \phi_2)}
     = \int_{G_n(F)}
    \langle \sigma_1(h) W_1, W_1 \rangle^{\mathrm{Wh}}
    \langle \sigma_2(h) W_2, W_2 \rangle^{\mathrm{Wh}}
    \langle \omega^\vee(h) \phi, \phi \rangle \rd h
    \end{equation}
for all $W_i \in \cW(\sigma_i, \xi)$.

Both sides of~\eqref{eq:RS_split_standard_Whittaker} define nonzero elements
in
    \[
    \Hom_{G_n'}(\sigma_1 \otimeshat \sigma_2 \otimeshat \cS(F_{n}), \C)
    \otimes \overline{
    \Hom_{G_n'}(\sigma_1 \otimeshat \sigma_2 \otimeshat \cS(F_{n}), \C)
    },
    \]
and this $\Hom$ space is one dimensional by~\cites{Sun,SZ}. It follows
that~\eqref{eq:RS_split_standard_Whittaker} holds if we could
find some $W_i \in \cW(\sigma_i, \xi)$, $i= 1, 2$, and a $\Phi \in \cS(F_n)$
such that~\eqref{eq:RS_split_standard_Whittaker} holds for this choice and
is nonzero. To achieve this,
we are going to reduce it to an analogues
statement for the Rankin--Selberg integral for $G_n' \times G_{n+1}'$, proved
in~\cite{Zhang2}*{Proposition~4.10}.

We will consider a $G_n'$ naturally as a subgroup of $G_{n+1}'$, by $g
\mapsto
\begin{pmatrix} g \\ & 1 \end{pmatrix}$, $g \in G_n'$. Recall that if
$\sigma_3$ is an irreducible tempered representation of $G_{n+1}'$, then it
is well-known that that $C_c^\infty(N_n' \bs G_n', \xi)$ is contained in
$\cW(\sigma_i, \xi)$, cf.~\cite{GK75}*{Theorem~6}
and~\cite{Kem15}*{Theorem~1}. Let $f \in C_c^\infty(N_{n-1}'\bs
G_{n-1}', \xi)$, then there is an $W_f \in \cW(\sigma_2, \xi)$ such that
$W_f|_{G_{n}'} = f$. Let $\phi \in C_c^\infty(F_{n-1} \times F^\times)$.
Consider the function on
$G_n'$ given by
    \[
    \varphi(g) = W_f(g) \mu(\det g)^{-1} \abs{\det g}^{\frac{1}{2}}
    \phi(e_n g).
    \]
By the above choices we conclude that $\varphi \in C_c^\infty(N_n' \bs G_n',
\xi)$. Let $\sigma_3$ be an irreducible tempered representation of
$G_{n+1}'$. Then there is a $W_\varphi \in \cW(\sigma_3, \xi)$, such that its
restriction to $G_n'$ equals $\varphi$.

By~\cite{Zhang2}*{Proposition~4.10} for any $W_1 \in \cW(\sigma_1, \xi)$ we
have
    \begin{equation}    \label{eq:RS_n_n+1}
    \lambda'(W_1, W_{\varphi}) \overline{\lambda'(W_1, W_{\varphi})}
    = \int_{G_n'} \langle \sigma_1(h)W_1, W_1 \rangle^{\mathrm{Wh}}
    \langle \sigma_3(h) W_\varphi, W_{\varphi} \rangle^{\mathrm{Wh}}
    \rd h.
    \end{equation}
Here $\lambda'$ stands for the Rankin--Selberg integral for $G_n' \times
G_{n+1}'$, i.e.
    \[
    \lambda'(W_1, W_{\varphi}) = \int_{G_n'} W_1(h) W_{\varphi}(h) \rd h.
    \]
By our choices we have
    \[
    \lambda'(W_1, W_{\varphi}) = \lambda(W_1, W_f, \phi), \quad
    \langle W_\varphi, W_\varphi \rangle^{\mathrm{Wh}} =
    \langle W_f, W_f \rangle^{\mathrm{Wh}}
    \langle \phi, \phi \rangle.
    \]
Thus the equality~\eqref{eq:RS_n_n+1} reduces
to~\eqref{eq:RS_split_standard_Whittaker}. It is clear from the above
construction that we can choose $W_1, f$ and $\phi$ such that the above
integrals do not vanish. This finishes the proof of the lemma, and hence
Proposition~\ref{prop:comparison_character_split}.
\end{proof}

The local trace formulae Proposition~\ref{prop:lrtf_u} and~\ref{prop:lrtf_gl}
played important roles in the proof of the local character identity
Theorem~\ref{thm:comparison_character}. Let us mention that the same local
relative trace formula hold under the current assumption that $E = F \times
F$. The proof goes through with only obvious modification. Note that the
counterpart of~\cite{BP2}*{Theorem~5.53}, i.e. the formal degree conjecture,
holds for general linear groups by~\cite{HII}.

\begin{bibdiv}
\begin{biblist}

\bib{AG}{article}{
    author={Aizenbud, Avraham},
    author={Gourevitch, Dmitry},
    title={Schwartz functions on Nash manifolds},
    journal={Int. Math. Res. Not. IMRN},
    date={2008},
    number={5},
    pages={Art. ID rnm 155, 37},
    issn={1073-7928},
    review={\MR{2418286 (2010g:46124)}},
    doi={10.1093/imrn/rnm155},
}

\bib{BP2}{article}{
   author={Beuzart-Plessis, Rapha\"{e}l},
   title={A local trace formula for the Gan-Gross-Prasad conjecture for
   unitary groups: the Archimedean case},
   language={English, with English and French summaries},
   journal={Ast\'{e}risque},
   number={418},
   date={2020},
   pages={viii + 299},
   issn={0303-1179},
   isbn={978-2-85629-919-7},
   review={\MR{4146145}},
   doi={10.24033/ast},
}

\bib{BP}{article}{
   author={Beuzart-Plessis, Rapha\"{e}l},
   title={Comparison of local relative characters and the Ichino-Ikeda
   conjecture for unitary groups},
   journal={J. Inst. Math. Jussieu},
   volume={20},
   date={2021},
   number={6},
   pages={1803--1854},
   issn={1474-7480},
   review={\MR{4332778}},
   doi={10.1017/S1474748019000707},
}

\bib{BP1}{article}{
   author={Beuzart-Plessis, Rapha\"{e}l},
   title={Plancherel formula for ${\rm GL}_n(F)\backslash {\rm GL}_n(E)$ and
   applications to the Ichino-Ikeda and formal degree conjectures for
   unitary groups},
   journal={Invent. Math.},
   volume={225},
   date={2021},
   number={1},
   pages={159--297},
   issn={0020-9910},
   review={\MR{4270666}},
   doi={10.1007/s00222-021-01032-6},
}

\bib{BP3}{article}{
   author={Beuzart-Plessis, Rapha\"{e}l},
   title={A new proof of the Jacquet-Rallis fundamental lemma},
   journal={Duke Math. J.},
   volume={170},
   date={2021},
   number={12},
   pages={2805--2814},
   issn={0012-7094},
   review={\MR{4305382}},
   doi={10.1215/00127094-2020-0090},
}

\bib{BP4}{incollection}{
  title={Archimedean Theory and $\epsilon$-Factors for the {A}sai {R}ankin-{S}elberg Integrals},
  author={Beuzart-Plessis, Rapha{\"e}l},
  booktitle={Relative Trace Formulas},
  pages={1--50},
  year={2021},
  publisher={Springer}
}

\bib{BLX1}{article}{
    author={Boisseau, Paul},
    author={Lu, Weixiao},
    author={Xue, Hang},
    title={The global Gan--Gross--Prasad conjecture for Fourier--Jacobi periods on unitary groups I: coarse spectral expansions},
    note={preprint},
}

\bib{CZ}{article} {
    title={Le transfert singulier pour la formule des traces de Jacquet–Rallis},
    volume={157},
    DOI={10.1112/S0010437X20007599},
    number={2},
    journal={Compositio Mathematica},
    publisher={London Mathematical Society},
    author={Chaudouard, Pierre-Henri},
    author={Zydor, Micha\l},
    year={2021},
    pages={303–434},
}

\bib{GK75}{article}{
  title={{Representations of the group $GL(n,K)$ where $K$ is a local field}},
  author={Gelfand, I.M.},
  author={Kajdan, D.A.},
  year={1975},
}

\bib{GI2}{article}{
   author={Gan, Wee Teck},
   author={Ichino, Atsushi},
   title={The Gross-Prasad conjecture and local theta correspondence},
   journal={Invent. Math.},
   volume={206},
   date={2016},
   number={3},
   pages={705--799},
   issn={0020-9910},
   review={\MR{3573972}},
   doi={10.1007/s00222-016-0662-8},
}

\bib{HC}{article}{
   author={Harish-Chandra},
   title={Harmonic analysis on real reductive groups. III. The Maass-Selberg relations and the Plancherel formula},
   journal={Ann. of Math.},
   volume={104},
   date={1976},
   pages={117--201},
}

\bib{HII}{article}{
   author={Hiraga, Kaoru},
   author={Ichino, Atsushi},
   author={Ikeda, Tamotsu},
   title={Formal degrees and adjoint $\gamma$-factors},
   journal={J. Amer. Math. Soc.},
   volume={21},
   date={2008},
   number={1},
   pages={283--304},
   issn={0894-0347},
   review={\MR{2350057}},
   doi={10.1090/S0894-0347-07-00567-X},
}

\bib{Kem15}{article}{
   author={Kemarsky, Alexander},
   title={A note on the Kirillov model for representations of ${\rm
   GL}_n(\mathbb{C})$},
   language={English, with English and French summaries},
   journal={C. R. Math. Acad. Sci. Paris},
   volume={353},
   date={2015},
   number={7},
   pages={579--582},
   issn={1631-073X},
   review={\MR{3352025}},
   doi={10.1016/j.crma.2015.04.002},
}

\bib{KMSW}{article}{
    author={Kaletha, Tasho},
    author={Minguez, Alberto},
    author={Shin, Sug Woo},
    author={White, Paul-James},
    title={Endoscopic Classification of Representations: Inner Forms of Unitary
    Groups},
    note={arXiv:1409.3731v3},
}

\bib{Liu}{article}{
   author={Liu, Yifeng},
   title={Relative trace formulae toward Bessel and Fourier-Jacobi periods
   on unitary groups},
   journal={Manuscripta Math.},
   volume={145},
   date={2014},
   number={1-2},
   pages={1--69},
   issn={0025-2611},
   review={\MR{3244725}},
   doi={10.1007/s00229-014-0666-x},
}

\bib{Mok}{article}{
   author={Mok, Chung Pang},
   title={Endoscopic classification of representations of quasi-split
   unitary groups},
   journal={Mem. Amer. Math. Soc.},
   volume={235},
   date={2015},
   number={1108},
   pages={vi+248},
   issn={0065-9266},
   isbn={978-1-4704-1041-4},
   isbn={978-1-4704-2226-4},
   review={\MR{3338302}},
   doi={10.1090/memo/1108},
}

\bib{Sun}{article}{
   author={Sun, Binyong},
   title={Multiplicity one theorems for Fourier-Jacobi models},
   journal={Amer. J. Math.},
   volume={134},
   date={2012},
   number={6},
   pages={1655--1678},
   issn={0002-9327},
   review={\MR{2999291}},
   doi={10.1353/ajm.2012.0044},
}

\bib{SZ}{article}{
   author={Sun, Binyong},
   author={Zhu, Chen-Bo},
   title={Multiplicity one theorems: the Archimedean case},
   journal={Ann. of Math. (2)},
   volume={175},
   date={2012},
   number={1},
   pages={23--44},
   issn={0003-486X},
   review={\MR{2874638}},
   doi={10.4007/annals.2012.175.1.2},
}

\bib{Wald}{article}{
   author={Waldspurger, J.-L.},
   title={La formule de Plancherel pour les groupes $p$-adiques (d'apr\`es
   Harish-Chandra)},
   language={French, with French summary},
   journal={J. Inst. Math. Jussieu},
   volume={2},
   date={2003},
   number={2},
   pages={235--333},
   issn={1474-7480},
   review={\MR{1989693}},
   doi={10.1017/S1474748003000082},
}

\bib{Yun}{article}{
   author={Yun, Zhiwei},
   title={The fundamental lemma of Jacquet and Rallis},
   note={With an appendix by Julia Gordon},
   journal={Duke Math. J.},
   volume={156},
   date={2011},
   number={2},
   pages={167--227},
   issn={0012-7094},
   review={\MR{2769216}},
   doi={10.1215/00127094-2010-210},
}

\bib{Xue1}{article}{
   author={Xue, Hang},
   title={The Gan-Gross-Prasad conjecture for ${\rm U}(n)\times{\rm U}(n)$},
   journal={Adv. Math.},
   volume={262},
   date={2014},
   pages={1130--1191},
   issn={0001-8708},
   review={\MR{3228451}},
   doi={10.1016/j.aim.2014.06.010},
}

\bib{Xue3}{article}{
   author={Xue, Hang},
   title={On the global Gan-Gross-Prasad conjecture for unitary groups:
   approximating smooth transfer of Jacquet-Rallis},
   journal={J. Reine Angew. Math.},
   volume={756},
   date={2019},
   pages={65--100},
   issn={0075-4102},
   review={\MR{4026449}},
   doi={10.1515/crelle-2017-0016},
}

\bib{Xue4}{article}{
   author={Xue, Hang},
   title={Bessel models for real unitary groups: the tempered case},
   journal={Duke Math. J.},
   volume={172},
   date={2023},
   number={5},
   pages={995--1031},
   issn={0012-7094},
   review={\MR{4568696}},
   doi={10.1215/00127094-2022-0018},
}

\bib{Xue6}{article}{
    author={Xue, Hang},
    title={Fourier--Jacobi models for real unitary groups},
    note={To appear in American Journal of Math.},
}

\bib{Zhang1}{article}{
   author={Zhang, Wei},
   title={Fourier transform and the global Gan-Gross-Prasad conjecture for
   unitary groups},
   journal={Ann. of Math. (2)},
   volume={180},
   date={2014},
   number={3},
   pages={971--1049},
   issn={0003-486X},
   review={\MR{3245011}},
}

\bib{Zhang2}{article}{
   author={Zhang, Wei},
   title={Automorphic period and the central value of Rankin-Selberg
   L-function},
   journal={J. Amer. Math. Soc.},
   volume={27},
   date={2014},
   number={2},
   pages={541--612},
   issn={0894-0347},
   review={\MR{3164988}},
   doi={10.1090/S0894-0347-2014-00784-0},
}

\bib{Zhang3}{article}{
   author={Zhang, W.},
   title={Weil representation and arithmetic fundamental lemma},
   journal={Ann. of Math. (2)},
   volume={193},
   date={2021},
   number={3},
   pages={863--978},
   issn={0003-486X},
   review={\MR{4250392}},
   doi={10.4007/annals.2021.193.3.5},
}

\bib{Zydor2}{article}{
   author={Zydor, Micha\l },
   title={La variante infinit\'{e}simale de la formule des traces de
   Jacquet-Rallis pour les groupes lin\'{e}aires},
   language={French, with English and French summaries},
   journal={J. Inst. Math. Jussieu},
   volume={17},
   date={2018},
   number={4},
   pages={735--783},
   issn={1474-7480},
   review={\MR{3835522}},
   doi={10.1017/S1474748016000141},
}

\bib{Zydor3}{article}{
   author={Zydor, Micha\l },
   title={Les formules des traces relatives de Jacquet-Rallis grossi\`eres},
   language={French, with English and French summaries},
   journal={J. Reine Angew. Math.},
   volume={762},
   date={2020},
   pages={195--259},
   issn={0075-4102},
   review={\MR{4195660}},
   doi={10.1515/crelle-2018-0027},
}

\end{biblist}
\end{bibdiv}

\end{document}